\numberwithin{equation}{section}
\newtheorem{Cor}[equation]{Corollary}
\newtheorem{Lem}[equation]{Lemma}
\newtheorem{KLem}[equation]{Key Lemma}
\newtheorem{Prop}[equation]{Proposition}
\newtheorem{Thm}[equation]{Theorem}
\newtheorem{MainThm}[equation]{Main Theorem}
\theoremstyle{remark}
\newtheorem{Def}[equation]{Definition}
\newtheorem{Not}[equation]{Notation}
\newtheorem{Exa}[equation]{Example}
\newtheorem{Exas}[equation]{Examples}
\newtheorem{Rem}[equation]{Remark}
\newtheorem{Remind}[equation]{Reminder}
\newtheorem{Cons}[equation]{Construction}
\newcommand{\nc}{\newcommand}
\nc{\dmo}{\DeclareMathOperator}
\dmo{\Ab}{Ab}
\dmo{\add}{add}
\dmo{\AM}{AM}
\dmo{\ATM}{ATM}
\dmo{\Chain}{Ch}
\dmo{\Chow}{Chow}
\dmo{\coker}{coker}
\dmo{\cone}{cone}
\dmo{\DAM}{DAM^{\geom}}%
\dmo{\DATM}{DATM^{\geom}}%
\dmo{\Der}{D}
\dmo{\DM}{DM^{\geom}}
\dmo{\DTM}{DTM^{\geom}}%
\dmo{\End}{End}
\dmo{\ev}{ev}
\dmo{\Ext}{Ext}
\dmo{\geom}{gm}
\dmo{\grmodname}{grmod}%
\dmo{\gr}{gr}
\dmo{\Hm}{H}
\dmo{\Hom}{Hom}
\dmo{\Hty}{\Kb}
\dmo{\Id}{Id}
\dmo{\id}{id}
\dmo{\Img}{Im}
\dmo{\ind}{ind}
\dmo{\Ker}{Ker}
\dmo{\Komp}{K}
\dmo{\Kos}{Kos}
\dmo{\modname}{mod}%
\dmo{\Mod}{Mod}
\dmo{\mot}{M}
\dmo{\opname}{op}
\dmo{\Proj}{Proj} 
\dmo{\proj}{proj}
\dmo{\res}{res}
\dmo{\SHmot}{SH^{\mathrm{c}}_{\bbA^{\!1}}}
\dmo{\SH}{SH^{\mathrm{c}}}
\dmo{\smallb}{b}
\dmo{\smallperf}{perf}
\dmo{\Spc}{Spc}
\dmo{\Spech}{Spec^h}
\dmo{\Spec}{Spec}
\dmo{\stabname}{stab}
\dmo{\stab}{stab}
\dmo{\st}{st}
\dmo{\supp}{supp}
\dmo{\thick}{thick}
\dmo{\TM}{TM}
\dmo{\triv}{triv}
\nc{\AbGrps}{\MMod{\bbZ}}
\nc{\Ac}{\mathrm{Ac}}
\nc{\adhpt}[1]{\adh{\{#1\}}}
\nc{\adh}[1]{\overline{#1}}
\nc{\adjto}{\rightleftarrows}
\nc{\adj}{\dashv}
\nc{\Afilmax}{\Afil_{\mathrm{q.ab}}}
\nc{\Afil}{\fil{\cat{A}}}
\nc{\afortiori}{{\sl a fortiori}}
\nc{\aka}{{a.\,k.\,a.}\ }
\nc{\ala}{{\`a la}\ }
\nc{\Aqab}{\Afilmax}
\nc{\Asplit}{\cA_{\mathrm{split}}}
\nc{\barpwzinv}{\barpwz\inv}
\nc{\barpwz}{\overline{\pwz}}
\nc{\cat}[1]{\mathscr{#1}}
\nc{\cA}{\cat{A}}
\nc{\cB}{\cat{B}}
\nc{\Cb}{\Chain_{\smallb}}
\nc{\cE}{\cat{E}}
\nc{\cF}{\cat{F}}
\nc{\cf}{{\sl cf.}\ }
\nc{\Ch}{\Cb}
\nc{\cJ}{\cat{J}}
\nc{\cK}{\cat{K}}
\nc{\cL}{\cat{L}}
\nc{\cM}{\cat{M}}
\nc{\cN}{\cat{N}}
\nc{\colim}{\mathop{\mathrm{colim}}}
\nc{\cP}{\cat{P}}
\nc{\cQ}{\cat{Q}}
\nc{\cR}{\cat{R}}
\nc{\cT}{\cat{T}}
\nc{\DABfil}{\Db(\Efil)}
\nc{\DA}{\Db(\cA)}
\nc{\Db}{\Der_{\smallb}}
\nc{\DEfil}{\Db(\Efil)}
\nc{\DETM}{\mathrm{D}\EE\mathrm{TM}(\FF;\bbZ/m)}%
\nc{\DF}{\mathbb{D}(\cT)}
\nc{\Displ}{\displaystyle}
\nc{\Dperf}{\Der^{\smallperf}}
\nc{\DT}{\mathbb{D}(\cT)}
\nc{\D}{\Der}
\nc{\Efil}{\Afil_{\mathrm{ex}}}
\nc{\eg}{{\sl e.g.}\xspace}
\nc{\eps}{\epsilon}
\nc{\et}{\textrm{\'et}}
\nc{\FF}{F} 
\nc{\fil}[2][]{\ifblank{#1}{#2^{\mathrm{fil}}}{(#2,#1)^{\mathrm{fil}}}}
\nc{\grmod}[1]{#1\text{-}\kern-0.1em\grmodname}%
\nc{\Homcat}[1]{\Hom_{\cat #1}}
\nc{\Homfil}{\Hom_{\Afil}}
\nc{\hook}{\hookrightarrow}
\nc{\ideal}[1]{\langle #1\rangle}
\nc{\ie}{{\sl i.e.}\ }
\nc{\injres}{\mathbb{J}}
\nc{\into}{\mathop{\rightarrowtail}}
\nc{\inv}{^{-1}}
\nc{\isofrom}{\overset{\sim}{\,\leftarrow\,}}
\nc{\isoto}{\overset{\sim}{\,\to\,}}
\nc{\Kbac}{\Komp_{\smallb,\mathrm{ac}}}
\nc{\Kbm}{\Komp_{\mathrm{b},\le0}}
\nc{\Kb}{\Komp_{\smallb}}
\nc{\kGfil}{\Efil}
\nc{\kk}{k}
\nc{\Km}{\Komp_{-}}
\nc{\Kp}{\Komp_{+}}
\nc{\loccit}{{\sl loc.\ cit.}\xspace}
\nc{\Mid}{\,\big|\,}
\nc{\mmod}[1]{#1\text{-}\kern-0.1em\modname}%
\nc{\MMod}[1]{#1\text{-}\kern-0.1em\Mod}%
\nc{\onto}{\mathop{\twoheadrightarrow}}
\nc{\op}{^{\opname}}
\nc{\oursetminus}{\smallsetminus}
\nc{\pos}{\mathrm{pos}}
\nc{\potimes}[1]{^{\otimes #1}}
\nc{\pproj}[1]{#1\text{-}\kern-0.1em\proj}%
\nc{\qquadtext}[1]{\qquad\textrm{#1}\qquad}
\nc{\quadtext}[1]{\quad\textrm{#1}\quad}
\nc{\reet}{\mathrm{R\acute{e}t}}
\nc{\restr}[1]{_{|_{\scriptstyle #1}}}
\nc{\rwz}{\mathrm{rwz}}
\nc{\sbull}{{\scriptscriptstyle\bullet}}
\nc{\seq}[2][]{\ifblank{#1}{#2^{\bbZ\op}}{(#2,#1)^{\bbZ\op}}}
\nc{\SET}[2]{\big\{\,#1\Mid#2\,\big\}}
\nc{\smat}[1]{\left(\begin{smallmatrix} #1 \end{smallmatrix}\right)}
\nc{\SpcK}{\Spc(\cK)}
\nc{\sstab}{\stabname\kern-0.1em\text{-}}%
\nc{\stabfil}{\stab(\Efil)}
\nc{\Tate}{\hat{\Hm}}
\nc{\tfgt}{\mathrm{f\widetilde{g}t}}
\nc{\too}{\mathop{\longrightarrow}\limits}
\nc{\To}{\Rightarrow}
\nc{\tristar}{\begin{center}*\ *\ *\end{center}}
\nc{\unit}{\mathbb{1}}
\nc{\via}{{\sl via}\xspace}
\nc{\wzp}{weight-zero part}
\nc{\xinto}[1]{\overset{#1}{\,\into\,}}
\nc{\xonto}[1]{\overset{#1}{\,\onto\,}}
\nc{\xto}[1]{\xrightarrow{#1}}
\nc{\pLs}{\cat{L}_1}
\nc{\pL}{\cat{L}_{0}}
\nc{\pMs}{\cat{M}_1}
\nc{\pM}{\cat{M}_0}
\nc{\pNs}{\cat{N}_1}
\nc{\pN}{\cat{N}_0}
\let\prLs\pLs
\let\prL\pL
\let\prMs\pMs
\let\prM\pM
\let\prNs\pNs
\let\prN\pN
\nc{\DE}{\DEfil}
\nc{\DEU}{\DEfil\restr{U}}
\nc{\KA}{\Kb(\cA)}
\nc{\KE}{\Kb(\Efil)}
\nc{\bamma}{\gamma}%
\nc{\E}{\mathbb{E}}
\nc{\Epur}{\mathrm{E}}
\nc{\fund}{\mathbb{S}}
\nc{\fundpur}{\mathrm{S}}
\nc{\T}{\mathbb{T}}
\nc{\invert}{\mathbb{L}}
\nc{\invertpur}{\mathrm{L}}
\nc{\chgcoef}{\mathrm{cc}}
\dmo{\quo}{quo}
\dmo{\sta}{sta}
\dmo{\Sta}{Sta}
\dmo{\fgt}{fgt}
\dmo{\pwz}{pwz}
\nc{\rsd}[1]{\mathrm{rsd}_{#1}}
\let\ea\expandafter
\def\foreachLetter#1#2#3{\foreachcount=#1
  \ea\loop\ea\ea\ea#3\@Alph\foreachcount
  \advance\foreachcount by 1
  \ifnum\foreachcount<#2\repeat}
\def\definebb#1{\ea\gdef\csname bb#1\endcsname{\ensuremath{\mathbb{#1}}\xspace}}
\begin{document}


\title{Three real Artin-Tate motives}
\author{Paul Balmer}
\date{\today}

\address{Paul Balmer, Mathematics Department, UCLA, Los Angeles, CA 90095-1555, USA}
\email{balmer@math.ucla.edu}
\urladdr{http://www.math.ucla.edu/~balmer}

\author{Martin Gallauer}
\address{Martin Gallauer, Max-Planck-Institut f\"ur Mathematik, 53111 Bonn, Germany}
\email{gallauer@mpim-bonn.mpg.de}
\urladdr{https://guests.mpim-bonn.mpg.de/gallauer}

\begin{abstract}
We analyze the spectrum of the tensor-triangulated category of Artin-Tate motives over the base field~$\mathbb{R}$ of real numbers, with integral coefficients.
Away from~$2$, we obtain the same spectrum as for complex Tate motives, previously studied by the second-named author. So the novelty is concentrated at the prime~$2$, where modular representation theory enters the picture via work of Positselski, based on Voevodsky's resolution of the Milnor Conjecture. With coefficients in~$k=\mathbb{Z}/2$, our spectrum becomes homeomorphic to the spectrum of the derived category of filtered $kC_2$-modules with a peculiar exact structure, for the cyclic group~$C_2=\mathrm{Gal}(\mathbb{C}/\mathbb{R})$. This spectrum consists of six points organized in an interesting way. As an application, we find exactly fourteen classes of mod-2 real Artin-Tate motives, up to the tensor-triangular structure. Among those, three special motives stand out, from which we can construct all others. We also discuss the spectrum of Artin motives and of Tate motives.
\end{abstract}

\subjclass[2010]{14F42; 18D10, 18E30, 18G99, 19E15, 20C20}
\keywords{Artin-Tate motives, tensor-triangular geometry, modular representation theory, classification}

\thanks{First-named author supported by NSF grant~DMS-1901696. Second-named author partially supported by IMPA, a Titchmarsh Fellowship of the University of Oxford and the Lockey Fund.}

\maketitle

\tableofcontents

\section{Introduction}


We explore motives and representation theory, from the perspective of tensor-triangular geometry. We first present our results in the language of motives and then turn to representation theory in the second part of this introduction.

\subsection*{Motivic result}

Consider the tensor-triangulated category (\emph{tt-category} for short)
\[
\cK=\DATM(\bbR;\bbZ/2)
\]
of geometric mixed Artin-Tate motives over the base field~$\FF=\bbR$ of real numbers, with coefficients modulo~$2$. (\textsl{Mutatis mutandis},~$\FF$ could be any real closed field.) In Voevodsky's category~$\DM(\FF;k)$ of geometric motives over~$\FF$, with coefficients in a commutative ring~$k$ (see~\cite{Voevodsky00}), the tt-subcategory $\DATM(\FF;k)$ is generated by Tate objects~$k(i)$, $i\in \bbZ$, and Artin motives $\mot(\Spec(E))$ for finite separable extensions~$E/\FF$. Our base field~$\FF=\bbR$ is the simplest non-trivial case, involving only one separable extension~$E=\bbC$. And yet, we shall see that this case is already interesting, particularly for coefficients in~$k=\bbZ/2$.

Every tt-category~$\cK$ admits a spectrum~$\SpcK$, a space that classifies objects of~$\cK$ up to the tensor-triangular structure; see~\cite{balmer:spectrum,balmer:icm} or \Cref{Rmd:Spc}. For `tensor-triangular geometry' and its relevance beyond motives and representation theory, the reader is referred to the surveys~\cite{Stevenson18,balmer:guide-HT-handbook}. Here, our goal is:
\begin{MainThm}[{\Cref{Thm:main}}]
\label{Thm:main-intro}%
The spectrum of the tensor-triangulated category $\cK$ of real Artin-Tate motives with $\bbZ/2$-coefficients is the six-point space
\[
\SpcK\ =
\qquad\qquad
\vcenter{\xymatrix@R=.01em{
\overset{\pLs}{\bullet}
&& \overset{\pNs}{\bullet}
\\
\underset{\pL}{\bullet} \ar@{-}[u]
& \overset{\pMs}{\bullet} \ar@{-}[lu] \ar@{-}[ru]
& \underset{\pN}{\bullet} \ar@{-}[u]
\\
& \underset{\pM}{\bullet} \ar@{-}[u] \ar@{-}[lu] \ar@{-}[ru]
}}\qquad
\]
where a line $\bullet - \bullet$ indicates that the higher point lies in the closure of the lower one.
\end{MainThm}

Before discussing consequences, let us unpack the above result. The space $\SpcK$ has exactly fourteen closed subsets, ordered by inclusion as follows:
\begin{equation}
\label{eq:lattice-closed}\vcenter{
\xymatrix@C=.1em@R=.01em{
&& \varnothing
\\
& {\{\pLs\}} \ar@{-}[ru]
&& {\{\pNs\}} \ar@{-}[lu]
\\
{\color{Orange}\{\pL,\pLs\}} \ar@{-}[ru]
&& \{\pLs\}\sqcup\{\pNs\} \ar@{-}[lu] \ar@{-}[ru]
&& {\color{Orange}\{\pN,\pNs\}} \ar@{-}[lu]
\\\\
& \kern-2em \{\pL,\pLs\}\sqcup\{\pNs\} \kern-1em \ar@{-}[ruu] \ar@{-}[luu]
& {\color{Orange}\{\pLs,\pMs,\pNs\}} \ar@{-}[uu]
& \kern-1em \{\pLs\}\sqcup\{\pN,\pNs\} \kern-2em \ar@{-}[ruu] \ar@{-}[luu]
&&
\\\\
\kern-1em\{\pL,\pLs,\pMs,\pNs\} \kern-.3em \ar@{-}[uuuu] \ar@{-}@<-0.2em>[rruu]|!{[rr];[ruu]}{\hole}
&& \{\pL,\pLs\}\sqcup\{\pN,\pNs\} \ar@{-}[luu] \ar@{-}[ruu]
&& \kern-.3em \{\pLs,\pMs,\pN,\pNs\} \kern-1em \ar@{-}[uuuu] \ar@{-}@<0.2em>[lluu]|!{[ll];[luu]}{\hole}
\\\\
&& \{\pL,\pLs,\pMs,\pN,\pNs\} \ar@{-}[lluu] \ar@{-}[uu] \ar@{-}[rruu]
\\\\
&& {\SpcK} \ar@{-}[uu]
}}\kern-2em
\end{equation}
The space $\SpcK$ has Krull dimension two and admits six irreducible closed subsets. In addition to~$\SpcK=\adhpt{\pM}$ itself and the two closed points $\{\pLs\}=\adhpt{\pLs}$ and $\{\pNs\}=\adhpt{\pNs}$, there are three irreducibles (highlighted in orange in~\eqref{eq:lattice-closed} above)
\begin{equation}
\label{eq:3-irreduciblos}%
{
{\xymatrix@R=.2em@C=1em@H=.5em{
\bullet \ar@{-}[d]\ar@[Gray]@{-}[rd] && {\color{Gray}\circ} \ar@[Gray]@{-}[ld]\ar@[Gray]@{-}[d]
\\
\bullet \ar@[Gray]@{-}[rd] & {\color{Gray}\circ} \ar@[Gray]@{-}[d] & {\color{Gray}\circ} \ar@[Gray]@{-}[ld]
\\
& {\color{Gray}\circ}
}}
\atop
\adhpt{\pL}=\{\pL,\pLs\}
}
\qquad\quad
{
{\xymatrix@R=.2em@C=1em@H=.5em{
\bullet\ar@[Gray]@{-}[d]\ar@{-}[rd] && \bullet \ar@{-}[ld]\ar@[Gray]@{-}[d]
\\
{\color{Gray}\circ} \ar@[Gray]@{-}[rd] & \bullet \ar@[Gray]@{-}[d] & {\color{Gray}\circ}\ar@[Gray]@{-}[ld]
\\
& {\color{Gray}\circ}
}}
\atop
\adhpt{\pMs}=\{\pLs,\pMs,\pNs\}
}
\qquad\quad
{
{\xymatrix@R=.2em@C=1em@H=.5em{
{\color{Gray}\circ} \ar@[Gray]@{-}[d]\ar@[Gray]@{-}[rd] && \bullet \ar@[Gray]@{-}[ld]\ar@{-}[d]
\\
{\color{Gray}\circ} \ar@[Gray]@{-}[rd] & {\color{Gray}\circ} \ar@[Gray]@{-}[d] & \bullet\ar@[Gray]@{-}[ld]
\\
& {\color{Gray}\circ}
}}
\atop
\adhpt{\pN}=\{\pN,\pNs\}
}
\end{equation}
that are `special' in that they are not intersections of larger irreducibles.
\goodbreak

By~\cite{balmer:spectrum}, the lattice~\eqref{eq:lattice-closed} of closed subsets of~$\SpcK$ classifies the \emph{tt-ideals} of~$\cK$, \ie the thick triangulated $\otimes$-ideal subcategories of~$\cK$. Consequently:
\begin{Cor}
\label{Cor:14-tt-ideals}%
There are precisely fourteen tt-ideals in $\cK=\DATM(\bbR;\bbZ/2)$.
\end{Cor}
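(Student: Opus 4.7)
The corollary follows immediately from Balmer's classification theorem~\cite{balmer:spectrum}: in the present setting---a finite, \afortiori\ Noetherian, spectrum---it provides a bijection between the tt-ideals of $\cK$ and the closed subsets of $\SpcK$. The plan is therefore simply to count the closed subsets of the six-point space delivered by the Main Theorem.

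To perform the count I would first read off the six irreducible closed subsets of $\SpcK$ from the Hasse diagram of the Main Theorem: the two closed points $\{\pLs\}$ and $\{\pNs\}$, the three special irreducibles $\adhpt{\pL}$, $\adhpt{\pMs}$, $\adhpt{\pN}$ displayed in~\eqref{eq:3-irreduciblos}, and the generic closure $\adhpt{\pM}=\SpcK$. Every closed subset is a union of these six. Since $\pM$ lies in the closure of every other point, it contributes only the total space, so I may restrict to enumerating the closed subsets of the five-point subspace $\SpcK\smallsetminus\{\pM\}$, which are in bijection with the antichains of its specialization poset. A direct inspection lists the antichains of that five-point poset as $\varnothing$, the five singletons, six two-element antichains (those pairs of pairwise-incomparable points), and the single three-element antichain $\{\pL,\pMs,\pN\}$, for a total of thirteen. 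These match the thirteen non-total entries of~\eqref{eq:lattice-closed}, and adjoining $\SpcK$ itself produces the fourteen closed subsets displayed there, hence fourteen tt-ideals.

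The argument presents no substantive obstacle: once the Main Theorem is in hand the rest is a finite combinatorial verification. The only mild care needed is to recognise that a special irreducible such as $\adhpt{\pMs}$ already contains both closed points $\pLs$ and $\pNs$, so adjoining either of them separately to $\adhpt{\pMs}$ does not yield a new subset; keeping track of such redundancies ensures that the thirteen-plus-one tally is exact rather than an over-count.
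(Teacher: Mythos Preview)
Your proof is correct and follows essentially the same route as the paper: invoke Balmer's classification to identify tt-ideals with closed (equivalently Thomason, since the space is finite) subsets of the six-point spectrum, then enumerate those subsets---the paper simply lists them in~\eqref{eq:lattice-closed}, while you count antichains, but the content is identical. One small point worth making explicit is that the bijection covers \emph{all} tt-ideals (not just radical ones) because $\cK$ is rigid, as the paper notes in \Cref{sec:applications}.
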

\goodbreak

Every object~$M$ in a tt-category~$\cK$ has a \emph{support}, $\supp(M)\subseteq\SpcK$, which is a closed subset of the spectrum. This yields an equivalence relation on objects: $M\sim M'$ when $\supp(M)=\supp(M')$. This happens if and only if $M$ and $M'$ generate the same tt-ideal $\ideal{M}=\ideal{M'}$, that is, $M$ and~$M'$ can be constructed from one another using the tensor-triangular structure of~$\cK$. In this light, the above corollary implies that there are precisely 14 equivalence classes of real Artin-Tate motives with $\bbZ/2$-coefficients, one for each closed subset given in~\eqref{eq:lattice-closed}.

For some of those 14 closed subsets~$Z$ of~$\SpcK$ it is easy to construct a representative~$M\in\cK$ whose support is~$Z$. As always, $\varnothing$ is the support of zero and $\SpcK$ is the support of the $\otimes$-unit $\unit=\mot(\Spec(\bbR))$. Also, if we have objects~$M_1$ and~$M_2$ realizing $Z_1=\supp(M_1)$ and~$Z_2=\supp(M_2)$, then we immediately have objects realizing their union $Z_1\cup Z_2=\supp(M_1\oplus M_2)$ and their intersection $Z_1\cap Z_2=\supp(M_1\otimes M_2)$. Hence there are three `special' Artin-Tate motives to describe, namely motives whose supports are the three special irreducible closed subsets of~\eqref{eq:3-irreduciblos}, those that are not intersections of larger irreducibles.

\subsection*{Three special Artin-Tate motives}

(See \Cref{sec:Spc-DATM(R;Z/2)}.) The right-most irreducible $\adhpt{\pN}=\{\pN,\pNs\}$ in~\eqref{eq:3-irreduciblos} is the support of the motive of the complex numbers
\[
\mot(\Spec(\bbC)).
\]
In other words, the two points $\{\pN,\pNs\}$ are exactly (the homeomorphic image of) the spectrum of the complex version of~$\cK$, that is, the tt-category of mod-2 complex Tate motives~$\DTM(\bbC;\bbZ/2)$; the spectrum of the latter was shown in~\cite{gallauer:tt-dtm-algclosed} to be a Sierpi\'nski space, \ie a space with two points, one closed~($\pNs$) and one open~($\pN$). This `geometric' part is marked by the blue box in~\eqref{eq:subdivision} below. The four `non-geometric' points $\pM,\pMs,\pL,\pLs$ can be called `arithmetic'.

\begin{equation}
\label{eq:subdivision}%
\vcenter{\xy
(0,0)*{\pLs};
(0,-14)*{\pL};
(15,-8)*{\pMs};
(15,-20)*{\pM};
(30,0)*{\pNs};
(30,-14)*{\pN};
{\ar@{-} (0,-12)*{};(0,-2)*{}};
{\ar@{-} (15,-18)*{};(15,-10)*{}};
{\ar@{-} (30,-12)*{};(30,-2)*{}};
{\ar@{-} (11,-7)*{};(3,-2)*{}};
{\ar@{-} (11,-19)*{};(3,-14)*{}};
{\ar@{-} (18,-7)*{};(27,-2)*{}};
{\ar@{-} (18,-19)*{};(27,-14)*{}};
{\ar@[Green]@{-} (-5,3)*{};(35,3)*{};};
{\ar@[Green]@{-} (-5,3)*{};(-5,-11)*{};};
{\ar@[Green]@{-} (35,3)*{};(35,-11)*{};};
{\ar@[Green]@{-} (-5,-11)*{};(35,-11)*{};};
{\ar@{..} (-5,-5)*{};(-14,-5)*{\mathrm{Pure\ }};};
{\ar@[Blue]@{-} (23,5)*{};(23,-17)*{};};
{\ar@[Blue]@{-} (23,5)*{};(37,5)*{};};
{\ar@[Blue]@{-} (23,-17)*{};(37,-17)*{};};
{\ar@[Blue]@{-} (37,5)*{};(37,-17)*{};};
{\ar@{..}@/^.5em/ (30,-17)*{};(45,-20)*{\mathrm{\ Geometric}};};
\endxy}
\end{equation}
\smallskip

The second irreducible $\adhpt{\pMs}=\{\pLs,\pMs,\pNs\}$ appearing in~\eqref{eq:3-irreduciblos} and isolated in the horizontal green box of~\eqref{eq:subdivision} is the support of a generalized Koszul object
\[
\Kos(\beta,\rho):=\cone(\beta)\otimes \cone(\rho).
\]
Here $\beta\colon \unit\to \unit(1)$ is the (motivic) Bott element of~\cite{levine:bott,haesemeyer-hornbostel:bott}, that is, the non-trivial element $-1$ in the motivic cohomology group $\Hm^{0,1}(\bbR;\bbZ/2)\cong \mu_2(\bbR)=\{\pm1\}$. The map $\rho\colon \unit\to \unit(1)[1]$ is the non-trivial element in the other weight-one motivic cohomology group $\Hm^{1,1}(\bbR;\bbZ/2)\cong K^{\mathrm{M}}_1(\bbR)/2=\bbR^\times/(\bbR^\times)^2$, induced by a morphism $\Spec(\bbR)\to\bbG_{\mathrm{m}}$ corresponding to a negative real number, see~\cite{bachmann:real}. The 3-point irreducible subset $\adhpt{\pMs}$ is also the image on spectra of a `semi-simplification' functor $\cK\to \Kb(\AM(\bbR;\bbZ/2))$ taking values in the homotopy category of complexes of pure Artin motives, and closely related to the weight complex functor of Bondarko~\cite{bondarko-weight,wildeshaus:artin-tate}. Hence the label `pure' in~\eqref{eq:subdivision}. The other three points $\pL,\pM,\pN$ are genuinely `mixed'.

Finally, consider the left-most irreducible $\adhpt{\pL}=\{\pL,\pLs\}$ in~\eqref{eq:3-irreduciblos}. In the category of finite correspondences over~$\Spec(\bbR)$, the object $\Spec(\bbC)$ is $\otimes$-selfdual and admits a map $\eta\colon\unit\to \Spec(\bbC)$ dual to the structure morphism $\epsilon\colon\Spec(\bbC)\to\Spec(\bbR)=\unit$. The composition $\epsilon\circ\eta$ is multiplication by~\mbox{$[\bbC:\bbR]$}, hence vanishes with $\bbZ/2$-coefficients. The corresponding complex
\begin{equation}
  \label{eq:fund-motivic}
  \fund_0=\qquad\cdots\to 0\to \unit\xto{\eta}\Spec(\bbC)\xto{\epsilon}\unit\to 0\to\cdots
\end{equation}
can be viewed as an object of~$\cK$ and its support is $\{\pL,\pLs\}$. This~$\fund_0$ is closely related to the non-trivial element of the Picard group discovered in~\cite{pohu:pic-motivic}. Namely, consider the affine quadric $Q \colon x^2 + y^2 = 1$ and its reduced motive $\tilde{\mot}(Q)$. This invertible motive is Artin-Tate and comes with a morphism $\tilde{\epsilon}:\tilde{\mot}(Q)\to \tilde{\mot}(\bbG_\mathrm{m})$, whose cone is nothing but the complex~\eqref{eq:fund-motivic}, up to one Tate twist.

\medbreak

Several interesting computations of spectra follow from \Cref{Thm:main-intro}.

\subsection*{Artin motives and Tate motives}

In Voevodsky's category $\DM(\bbR;\bbZ/2)$, we have the tt-subcategories of Tate motives $\DTM(\bbR;\bbZ/2)$, or of Artin motives $\DAM(\bbR;\bbZ/2)$, \ie the tt-subcategories of our $\cK$ generated by only the Tate objects $\bbZ/2(i)$ for $i\in\bbZ$, or by only the Artin motive $\mot(\Spec(\bbC))$, respectively.
\begin{Cor}[\Cref{Thm:Spc-DTM} and \Cref{Thm:Spc-DAM}]%
  The spectra of the tt-categories of real Tate motives and of real Artin motives with $\bbZ/2$-coefficients are respectively
\[
\Spc(\DTM(\bbR;\bbZ/2))=\vcenter{\xymatrix@R=.3em@C=1em{
&{\bullet}
\\
{\bullet} \ar@{-}[ru]
&
& {\bullet} \ar@{-}[lu]
\\
& {\bullet} \ar@{-}[lu] \ar@{-}[ru]}
}\quadtext{and}
\Spc(\DAM(\bbR;\bbZ/2))=\vcenter{\xymatrix@R=.3em@C=1em{
{\bullet}
&& {\bullet}
\\
& {\bullet} \ar@{-}[lu] \ar@{-}[ru]
}}
\]
In particular, the former admits six tt-ideals and is a local tt-category (unique closed point) whereas the latter admits five tt-ideals. (For the continuous maps induced by inclusion, between those spectra and $\Spc(\DATM(\bbR;\bbZ/2))$, see \Cref{Rmd:Spc-inclusion-Artin-Tate}.)
\end{Cor}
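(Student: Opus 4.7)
My plan is to deduce both statements from the Main Theorem by means of the fully faithful inclusions
\[
\iota_T\colon\cT:=\DTM(\bbR;\bbZ/2)\hook\cK \qquadtext{and} \iota_A\colon\cA:=\DAM(\bbR;\bbZ/2)\hook\cK,
\]
which induce continuous restriction-of-primes maps $\varphi_T=\Spc(\iota_T)\colon\SpcK\to\Spc(\cT)$ and $\varphi_A=\Spc(\iota_A)\colon\SpcK\to\Spc(\cA)$, sending a prime $\mathfrak{p}$ to $\mathfrak{p}\cap\cT$ (respectively $\mathfrak{p}\cap\cA$). I would identify each as a surjection onto the claimed three- or four-point space, by proving surjectivity, computing the collapse pattern, and transferring the topology.

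For the surjectivity step, I see two routes. The first is to set up a going-up-type argument using the semi-simplification / weight-complex functor $\cK\to\Kb(\AM(\bbR;\bbZ/2))$ mentioned in the discussion of $\Kos(\beta,\rho)$, together with its Tate-side analogue coming from a weight filtration on Tate motives, and to use these retractions to lift each prime of $\cT$ or $\cA$ to a prime of $\cK$. The second, safer route is to compute $\Spc(\cT)$ and $\Spc(\cA)$ \emph{ex nihilo}, mimicking the proof of the Main Theorem in the restricted settings: the Tate case reduces to a purely filtered computation (the Galois variable having disappeared), and the Artin case to a computation of the tt-spectrum of $\Kb$ of $\bbZ/2[C_2]$-permutation modules (the filtration having disappeared). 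Matching with $\varphi_T$ and $\varphi_A$ is then a check on supports of generators.

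The collapse patterns are determined by testing, for each of the six primes $\pL,\pLs,\pM,\pMs,\pN,\pNs$ of $\SpcK$, membership of the generators of $\cT$ (the Tate objects $\bbZ/2(i)$) and of $\cA$ ($\mot(\Spec(\bbC))$), using the explicit descriptions of the tt-ideals generated by $\fund_0$, $\Kos(\beta,\rho)$ and $\mot(\Spec(\bbC))$ recalled in the discussion of the three special motives. Pushing supports forward via $\supp_\cT(M)=\varphi_T(\supp_\cK(M))$ for $M\in\cT$, and analogously for $\cA$, then gives the specialization relations displayed in the statement and, by Balmer's classification of tt-ideals via Thomason subsets, the stated counts of six and five tt-ideals. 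The maps in \Cref{Rmd:Spc-inclusion-Artin-Tate} are then $\varphi_T$, $\varphi_A$, and their mutual comparison. The principal obstacle is setting up surjectivity without circularity: either via a clean functorial lift of primes through the weight-complex functor, or by a re-run of the filtered / permutation-module machinery in each restricted setting. I would opt for the latter, to be sure that no ``phantom'' prime of $\cT$ or $\cA$ is missed.
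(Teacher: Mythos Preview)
Your Artin case is fine and matches the paper: $\DAM(\bbR;\bbZ/2)\simeq\Kb(\mmod{kC_2})$ directly, and the spectrum of the latter is the three-point space already computed earlier in the paper. There is no need to go through $\DATM$ at all.

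There are two problems on the Tate side. First, surjectivity of $\varphi_T$ (and $\varphi_A$) is automatic: the inclusion of a tt-subcategory into a rigid tt-category induces a surjection on spectra by \cite[Cor.~1.8]{balmer:surjectivity}. So neither Route~1 nor Route~2 is needed for that purpose; the only question is the collapse pattern. Second, and more seriously, your Route~2 for $\cT$ is incorrect as stated. The category $\DTM(\bbR;\bbZ/2)$ does \emph{not} reduce to ``purely filtered $k$-modules with the Galois variable having disappeared''. The class $\rho\in\Hm^{1,1}(\bbR;\bbZ/2)\cong\bbR^\times/(\bbR^\times)^2$ is nonzero and lives entirely in the Tate subcategory; it is precisely the arithmetic input. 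If the Galois variable truly disappeared you would be computing $\Spc(\DTM(\bbC;\bbZ/2))$, which is the two-point Sierpi\'nski space, not the four-point diamond. There is no obvious exact-category model for $\DTM(\bbR;\bbZ/2)$ analogous to $\Efil$, so an \emph{ex nihilo} computation is not straightforward.

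The paper's argument for the Tate case avoids this entirely. Surjectivity of $\varphi_T$ is free. To see the collapse, observe that the weight complex functor restricted to $\DTM$ factors through $\Kb(k)$ (pure Tate motives over $\bbR$ are just graded $k$-vector spaces), so the three pure primes $\pLs,\pMs,\pNs$ all land on the unique prime $0$ of $\Kb(k)$; conservativity identifies their common image as~$0$. For the three mixed primes $\pL,\pM,\pN$, their generators $\cone(\rho)$, $\cone(\beta\rho)$, $\cone(\beta)$ all lie in $\DTM$ already, and the paper's \Cref{Prop:Spc-map-generators} then shows that $\varphi_T$ is injective on $\{\pL,\pM,\pN\}$ with images the tt-ideals generated by those same objects. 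This is closer in spirit to your Route~1, but used to determine the fibres rather than to prove surjectivity.
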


\subsection*{Inverting $\beta$ or $\rho$}

We described above the `pure' irreducible $\adhpt{\pMs}=\{\pLs,\,\pMs,\,\pNs\}$ at the top of~$\SpcK$ as the support of a generalized Koszul object $\Kos(\beta,\rho)$. More precisely, this closed subset~$\adhpt{\pMs}$ is the intersection of the following two supports
\begin{align*}
  \supp(\cone(\beta))=
\vcenter{\xymatrix@R=.2em@C=1em@H=.5em{
\bullet \ar@{-}[d]\ar@{-}[rd] && \bullet \ar@{-}[ld]\ar@[Gray]@{-}[d]
\\
\bullet \ar@[Gray]@{-}[rd] & \bullet \ar@[Gray]@{-}[d] & {\color{Gray}\circ}\ar@[Gray]@{-}[ld]
\\
& {\color{Gray}\circ}
}}
&\qquad\textrm{and}&\supp(\cone(\rho))=\vcenter{\xymatrix@R=.2em@C=1em@H=.5em{
\bullet \ar@[Gray]@{-}[d]\ar@{-}[rd] && \bullet \ar@{-}[ld]\ar@{-}[d]
\\
{\color{Gray}\circ} \ar@[Gray]@{-}[rd] & \bullet \ar@[Gray]@{-}[d] & \bullet\ar@[Gray]@{-}[ld]
\\
& {\color{Gray}\circ}
}}
\end{align*}
Thus, inverting $\beta$ or $\rho$ yields in both cases a tt-category whose spectrum is a Sierpi\'nski space (the complements, marked~${\circ-\circ}$ above) corresponding to the points $\{\pM,\pN\}$ and $\{\pM,\pL\}$ respectively. The localization at $\beta$ is \'etale realization
\[
\mathrm{Re}_{\mathrm{\acute{e}t}}:\DATM(\bbR;\bbZ/2)\too\DATM(\bbR;\bbZ/2)[\beta^{-1}]\cong\Db(\bbZ/2[C_2])
\]
where $C_2=\mathrm{Gal}(\bbC/\bbR)$. On the other hand, we call the localization at~$\rho$ the \emph{real realization}, in reference to~Bachmann~\cite{bachmann:real} (who proves that in the context of $\bbA^{\!1}$-homotopy theory inverting~$\rho$ amounts to real realization):
\[
\mathrm{Re}_{\bbR}:\DATM(\bbR;\bbZ/2)\too\DATM(\bbR;\bbZ/2)[\rho^{-1}].
\]
We compute the target as the quotient of Artin motives by the motive of~$\bbC$
\[
  \DATM(\bbR;\bbZ/2)[\rho^{-1}]\simeq\frac{\DAM(\bbR;\bbZ/2)}{\ideal{\mot(\Spec(\bbC))}}\simeq\frac{\Kb(\bbZ/2[C_2])}{\ideal{\bbZ/2[C_2]}}\,.
\]
(The identification $\DAM(\bbR;\bbZ/2)\simeq\Kb(\bbZ/2[C_2])$ goes back to~\cite[\S\,3.4]{Voevodsky00}.) We will also give an arguably more explicit description of this quotient category in terms of filtered $C_2$-representations over $\bbZ/2$. For more details, we refer to \Cref{sec:applications}.

\subsection*{Integral coefficients}
So far, we only discussed real motives with mod-2 coefficients. With integral coefficients, the spectrum of $\DATM(\FF;\bbZ)$ for $\FF$ real closed is essentially determined by the spectrum of~$\DATM(\FF;\bbZ/2)$ and the analogous spectra \emph{over the algebraic closure}~$\bar\FF$. This algebraically closed case was handled in~\cite{gallauer:tt-dtm-algclosed,gallauer:tt-fmod} for finite coefficients, whereas the case of rational coefficients goes back to~\cite{peter:spectrum-damt}. Since the latter is unconditional only for small fields we deduce an unconditional statement only for $\FF$ the field of real algebraic numbers. The expectation is, however, that the same result holds in general, \cf \Cref{Rmd:Spc-DATM(R;Q)-conjecture}.
\begin{Cor}[{\Cref{Thm:Spc-DATM(R;Z)}, \Cref{Rmd:Spc-DATM(R;Q)-conjecture}}]
  \label{Cor:Spc-AT-integral}%
The spectrum of $\DATM(\bbR;\bbZ)$ is the following topological space
\[
\xymatrix@C=1em@R=.01em{
&& \overset{\pLs}{\bullet}&& \overset{\pNs=\mathfrak{m}_2}{\bullet}&\overset{\mathfrak{m}_3}{\bullet}&\overset{\mathfrak{m}_5}{\bullet}&\cdots&\overset{\mathfrak{m}_\ell}{\bullet}& \cdots
\\
\Spc(\DATM(\bbR;\bbZ))= \ar[dddd]
&&\underset{\pL}{\bullet}\ar@{-}[u]&\overset{\pMs}{\bullet}\ar@{-}[ru]\ar@{-}[lu]&\underset{\pN=\mathfrak{e}_2}{\bullet}\ar@{-}[u]&\underset{\mathfrak{e}_3}{\bullet}\ar@{-}[u]&\underset{\mathfrak{e}_5}{\bullet}\ar@{-}[u]&\cdots&\underset{\mathfrak{e}_\ell}{\bullet}\ar@{-}[u]
& \cdots
\\
&&&\underset{\pM}{\bullet}\ar@{-}[ru]\ar@{-}[lu]\ar@{-}[u]
\\
&&&&&&&\bigodot\ar@{-}[llluu]\ar@{-}[lluu]\ar@{-}[luu]\ar@{-}[ruu]
\\\\
\Spec(\bbZ)=
&&& \overset{2\bbZ}{\bullet} \ar@{-}[rrrrd]
&& \overset{3\bbZ}{\bullet} \ar@{-}[rrd]
& \overset{5\bbZ}{\bullet} \ar@{-}[rd]
& \cdots
& \overset{\ell\bbZ}{\bullet} \ar@{-}[ld]
& \cdots
\\
&&&&&&& \underset{(0)}{\bullet}
}
\]
where $\bigodot$ is the spectrum of the rational category~$\DATM(\bbR;\bbQ)$, which is conjectured to be a point. The same result holds for any real-closed field instead of~$\bbR$, in particular for $\bbR_{\mathrm{alg}}=\bar{\bbQ}\cap \bbR$ in which case $\bigodot$ is indeed known to be a single point.

The canonical comparison map of~\cite{balmer:sss} yields the vertical projection. The six points~$\pLs,\pMs,\pNs,\pL,\pM,\pN$ are mapped to~$2\bbZ$. These primes are the ones of \Cref{Thm:main-intro}, pulled-back under the tt-functor $\DATM(\bbR;\bbZ)\to \DATM(\bbR;\bbZ/2)$. The points~$\mathfrak{m}_\ell$ and $\mathfrak{e}_\ell$ are the ones of~\cite{gallauer:tt-dtm-algclosed}, pulled-back under the tt-functor $\DATM(\bbR;\bbZ)\to \DTM(\bbC;\bbZ/\ell)$. The primes $\mathfrak{m}_\ell$ and $\mathfrak{e}_\ell$ in $\DTM(\bbC;\bbZ/\ell)$ are the kernels of mod-$\ell$ motivic and mod-$\ell$ \'etale cohomology, respectively.
\end{Cor}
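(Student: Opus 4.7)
The plan is to reconstruct the spectrum of $\DATM(\bbR;\bbZ)$ by fibering it over $\Spec(\bbZ)$ via Balmer's comparison map of \cite{balmer:sss} applied to $R_\cK = \End_\cK(\unit)$ (which surjects onto $\bbZ$), and then identifying each fiber with the spectrum of an already-understood tt-category. Concretely, for every prime~$\ell$ the Verdier quotient by~$\ell\cdot\unit$ yields a tt-functor $\DATM(\bbR;\bbZ)\to \DATM(\bbR;\bbZ/\ell)$ whose induced map on spectra identifies $\Spc(\DATM(\bbR;\bbZ/\ell))$ with the fiber over $\ell\bbZ\subset\Spec(\bbZ)$; similarly, rationalization identifies the fiber over $(0)$ with $\Spc(\DATM(\bbR;\bbQ))$. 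The first step is therefore to describe each fiber separately.

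The fiber at $2\bbZ$ is given by \Cref{Thm:main-intro}: it is the six-point space with points $\pL,\pLs,\pM,\pMs,\pN,\pNs$. For an odd prime $\ell$, the Galois group $C_2=\mathrm{Gal}(\bbC/\bbR)$ has order coprime to~$\ell$, so $\bbZ/\ell[C_2]$ is semisimple and the transfer $\epsilon\circ\eta=2$ is invertible; from this one deduces that $\mot(\Spec(\bbC))$ becomes a retract of~$\unit$, whence $\DATM(\bbR;\bbZ/\ell)\simeq \DTM(\bbC;\bbZ/\ell)$ (this is the `away from~$2$' principle cited in the introduction). The spectrum of the right-hand side is then the Sierpi\'nski space $\{\mathfrak{e}_\ell\subset\mathfrak{m}_\ell\}$ of \cite{gallauer:tt-dtm-algclosed}. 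The generic fiber is $\bigodot=\Spc(\DATM(\bbR;\bbQ))$; for the real-closed subfield $\bbR_\mathrm{alg}$, an application of \cite{peter:spectrum-damt} (whose hypotheses are met here) gives that $\bigodot$ is a point, while for a general real-closed field we only record this as the conjectural description.

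The second step is the topological assembly. Since the comparison map of~\cite{balmer:sss} is continuous and spectral, the set underlying $\Spc(\DATM(\bbR;\bbZ))$ is the disjoint union of the fibers computed above. Closure of a prime at $\ell\bbZ$ in the total space must lie in the closure of $\ell\bbZ$ in $\Spec(\bbZ)$, which is just $\{\ell\bbZ\}$ (closed primes are rigid), so no new specializations appear between distinct finite fibers. The only additional relations to justify are the specializations from the generic fiber $\bigodot$ to each minimal prime in the $\ell$-fiber, namely to $\pM$ when $\ell=2$ and to $\mathfrak{e}_\ell$ when $\ell$ is odd: these follow from the fact that a rational non-zero object specializes to a non-zero object modulo every prime where it is not divisible, combined with the description of $\pM$ and $\mathfrak{e}_\ell$ as the unique minimal primes in their respective fibers. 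Finally, the lifting of the arithmetic specializations within each fiber is already encoded in \Cref{Thm:main-intro} and in~\cite{gallauer:tt-dtm-algclosed}.

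The step I expect to be the main obstacle is controlling the topology carefully enough to rule out unexpected specializations between fibers and to verify that each minimal prime of a finite fiber is actually a specialization of the generic fiber; this ultimately rests on writing down enough explicit motives (for instance $\cone(\ell\cdot\unit)$ and suitable tensor powers of the three special motives $\Kos(\beta,\rho)$, $\mot(\Spec(\bbC))$ and $\fund_0$) whose supports separate the candidate points. The `mutatis mutandis' statement for a real-closed field replacing $\bbR$ is then automatic, since the constructions of the Main Theorem and of \cite{gallauer:tt-dtm-algclosed} depend only on the real-closed (resp.\ algebraically closed) nature of the base, and the qualification about $\bigodot$ being a point is exactly the range of validity of~\cite{peter:spectrum-damt}.
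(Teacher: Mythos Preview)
Your strategy differs from the paper's and contains a real gap. The paper does \emph{not} decompose into fibers over $\Spec(\bbZ)$. Instead it covers $\Spc(\DATM(\FF;\bbZ))$ by two overlapping \emph{closed} pieces, $\supp(\bbZ/2)$ and $\supp(\mot(\bar\FF))$, identified via \cite[Thm.~1.7]{balmer:surjectivity} as the images of $\Spc(\chgcoef_2^*)$ and of $\Spc(p^*)$ for $p\colon\Spec(\bar\FF)\to\Spec(\FF)$. The second piece is $\Spc(\DTM(\bar\FF;\bbZ))$, already fully computed in \cite{gallauer:tt-dtm-algclosed}, and it handles all odd primes together with the generic point in one stroke; the overlap with the first piece is exactly $\{\pN,\pNs\}=\{\mathfrak{e}_2,\mathfrak{m}_2\}$. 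This buys you the specializations from $\cP_0$ to each $\mathfrak{e}_\ell$ for free (they already hold over~$\bar\FF$), whereas in your fiber-by-fiber approach these inter-fiber relations are precisely what you flag as the ``main obstacle'' and never actually establish.

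The gap you did not flag is more serious: you assert that $\Spc(\chgcoef_\ell^*)$ \emph{identifies} $\Spc(\DATM(\bbR;\bbZ/\ell))$ with the fiber over~$\ell\bbZ$, but only surjectivity onto $\supp(\bbZ/\ell)$ is automatic. Injectivity is not, and the paper proves it via a dedicated lemma (\Cref{Prop:Spc-map-generators}): one exhibits, for each prime in the target category, generators lying in the image of~$\chgcoef_\ell^*$ (using \eg $\chgcoef_{\ell,*}\cone(\beta_\ell)$ and \cite[Lem.~B.6]{gallauer:tt-dtm-algclosed}), which forces distinct primes to stay distinct after pullback. Your odd-prime argument also contains an error: $\epsilon\circ\eta=2$ invertible makes $\unit$ a retract of $\mot(\bbC)$, not the reverse; this yields $\supp(\mot(\bbC))=\Spc(\DATM(\bbR;\bbZ/\ell))$ and hence surjectivity of $\Spc(p^*)$, but it does \emph{not} give an equivalence $\DATM(\bbR;\bbZ/\ell)\simeq\DTM(\bbC;\bbZ/\ell)$ (the sign character remains). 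The paper's \Cref{Cor:DATM-odd} bypasses any such equivalence and proves injectivity of $\Spc(p^*\chgcoef_\ell^*)$ directly, again via explicit generators.
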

\goodbreak

\subsection*{Motivic tt-geometry}
Let us place the results discussed so far within the broader field of motivic tensor-triangular geometry. The goal of the latter is to understand the tt-geometry of motivic tt-categories in general, one prominent example of which is the category of Voevodsky motives $\DM(\FF;k)$. Even though the present work only handles Artin-Tate motives, it already provides a lower bound on the tt-geometric `complexity' of Voevodsky motives in general. Indeed, the inclusion $\DATM(\bbR;\bbZ/2)\hook\DM(\bbR;\bbZ/2)$ induces a \emph{surjection}
\begin{equation*}
  \Spc(\DM(\bbR;\bbZ/2))\onto\Spc(\DATM(\bbR;\bbZ/2))
\end{equation*}
onto the six-point space of \Cref{Thm:main-intro}, by~\cite[Corollary~1.8]{balmer:surjectivity}. In particular, there are at least fourteen tt-ideals in $\DM(\bbR;\bbZ/2)$. A similar surjection holds integrally, hence \Cref{Cor:Spc-AT-integral} sheds some light on the complexity of~$\DM(\bbR;\bbZ)$.

The base fields discussed in this work are real closed. They represent the first foray away from the algebraically closed case discussed in Gallauer~\cite{gallauer:tt-dtm-algclosed} and lead us to filtered representations of the Galois group in positive characteristic, by work of Positselski~\cite{positselski:artin-tate-motives}, as we explain next. Naturally, one may ask about Artin-Tate motives over base fields with more complicated Galois groups than~$C_2=\mathrm{Gal}(\bbC/\bbR)$, such as number fields or finite fields. We plan to attack this problem in future work and anticipate the answer to be substantially more involved. Even this first interaction with modular representation theory is non-trivial, as the reader will see, and produces the intriguing pictures discussed above.

\bigbreak
\tristar

\subsection*{Relation with modular representation theory}
Let us now turn our attention to the representation-theoretic facet of our work. The \'etale realization of real motives with $k=\bbZ/2$-coefficients takes values in $\Db(\cA)$, the bounded derived category of $kC_2$-modules, where $C_2=\mathrm{Gal}(\bbC/\bbR)$ is the absolute Galois group of $\bbR$. Artin-Tate motives admit a functorial weight filtration so that their \'etale realization is endowed with a filtration as well. A remarkable result of Positselski~\cite{positselski:artin-tate-motives} using the norm residue isomorphism theorem (\ie Milnor's Conjecture, here) establishes that, in a suitable sense, one recovers the motive from this purely algebraic information. It means that our triangulated category
\begin{equation}
\label{eq:Positselski-intro}%
\cK=\DATM(\bbR;k)\simeq \DEfil
\end{equation}
is equivalent to the bounded derived category of a slightly tricky \emph{exact} category~$\Efil$ of filtered $kC_2$-modules. The category~$\Efil$ is equivalent to the full subcategory of $\DATM(\bbR;k)$ closed under extensions and generated by the motives $\mot(\Spec(E))(i)$ for $E\in\{\bbR,\bbC\}$ and $i\in\bbZ$, with the exact structure induced from the triangulated structure of $\DATM(\bbR;k)$. Although Positselski's equivalence~\eqref{eq:Positselski-intro} is not known to preserve the tensor product, it preserves `enough' of the tensor structure to imply that the two tt-categories in~\eqref{eq:Positselski-intro} have the same spectrum (\Cref{Prop:positselski}). We discuss Positselski's results, and revisit their proof in our setting, in \Cref{sec:translation}.

\subsection*{Filtered representations}
Purely in representation-theoretic terms, the exact category $\Efil$ can be described as consisting of (finitely) filtered objects~$M$
\begin{equation}
\label{eq:M-filtered}%
\ldots 0 = 0  \subseteq M^n \subseteq M^{n-1} \subseteq \cdots \subseteq M^{m+1} \subseteq M^m = M^{m-1} = \ldots = M
\end{equation}
in the abelian category $\cat{A}=\mmod{kC_2}$ of finitely generated $kC_2$-modules; the exact structure on~$\Efil$ is pulled back from the \emph{split}-exact structure on~$\Asplit=\mmod{kC_2}$ (that is, $\cA$ viewed as only an additive category) via the total-graded functor $\gr\colon \Efil\to \Asplit$ mapping~$M^\sbull$ to $\oplus_{i}M^i/M^{i+1}$. This category~$\Efil$ turns out to be a Frobenius exact category (\Cref{Cor:Afil-Frobenius}), a type of category familiar to modular representation theorists. Details are given in \Cref{sec:filt-repr-setup,sec:filt-Frobenius}.

Thus, on the representation-theory side, \textbf{we now write $\cK$ to mean~$\DEfil$}. The technical heart of the paper consists in proving that $\Spc(\cK)$ has the structure described in \Cref{Thm:main-intro}, \ie the six points with the fourteen closed subsets. This will occupy the principal \Cref{part:I}. We show in particular that the `left-hand' irreducible $\adhpt{\pL}=\{\pL,\pLs\}$ is the support of the complex corresponding to~\eqref{eq:fund-motivic} in $\DEfil$, namely (with the usual non-trivial maps~$\eta$ and~$\eps$)
\begin{equation}
\label{eq:fund_0}%
\fund_0 = \qquad \cdots \to 0\to k \xto{\eta} kC_2 \xto{\eps} k \to 0 \to\cdots
\end{equation}
The above $kC_2$-modules all have the trivial one-step filtration, say, in filtration degree zero. Obviously this complex is exact in the abelian category~$\cat{A}=\mmod{kC_2}$. It is however not split exact, hence it is not exact in the `tricky' exact category~$\Efil$. Its non-exactness explains why the object~$\fund_0\in\cK$ has non-empty support in~$\SpcK$. The authors mistakenly believed for a while that this $\supp(\fund_0)$ was reduced to a single point and that $\SpcK$ had only five points. The discovery that $\supp(\fund_0)$ consists of two points was one of the most delicate parts of the work and led us to isolate the most evasive point~$\pL$.

\subsection*{Proof outline}

We build two tt-functors out of $\cK=\DEfil$ into the \emph{same} target category $\Kb(\cA)$, where $\cA=\mmod{kC_2}$ is now merely the additive category~$\Asplit$
\[
\gr\colon \DE\to \KA
\qquadtext{and}
\tfgt\colon \DE\to \KA\,.
\]
We prove in \Cref{Thm:Spc(Kb(kC_2))} that the spectrum of the target category $\KA$ is
\begin{equation}
\label{eq:Spc(KA)-intro}%
\Spc(\KA)\ = \qquad
\vcenter{\xymatrix@R=.1em{
\overset{\cL}{\bullet} && \overset{\cN}{\bullet}
\\
& \underset{\cM}{\bullet} \ar@{-}[lu] \ar@{-}[ru]
}}
\end{equation}
(The better-known $\Spc(\Db(\mmod{kC_2}))\simeq\Spech(\Hm^\sbull(C_2,\bbZ/2))$ appears as the open $\{\cM,\cN\}$, whereas the projective support variety of~$C_2$, which is well-known to be trivial, $\Spc(\stab(kC_2))=\mathcal{V}_{C_2}(k)=\ast$, appears as the point~$\cM$.) We were informed that \cite{dugger-et-al:C2} independently obtained~\eqref{eq:Spc(KA)-intro} through a different approach.

The images of the 3-point space $\Spc(\KA)$ in~$\SpcK$, under the maps $\Spc(\gr)$ and $\Spc(\tfgt)$, correspond respectively to the following two subsets of~$\SpcK$, one closed (at the top) and one open (at the bottom):
\[
{\xy
(0,0)*{\pLs};
(0,-10)*{\pL};
(20,-10)*{\pMs};
(20,-20)*{\pM};
(40,0)*{\pNs};
(40,-10)*{\pN};
{\ar@{-} (0,-8)*{};(0,-2)*{}};
{\ar@{-} (20,-18)*{};(20,-12)*{}};
{\ar@{-} (40,-8)*{};(40,-2)*{}};
{\ar@{-} (16,-8)*{};(4,-2)*{}};
{\ar@{-} (16,-18)*{};(4,-12)*{}};
{\ar@{-} (24,-8)*{};(36,-2)*{}};
{\ar@{-} (24,-18)*{};(36,-12)*{}};
(70,-15)*{\Spc(\KA)\textrm{ via }\tfgt};
{\ar@{..>}@/^.5em/ (53,-15)*{};(41,-14)*{}};
{\ar@{..} (-4,-4)*{};(20,-16)*{};};
{\ar@{..} (-4,-4)*{};(-4,-12)*{};};
{\ar@{..} (-4,-12)*{};(20,-24)*{};};
{\ar@{..} (44,-4)*{};(20,-16)*{};};
{\ar@{..} (44,-4)*{};(44,-12)*{};};
{\ar@{..} (44,-12)*{};(20,-24)*{};};
(70,0)*{\Spc(\KA)\textrm{ via }\gr};
{\ar@{..>}@/_.3em/ (53,0)*{};(45,0)*{}};
{\ar@{..} (-4,3)*{};(4,3)*{};};
{\ar@{..} (4,3)*{};(20,-5)*{};};
{\ar@{..} (20,-5)*{};(36,3)*{};};
{\ar@{..} (36,3)*{};(44,3)*{};};
{\ar@{..} (44,3)*{};(44,-3)*{};};
{\ar@{..} (44,-3)*{};(20,-15)*{};};
{\ar@{..} (20,-15)*{};(-4,-3)*{};};
{\ar@{..} (-4,-3)*{};(-4,3)*{};};
\endxy}
\]
Let us say a word about those two tt-functors~$\gr$ and~$\tfgt\colon \DE\to \KA$. First, $\gr$ is induced by the exact total-graded functor $\gr\colon \Efil\to \cA$ already discussed at the level of exact categories. Note how the special exact structure on~$\Efil$, pulled back from the split one on~$\cA$, allows $\gr$ to land in $\KA$ instead of the less informative~$\DA$. The second tt-functor $\tfgt\colon \DE\to \KA$ is more mysterious. As the notation suggests, it is related to the functor $\fgt\colon \Efil\to \cA$ that `forgets' the filtration (\ie takes $M^\sbull$ as in~\eqref{eq:M-filtered} to the underlying object~$M$) but this is only true with a twist. Indeed, $\fgt\colon \Efil\to \cA$ is only exact when $\cA$ is viewed as an abelian category, hence induces a functor $\fgt\colon \DE\to \DA$. Our functor $\tfgt\colon \DE\to \KA$ lifts this~$\fgt$ along $\KA\onto \DA$. Its construction involves twisting objects of~$\DE$ by sufficiently large powers of a special $\otimes$-invertible object, take the effective part, and untwist in~$\KA$. The precise definition is a little too technical for this introduction and will be explained in \Cref{sec:central}.

Having identified six points of~$\SpcK$, the remaining critical step consists in proving that these are indeed all the points (\Cref{Thm:Spc-DE}). This will rely on the tt-functor $\gr\colon \DEfil\to \Kb(kC_2)$ detecting the nilpotence of certain morphisms (\Cref{Lem:gr-nil}), expanding on the methods of~\cite{balmer:surjectivity} (\cf \Cref{Cor:surjectivity-support}).

As a final comment, we indicate that all points of $\SpcK$ come equipped with a `residue field functor' into a suitable `tt-field' in the sense of~\cite{balmer-krause-stevenson:ruminations}.

\subsection*{Acknowledgments}
We are thankful to Tom Bachmann, Bernhard Keller, Henning Krause, Peter Symonds and Burt Totaro for precious comments and references.

\bigbreak\goodbreak
\section{Background and notation}
\label{sec:basics}%
\medbreak

%
\begin{Remind}
\label{Rmd:general}%
By a \emph{tensor category}~$\cA$ we mean an additive symmetric monoidal category whose monoidal structure $\otimes\colon \cA\times\cA\to \cA$ is additive in each variable. The $\otimes$-unit is usually denoted~$\unit$. Such a $\otimes$-category~$\cA$ is called \emph{rigid} if every object~$x\in\cA$ admits a \emph{dual} $x^\vee\in\cA$ such that $x\otimes-\colon\cA\to \cA$ is left adjoint to~$x^\vee\otimes-\colon\cA\to \cA$. Any tensor-functor $F\colon \cA\to \cA'$ automatically preserves rigid objects, with $F(x)^\vee=F(x^\vee)$. Furthermore, if $F$ has a right adjoint~$G\colon \cA'\to \cA$ and $\cA$ is rigid then there is a projection formula (see~\cite[Prop.\,3.2]{fausk-hu-may:adjoints}, for instance)
\begin{equation}
\label{eq:projection-formula-general}%
x\otimes G(x')\cong G(F(x)\otimes x').
\end{equation}

An \emph{exact category} $\cE$ is an additive category together with a distinguished class of so-called \emph{admissible} short exact sequences $\xymatrix@C=1.5em{A \,\ar@{>->}[r]|(.45){f} & B \ar@{->>}[r]|(.45){g} & C}$; these sequences must be intrinsically exact (\ie $g$ is a cokernel of~$f$ and $f$ a kernel of~$g$), must contain all split exact sequences $A\into A\oplus C\onto C$, must be closed under push-out along any morphism $A\to A'$ (including existence of said push-out) and must be closed under pull-back along any morphism $C'\to C$, and finally must be such that admissible monomorphisms ($\into$) are closed under composition, and admissible epimorphisms ($\onto$) as well. See~\cite[App.\,A]{keller:chain-stable}. An \emph{exact functor} between exact categories is an additive functor which preserves admissible short exact sequences. An object $A$ is called \emph{projective} (respectively \emph{injective}) if the functor $\Hom_{\cE}(A,-):\cE\to\AbGrps$ (respectively the functor $\Hom_{\cE}(-,A):\cE\op\to\AbGrps$) is exact. A \emph{tensor-exact} category is one such that $\otimes\colon \cE\times\cE\to \cE$ is exact in each variable.

A \emph{Frobenius exact category} is an exact category with enough projectives (every object receives an admissible epimorphism from a projective), enough injectives (the dual notion), and the projective and injective objects coincide. Such a category $\cE$ has an associated \emph{stable category} $\stab(\cE)$ which is constructed as the additive quotient by the projective objects, \ie modding out maps that factor via a projective. It is canonically a triangulated category, see~\cite[Thm.\,2.6]{happel:stable-cat}. We write
\begin{equation}
\label{eq:stab}%
\sta\colon \cE\to \stab(\cE)
\end{equation}
for the canonical quotient functor (identity on objects and mapping a morphism to its class). If $\cE$ is moreover a tensor-exact category in which projective-injectives form a $\otimes$-ideal, then $\stab(\cE)$ inherits a unique tensor structure making $\sta\colon\cE\to \stab(\cE)$ into a tensor-functor. In that case, $\stab(\cE)$ is tensor-triangulated.
\end{Remind}

\begin{Not}
\label{Not:homological-algebra}%
Given an additive category~$\cA$, we denote the usual categories of complexes as follows (we use homological indexing because exponents will be reserved for filtration degrees):
  \begin{itemize}
  \item $\Cb(\cA)$: the category of bounded chain complexes in $\cA$; 
  \item $\Kb(\cA)$: the category with same objects but maps up to homotopy.
  \end{itemize}
When $\cA$ is a (rigid) tensor category, $\Kb(\cA)$ is naturally a (rigid) tt-category.
\end{Not}

\begin{Remind}
\label{Rmd:DbE}%
Let $\cE$ be an exact category. Its bounded derived category~\cite{neeman:D(exact)}
\[
\Db(\cE)=\frac{\Kb(\cE)}{\Kbac(\cE)}
\]
is the Verdier quotient of $\Kb(\cE)$ by the thick subcategory~$\Kbac(\cE)$ of \emph{acyclic complexes}, \ie those (complexes homotopy equivalent to\,(\footnote{\,Our exact categories $\cE$ will all be idempotent-complete making this point moot.})) complexes spliced together from admissible short exact sequences. Again, if $\cE$ is (rigid) tensor-exact then $\Db(\cE)$ is a (rigid) tt-category. One key advantage of~$\Db(\cE)$ over~$\Kb(\cE)$ is that a sequence $A\to B\to C$ of bounded complexes in~$\cE$ which is degreewise an admissible exact sequence yields a triangle $A\to B\to C\to A[1]$ in~$\Db(\cE)$.
\end{Remind}

\begin{Prop}
\label{Prop:Rickard}%
Let $\cE$ be a Frobenius exact category and $\ideal{\Proj}\subseteq\Db(\cE)$ the subcategory of perfect complexes (essentially complexes of projectives). Consider the Verdier quotient $\quo\colon \Db(\cE)\onto \Db(\cE)/\ideal{\Proj}$. Then there exists a canonical triangulated equivalence $\stab(\cE)\to\Db(\cE)/\ideal{\Proj}$ making this diagram commute\,:
\[
\xymatrix@R=2em{
\cE \ \ar@{^(->}[r]^-{[0]} \ar@{->>}[d]_-{\sta}
& \Db(\cE) \ar@{->>}[d]^-{\quo}
\\
\stab(\cE) \ar[r]^-{\cong}
& \Db(\cE)/\ideal{\Proj}\,.}
\]
When moreover $\cE$ is tensor-exact and $\Proj$ forms a $\otimes$-ideal in~$\cE$, then the equivalence $\stab(\cE)\isoto \Db(\cE)/\ideal{\Proj}$ is an equivalence of tensor-triangulated categories.
\end{Prop}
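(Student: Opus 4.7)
My plan is to build the functor \emph{downward-then-rightward} in the square and then show it is the required equivalence. Starting from the composite $\cE \hook \Db(\cE) \xonto{\quo}\Db(\cE)/\ideal{\Proj}$, I first observe that any projective-injective $P\in\cE$ lies in $\ideal{\Proj}$, so its image on the right is zero; moreover, any morphism $f\colon A\to B$ in~$\cE$ factoring through a projective becomes zero in the Verdier quotient. By the universal property of the additive quotient $\sta\colon\cE\onto\stab(\cE)$ (\Cref{Rmd:general}), this yields a well-defined additive functor $\Phi\colon \stab(\cE)\to \Db(\cE)/\ideal{\Proj}$ making the square commute on the nose at the level of additive categories.

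Next I would promote $\Phi$ to a triangulated functor. The shift in $\stab(\cE)$ is defined via a chosen cosyzygy: for each $A\in\cE$ pick an admissible short exact sequence $A\into I_A\onto \Omega\inv A$ with $I_A$ projective-injective, and declare $A[1]:=\Omega\inv A$; distinguished triangles in $\stab(\cE)$ come by definition from pushouts of such sequences (\Cref{Rmd:general}). The \emph{same} admissible short exact sequence produces a distinguished triangle $A\to I_A\to \Omega\inv A\to A[1]$ in $\Db(\cE)$ (\Cref{Rmd:DbE}), in which $I_A\to 0$ becomes an isomorphism in the quotient since $I_A\in\ideal{\Proj}$. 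This gives canonical isomorphisms $\Phi(A[1])\cong \Phi(A)[1]$ compatible with the distinguished triangles coming from Frobenius pushouts, so $\Phi$ is triangulated.

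For full faithfulness I would compute $\Hom$'s in the Verdier quotient using the right calculus of fractions afforded by $\ideal{\Proj}\subseteq\Db(\cE)$: a map $A\to B$ in $\Db(\cE)/\ideal{\Proj}$ is a roof $A\xfrom{s} X\xto{f}B$ whose cone $\cone(s)$ is perfect. Using that $\cE$ has enough projectives (equivalently injectives), I can inductively replace any such $X$ by $A$ itself: truncating $\cone(s)$ against a projective resolution lets me homotope away terms in $\ideal{\Proj}$ and reduce the roof to an honest morphism $A\to B$ in $\cE$. The same replacement shows that if such a morphism becomes zero in the quotient then it factors through a projective, and hence it is zero in $\stab(\cE)$. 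Essential surjectivity proceeds by induction on the amplitude of a complex $C\in\Db(\cE)$: using an admissible epimorphism $P\onto C_n$ from a projective and rolling up one step via the associated admissible short exact sequence trades a length-$(n{-}m)$ complex for a length-$(n{-}m{-}1)$ complex with the same image in the quotient, eventually producing an object of $\cE$; this is the step most sensitive to the exact structure and where I expect the main bookkeeping to live.

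Finally, for the tensor statement: if $\cE$ is tensor-exact with $\Proj$ a $\otimes$-ideal, then $\ideal{\Proj}\subseteq\Db(\cE)$ is a thick $\otimes$-ideal, so the Verdier quotient $\Db(\cE)/\ideal{\Proj}$ inherits a unique tt-structure for which $\quo$ is a tt-functor. By \Cref{Rmd:general} the stable category $\stab(\cE)$ likewise inherits a unique tt-structure making $\sta$ a tt-functor. Both monoidal structures agree after restriction along $\cE\hook\Db(\cE)\to\Db(\cE)/\ideal{\Proj}$, which is essentially surjective by the previous step, so by uniqueness the equivalence $\Phi$ is a tt-equivalence.
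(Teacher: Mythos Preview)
Your outline is essentially the standard direct proof of Rickard's theorem, and it is correct in broad strokes. The paper, by contrast, does not reprove the result at all: its proof consists of observing that the composite $\cE\hook\Db(\cE)\onto\Db(\cE)/\ideal{\Proj}$ factors through $\stab(\cE)$ and then citing Rickard~\cite{rickard:der-stab} and Keller--Vossieck~\cite[Ex.~2.3]{keller-vossieck:sous-der} for the equivalence, with a one-line remark that the tensor case follows because all functors in sight are tt-functors and $\ideal{\Proj}$ is a tt-ideal. So you are supplying an argument where the paper simply appeals to the literature.

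Two places in your sketch would need more care if you actually wanted a self-contained proof. First, the full-faithfulness step: ``inductively replace any such $X$ by $A$ itself'' is vague. The clean way is to note that, since $\cE$ has enough injectives, for $A\in\cE$ one can compute $\Hom_{\Db(\cE)}(A,B[n])$ via an injective resolution of $A$; then a morphism $A\to B$ in the quotient is represented by a roof whose left leg can be straightened using that resolution, and factoring through a perfect complex is exactly factoring through a projective in~$\cE$. Second, your essential-surjectivity ``rolling up'' is morally right but you should articulate the induction step: given a complex with top term $C_n$, embed $C_n\into I$ with $I$ injective-projective, and use the resulting admissible mono $C_n\into I\oplus C_{n-1}$ to replace $C$ by a complex one step shorter, isomorphic to $C$ modulo $\ideal{\Proj}$. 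Alternatively, once full faithfulness is in hand, essential surjectivity follows immediately by d\'evissage since the image of $\Phi$ is closed under shifts (your paragraph on the triangulated structure shows $\Phi(A)[1]\cong\Phi(\Omega\inv A)$) and $\cE$ generates $\Db(\cE)$ as a triangulated category.
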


\begin{proof}
The composite $\cE\into\Db(\cE)\xrightarrow{\quo}\Db(\cE)/\ideal{\Proj}$ clearly passes to the stable category. The resulting functor is an equivalence by (the general form of) a result of Rickard~\cite{rickard:der-stab}. See for instance~\cite[Ex.\,2.3]{keller-vossieck:sous-der}. The `moreover' case is easy: all functors in sight are tensor-triangulated and $\ideal{\Proj}$ is a tt-ideal of~$\Db(\cE)$.
\end{proof}

\begin{Not}
\label{Not:Sta}%
We shall sometimes denote by $\Sta\colon \Db(\cE)\onto \stab(\cE)$ the composite functor $\Db(\cE)\xonto{\quo}\Db(\cE)/\ideal{\Proj}\overset{\simeq}{\,\leftarrow\,}\stab(\cE)$ of \Cref{Prop:Rickard}.
\end{Not}

\begin{Remind}[\cite{balmer:spectrum}]
\label{Rmd:Spc}%
The \emph{spectrum} $\SpcK$ of an essentially small tt-category~$\cK$ is the set of tt-ideals~$\cP\subsetneq\cK$ (triangulated subcategories closed under direct summands and tensoring with objects in $\cK$) which are \emph{prime}, meaning that $x\otimes y\in\cP$ forces $x\in\cP$ or~$y\in\cP$. The set~$\SpcK$ admits a topology whose basis of closed subsets are the \emph{supports} $\supp(x)=\SET{\cP}{x\notin\cP}$ of objects~$x\in\cK$. In that topological space~$\SpcK$, the closure of a point~$\cP$ is~$\adhpt{\cP}=\SET{\cQ}{\cQ\subseteq\cP}$. In our pictures, we denote the specialization relation $\cQ\in\adhpt{\cP}$ by a vertical(ish) line as we did in the introduction
\[
\xymatrix@R=1em{
\cQ\ar@{-}[d]
\\
\cP
}
\]
The support of a tt-ideal $\cJ\subseteq\cK$ is~$\supp(\cJ)=\cup_{a\in \cJ}\supp(a)=\SET{\cQ}{\cJ\not\subseteq\cQ}$. All (radical) tt-ideals $\cJ\subseteq\cK$ are classified by their support. Every tensor-triangulated functor $F\colon \cK\to \cL$ induces a continuous map $\varphi=\Spc(F)\colon \Spc(\cL)\to \SpcK$ sending~$\cQ$ to $F\inv(\cQ)$. It satisfies $\varphi\inv(\supp_{\cK}(x))=\supp_{\cL}(F(x))$ for all~$x\in\cK$.
\end{Remind}

\begin{Rem}
\label{Rmd:Spc-quo}%
We shall use several times that if $\cJ\subset\cK$ is a tt-ideal with Verdier quotient $\quo\colon \cK\onto \cK/\cJ$ then the map $\Spc(\quo)\colon \Spc(\cK/\cJ)\to \SpcK$ given by $\cQ\mapsto \quo\inv(\cQ)$ defines a homeomorphism between $\Spc(\cK/\cJ)$ and the subspace $\SET{\cP\in\SpcK}{\cJ\subseteq\cP}$. In particular, if $\cJ=\ideal{x}$ is the tt-ideal generated by $x$ then the subspace is the open $U(x)=\SET{\cP}{x\in \cP}$, complement of~$\supp(x)$. See~\cite[Prop.\,3.11]{balmer:spectrum}.
\end{Rem}

We shall use the following tt-geometric fact of independent interest.
\begin{Prop}
  \label{Prop:Spc-map-generators}%
  Let $F:\cK\to\cL$ be a tt-functor and let $\varphi=\Spc(F)$ the induced map on spectra~$\varphi\colon\Spc(\cL)\to\SpcK$. Let $\cQ\in\Spc(\cL)$ be a prime that is generated by the image under $F$ of a set $S$ of objects in $\cK$, \ie $\cQ=\ideal{F(S)}$.
  \begin{enumerate}[\rm(a)]
  \item\label{it:Spc-map-generators-a} If $\cQ'\in\Spc(\cL)$ and $\cQ\not\subseteq\cQ'$ then $\varphi(\cQ)\not\subseteq\varphi(\cQ')$.
  \item\label{it:Spc-map-generators-b} If $\varphi$ is surjective {\rm(}at least onto~$\SET{\cP\in\SpcK}{S\subseteq\cP}${\rm)} then $\varphi(\cQ)=\ideal{S}$.
  \end{enumerate}
\end{Prop}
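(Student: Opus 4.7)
My plan for (a) is to extract a single element $s\in S$ witnessing the non-containment $\varphi(\cQ)\not\subseteq\varphi(\cQ')$. Starting from $\cQ\not\subseteq\cQ'$ together with the hypothesis that $\cQ=\ideal{F(S)}$ is the \emph{smallest} tt-ideal of~$\cL$ containing $F(S)$, some $s\in S$ must satisfy $F(s)\notin\cQ'$; unwinding $\varphi=\Spc(F)=F\inv(-)$ this reads $s\notin\varphi(\cQ')$. On the other hand $F(s)\in F(S)\subseteq\cQ$ gives $s\in\varphi(\cQ)$, so this $s$ separates the two preimages. The whole argument is essentially two lines of definition-chasing.

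For (b), one inclusion is formal: from $S\subseteq F\inv(F(S))\subseteq F\inv(\cQ)=\varphi(\cQ)$ and the fact that $\varphi(\cQ)$ is a tt-ideal I obtain $\ideal S\subseteq\varphi(\cQ)$. The converse is where surjectivity enters. Fix an arbitrary prime $\cP\in\SpcK$ with $S\subseteq\cP$ and use the hypothesis to lift $\cP$ to some $\cQ''\in\Spc(\cL)$ with $\varphi(\cQ'')=\cP$; then $S\subseteq\varphi(\cQ'')$ translates into $F(S)\subseteq\cQ''$, whence $\cQ=\ideal{F(S)}\subseteq\cQ''$, and monotonicity of $\varphi$ yields $\varphi(\cQ)\subseteq\cP$. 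Intersecting over all such $\cP$ places $\varphi(\cQ)$ inside every prime of $\cK$ containing~$S$; by the support-classification of radical tt-ideals recalled in~\Cref{Rmd:Spc}, this intersection is the radical $\sqrt{\ideal S}$. Since $\varphi(\cQ)$ is itself prime, hence radical, the sandwich $\ideal S\subseteq\varphi(\cQ)\subseteq\sqrt{\ideal S}$ collapses to the stated equality $\varphi(\cQ)=\ideal S$, read at the level of radical tt-ideals.

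I do not expect a serious obstacle here: both halves reduce to elementary bookkeeping with the definitions $\varphi=F\inv$ and $\cQ=\ideal{F(S)}$, combined with the standard facts that preimages of primes under tt-functors are prime and that radical tt-ideals are classified by their supports. The only mildly delicate point is the radical/prime distinction in~(b), handled above by the observation that $\varphi(\cQ)$ is automatically prime; this is also what makes the hypothesis in~(b) only require surjectivity onto the \emph{subspace} $\SET{\cP\in\SpcK}{S\subseteq\cP}$ rather than onto all of~$\SpcK$.
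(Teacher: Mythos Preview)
Your argument is correct and follows essentially the same route as the paper's proof: part~(a) is the same definition-chase, and part~(b) is the same ``lift a prime containing $S$, push $\cQ$ into it, intersect'' argument, invoking the classification of radical tt-ideals for the final step. The paper simply cites \cite[Lem.~4.8, Thm.~4.10]{balmer:spectrum} to write $\ideal{S}=\bigcap_{S\subseteq\cP}\cP$ directly, whereas you are (rightly) more explicit about the radical: your sandwich really only gives $\varphi(\cQ)=\sqrt{\ideal S}$, and the stated equality $\varphi(\cQ)=\ideal S$ needs $\ideal S$ radical (automatic in the rigid setting where the proposition is used).
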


\begin{proof}
For~\eqref{it:Spc-map-generators-a}, we have $F(S)\not\subseteq\cQ'$ by assumption and therefore $S\not\subseteq\varphi(\cQ')$. On the other hand, we do have $S\subseteq\varphi(\cQ)$. For~\eqref{it:Spc-map-generators-b}, since $S\subseteq\varphi(\cQ)$, it suffices to show that $\varphi(\cQ)\subseteq\ideal{S}$. By~\cite[Lemma~4.8, Theorem~4.10]{balmer:spectrum}, we have
  \begin{equation*}
    \ideal{S}=\bigcap_{S\subseteq\cP}\cP
  \end{equation*}
where the $\cP$ are prime ideals in $\cK$. So it suffices to prove the following claim:
  \begin{equation}
    \text{If }S\subseteq\cP\text{ then }\varphi(\cQ)\subseteq\cP.\label{eq:claim-PQ}\tag{$\ast$}
  \end{equation}
  So let $\cP\in\Spc(\cK)$ with $S\subseteq\cP$. By our assumption, $\cP=\varphi(\cQ')$ for some $\cQ'\in\Spc(\cL)$. We deduce from $S\subseteq\cP=F\inv(\cQ')$ that $F(S)\subseteq\cQ'$ and, as $\cQ$ is generated by $F(S)$, also $\cQ\subseteq\cQ'$. We conclude that $\varphi(\cQ)\subseteq\varphi(\cQ')=\cP$ as claimed in~$(*)$ above.
\end{proof}

\begin{Rem}
\label{Rem:linear-algebra}%
  Let $M$ be a complex in an additive category~$\cA$, of the following form
  \begin{equation*}
    M=\qquad \cdots\xto{d_{i+2}} M_{i+1}\xrightarrow{d_{i+1}} L\oplus M_i'\xrightarrow{\ d_i\ }L'\oplus M_{i-1}'\xrightarrow{d_{i-1}}M_{i-2}\xto{d_{i-2}}\cdots
  \end{equation*}
  where $d_i$ induces an isomorphism $L\isoto L'$ on the first summands. Then elementary operations show that $M$ is isomorphic to a complex of the form
  \begin{equation*}
    \cdots\xto{d_{i+2}} M_{i+1}\xrightarrow{\smat{0\\d'_{i+1}}} L\oplus M_i'\xrightarrow{\smat{1&0\\0& d'_i}}L\oplus M_{i-1}'\xrightarrow{\smat{0&d'_{i-1}}}M_{i-2}\xto{d_{i-2}}\cdots
  \end{equation*}
Consequently, in $\Hty(\cat{A})$, the complex~$M$ becomes isomorphic to
  \begin{equation*}
    \cdots\xto{d_{i+2}}M_{i+1}\xrightarrow{d'_{i+1}} M_i'\xrightarrow{d'_i}M_{i-1}'\xrightarrow{d'_{i-1}}M_{i-2}\xto{d_{i-2}}\cdots
  \end{equation*}
\end{Rem}

\begin{Rem}
\label{Rem:truncation}%
Recall that a complex~$M=\cdots \xto{d} M_{n+1}\xto{d} M_{n} \xto{d} M_{n-1}\xto{d} \cdots$ admits `stupid' truncations above and below homological degree~$n\in\bbZ$
\[
M_{\ge n}=\cdots \xto{d} M_{n+1}\xto{d} M_{n} \to 0 \to \cdots
\quadtext{and}
M_{\le n}=\cdots 0\to M_{n}\xto{d} M_{n-1} \xto{d} \cdots
\]
together with canonical maps $M_{\le n}\to M$ and $M\to M_{\ge n}$. The functors
\[
(-)_{\ge n}\colon \Chain(\cat{A})\to \Chain(\cat{A})
\qquadtext{and}
(-)_{\le n}\colon \Chain(\cat{A})\to \Chain(\cat{A})
\]
do \emph{not} descend to the homotopy category~$\Komp(\cA)$, for homotopies can cross `over' the degree where truncation occurs, but every~$M$ fits in an exact triangle in~$\Komp(\cat{A})$
\[
M_{\le n} \to M \to M_{\ge n+1} \to M_{\le n}[1]
\]
with the obvious morphisms. This triangle in~$\Komp(\cA)$ is natural in~$M\in\Chain(\cA)$. It will be convenient to rotate this triangle to express $M$ as the cone of a morphism~$\delta$\,:
\begin{equation}
\label{eq:trunc-triangle}%
M_{\ge n+1}[-1] \xto{\delta} M_{\le n} \to M \to M_{\ge n+1}
\end{equation}
where $\delta$ is simply~$d\colon M_{n+1}\to M_n$ in degree~$n$ and (necessarily) zero elsewhere.
\end{Rem}

\begin{Rem}
Notation can be overwhelming in this topic. We tried to be somewhat systematic. We typically use $M,N,\ldots$ for modules and $A,B,\ldots$ for filtered modules. We use $\E$, $\invert$, $\fund$, $\T$, \ldots for special objects or special complexes in the category of filtered modules. We typically use $\Epur$, $\invertpur$ and $\fundpur$, \ldots for their unfiltered (pure) analogues. We also tried to name the many functors that appear with two-to-three-letter names, like $\gr$, $\fgt$, $\quo$, $\sta$, \ldots so that the reader can more easily remember their meaning (`graded', `forget', `quotient', `stable') even under the stress of proof.
\end{Rem}

\bigbreak\goodbreak
\part{Filtered modular representations}
\label{part:I}%
\medbreak

\bigbreak\goodbreak
\section{Homotopy category of \texorpdfstring{$kC_2$}{kC2}-modules}
\label{sec:Spc-Kb(C_2)}%
\medbreak

Let $k$ be a field of characteristic~$2$ and $C_2=\langle{\sigma\mid\sigma^2=1}\rangle$ the group of order~2. Consider the rigid tensor-abelian category of finite-dimensional $kC_2$-modules
\[
\cA=\mmod{kC_2}.
\]
As usual, the tensor~$\otimes$ is over~$k$, with diagonal group action. This tensor is exact in each variable. So the homotopy category~$\Kb(\cA)$ of complexes in~$\cA$ and the derived category~$\Db(\cA)$ are rigid tt-categories (see~\Cref{sec:basics}). Our goal in this preparatory section (\Cref{Thm:Spc(Kb(kC_2))}) is to describe the tt-spectrum of~$\Kb(\cA)$.

\begin{Rem}\label{Rem:stab(A)}%
The indecomposable objects in the Krull-Schmidt category~$\cA$ are the trivial representation~$k$ and the free one~$kC_2$. The stable category $\stab(\cA)$ is tt-equivalent to $\mmod{k}$ via the composite $\mmod{k}\hook\cA\onto \stab(\cA)$. Those categories~$\mmod{k}$ and~$\stab(kC_2)$ are triangulated with the identity as suspension $\Sigma=\Id$, and trivial (semi-simple) triangulation. Their spectrum contains just one point,~$(0)$.
\end{Rem}

\begin{Rem}
\label{Rmd:Spc(Db(A))}%
The cohomology ring $\Hm^\sbull(C_2,k)=\Homcat{\Db(\cA)}(k,k[\bullet])=\Ext^\sbull_{kC_2}(k,k)$ is isomorphic to~$k[\fundpur]$, with generator $\fundpur\in \Ext^1_{kC_2}(k,k)$ the non-trivial extension:
  \begin{equation}
    \label{eq:extension-S}%
    k\xinto{\eta}kC_2\xonto{\epsilon}k.
  \end{equation}
When viewed as a complex in~$\cA$, we place $\fundpur$ in homological degrees 2, 1 and~0:
\begin{equation}
\label{eq:fund-pur}%
\fundpur = \qquad \cdots 0 \to 0\to k \xto{\eta} kC_2 \xto{\eps} k \to 0 \to 0 \cdots
\end{equation}

The spectrum $\Spc(\Db(\mmod{kG}))$ is known for any finite groups~$G$ to be homeomorphic to $\Spech(\Hm^\sbull(G,k))$, see~\cite[Prop.\,8.5]{balmer:sss}. For $G=C_2$, the spectrum $\Spech(k[\fundpur])$ has two points and one can easily prove directly that
\[
\Spc(\Db(\cA))=\{(0),\ideal{kC_2}\}.
\]
Indeed, the two tt-ideals $(0)$ and $\ideal{kC_2}$ are prime because they are the kernels of the following tt-functors (see \Cref{Not:Sta} for the second one):
\[
\res^{C_2}_1\colon \Db(\cA)\to \Db(k)
\qquadtext{and}
\Sta\colon \Db(\cA)\onto \stab(\cA)\cong\mmod{k}.
\]
If $\cJ\subseteq\Db(\cA)$ contains $M$ non-zero then $\res^{C_2}_1M$ remains non-zero in~$\Db(k)$, hence admits $k[i]$ as a direct summand, for some $i\in \bbZ$. Then $kC_2\otimes M\cong \ind_1^{C_2}\res^{C_2}_1 M$ admits $kC_2[i]$ as a summand, thus $\cJ\supseteq\ideal{kC_2}$. By \Cref{Rem:stab(A)}, $\Spc(\Db(\cA)/\ideal{kC_2})=\Spc(\stab(\cA))=\{(0)\}$. So $\ideal{kC_2}$ is indeed the only non-zero prime of~$\Db(\cA)$.
\end{Rem}

Applying \Cref{Rmd:Spc-quo} to $\cK=\Kb(\cA)$ and~$\cJ=\Kbac(\cA)$ the tt-ideal of acyclic complexes, we obtain by definition of~$\Db(\cA)=\Kb(\cA)/\Kbac(\cA)$ the following:

\begin{Prop}\label{Prop:Der-open-piece}
  The Verdier quotient functor $\Kb(\cA)\onto\Db(\cA)$ induces a homeomorphism between $\Spc(\Db(\cA))$ and the subspace $\{\cP\in\Spc(\Kb(\cA))\mid \Kbac(\cA)\subset\cP\}$ of $\Spc(\Kb(\cA))$. The complement of that subspace is~$\supp(\Kbac(\cA))$.\qed
\end{Prop}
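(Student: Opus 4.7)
The proof plan is essentially a direct invocation of the general tt-geometric facts already recorded. By definition of the bounded derived category of an exact category (\Cref{Rmd:DbE}), we have $\Db(\cA) = \Kb(\cA)/\Kbac(\cA)$, where $\Kbac(\cA)$ is a tt-ideal of $\Kb(\cA)$. So I would simply apply \Cref{Rmd:Spc-quo} with $\cK = \Kb(\cA)$ and $\cJ = \Kbac(\cA)$ to obtain the homeomorphism between $\Spc(\Db(\cA))$ and the subspace $\{\cP \in \Spc(\Kb(\cA)) \mid \Kbac(\cA) \subseteq \cP\}$ induced by $\Spc(\mathrm{quo})$.

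For the identification of the complement, I would unfold the definition of the support of a tt-ideal from \Cref{Rmd:Spc}: $\supp(\Kbac(\cA)) = \{\cP \in \Spc(\Kb(\cA)) \mid \Kbac(\cA) \not\subseteq \cP\}$, which is visibly the set-theoretic complement of the subspace appearing in the first part of the statement.

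There is no serious obstacle here; the proposition is a bookkeeping consequence of (i) the definition of $\Db(\cA)$ as a Verdier quotient and (ii) the standard correspondence between tt-ideals containing a given tt-ideal and primes of the quotient. I would therefore write the proof as essentially one sentence citing \Cref{Rmd:Spc-quo}, followed by a second sentence unpacking $\supp(\Kbac(\cA))$ from its definition.
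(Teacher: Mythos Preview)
Your proposal is correct and matches the paper's approach exactly: the paper states the proposition with a \qed\ symbol and no separate proof, having already written just before it that one applies \Cref{Rmd:Spc-quo} to $\cK=\Kb(\cA)$ and $\cJ=\Kbac(\cA)$ together with the definition $\Db(\cA)=\Kb(\cA)/\Kbac(\cA)$. Your unpacking of $\supp(\Kbac(\cA))$ from \Cref{Rmd:Spc} for the complement is likewise exactly what is intended.
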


So we want to understand $\supp(\Kbac(\cA))$, the support of acyclic complexes.

\begin{Def}\label{Def:simple-ideal}
  A tt-ideal $\cJ$ in a tt-category is \emph{simple} if any non-zero object $x\in\cJ$ generates $\cJ$ as a tt-ideal. In other words, its only sub-tt-ideals are~$0$ and~$\cJ$.
\end{Def}

\begin{Lem}\label{Lem:simple-ideal-support} Let $\cJ\subset\cK$ be a simple tt-ideal in a tt-category $\cK$. Then the support $\supp(\cJ)=\SET{\cP}{\cJ\not\subseteq\cP}$ of $\cJ$ is either empty or a single closed point.
\end{Lem}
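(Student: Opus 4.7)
The plan is to split into cases according to whether $\cJ = 0$ (when the support is trivially empty) or $\cJ \neq 0$. In the latter case I will establish two things separately: that $\supp(\cJ)$ is closed in $\SpcK$, and that it contains at most one prime. Combining these gives exactly the conclusion that $\supp(\cJ)$ is a single closed point.

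The pivotal consequence of simplicity that I will use throughout is the following dichotomy for any prime $\cP \in \SpcK$: since $\cJ \cap \cP$ is a tt-ideal of $\cK$ contained in $\cJ$, hence a sub-tt-ideal of $\cJ$, simplicity forces it to be either $0$ or all of $\cJ$; equivalently, either $\cJ \cap \cP = 0$ or $\cJ \subseteq \cP$.

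Closedness is quick: picking any non-zero $x \in \cJ$, simplicity forces $\ideal{x} = \cJ$, so $\supp(\cJ) = \supp(x)$, which is closed by the very definition of the topology on $\SpcK$ recalled in \Cref{Rmd:Spc}. For uniqueness, I take two primes $\cP, \cQ \in \supp(\cJ)$ and aim to deduce $\cP = \cQ$. By the dichotomy, both intersections $\cJ \cap \cP$ and $\cJ \cap \cQ$ are zero. Then for any $x \in \cP$ and any $y \in \cJ$, the tensor $x \otimes y$ lies in $\cP$ (since $x \in \cP$ and $\cP$ is a $\otimes$-ideal) and in $\cJ$ (since $y \in \cJ$ and $\cJ$ is a $\otimes$-ideal), hence in $\cJ \cap \cP = 0$, so $x \otimes y = 0$. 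Now choose $y \in \cJ \smallsetminus \cQ$, which exists since $\cJ \not\subseteq \cQ$; then $x \otimes y = 0 \in \cQ$, and primality of $\cQ$ combined with $y \notin \cQ$ forces $x \in \cQ$. This yields $\cP \subseteq \cQ$, and swapping the roles of $\cP$ and $\cQ$ gives $\cP = \cQ$.

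The only delicate point, and the one I expect to require most care, is the interplay in this last paragraph between simplicity (which delivers $\cJ \cap \cP = 0$ and hence the crucial annihilation $x \otimes y = 0$) and primality of $\cQ$ (which converts that annihilation into the containment of primes $\cP \subseteq \cQ$); once this interplay is noticed, the rest of the argument is essentially formal.
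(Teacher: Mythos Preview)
Your proof is correct and essentially identical to the paper's: both use the dichotomy $\cJ\cap\cP\in\{0,\cJ\}$ to force $x\otimes y=0$ for $x\in\cP$, $y\in\cJ$, and then primality of the other prime to deduce the containment. The only minor addition is that you spell out closedness via $\supp(\cJ)=\supp(x)$ for any non-zero $x\in\cJ$, which the paper leaves implicit.
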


\begin{proof}
Let $\cP_1,\cP_2\in\supp(\cJ)$. Pick $x\in \cJ\oursetminus\cP_2$. Since $\cJ$ is simple, $\cJ\not\subseteq\cP_1$ forces $\cJ\cap \cP_1=0$. For all $y\in \cP_1$ we have $x\otimes y\in \cJ\cap \cP_1=0\subseteq\cP_2$ and $x\notin\cP_2$ forces $y\in \cP_2$. So $\cP_1\subseteq\cP_2$ for any $\cP_1,\cP_2\in\supp(\cJ)$. Hence also $\cP_2\subseteq\cP_1$ and thus $\cP_1=\cP_2$.
\end{proof}

\begin{Prop}
\label{Prop:acyclics-simple}%
The tt-ideal $\Kbac(\cA)$ of acyclics in~$\Kb(\cA)$ is simple. In particular, $\Kbac(\cA)=\ideal{\fundpur}$ is generated by the complex~$\fundpur$ in~\eqref{eq:fund-pur}.
\end{Prop}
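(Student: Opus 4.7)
We first note that $\fundpur\in\Kbac(\cA)$: its underlying three-term sequence~\eqref{eq:extension-S} is exact in~$\cA$, and the complex is non-zero in $\Kb(\cA)$ since the extension is non-split. So once simplicity is established, the ``in particular'' assertion $\Kbac(\cA)=\ideal{\fundpur}$ follows formally, because a simple tt-ideal is generated as a tt-ideal by any of its non-zero elements.

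The key structural input for simplicity comes from the very restricted representation theory of $\cA=\mmod{kC_2}$ in characteristic~$2$: this Krull--Schmidt category has just two indecomposable objects, namely $k$ and the projective-injective $kC_2$, and one computes $\Ext^1_{kC_2}(k,k)\cong k$, generated by the class of~\eqref{eq:extension-S}, while the other $\Ext^1$ groups vanish. Consequently, every short exact sequence in~$\cA$ is isomorphic, as a short exact sequence, to a direct sum of split sequences and copies of the $\fundpur$-sequence.

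For the inclusion $\Kbac(\cA)\subseteq\ideal{\fundpur}$, take an acyclic $M\in\Kb(\cA)$ and splice it into its position-wise short exact sequences $0\to Z_{n+1}\to M_{n+1}\to Z_n\to 0$. By the structural input, each three-term slice is, in $\Kb(\cA)$, a direct sum of shifts of $\fundpur$ together with contractible summands. One then reassembles~$M$ from these slices inside $\ideal{\fundpur}$ by induction on the length of~$M$, using the truncation triangles of \Cref{Rem:truncation} and keeping track of how the slices glue. The reverse inclusion $\ideal{\fundpur}\subseteq\Kbac(\cA)$ is immediate.

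For simplicity itself, given a non-zero $M\in\Kbac(\cA)$, one must exhibit $\fundpur\in\ideal{M}$. I plan to pass to a minimal representative via \Cref{Rem:linear-algebra}: the non-vanishing of~$M$ in $\Kb(\cA)$ then forces at least one position-wise short exact sequence in~$M$ to be non-split, providing a concrete $\fundpur$-piece inside its splicing. The technical obstacle --- and the subtlest part of the proof --- is to realize this piece inside $\ideal{M}$ itself. This cannot in general be a direct-summand argument on~$M$ alone: for instance, the four-term complex $k\xto{\eta}kC_2\xto{1+\sigma}kC_2\xto{\epsilon}k$ is acyclic in $\Kb(\cA)$, is indecomposable with endomorphism ring $k$, and admits no non-zero morphism in either direction to any shift of $\fundpur$. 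One must therefore genuinely use the closure of $\ideal{M}$ under tensor products with auxiliary objects of $\Kb(\cA)$, together with mapping cones, to produce $\fundpur$.
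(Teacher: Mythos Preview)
Your proposal has a genuine gap: you correctly isolate the hard direction --- showing $\fundpur\in\ideal{M}$ for an arbitrary non-zero acyclic~$M$ --- and you give an excellent example (the four-term complex $k\to kC_2\to kC_2\to k$) demonstrating why naive summand or morphism arguments fail. But then you stop, saying only that one ``must genuinely use'' the tensor and cone closure. That is precisely the content of the proposition, and you have not supplied it. The paper's proof is not long, but the idea is specific and you have not found it.

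A secondary issue: your sketch of $\Kbac(\cA)\subseteq\ideal{\fundpur}$ via splicing is also shakier than it looks. The short exact sequences $0\to Z_{i}\to M_{i}\to Z_{i-1}\to 0$ do give three-term complexes lying in $\ideal{\fundpur}$, but the stupid truncation triangles of \Cref{Rem:truncation} relate $M$ to $M_{\le n}$ and $M_{\ge n+1}$, neither of which is acyclic, and the ``smart'' truncations relating $M$ to shorter acyclics produce degreewise short exact sequences of complexes that are \emph{not} levelwise split, hence do not yield triangles in $\Kb(\cA)$ (only in $\Db(\cA)$, where everything is already zero). So the reassembly step, as stated, does not go through.

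The paper takes a different and unified route, proving directly that $M\in\ideal{N}$ for any acyclic $M$ and any non-zero acyclic~$N$. The key observation is that for any acyclic $M$ one has $kC_2\otimes M\cong\ind_1^{C_2}\res^{C_2}_1 M=0$ in $\Kb(\cA)$, since $\res^{C_2}_1 M$ is acyclic over the field~$k$. Tensoring the triangle $k\xto{\eta}kC_2\to\invertpur\to k[1]$ with~$M$ then gives $\invertpur\otimes M\simeq M[1]$, and iterating, $\invertpur^{\otimes\ell}\otimes M\simeq M[\ell]$. Boundedness of $M$ forces the map $\tilde\eps\colon\invertpur\to\unit$ to be $\otimes$-nilpotent on~$M$ (this is \Cref{Lem:<t>=acyclics}). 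The proof of the proposition then constructs, from the bottom of a minimal representative of~$N$, a morphism $\phi\colon N\to\fundpur\otimes N_0$ and compares the associated stupid-truncation triangles; the connecting map $\delta$ for~$N$ factors through $\tilde\eps\otimes 1$, hence is $\otimes$-nilpotent on~$M$, and unwinding this yields $M$ as a summand of an object of~$\ideal{N}$. This is exactly the ``genuine use of tensor and cones'' you anticipated but did not carry out.
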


Let us begin with a preparation.

\begin{Lem}
\label{Lem:<t>=acyclics}%
Consider the following morphism of complexes $\tilde\eps$ in~$\cA$
\[
\xymatrix@R=1.5em{
\invertpur:=\fundpur_{\ge1}[-1]= \ar@<-.5em>[d]_-{\tilde\eps}
&& \cdots \ar[r]
& 0 \ar[r] \ar[d]
& k \ar[r]^-{\eta} \ar[d]
& kC_2 \ar[r] \ar[d]^-{\eps}
& 0\ar[r] \ar[d]
& \cdots
\\
\unit=
&& \cdots \ar[r]
& 0 \ar[r]
& 0 \ar[r]
& k \ar[r]
& 0\ar[r]
& \cdots
}
\]
Let $M\in\Kbac(\cat{A})$ be an acyclic complex. Then the map~$\tilde\eps\colon \invertpur\to \unit$ is $\otimes$-nilpotent on~$M$, that is, there exists $\ell\gg0$ such that $\tilde\eps\potimes{\ell}\otimes M=0$ in~$\Hty(\cat{A})$.
\end{Lem}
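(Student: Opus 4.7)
The plan is to identify $\invertpur\potimes{\ell}\otimes M$ with a shift $M[\ell]$ in $\Kb(\cat{A})$ for $\ell\gg 0$, after which a routine degree count in the bounded homotopy category forces the induced map $M[\ell]\to M$ to vanish. The essential input is that the `lower' part of $\invertpur\potimes{\ell}$, i.e.\ everything except the top degree~$\ell$, consists of free $kC_2$-modules.

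First, I would apply the stupid truncation of~\Cref{Rem:truncation} at the top of $\invertpur\potimes{\ell}$. Since $(\invertpur)_0=kC_2$ and $(\invertpur)_1=k$, the top entry $(\invertpur\potimes{\ell})_\ell=k^{\otimes\ell}=k$ sits in degree~$\ell$, and writing $T:=(\invertpur\potimes{\ell})_{\le\ell-1}$ one obtains a distinguished triangle
\begin{equation*}
T \longrightarrow \invertpur\potimes{\ell} \longrightarrow k[\ell] \longrightarrow T[1]
\end{equation*}
in $\Kb(\cat{A})$. For each $j\le\ell-1$, every summand of $(\invertpur\potimes{\ell})_j$ is a tensor product containing at least one factor $(\invertpur)_0=kC_2$, and iterating the identity $kC_2\otimes X\cong kC_2^{\oplus\dim_k X}$, valid for any $X\in\cat{A}$, shows each such product to be free over $kC_2$. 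Hence $T$ is a bounded complex of free $kC_2$-modules.

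Now tensor the displayed triangle with $M$. By the same freeness identity, $T\otimes M$ is again a bounded complex of free $kC_2$-modules; meanwhile its underlying complex of $k$-vector spaces equals $T\otimes_k M$, which is acyclic because $M$ is acyclic and $\otimes_k$ is exact over the field~$k$. Any bounded acyclic complex of projective objects in $\cat{A}$ is null-homotopic, so $T\otimes M=0$ in $\Kb(\cat{A})$. The tensored triangle therefore collapses to an isomorphism $\alpha\colon \invertpur\potimes{\ell}\otimes M\isoto M[\ell]$, under which $\tilde\eps\potimes{\ell}\otimes M$ corresponds to the morphism $(\tilde\eps\potimes{\ell}\otimes M)\circ\alpha^{-1}\colon M[\ell]\to M$ in $\Kb(\cat{A})$. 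But $M$ is bounded, say with non-zero entries in homological range $[a,b]$, and $M[\ell]$ is then supported in $[a+\ell,b+\ell]$; for $\ell>b-a$ these ranges are disjoint, so there are simply no non-zero chain maps $M[\ell]\to M$, \afortiori none in the homotopy category. Hence $\tilde\eps\potimes{\ell}\otimes M=0$ for such $\ell$. The only delicate step is the identification of $T$ as a complex of free $kC_2$-modules; the projective-acyclic vanishing and the degree count are then routine.
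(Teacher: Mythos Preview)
Your proof is correct and follows essentially the same route as the paper: both establish $\invertpur^{\otimes\ell}\otimes M\simeq M[\ell]$ and then conclude via a degree count in the bounded homotopy category. The only difference is organizational---the paper obtains this isomorphism by induction on~$\ell$ via the single triangle $k\xto{\eta} kC_2\to\invertpur\to k[1]$ together with the Frobenius identity $kC_2\otimes M\cong\ind_1^{C_2}\res_1^{C_2}M=0$, whereas you truncate $\invertpur^{\otimes\ell}$ all at once and observe that its sub-top part is degreewise free.
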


\begin{proof}
Since~$\invertpur=\cone(\eta\colon k\to kC_2)$, we have an exact triangle in~$\Hty(\cat{A})$
\begin{equation}
\label{eq:aux-V-triangle}%
k \xto{\eta} kC_2 \to \invertpur\to k[1]\,.
\end{equation}
For every $\ell\ge 1$, the morphism $\tilde\eps\potimes{\ell}\otimes M$ has the following source and target:
\begin{equation}
\label{eq:aux-alpha^l}%
\tilde\eps\potimes{\ell}\otimes M\colon \invertpur\potimes{\ell}\otimes M\to M.
\end{equation}
Since $\res^{C_2}_1M$ is acyclic over the field~$k$, it is zero in~$\Kb(k)$. Hence by Frobenius we have $kC_2\otimes M\cong \ind_1^{C_2}\res^{C_2}_1M=\ind_1^{C_2}0=0$. So tensoring the exact triangle~\eqref{eq:aux-V-triangle} with~$M$, we see that $\invertpur\otimes M\simeq M[1]$ in~$\Hty(\cat{A})$. By induction on~$\ell$, we have
\begin{equation}
\label{eq:aux-BM}%
\invertpur\potimes{\ell}\otimes M\simeq M[\ell].
\end{equation}
Now since $M$ is bounded, there exists $\ell_0$ large enough so that for all~$\ell\ge \ell_0$
\begin{equation}
\label{eq:aux-M[l]M}%
\Hom_{\Hty(\cat{A})}\big(M[\ell],M\big)=0.
\end{equation}
It then follows from~\eqref{eq:aux-alpha^l},~\eqref{eq:aux-BM} and~\eqref{eq:aux-M[l]M} that $\tilde\eps\potimes{\ell}\otimes M=0$ for all~$\ell\ge\ell_0$.
\end{proof}

\begin{proof}[Proof of \Cref{Prop:acyclics-simple}]
Let $M,N$ be acyclic complexes with $N$ homotopically non-trivial. We need to show that $M\in \ideal{N}$ in~$\Kb(\cA)$. We can assume that~$N$ lives in the following degrees $N=\cdots 0\to N_n\to \cdots \to N_1\to N_0\to 0\cdots$ with $N_0\neq 0$ and that $N$ contains no contractible direct summand in~$\Cb(\cA)$. Hence $N_0$ has no projective summand, for otherwise we could split off a summand isomorphic to $\cdots 0\to kC_2\xto{=}kC_2\to 0\cdots$ by \Cref{Rem:linear-algebra}, contradicting our assumption on~$N$. Similarly, if we decompose $N_1\simeq P\oplus T$ with $P$ projective and $T$ without projective summand then the differential $d_1\colon P\oplus T=N_1\to N_0$ is of the form~$(q\ 0)$. Indeed, here we use that the group is~$C_2$. Both~$T$ and~$N_0$ have no projective summand, hence must have trivial $C_2$-action. Any non-zero map $T\to N_0$ would then yield, by \Cref{Rem:linear-algebra} again, a summand of~$N$ isomorphic to $\cdots 0\to k\xto{=} k\to 0\cdots$, which we have excluded. In summary, our acyclic complex~$N$ has the following form:
\[
N=\qquad\cdots 0\to N_n\to \cdots \to N_2\to P\oplus T\xto{(q\ 0)} N_0 \to 0\cdots
\]
where $P$ is projective and where the $C_2$-action on $N_0\simeq k^s$ is trivial.

Consider the tensor of the exact complex~$\fundpur$ of~\eqref{eq:fund-pur} with the object~$N_0$. This yields the second row in the following commutative diagram, whose first row is~$N$\,:
\[
\xymatrix@R=2em{
N= \ar@{..>}@<-.7em>[d]^-{\phi}
& \cdots \ar[r]^-{} 
& N_3 \ar[r]^-{d} \ar[d]
& N_2 \ar[r]^-{d} \ar@{..>}[d]_-{\exists}^-{g}
& P \oplus T \ar[r]^-{\smat{q&0}} \ar@{..>}[d]_-{\exists}^-{\smat{f& 0}}
& N_0 \ar[r]^-{} \ar@{=}[d]
& 0 \ar[r]^-{} \ar[d]
& {\cdots}
\\
\fundpur\otimes N_0=
& {\cdots}\ar[r]
& 0 \ar[r]^-{}
&  N_0 \ar@{ >->}[r]^-{}
& kC_2\otimes N_0 \ar@{->>}[r]^-{\eps\otimes 1}
& N_0 \ar[r]^-{}
& 0 \ar[r]^-{}
& {\cdots}
}
\]
From the right, we construct a morphism $\phi\colon N\to \fundpur\otimes N_0$ in~$\Cb(\cA)$ between those two exact complexes. Since $\eps\otimes 1\colon kC_2\otimes N_0\to N_0$ is onto and $P$ is projective, there exists $f\colon P\to kC_2\otimes N_0$ that lifts~$q$, \ie such that $(\eps\otimes 1)f=q$. This makes the above right-hand square commute. The existence of~$g$ then simply follows from exactness of the bottom sequence. Applying the stupid truncations~\eqref{eq:trunc-triangle} to this morphism of complexes~$\phi$ in~$\Cb(\cA)$, we get two exact triangles in~$\Kb(\cA)$ and a morphism of triangles (where $N_0$ is $N_0[0]=N_{\le0}$ and $(\fundpur\otimes N_0)_{\ge 1}=\fundpur_{\ge 1}\otimes N_0$)\,:
\[
\xymatrix@R=2em{
N_{\ge 1}[-1] \ar[r]^-{\delta} \ar[d]^{\phi_{\ge 1}[-1]}
& N_0 \ar[r] \ar@{=}[d]
& N \ar[r] \ar[d]^{\phi}
& N_{\ge 1} \ar[d]^{\phi_{\ge 1}}
\\
\fundpur_{\ge 1}[-1]\otimes N_0 \ar[r]^-{\tilde\eps\otimes 1}
& N_0 \ar[r]
& \fundpur \otimes N_0 \ar[r]
& \fundpur_{\ge 1}\otimes N_0
}
\]
Now we have seen in \Cref{Lem:<t>=acyclics} that $\tilde\eps\colon \fundpur_{\ge1}[-1]\to \unit$ is $\otimes$-nilpotent on any acyclic. Hence so is $\tilde\eps\otimes 1$ and by the above left-hand commutative square, so is~$\delta$. Therefore for our acyclic~$M$ we have $\delta\potimes{\ell}\otimes M=0$ in~$\Kb(\cA)$ for some $\ell\gg0$. The cone of this zero morphism $\delta\potimes{\ell}\otimes M\colon N_{\ge1}[-1]\potimes{\ell}\otimes M \to N_0\potimes{\ell}\otimes M$ contains $N_0\potimes{\ell}\otimes M\simeq M^{\oplus s\ell}$ as a direct summand, hence~$M$ as well: $M\in\ideal{\cone(\delta\potimes{\ell})}\subseteq\ideal{\cone(\delta)}=\ideal{N}$.
\end{proof}

\begin{Thm}
\label{Thm:Spc(Kb(kC_2))}%
  The spectrum of $\Hty(\mmod{kC_2})$ is the following 3-point topological space, where the complex $kC_2$ is concentrated in degree zero and $\fundpur$ is as in~\eqref{eq:fund-pur}.
  \begin{equation*}
    \vcenter{\xymatrix@C=.5em@R=1em{\cL=\ideal{kC_2} \ar@{-}[rd]
    && \cN=\ideal{\fundpur} \ar@{-}[ld]
    \\
    & \cM=\ideal{\fundpur,kC_2}}}
  \end{equation*}
  So $\cM$ is a generic point, whereas $\{\cL\}=\supp(\fundpur)$ and $\{\cN\}=\supp(kC_2)$ are closed.
\end{Thm}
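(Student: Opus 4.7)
The plan is to split $\SpcK$ via \Cref{Prop:Der-open-piece} into two pieces: the open subset of primes containing $\Kbac(\cA)$ and its closed complement $\supp(\Kbac(\cA))=\supp(\fundpur)$. The open piece is homeomorphic to $\Spc(\Db(\cA))$, which by \Cref{Rmd:Spc(Db(A))} consists of the two primes $(0)$ and $\ideal{kC_2}$. Their preimages under the Verdier quotient $\quo\colon\Kb(\cA)\onto\Db(\cA)$ give the two primes
\[
\cN=\quo\inv((0))=\Kbac(\cA)=\ideal{\fundpur},\qquad \cM=\quo\inv(\ideal{kC_2})=\ideal{\fundpur,kC_2}
\]
of $\Kb(\cA)$, accounting for two of the three expected points and settling the specialization $\cN\subsetneq\cM$.

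For the closed piece, \Cref{Prop:acyclics-simple} says that the tt-ideal $\ideal{\fundpur}$ is simple, whence \Cref{Lem:simple-ideal-support} forces $\supp(\fundpur)$ to be either empty or a single closed point. The main obstacle is to produce an actual prime in this support, which amounts to separating $\fundpur$ from $kC_2$ via a tt-functor. The natural candidate is the degreewise stabilization
\[
\Kb(\sta)\colon\Kb(\cA)\too\Kb(\stab(\cA))\simeq\Kb(\mmod{k}),
\]
using the equivalence $\stab(\cA)\simeq\mmod{k}$ of \Cref{Rem:stab(A)}. The target tt-category has spectrum reduced to a single point $(0)$: indeed every $k[i]$ is $\otimes$-invertible with inverse $k[-i]$, so any non-zero tt-ideal must contain $\unit$. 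Consequently $\cL:=\ker(\Kb(\sta))$ is a prime of $\Kb(\cA)$, which contains $kC_2$ (so $\ideal{kC_2}\subseteq\cL$) but not $\fundpur$ (since $\Kb(\sta)(\fundpur)\simeq k\oplus k[2]\ne 0$). Hence $\cL\in\supp(\fundpur)$, and combined with the above dichotomy, $\supp(\fundpur)=\{\cL\}$.

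Finally, the identification $\cL=\ideal{kC_2}$ reduces to the reverse inclusion $\ker(\Kb(\sta))\subseteq\ideal{kC_2}$. For this, one analyses an $M\in\ker(\Kb(\sta))$ through the Krull--Schmidt decomposition $M_i=P_i\oplus Q_i$ with $P_i$ projective and $Q_i$ of trivial $C_2$-action: the null-homotopy of $\sta(M)=Q_\sbull$ in $\Kb(\mmod{k})$ can be iteratively promoted to a splitting of the $Q$-part in $\Kb(\cA)$ via \Cref{Rem:linear-algebra}, leaving $M$ homotopy-equivalent to a complex of projectives. Putting the three primes together, $\SpcK=\{\cL,\cM,\cN\}$ are pairwise distinct, with the strict inclusions $\cL,\cN\subsetneq\cM$ witnessed by $\fundpur\in\cM\oursetminus\cL$ and $kC_2\in\cM\oursetminus\cN$, and mutual incomparability of $\cL,\cN$ witnessed by the same two objects. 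This produces exactly the depicted three-point topology, with the identifications $\{\cL\}=\supp(\fundpur)$ established above and $\{\cN\}=\supp(kC_2)$ immediate from $kC_2\in\cL\cap\cM\setminus\cN$.
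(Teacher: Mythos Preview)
Your argument is correct and shares the paper's overall architecture: decompose $\Spc(\KA)$ via \Cref{Prop:Der-open-piece} into the open $\Spc(\DA)\cong\{\cN,\cM\}$ and the closed $\supp(\Kbac(\cA))=\supp(\fundpur)$, then use \Cref{Prop:acyclics-simple} and \Cref{Lem:simple-ideal-support} to pin the latter down to a single point. The difference lies in how you handle~$\cL$. The paper argues abstractly: it knows $\supp(\fundpur)$ is at most a point, observes $kC_2\in\cL$ since $kC_2$ is not acyclic, deduces $\cL\subset\cM$ from \emph{connectedness} of $\Spc(\KA)$ (otherwise the rigid tt-category would split, contradicting $\End(\unit)=k$), and only then concludes $\cL=\ideal{kC_2}$ by comparing supports. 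You instead exhibit $\cL$ concretely as $\ker(\Kb(\sta))$---this is precisely the residue functor $\rsd{\cL}$ that the paper introduces only \emph{after} the theorem, in \Cref{Cor:fields-KA}(c)---and then prove $\ker(\Kb(\sta))\subseteq\ideal{kC_2}$ by hand via iterated application of \Cref{Rem:linear-algebra}. Your route is more constructive and sidesteps the connectedness trick; the paper's is shorter. One small point worth making explicit in your write-up: when you split off a contractible $(Q_m\xto{\id}Q_m)$ at the top nonvanishing $Q$-degree, Krull--Schmidt cancellation guarantees the remaining $M'_{m-1}$ again decomposes as (projective)$\,\oplus\,$(trivial action), so the induction on $\sum_i\dim_k Q_i$ really does go through; this is the only place your ``iteratively promoted'' needs unpacking.
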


\begin{proof}
As before we write $\cA$ for~$\mmod{kC_2}$. In~\Cref{Prop:acyclics-simple}, we proved that $\Kbac(\cA)$ is a simple tt-ideal, hence has support a single closed point by \Cref{Lem:simple-ideal-support}, that we call~$\cL$. On the other hand we proved in~\Cref{Prop:Der-open-piece} that the complement of this single point was $\Spc(\Db(\cA))$, which has two points~$\cM,\cN$ with $\cN\in\adhpt{\cM}$, which means $\cN\subset\cM$ (\Cref{Rmd:Spc}). More precisely, $\cM$ and $\cN$ are the two primes containing~$\Kbac(\cA)$ and they correspond in the quotient~$\Kb(\cA)/\Kbac(\cA)=\Db(\cA)$ to the two primes~$0$ and~$\ideal{kC_2}$ of \Cref{Rmd:Spc(Db(A))}. This gives us the description of~$\cN=\Kbac(\cA)=\ideal{\fundpur}$ and~$\cM=\ideal{\fundpur,kC_2}$. It remains to see that $\cL=\ideal{kC_2}$ and that it is contained in~~$\cM$ but not in $\cN$. The object~$kC_2$ refers here to the complex concentrated in degree zero hence it is not acyclic. Thus its support is disjoint from~$\supp(\Kbac(\cA))=\{\cL\}$. This reads $kC_2\in\cL$ or $\ideal{kC_2}\subseteq\cL$. As $kC_2\notin\Kbac(\cA)=\cN$, this proves already that $\cL\not\subset\cN$. If we had $\cL\not\subset\cM$ as well then $\Spc(\Kb(\cA))$ would be disconnected, as $\{\cL\}\sqcup\{\cM,\cN\}$. This would force the rigid tt-category~$\Kb(\cA)$ to be the product of two tt-categories, which is excluded for many reasons, for instance $\End_{\Kb(\cA)}(\unit)\cong k$ being indecomposable. So we have indeed $\cL\subset\cM$. Finally let us show that $\ideal{kC_2}\subseteq \cL$ is an equality, by considering the supports of those two tt-ideals, \ie the primes not containing them. By inspection, we see that both have support~$\{\cN\}$, hence they are equal: $\ideal{kC_2}=\cL$.
\end{proof}

Let us describe `residue field functors' detecting the three points of~$\Spc(\Kb(\cA))$.
\begin{Cor}
\label{Cor:fields-KA}%
Let $\cA=\mmod{kC_2}$ and use notation of \Cref{Thm:Spc(Kb(kC_2))}.
\begin{enumerate}[\rm(a)]
\item
\label{it:res-field-N}%
The following restriction functor is a tt-functor whose kernel is~$\cN=\ideal{\fundpur}$:
\[
\xymatrix@C=4em{
\rsd{\cN}\colon\Kb(\cA) \ar[r]^-{\res^{C_2}_1} & \Kb(k)\cong\Db(k).
}
\]
\smallbreak
\item
\label{it:res-field-M}%
The following localization functor is a tt-functor whose kernel is~$\cM=\ideal{\fundpur,kC_2}$:
\[
\xymatrix{
\rsd{\cM}\colon\Kb(\cA) \ar@{->>}[r]^-{\quo} & \Db(\cA) \ar@{->>}[r]^-{\Sta} & \stab(kC_2)\cong \mmod{k}.
}
\]
\smallbreak
\item
\label{it:res-field-L}%
Consider the additive quotient $\sta\colon\cA=\mmod{kC_2}\onto \stab(kC_2)\cong \mmod{k}$. The following induced functor is a tt-functor whose kernel is the prime~$\cL=\ideal{kC_2}$:
\[
\xymatrix@C=4em{
\rsd{\cL}\colon\Kb(\cA) \ar[r]^-{\Kb(\sta)} & \Kb(\mmod{k})\cong\Db(k).
}
\]
\end{enumerate}
\end{Cor}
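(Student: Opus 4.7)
The plan is to exploit that each target category is a tt-field, in the sense of having only the zero prime: indeed, $\Kb(k)\cong\Db(k)$ and $\Kb(\mmod{k})\cong\Db(k)$ because over a field every bounded complex is homotopy equivalent to its cohomology, while $\stab(kC_2)\cong\mmod{k}$ by Remark~3.1. Since the preimage of a prime under a tt-functor is prime, once each $\rsd{\cdot}$ is established as a tt-functor its kernel is automatically one of $\cL$, $\cM$, $\cN$ by Theorem~3.6. It then remains to distinguish these by evaluating on the two generators $\fundpur$ and $kC_2$, whose membership in each prime is recorded by the theorem.

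For~(a), the functor $\res^{C_2}_1\colon\cA\to\mmod{k}$ is tensor-exact, so $\Kb(\res^{C_2}_1)$ is a tt-functor. The complex $\res^{C_2}_1(\fundpur)$ is acyclic over $k$, hence zero in $\Kb(k)$, so $\cN=\ideal{\fundpur}\subseteq\ker$. But $\res^{C_2}_1(kC_2)=k^{\oplus 2}\neq 0$, so $kC_2\notin\ker$. The unique prime in $\{\cL,\cM,\cN\}$ containing $\cN$ yet avoiding $kC_2$ is $\cN$ itself.

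For~(b), $\quo$ is a Verdier quotient (hence tt) and $\Sta$ is tt by Proposition~2.4, so $\rsd{\cM}$ is tt. We have $\quo(\fundpur)=0$ (acyclic) and $\Sta(kC_2)=0$ (projective-injective), so $\cM=\ideal{\fundpur,kC_2}\subseteq\ker$; and $\ker$ is a proper ideal since $\rsd{\cM}(\unit)=k\neq 0$. Because $\cM$ is the maximal prime among $\cL,\cM,\cN$, we get $\ker=\cM$.

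For~(c), the key point is that the projective-injectives form a $\otimes$-ideal in the Frobenius tensor-exact category $\cA$, since $kC_2\otimes M$ is free by Frobenius reciprocity. Reminder~2.1 then makes $\sta\colon\cA\to\stab(\cA)\cong\mmod{k}$ a tensor functor, and $\Kb(\sta)$ a tt-functor; note $\sta$ is \emph{not} exact on the abelian $\cA$, but only additivity is needed to form $\Kb(\sta)$. Clearly $\sta(kC_2)=0$, so $\cL=\ideal{kC_2}\subseteq\ker$. The main obstacle, and the nontrivial step, is verifying $\fundpur\notin\ker$: applying $\sta$ degreewise to $\fundpur$ zeros out the middle term $kC_2$, leaving the complex $\cdots 0\to k\to 0\to k\to 0\cdots$ with automatically zero differentials, which is isomorphic in $\Kb(\mmod{k})$ to $k\oplus k[2]\neq 0$. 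Hence $\ker\neq\cM,\cN$, forcing $\ker=\cL$.
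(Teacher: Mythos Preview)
Your proof is correct and follows essentially the same approach as the paper's: verify each functor is a tt-functor into a tt-category with a single prime (so the kernel is automatically prime), then identify which of the three primes it is by testing the generators $\fundpur$ and $kC_2$. The paper's proof is a two-sentence sketch of exactly this strategy; you have simply filled in the details.
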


\begin{proof}
The verification that these are well-defined tt-functors is easy. As the target categories have spectra reduced to a single prime, namely zero, the kernels of those functors are primes in~$\Kb(\cA)$. To identify which prime it is exactly, it then suffices to compute the image of~$kC_2$ and of~$\fundpur$ under those functors, which is very easy.
\end{proof}

\begin{Rem}
We draw the reader's attention to the slightly unorthodox construction in~\eqref{it:res-field-L}. We consider the stable category $\stab(kC_2)=\mmod{kC_2}/\pproj{kC_2}$ but do not think of it as a triangulated category, just as an additive category, and take its homotopy category of complexes $\Kb(\stab(kC_2))$ as such.
\end{Rem}

\begin{Rem}
\label{Rem:localisations-of-KA}%
Consider two localizations of the homotopy category~$\Kb(\cA)$, namely $\Kb(\cA)/\ideal{kC_2}$ and $\Kb(\cA)/\ideal{\fundpur}$. We already know (\Cref{Prop:acyclics-simple}) that $\KA/\ideal{\fundpur}$ is simply the derived category~$\Db(\cA)$. Using \Cref{Rmd:Spc-quo}, we identify the spectra of those two localizations with open pieces of the spectrum (indicated by the~$\bullet$)
\[
{\xymatrix@C=2em@R=.5em{
\bullet && {\color{Gray}\circ}
\\
& \bullet \ar@{-}[lu] \ar@{-}@[Gray][ru]
}\atop
\Spc\big(\Kb(\cA)/\ideal{kC_2}\big)
}
\qquad\hook\qquad
{\xymatrix@C=2em@R=.5em{
\bullet && \bullet
\\
& \bullet \ar@{-}[lu] \ar@{-}[ru]
}\atop
\Spc\big(\Kb(\cA)\big)
}
\qquad\hookleftarrow\qquad
{\xymatrix@C=2em@R=.5em{
{\color{Gray}\circ} && \bullet
\\
& \bullet \ar@{-}@[Gray][lu] \ar@{-}[ru]
}\atop
\Spc\big(\Db(\cA)\big)
}
\]
or explicitly $\Spc(\KA/\ideal{kC_2})=\{\cL,\cM\}$ and $\Spc(\Db(\cA))=\{\cM,\cN\}$.
\end{Rem}

\begin{Rem}
\label{Rem:res-fields}%
With notation as in \Cref{Cor:fields-KA}, we have a commutative diagram
\begin{equation}
\label{eq:residue-fields-Kb(A)}%
\vcenter{\xymatrix@R=.7em{
&& \Kb(\cA) \ar@{->>}[ldd]^-{\quo} \ar@{->>}[rdd]_-{\quo}
 \ar@/_3em/[lldddd]_-{\Displ\rsd{\cL}} \ar[dddd]|-{\Displ\vphantom{I^I_j}\rsd{\cM}} \ar@/^3em/[rrdddd]^-{\Displ\rsd{\cN}}
\\\\
&\Displ\frac{\Kb(\cA)}{\ideal{kC_2}} \ar@{->>}[rdd] \ar[ldd]^-{\rsd{\cL}'} \ar[rdd]_-{\rsd{\cM}'}
&& \kern-2em \Displ\frac{\Kb(\cA)}{\ideal{\fundpur}}=\Db(\cA) \ar@{->>}[ldd]|-{\ \ \rsd{\cM}''=\Sta} \ar[rdd]_-{\rsd{\cN}''=\res^{C_2}_1}
\\\\
\Db(k)
&& \Displ\frac{\Kb(\cA)}{\ideal{kC_2,\fundpur}}\cong\mmod{k}\kern-3em
&& \Db(k)
\\
\kappa(\cL) \ar@{=}[u]
&& \kappa(\cM) \ar@{=}[u]
&& \kappa(\cN) \ar@{=}[u]
}}
\end{equation}
Let us explain this picture. The functors $\rsd{\cL}$ and $\rsd{\cM}$ vanish on~$kC_2$ hence induce functors $\rsd{\cL}'$ and $\rsd{\cM}'$ as in the left-hand side of~\eqref{eq:residue-fields-Kb(A)}. The functors $\rsd{\cM}$ and $\rsd{\cN}$ vanish on~$\fundpur$ hence induce functors $\rsd{\cM}''$ and $\rsd{\cN}''$ as in the right-hand side of~\eqref{eq:residue-fields-Kb(A)}. The latter coincide with the tt-functors of \Cref{Rmd:Spc(Db(A))}, under the identification $\KA/\ideal{\fundpur}=\DA$, namely $\rsd{\cM}''=\Sta$ and $\rsd{\cN}''=\res^{C_2}_1$.

At the bottom of~\eqref{eq:residue-fields-Kb(A)} we see the target categories $\kappa(\cL)=\Db(k)$, $\kappa(\cM)=\mmod{k}$ and $\kappa(\cN)=\Db(k)$ of the three `residue functors'~$\rsd{\cL}$, $\rsd{\cM}$ and~$\rsd{\cN}$. The derived category $\Db(k)$ that appears at~$\cL$ and $\cN$ is certainly a `tensor-triangular field'; see \cite{balmer-krause-stevenson:ruminations}. The third one, $\kappa(\cM)$, is more mysterious and comes into play as the stable category $\stab(kC_2)=\frac{\mmod{kC_2}}{\pproj{kC_2}}$. This is indeed one of the non-standard tt-fields identified in~\cite{balmer-krause-stevenson:ruminations}, namely $\stab(kC_p)$ for a prime number~$p$. Here however, because $p=2$, this stable category coincides with the very ordinary category of finite dimensional $k$-vector spaces. So the exotic nature of this `tt-field' $\kappa(\cM)$ is somewhat hidden, except for its suspension being the identity.
\end{Rem}

\begin{center}
*\ *\ *
\end{center}

We have achieved our goal of describing $\Spc(\Kb(\mmod{kC_2}))$. We end the section with some technical results about $\cA=\mmod{kC_2}$ that will come handy later on.

\begin{Remind}
\label{Rmd:Galois}%
The object $\Epur=kC_2$ admits a unique associative and commutative multiplication $\mu\colon kC_2\otimes kC_2\to kC_2$ in~$\cA$, mapping $1\otimes 1$ to~$1$ and $1\otimes\sigma$ to zero. (Recall that $C_2=\ideal{\sigma}$.) The map $\eta\colon \unit_{\cA}=k\xto{1+\sigma} \Epur=kC_2$
is a two-sided unit for~$\Epur$. Also $\sigma\in C_2$ acts as an automorphism via $\Epur\xto{\sigma\cdot} \Epur$. In fact, $\Epur$ is a `quasi-Galois' ring-object with Galois group~$C_2$ meaning that the following is an isomorphism
\begin{equation}
\label{eq:Galois}%
\xymatrix@C=6em{
\Epur\otimes \Epur \ar[r]^-{\big(\mu\quad\mu\circ(1\otimes\sigma)\big)}_-{\simeq}
& \Epur\oplus \Epur.
}
\end{equation}
See \cite{Pauwels17}. This isomorphism is just a permutation of the bases, as follows: $1\otimes 1\leftrightarrow(1,0)$, $1\otimes\sigma\leftrightarrow(0,1)$, $\sigma\otimes 1\leftrightarrow(0,\sigma)$ and $\sigma\otimes\sigma\leftrightarrow(\sigma,0)$. In other words, it $k$-linearly extends a bijection of $C_2$-sets $C_2\times C_2\isoto C_2\sqcup C_2$.
\end{Remind}

\begin{Prop}
\label{Prop:L^n}%
The object $\invertpur=\fundpur_{\ge1}[-1]=(\cdots 0\to k \xto{\eta} kC_2 \to 0 \cdots)$ with $k$ in homological degree~1 (see \Cref{Lem:<t>=acyclics}) is $\otimes$-invertible in $\Kb(\cA)$. More precisely, for every $n\in\bbZ$ we have canonical isomorphisms
\[
\invertpur\potimes{n}\cong\left\{
\begin{array}{cl}
\cdots 0 \to k \xto{\eta} kC_2\xto{\eta\eps} \cdots \xto{\eta\eps} kC_2\to 0\cdots \kern8.5em & \textrm{if }n\ge 0
\\
\kern9em \cdots 0 \to kC_2 \xto{\eta\eps} \cdots \xto{\eta\eps} kC_2 \xto{\eps} k\to 0\cdots & \textrm{if }n\le 0
\end{array}
\right.
\]
where in each case $k$ sits in homological degree~$n$ and there are $|n|$ copies of~$kC_2$.
\end{Prop}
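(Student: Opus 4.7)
The plan is to prove the explicit formula by induction on $|n|$ using the quasi-Galois isomorphism of \Cref{Rmd:Galois} together with the cancellation trick of \Cref{Rem:linear-algebra}; $\otimes$-invertibility will emerge as a byproduct. The base cases are immediate: $n=0$ gives $\unit = k$, and $n=1$ is the definition of $\invertpur$. For $n=-1$, the rigidity of $\Kb(\cA)$ (\Cref{Not:homological-algebra}) produces a dual $\invertpur^{\vee}$; since $kC_2$ is self-dual in $\cA$ via its Frobenius algebra structure with $\eta$ and $\eps$ mutually dual, one reads off $\invertpur^{\vee}=(\cdots 0 \to kC_2 \xto{\eps} k \to 0\cdots)$ with $k$ in degree~$-1$, matching the claim.

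For the inductive step $n \to n+1$ with $n \ge 1$, let $C_n$ denote the claimed complex and form the total complex of $\invertpur \otimes C_n$. It sits in degrees $[0,n+1]$: a copy of $k$ at the top, free rank-$2$ modules $k \otimes kC_2 \oplus kC_2 \otimes k = kC_2 \oplus kC_2$ at each intermediate degree $1 \le j \le n$, and $kC_2 \otimes kC_2$ at the bottom. Apply the quasi-Galois isomorphism~\eqref{eq:Galois} to split the bottom term as $kC_2 \oplus kC_2$, and unpack the differentials via the Koszul rule (signs are irrelevant in characteristic~$2$). A direct inspection shows that at each of the $n$ interior positions, the matrix of the differential contains a block of the form $kC_2 \xto{\id} kC_2$. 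Apply \Cref{Rem:linear-algebra} iteratively to eliminate these $n$ contractible summands. The resulting complex is exactly $C_{n+1}$: the top differential remains $\eta$, inherited from $\invertpur$, and each interior Schur complement computes to multiplication by $1+\sigma = \eta\eps$, as required.

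The step $n \to n-1$ for $n \le 0$ is entirely analogous, tensoring instead with $\invertpur^{\vee}$. As a byproduct of the $n=0$ instance of this procedure, $\invertpur \otimes \invertpur^{\vee} \simeq \unit$ in $\Kb(\cA)$, confirming $\otimes$-invertibility. The main obstacle is the bookkeeping: one has to pin down at each position which summand of $kC_2 \otimes kC_2$ carries the identity block under the Galois iso, and verify that the elimination yields $\eta\eps$ (multiplication by the norm element $1+\sigma$) rather than e.g.\ $\eps\eta = 0$. Once verified at a single junction, the pattern propagates unchanged through the induction.
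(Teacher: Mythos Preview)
Your inductive computation via the Galois isomorphism and \Cref{Rem:linear-algebra} is essentially the direct route the paper gestures at (``This follows from the description of $kC_2\otimes kC_2$ in \Cref{Rmd:Galois}'') without spelling it out. There is, however, a bookkeeping slip: for $n\ge 2$, the degree-$j$ term of $\invertpur\otimes C_n$ with $1\le j\le n-1$ is $(k\otimes kC_2)\oplus(kC_2\otimes kC_2)\cong kC_2^{\oplus 3}$, not rank two as you wrote; only at $j=n$ is it $(k\otimes kC_2)\oplus(kC_2\otimes k)\cong kC_2^{\oplus 2}$. The identity blocks you are after all come from the component $\eta\otimes 1\colon k\otimes kC_2\to kC_2\otimes kC_2$ of the total differential, which under the isomorphism~\eqref{eq:Galois} has an invertible coordinate; there is one such for each $j=1,\ldots,n$, so your count of $n$ contractible pairs is in fact correct even though the rank description is not. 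With that correction the induction goes through and yields the explicit formula.

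For invertibility alone, the paper also offers a shorter alternative: test in the residue functors of \Cref{Cor:fields-KA}. In $\Kb(\cA)/\ideal{kC_2}$ the object $\invertpur$ becomes $\unit[1]$; in $\Db(\cA)$ it is a projective resolution of $k$, hence becomes $\unit$. Both images are invertible, so $\invertpur$ is. This bypasses all tensor bookkeeping but of course does not produce the explicit description of $\invertpur\potimes{n}$, for which your (corrected) inductive computation is the right tool.
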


\begin{proof}
This follows from the description of~$kC_2\otimes kC_2$ in \Cref{Rmd:Galois}. Alternatively, one can test invertibility in all residue fields of \Cref{Rem:res-fields}. If we pass via $\Kb(\cA)/\ideal{kC_2}$ then $\invertpur$ is just~$\unit[1]$. If on the other hand we pass via $\Db(\cA)$ then $\invertpur$, being a resolution of~$k$, becomes isomorphic to~$\unit$. In any case, it is invertible.
\end{proof}

\begin{Rem}
\label{Rem:v^n}%
To understand how unique the isomorphisms of \Cref{Prop:L^n} are, or how `coherent' they are, note that any two isomorphisms between invertibles differ by multiplication by an automorphism of~$\unit$. For $k=\bbF_2$ there is no non-trivial automorphism of~$\unit$ since $\bbF_2^\times=\{1\}$. So we can construct our isomorphisms canonically over the field $\bbF_2$ and then extend scalars to any field of characteristic~$2$.
\end{Rem}

\begin{Rem}
\label{Rem:Koszul-Kb(A)}%
Already in our toy example of a tt-category~$\cK=\KA$, the two codimension-one irreducible closed subsets $\{\cL\}$ and~$\{\cN\}$ of~$\SpcK$ are `cut out' by a single `equation' $s\colon \unit \to u$ for $u$ some $\otimes$-invertible, that is, they are of the form
\begin{equation}
\label{eq:Z(s)}%
Z(s)=\supp(\cone(s)).
\end{equation}
In algebro-geometric language, they correspond to a `Koszul object' of length one $\mathrm{Kos}(\alpha)=\cone(\unit \xto{\alpha} u)$. Specifically for $\{\cL\}=\supp(\fundpur)$, we have $\{\cL\}=\supp(\cone(\tilde\eta\colon \unit \to \invertpur\potimes{-1}))$ where $\tilde \eta$ is simply~$\eta\colon k \to kC_2$ in degree zero. The other closed point $\{\cN\}=\supp(kC_2)$ equals $\supp(\upsilon\colon \unit \to \invertpur\potimes{-1}[1])$ where $\upsilon\colon \unit \to \invertpur\potimes{-1}[1]$ is given by the identity $k\to k$ in degree zero.
\end{Rem}

\bigbreak\goodbreak
\section{Filtered \texorpdfstring{$kC_2$}{kC2}-modules}
\label{sec:filt-repr-setup}%
\medbreak

After describing in \Cref{sec:Spc-Kb(C_2)} the space $\Spc(\KA)$ for $\cA=\mmod{kC_2}$, we turn to the more substantial problem of understanding filtered objects in~$\cA$. In this section, we define the tensor category $\Afil$ of filtered objects, which is a Krull-Schmidt category, and we describe its indecomposable objects (\Cref{Cor:Afil-krull-schmidt}) and their tensor (\Cref{Prop:tens-e_l}). We discuss exact structures in the next section.

\begin{Def}
\label{Def:filtrations}%
We denote by $\fil{\cA}$ the category of \emph{filtered objects} in $\cA$, \ie sequences
\begin{equation}
\label{eq:filtered-object}
A= \qquad \cdots\into A^{n+1}\into A^n\into A^{n-1}\into\cdots
\end{equation}
of monomorphisms in $\mmod{kC_2}$ such that $\gr^n(A):=A^n/A^{n+1}$ is zero for all but finitely many $n$, and $A^n=0$ for $n\gg 0$. The \emph{underlying object} of $A$ is
\[
\fgt(A)=\colim_{n\to -\infty}A^n
\]
that is, up to isomorphism, $A^{n}$ for $n\ll0$. We often think of a filtered object~$A$ as the underlying object equipped with a finite filtration $\cdots A^{n+1}\subseteq A^n\subseteq \cdots \subseteq A$ as above. A morphism of filtered objects is the obvious degreewise notion, compatible with the inclusions. In other words, we can view $\Afil$ as a full subcategory of the category $\seq{\cA}=\mathrm{Fun}(\bbZ\op,\cA)$ of presheaves from the poset~$(\bbZ,\le)$ to~$\cA$. Alternatively, a morphism $f\colon A\to B$ is simply a morphism of underlying objects such that $f(A^n)\subseteq B^n$ for all~$n\in\bbZ$. We thus have a faithful functor
\[
\fgt\colon \Afil\to \cA
\]
that forgets the filtration. We also have functors $\gr^n\colon \Afil\to \cA$, which assemble to a functor $\gr(A)=\oplus_{n}\gr^n(A)$ called the \emph{total-graded functor}
\[
\gr\colon\Afil \to \cA.
\]

When we discuss complexes in~$\Afil$, we shall have (homological) degrees for complexes and (filtration) degrees of each term of the complex. To avoid confusion with the word `degree', and to follow the motivic tradition, we speak of \emph{weight} to refer to the \emph{filtration degree}. More precisely, we shall call~$A^n$ the elements \emph{of weight at least}~$n$ in~$A\in\fil{\cA}$. Also, we shall try to write complexes differentials horizontally and filtration inclusions vertically (with bigger weights below smaller weights).
\end{Def}

\begin{Exa}
\label{Exa:filt-0}%
Any $kC_2$-module $M\in \cA$ defines a filtered object~$\pwz(M)=M$ with $\pwz(M)^n=M$ for $n\le 0$ and $\pwz(M)^n=0$ for $n>0$. We call such a filtered object $\pwz(M)=\cdots 0=0\subseteq M= M=\cdots$ \emph{pure of weight zero}.
\end{Exa}

\begin{Rem}
\label{Rem:idempotent-complete-Afil}%
The subcategory $\Afil$ of~$\cA^{\bbZ\op}$ is closed under retracts. Since $\cA^{\bbZ\op}$ is idempotent-complete, so is~$\Afil$. Hence in the sequel, we tacitly use that a retracted monomorphism in~$\Afil$ is the inclusion of a direct summand.
\end{Rem}

\begin{Rem}\label{Rem:tensor-filtered}
There is an induced tensor product on $\fil{\cA}$ making $\fgt\colon\Afil\to \cA$ a tensor functor, \ie defined by tensoring underlying objects and filtering via
  \begin{equation*}
    (A\otimes B)^n:=\colim_{p+q\geq n}A^p\otimes B^q.
  \end{equation*}
In our case, the tensor $\otimes\colon \cA\times \cA\to \cA$ is exact, hence preserves monomorphisms. So we can think of~$(A\otimes B)^n=\sum_{p+q=n}A^p\otimes B^q$ as the sum of the subobjects $A^p\otimes B^q\into \fgt(A)\otimes \fgt(B)$, although the map $\oplus_{p+q=n}A^p\otimes B^q\to \fgt(A)\otimes \fgt(B)$ need not be a monomorphism. By definition, the functor $\fgt:\fil{\cA}\to \cA$ is a tensor functor. The same holds for $\gr\colon \Afil\to \cA$, as we now check.
\end{Rem}

\begin{Lem}
  \label{Lem:gr-tensor} We have in $\cA$ a canonical isomorphism
  \begin{equation*}
    \gr^n(A\otimes B)=\bigoplus_{p+q=n}\gr^p(A)\otimes \gr^q(B),
  \end{equation*}
  which is part of the structure making $\gr\colon \Afil\to \cA$ a tensor functor.
\end{Lem}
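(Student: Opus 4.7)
Write down the natural candidate $\phi_n\colon \bigoplus_{p+q=n}\gr^p(A)\otimes\gr^q(B)\to \gr^n(A\otimes B)$ sending $\bar a\otimes \bar b$, with $a\in A^p$ and $b\in B^q$ and $p+q=n$, to the class of $a\otimes b\in A^p\otimes B^q\subseteq (A\otimes B)^n$ modulo $(A\otimes B)^{n+1}$. This is well-defined because $A^{p+1}\otimes B^q$ and $A^p\otimes B^{q+1}$ are both contained in $(A\otimes B)^{n+1}$, and surjectivity follows from the observation---already recorded in~\Cref{Rem:tensor-filtered}---that $(A\otimes B)^n=\sum_{p+q=n}A^p\otimes B^q$ (any term with $p'+q'>n$ is absorbed into $A^{p'}\otimes B^{n-p'}$ since $B^{q'}\subseteq B^{n-p'}$).

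For injectivity, I proceed by induction on the number of weights of~$A$, \ie integers $p$ with $\gr^p(A)\neq 0$. The base case, $A$ pure of weight~$p_0$, is clean: then $(A\otimes B)^n=\fgt(A)\otimes B^{n-p_0}$ inside $\fgt(A)\otimes\fgt(B)$, and exactness of $\fgt(A)\otimes-$ in~$\cA$ gives $\gr^n(A\otimes B)=\fgt(A)\otimes\gr^{n-p_0}(B)=\gr^{p_0}(A)\otimes\gr^{n-p_0}(B)$, matching the right-hand side. In the inductive step, let $n_1$ be the smallest weight of~$A$ and introduce a sub-filtered object $A'\hookrightarrow A$ by $(A')^p=A^p$ for $p>n_1$ and $(A')^p=A^{n_1+1}$ for $p\leq n_1$: then $A'$ has one fewer weight than~$A$, and the quotient $A/A'$ is pure of weight~$n_1$ with underlying object~$\gr^{n_1}(A)$. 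Show that the degreewise short exact sequence $0\to A'\to A\to A/A'\to 0$ of filtered objects remains degreewise exact after tensoring with~$B$; then the snake lemma applied to the induced map of filtrations yields a short exact sequence on graded pieces, which fits into a commutative ladder with the corresponding direct-sum decomposition of $\bigoplus_{p+q=n}\gr^p(A)\otimes\gr^q(B)$. By induction on $A'$ and the base case applied to $A/A'$, the outer vertical maps are isomorphisms, and a five-lemma chase concludes that $\phi_n$ is an isomorphism for~$A$ as well.

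The main obstacle is justifying that $-\otimes B$ preserves the above degreewise short exact sequence, \ie that $0\to (A'\otimes B)^n\to (A\otimes B)^n\to ((A/A')\otimes B)^n\to 0$ is short exact in $\cA$ for each~$n$. Surjectivity and the inclusion $(A'\otimes B)^n\subseteq\ker$ follow directly from the construction, but the reverse inclusion ultimately rests on the identity $(X\otimes\fgt(B))\cap(\fgt(A)\otimes Y)=X\otimes Y$ for subobjects $X\subseteq\fgt(A)$ and $Y\subseteq\fgt(B)$, a standard consequence of flatness over the field~$k$. Finally, to promote $\gr$ to a tensor functor one checks that the isomorphisms $\phi_n$ are natural in $A$ and $B$ and compatible with the unit, associativity, and symmetry constraints; these verifications are routine given the explicit formula for~$\phi_n$.
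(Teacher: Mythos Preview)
Your argument is correct, but the paper takes a much shorter route. After constructing the same natural surjection
\[
\phi_n\colon \bigoplus_{p+q=n}\gr^p(A)\otimes\gr^q(B)\onto \gr^n(A\otimes B),
\]
the paper simply sums over all~$n$: the total $k$-dimension of the left-hand side is $\sum_p\dim_k\gr^p(A)\cdot\sum_q\dim_k\gr^q(B)=\dim_k\fgt(A)\cdot\dim_k\fgt(B)$, while the total $k$-dimension of the right-hand side is $\dim_k\fgt(A\otimes B)$; these agree since $\fgt$ is monoidal, so each $\phi_n$ is forced to be an isomorphism. This exploits the fact that $\cA=\mmod{kC_2}$ consists of finite-dimensional $k$-vector spaces. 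Your induction on the filtration length of~$A$, together with the flatness identity $(X\otimes\fgt(B))\cap(\fgt(A)\otimes Y)=X\otimes Y$, is more work but has the virtue of not depending on finite dimensionality---it would go through for filtered modules over any ring where the underlying tensor is exact, whereas the paper's dimension count is specific to the setting at hand.
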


\begin{proof}
  As explained above, we may think of $A\otimes B$ as a filtration on $\fgt(A)\otimes\fgt(B)$ with elements of weight at least~$n$ given by
  \begin{equation*}
    (A\otimes B)^n=\sum_{p+q= n}A^p\otimes B^q.
  \end{equation*}
  Furthermore, for every pair $(p,q)$ such that $p+q=n$, the canonical map $A^p\otimes B^q\to\gr^n(A\otimes B)$ vanishes on $A^{p+1}\otimes B^q + A^p\otimes B^{q+1}$. This induces a map
  \begin{equation}
    \label{eq:component-monoidal-gr}
    \bigoplus_{p+q=n}\gr^p(A)\otimes \gr^q(B)\onto \gr^n(A\otimes B)
  \end{equation}
  which is part of a natural transformation making $\gr$ lax-monoidal. Note that~\eqref{eq:component-monoidal-gr} is surjective.
  Summing over all $n$, the $k$-dimensions of the domain and codomain of~\eqref{eq:component-monoidal-gr} are the $k$-dimensions of $\fgt(A)\otimes\fgt(B)$ and $\fgt(A\otimes B)$, respectively---which are equal. We conclude that for each~$n$ the map in~\eqref{eq:component-monoidal-gr} is an isomorphism.
\end{proof}

\begin{Exa}
The functor $\pwz\colon \cA\to \Afil$ of \Cref{Exa:filt-0} is a tensor-functor.
\end{Exa}

\begin{Lem}
  \label{Lem:fil-rigid}%
  The $\otimes$-category $\Afil$ is rigid, and the dual $A^\vee$ of $A$ is given by
  \begin{equation*}
    (A^\vee)^n:=\ker(\fgt(A)^\vee\to (A^{-n+1})^\vee)
  \end{equation*}
  with the canonical transition morphisms. Furthermore, $\gr^n(A^\vee)\cong (\gr^{-n}(A))^\vee$.
\end{Lem}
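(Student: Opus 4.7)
The plan is to define $A^\vee$ explicitly by the stated formula, verify it gives a bona fide filtered object with the claimed graded pieces, and then produce compatible evaluation and coevaluation maps, deducing the triangle identities from the rigidity of~$\cA$.

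First I would set $(A^\vee)^n := \Ker(\fgt(A)^\vee \onto (A^{-n+1})^\vee)$, where the surjection is $\cA$-dual to the monomorphism $A^{-n+1}\into \fgt(A)$ (it is surjective because $k$-linear duals are exact over the field~$k$). The inclusion $A^{-n+1}\subseteq A^{-n}$ induces nested kernels $(A^\vee)^{n+1}\subseteq (A^\vee)^n$, giving the transition morphisms. Boundedness is automatic: for $n\gg 0$, $A^{-n+1}=\fgt(A)$ so $(A^\vee)^n=0$; for $n\ll 0$, $A^{-n+1}=0$ so $(A^\vee)^n=\fgt(A)^\vee$. For the grading, the quotient $(A^\vee)^n/(A^\vee)^{n+1}$ identifies with the annihilator of $A^{-n+1}$ modulo the annihilator of $A^{-n}$ inside $\fgt(A)^\vee$; dualizing the short exact sequence $A^{-n+1}\into A^{-n}\onto \gr^{-n}(A)$ then yields $\gr^n(A^\vee)\cong (\gr^{-n}(A))^\vee$.

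Next I would take $\ev\colon A^\vee \otimes A \to \unit$ and $\operatorname{coev}\colon\unit \to A \otimes A^\vee$ to be the underlying maps from~$\cA$, and verify filtration compatibility. For $\ev$, the weight-$1$ part of $A^\vee\otimes A$ equals $\sum_{p+q\geq 1}(A^\vee)^p\otimes A^q$; by construction $(A^\vee)^p$ annihilates $A^{-p+1}$, and $p+q\geq 1$ forces $A^q\subseteq A^{-p+1}$, so evaluation kills each summand and lands in $\unit^1=0$. For coevaluation, I need the image of $1\in k=\unit^0$ to lie in $(A\otimes A^\vee)^0=\sum_p A^p\otimes (A^\vee)^{-p}$. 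Picking a $k$-basis $\{e_i\}$ of $\fgt(A)$ adapted to the filtration (obtained by choosing a basis of the smallest nonzero $A^m$ and extending upward through each $A^{m-1}$; this is only a vector-space splitting, not a $C_2$-equivariant one), each $e_i$ has a unique maximal weight $n_i$ with $e_i\in A^{n_i}\setminus A^{n_i+1}$, and the dual basis vector $e_i^\vee$ vanishes on $A^{n_i+1}$, hence lies in $(A^\vee)^{-n_i}$. Thus $\sum_i e_i\otimes e_i^\vee\in \sum_i A^{n_i}\otimes (A^\vee)^{-n_i}\subseteq (A\otimes A^\vee)^0$. Since this is a $kC_2$-submodule of $\fgt(A)\otimes\fgt(A)^\vee$, the $kC_2$-equivariant coevaluation factors through it.

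The triangle identities for $\ev$ and $\operatorname{coev}$ in $\Afil$ then follow formally from their counterparts in $\cA$, since $\fgt\colon\Afil\to \cA$ is faithful and strong monoidal. The main point to be careful about is the filtration compatibility of coevaluation: one must use that a filtration-adapted $k$-basis of $\fgt(A)$ always exists, even when $C_2$-equivariant splittings do not, and that the image of the canonical $\cA$-coevaluation lies in the $kC_2$-stable $k$-subspace $\sum_p A^p\otimes (A^\vee)^{-p}$. Once this step is in place, the rest is bookkeeping with annihilators.
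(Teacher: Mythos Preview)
Your argument is correct and is precisely the direct verification the paper has in mind when it writes ``The proof is straightforward''; you have carefully unpacked the annihilator description of the dual filtration, the computation of graded pieces via dualizing the short exact sequence $A^{-n+1}\into A^{-n}\onto \gr^{-n}(A)$, and the filtration compatibility of evaluation and coevaluation (the latter via a filtration-adapted $k$-basis, noting that $(A\otimes A^\vee)^0$ is a $kC_2$-submodule so no equivariant splitting is needed). There is nothing to add.
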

\begin{proof}
  The proof is straightforward.
\end{proof}

\begin{Not}
\label{Not:beta}%
  Consider the ``twist'' functor $(m):\fil{\cA}\to\fil{\cA}$ which keeps the same underlying object but shifts the filtration (or the weight) by $m\in\bbZ$:
  \begin{equation*}
    A(m)^n=A^{n-m}.
  \end{equation*}
  It comes with a canonical morphism $\beta\colon A\to A(1)$, given by the identity on the underlying object. This defines a natural transformation $\beta:\Id\To (1)\colon \Afil \to \Afil$.
\end{Not}

We want to describe all objects of~$\Afil$. Recall that the Krull-Schmidt category~$\cA=\mmod{kC_2}$ has two indecomposable objects, $k$ with trivial action and the free module~$kC_2$. Let us start by constructing filtrations on these two objects.

\begin{Cons}
\label{Cons:e_l}%
Consider the basic objects $\unit(m)=\pwz(k)(m)$ and $\E_0(m)=\pwz(kC_2)(m)$ in $\fil{\cA}$, for $m\in\bbZ$. For $\ell\in\bbZ_{\geq 1}$, we define the object $\E_\ell(m)$ as
\begin{equation}
\label{eq:e_l}%
  \E_\ell(m)=\qquad\cdots \subset 0\subset k = \cdots = k\xinto{\eta} kC_2 = kC_2 = \cdots
\end{equation}
where $kC_2$ occurs in filtration degrees~$\le m$ and $k$ appears in degrees from $m+\ell$ down to~$m+1$. So the underlying object of~$\E_\ell(m)$ is~$kC_2$. The homomorphisms out of $\E_\ell(m)$ can be described as follows:
\begin{equation}
\label{eq:Hom(e_l,-)}%
\begin{array}{rl}
  \Hom(\E_\ell(m),A)\kern-.7em
    &=\Big\{f\in\Hom_{kC_2}(kC_2,A)\,\Big|\,{\Displ f(kC_2)\subseteq A^m\atop\Displ\textrm{ and }f(k)\subseteq A^{m+\ell}}\Big\}
    \\[1em]
                &\cong\{x\in A^m\mid (1+\sigma)\,x\in A^{m+\ell}\}.
\end{array}\kern-1.5em
\end{equation}
(Recall that $\sigma$ is the name of the generator of~$C_2$ and that $\eta\colon k\into kC_2$ is $1\mapsto 1+\sigma$.)
\end{Cons}

\begin{Rem}
\label{Rem:a^>m}%
Continuing the analogy with motives, we shall say that a filtered object $A\in\Afil$ is \emph{effective} if~$\fgt(A)=A^0$, that is $A^0=A^{-1}=\cdots=A^n$ for all~$n\le 0$, or equivalently if~$A$ lives entirely in non-negative weights. For every $m\in\bbZ$, we shall also use the notation
\[
A^{\ge m}= \qquad \cdots \subseteq A^{m+1} \subseteq A^{m} = A^{m} = \cdots
\]
for the subspace of weight at least $m$, the filtered object~$A^m$ (still in weight~$m$) together with all higher weights. We have a monomorphism $A^{\ge m}\into A$. So an object $A\in\Afil$ is effective if and only if $A^{\ge0}=A$. For example, the objects~$\E_\ell(m)$ of \Cref{Cons:e_l} satisfy $(\E_\ell(m))^{\ge m'}=\E_\ell(m)$ whenever $m'\le m$. In particular, for $m\ge 1$, we have
\[
(\E_\ell(m))^{\ge 1}=\E_\ell(m).
\]
\end{Rem}

\begin{Lem}
\label{Lem:KRS-prepa}%
Let $A=A^{\ge0}$ be an effective object of~$\Afil$. Let $m\ge 1$ and $\ell\ge 0$ and $\alpha\colon \E_\ell(m)\to A$ be a morphism such that $\alpha^{\ge1}\colon (\E_\ell(m))^{\ge1}\to A^{\ge1}$ is a split monomorphism, \ie we give $\E_\ell(m)=(\E_\ell(m))^{\ge1}$ as direct summand of $A^{\ge1}$. Then $\alpha$ is a split monomorphism as well, \ie $\E_\ell(m)$ is a direct summand of $A$ itself.
\end{Lem}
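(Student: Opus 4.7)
The strategy is to produce a retraction $\tilde\rho\colon A\to \E_\ell(m)$ of $\alpha$ by extending the given retraction $\rho\colon A^{\ge 1}\to (\E_\ell(m))^{\ge 1}$ of $\alpha^{\ge 1}$ along the inclusion $\iota\colon A^{\ge 1}\hook A$. Since $m\ge 1$, the preceding remark identifies $(\E_\ell(m))^{\ge 1}$ with $\E_\ell(m)$, and $\alpha$ factors as $\alpha=\iota\circ\alpha^{\ge 1}$; hence any extension $\tilde\rho$ of $\rho$ along $\iota$ will automatically satisfy $\tilde\rho\circ\alpha=\rho\circ\alpha^{\ge 1}=\id_{\E_\ell(m)}$.

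To build $\tilde\rho$, I would first work at the level of underlying $kC_2$-modules via $\fgt\colon\Afil\to\cA$. The task reduces to extending the $kC_2$-linear map $\fgt(\rho)\colon A^1\to kC_2$ along the monomorphism $A^1\hook A^0$ to some $\tilde\rho_0\colon A^0\to kC_2$. The essential ingredient---and the only nonformal step---is that $kC_2$ is \emph{self-injective}: the group algebra of a finite group over a field is a Frobenius algebra, hence injective as a module over itself. This is what allows the desired extension to exist, and I expect it to be the main (but not difficult) input.

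It remains to check that $\tilde\rho_0$ is filtration-compatible, thereby defining a morphism $\tilde\rho\colon A\to\E_\ell(m)$ in $\Afil$. The verification splits into two easy cases. In weights $n\le 0$, effectivity of $A$ gives $A^n=A^0$, while $m\ge 1\ge n$ gives $\E_\ell(m)^n=kC_2$, so the containment $\tilde\rho_0(A^n)\subseteq\E_\ell(m)^n$ is vacuous. In weights $n\ge 1$ we have $A^n=(A^{\ge 1})^n\subseteq A^1$, and on $A^1$ the map $\tilde\rho_0$ agrees with $\fgt(\rho)$, which respects filtrations because $\rho$ does. Consequently $\tilde\rho\circ\iota=\rho$, and the factorization recorded in the first paragraph completes the proof.
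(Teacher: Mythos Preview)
Your proposal is correct and follows essentially the same approach as the paper: extend the underlying retraction $A^1\to kC_2$ to $A^0\to kC_2$ using injectivity of $kC_2$ in $\mmod{kC_2}$, then observe that the extension is automatically filtration-compatible. The paper is terser on the filtration check (it simply asserts ``this automatically defines a retraction''), whereas you spell out the two weight ranges explicitly, but the argument is the same.
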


\begin{proof}
Note that since $m\ge1$, the underlying morphism of $\alpha\colon \E_\ell(m)\to A$ `lands' in~$A^1$, that is, $\alpha(kC_2)\subseteq A^1$. Pick a retraction of~$\alpha^{\ge1}$, say $r^1\colon A^{\ge1}\to (\E_\ell(m))^{\ge1}=\E_\ell(m)$. This morphism $r^1$ consists of $r^1\colon A^1\to kC_2$ which maps the filtration of $A$ into the one for~$\E_\ell(m)$, for all weights\,$\geq1$, and satisfies $r^1\circ \alpha=\id_{kC_2}$ on underlying objects. The only question is to extend $r^1\colon A^1\to kC_2=(\E_\ell(m))^1=(\E_\ell(m))^0$ to the whole of~$A^0$
\[
\xymatrix@C=4em@R=.2em{
\E_\ell(m) \ar[r]^-{\alpha} & A \ar@{-->}[r]^-{\exists\,?\ r}
& \E_\ell(m)
\\
\vdots&\vdots&\vdots\\
kC_2 \ar[r]^-{\alpha} \ar@{=}[u] & A^0 \ar@{=}[u] \ar@{-->}[r]^-{\exists\,?\ r^0} & kC_2 \ar@{=}[u]
\\\\
kC_2 \ar[r]^-{\alpha} \ar@/_2em/[rr]_(.3){\id} \ar@{=}[uu] & A^1 \ar@{ >->}[uu] \ar[r]^-{r^1} & kC_2 \ar@{=}[uu]
\\\\
\vdots \ar@{>->}[uu] & \vdots \ar@{>->}[uu] & \vdots \ar@{>->}[uu]
}
\]
Such an extension $r^0$ of~$r^1$ exists because $A^1\into A^0$ is a monomorphism and $kC_2$ is injective in~$\cA$. This automatically defines a retraction~$r\colon A\to \E_\ell(m)$ of~$\alpha$.
\end{proof}

\begin{Prop}\label{Prop:KRS-Efil}
  Every object in $\fil{\cA}$ is isomorphic to one of the form
  \begin{equation*}
    \bigoplus_i\E_{\ell_i}(m_i)\oplus\bigoplus_j\unit(n_j).
  \end{equation*}
  for finitely many integers $\ell_i\ge 0$, $m_i\in\bbZ$ and $n_j\in \bbZ$. {\rm(}See~\eqref{eq:e_l} for $\E_{\ell}(m)$.{\rm)}
\end{Prop}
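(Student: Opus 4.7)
The plan is to induct on $\dim_k\fgt(A)$, at each step peeling off one indecomposable summand of $A$; the induction closes since the complement has strictly smaller underlying dimension. The base case $A=0$ is trivial, and after a twist by some $(m)$ we may assume $A$ is effective with $A\ne 0$. Let $N$ be the largest weight with $A^N\ne 0$ and decompose $A^N\cong k^a\oplus (kC_2)^b$ in $\cA=\mmod{kC_2}$.

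If $b\ge 1$, the inclusion of a $kC_2$-summand $kC_2\hookrightarrow A^N$ yields a morphism $\E_0(N)\to A$, since $kC_2\subseteq A^N\subseteq A^n$ for all $n\le N$. If $b=0$, pick any $0\ne x\in A^N$; note $C_2$ acts trivially on $A^N\cong k^a$. Two sub-cases arise according to whether $x\in (1+\sigma)\fgt(A)$. If yes, set $m_0:=\max\{n\mid x\in (1+\sigma)A^n\}$, pick $y_0\in A^{m_0}$ with $(1+\sigma)y_0=x$, and consider $\alpha\colon\E_\ell(m_0)\to A$ sending $1\mapsto y_0$, with $\ell:=N-m_0$. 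A short argument shows $\ell\ge 1$: otherwise $\{y_0,\sigma y_0\}$ would span a free $kC_2$-summand of $A^N$, contradicting $b=0$. If instead $x\notin (1+\sigma)\fgt(A)$, consider $\unit(N)\to A$ sending $1\in k$ to $x$.

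The heart of the argument is showing that each of these maps is a split monomorphism in $\fil{\cA}$. For $\E_0(N)$: extend the projection $A^N\onto kC_2$ to a $kC_2$-linear retraction $p\colon\fgt(A)\to kC_2$ using injectivity of $kC_2$ in $\cA$; since $kC_2\subseteq A^n$ for $n\le N$, one gets $A^n=kC_2\oplus\ker(p|_{A^n})$, hence $A\cong\E_0(N)\oplus A'$. For $\unit(N)$: any candidate retraction $\fgt(A)\to k$ factors through $\fgt(A)/(1+\sigma)\fgt(A)$, in which $x$ has non-zero image precisely by the sub-case hypothesis. The delicate case is $\E_\ell(m_0)$: one first defines a partial retraction on $A^{m_0+1}+kC_2\cdot y_0$, given by $y_0\mapsto 1$ on $kC_2\cdot y_0$ and by a $kC_2$-linear map into $k(1+\sigma)\subseteq kC_2$ on $A^{m_0+1}$ sending $x\mapsto 1+\sigma$. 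The latter exists because the maximality of $m_0$ forces $x\notin (1+\sigma)A^{m_0+1}$, so $x$ survives as a non-zero element of $A^{m_0+1}/(1+\sigma)A^{m_0+1}$.

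I expect the main obstacle to be verifying that this partial retraction is well-defined and actually lands in $k(1+\sigma)$ on the relevant weights. The two definitions must agree on $A^{m_0+1}\cap kC_2\cdot y_0$: a direct computation using $\sigma y_0=y_0+x$ together with $y_0\notin A^{m_0+1}$ (again by maximality of $m_0$) identifies this intersection as $k\cdot x$, on which both definitions return $1+\sigma$. The partial retraction then extends to all of $\fgt(A)\to kC_2$ by injectivity of $kC_2$ in $\cA$, and it carries $A^n$ into $k(1+\sigma)=(\E_\ell(m_0))^n$ for all $m_0<n\le N$ because $A^n\subseteq A^{m_0+1}$. As a sanity check, after a further shift ensuring $m_0+1\ge 1$, \Cref{Lem:KRS-prepa} provides an alternative route for this last extension step.
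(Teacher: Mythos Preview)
Your argument is correct and takes a route that differs in organization from the paper's. Both proofs peel off one indecomposable summand at a time, but the paper first performs a reduction: inducting on the filtration amplitude and invoking \Cref{Lem:KRS-prepa} to split off every $\E_\ell(m)$-summand of $A^{\ge 1}$, thereby arranging that $A^{\ge 1}$ has trivial $C_2$-action before constructing any retraction. After this reduction, the paper's retraction argument is short: any $C_2$-fixed element of positive weight automatically lies in the top weight, so the image of the retraction is forced into the right filtration piece. You instead induct directly on $\dim_k\fgt(A)$ and handle three cases ($\E_0(N)$, $\unit(N)$, $\E_\ell(m_0)$) without any preliminary reduction; the price is the more delicate construction of the retraction in the $\E_\ell(m_0)$ case, where you must explicitly build the partial map on $A^{m_0+1}+kC_2\cdot y_0$ and check compatibility on the intersection. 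Your approach is more self-contained (it does not rely on \Cref{Lem:KRS-prepa} as a black box), while the paper's is cleaner once that lemma is in hand. One small remark: your ``sanity check'' invocation of \Cref{Lem:KRS-prepa} is slightly off---to match its hypotheses you would need $m_0\ge 1$ (not $m_0+1\ge 1$), and you would still need to verify that $\alpha^{\ge 1}$ is split, which amounts to the $\unit(N)$ case applied to $A^{\ge m_0+1}$; but this is inessential since your direct extension via injectivity of $kC_2$ already does the job.
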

\begin{proof}
  Let $A\in\Afil$. Since twisting on $\Afil$ is an equivalence of categories, and by induction on the filtration amplitude, we may assume that $A$ is effective, $\fgt(A)=A^0$, and that the statement holds for $A^{\geq 1}$. Doing induction on the $k$-dimension of $A^0$ and using \Cref{Lem:KRS-prepa} (see \Cref{Rem:idempotent-complete-Afil}), we may assume that $A^{1}$ has trivial $C_2$-action: any $\E_\ell(m)$ summand of~$A^{\ge1}$ is already a summand of~$A$.

Let $\ell$ be the maximal weight in~$A$, that is, the largest integer~$\ell$ such that $A^\ell\neq 0$. If $\ell=0$, we have $A^1=0$ and $A=A^{\ge0}$ is simply a $kC_2$-module pure of weight zero, hence is of the form $\bigoplus_i \E_0\oplus\bigoplus_j\unit$, as wanted. So let us suppose $\ell>0$.

Let $x\in A^\ell$ be non-zero. Since $\ell>0$, we have $x\in A^1$ and thus $x$ is fixed by the $C_2$-action by the first reduction above. Let us define a sub-module $B^0$ of~$A^0$ by distinguishing two cases. If $x=y+\sigma y$ for some $y\in A^0$ let $B^0=ky+k\sigma y\simeq kC_2$, otherwise let $B^0=k x\simeq k$. Note that in both cases the inclusion $B^0\into A^0$ has a retraction $r^0:A^0\to B^0$ in the category of $kC_2$-modules. In the first case, $B_0$ is an injective $kC_2$-module. In the second case, it is an easy exercise to verify that if $i\colon k\into A$ in $\mmod{kC_2}$ does not factor through $kC_2$, then $i$ is a split monomorphism. (The assumption means that $i$ remains non-zero in~$\stab(\cA)\cong\mmod{k}$, hence splits there, hence splits in~$\cA$ as well since $\cA(k,k)\cong\Hom_{\stab \cA}(k,k)\cong k$.) Endowing $B^0$ with the filtration induced by the inclusion $B^0\into A^0$ we see that the resulting object $B$ is of the form $\E_\ell$ (in the first case) or $\unit(\ell)$ (in the second case).

Our claim is that $r^0:A^0\to B^0$ is compatible with the filtrations and thus defines a retraction $r:A\to B$ of the inclusion $B\into A$. We need to show that for $z\in A^{n}$ with $n\ge 1$, we have $r^0(z)\in B^n$. We know that $z\in A^{\ge 1}$ is fixed by the $C_2$-action hence so is its image $r^0(z)\in B^0$, thus $r^0(z)\in kx\subseteq A^\ell$ for the maximal weight~$\ell$ in~$A$. Hence, we see that $r^0\colon A^0\to B^0$ either maps $z\in A^n$ to zero (hence to $B^n$) or it maps $z$ into the highest possible weight~$\ell\ge n$ where non-zero elements exist. In any case, $r$ respects the filtration. So we can split off~$B$ as a direct summand of~$A$ and finish the proof by induction.
\end{proof}

\begin{Lem}
\label{Lem:hom-Afil}%
For every $n\in \bbZ$ and $\ell\ge 0$ the endomorphisms rings
\[
\End_{\Afil}(\unit(n)) = k\cdot \Id
\qquadtext{and}
\End_{\Afil}(\E_\ell(n)) = kC_2\cdot \Id
\]
are local rings.
\end{Lem}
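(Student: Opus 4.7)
The plan is to compute each endomorphism ring explicitly using the description of filtered objects and then observe that the resulting rings are local.

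For $\unit(n) = \pwz(k)(n)$, the underlying object is $k$ with trivial $C_2$-action, and since $\End_{\cA}(k) = k$, any endomorphism is $\lambda\cdot \Id$ for some $\lambda \in k$. The filtration compatibility is automatic because $\unit(n)$ is a one-step filtration supported in weight~$n$. Thus $\End_{\Afil}(\unit(n)) = k$, which is a field, hence trivially local.

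For $\E_\ell(n)$, I would directly apply the adjunction-style formula~\eqref{eq:Hom(e_l,-)} with $m = n$ and $A = \E_\ell(n)$:
\[
\End_{\Afil}(\E_\ell(n)) \cong \{ x \in \E_\ell(n)^n \mid (1+\sigma)x \in \E_\ell(n)^{n+\ell} \}.
\]
Here $\E_\ell(n)^n = kC_2$ and $\E_\ell(n)^{n+\ell}$ is the image of $\eta\colon k \into kC_2$, namely $k\cdot(1+\sigma)$ (or all of $kC_2$ if $\ell=0$). The key observation is that for any $x = a + b\sigma \in kC_2$, one computes in characteristic~$2$ that $(1+\sigma)x = (a+b)(1+\sigma)$, which always lies in $k\cdot(1+\sigma)$. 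Therefore the condition on $x$ is automatic, and the identification gives $\End_{\Afil}(\E_\ell(n)) \cong kC_2$ as a $k$-vector space. A quick check shows this identification is an isomorphism of rings (it sends $x$ to the endomorphism of the underlying module given by right-multiplication-by-$x$ via the appropriate composition in \eqref{eq:Hom(e_l,-)}, which is compatible with composition), matching the statement $kC_2 \cdot \Id$.

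Finally, to conclude locality, one uses that in characteristic~$2$ we have $kC_2 \cong k[\sigma]/(\sigma^2 - 1) = k[\sigma]/(\sigma + 1)^2$, which is a local Artinian ring with maximal ideal generated by $1+\sigma$ and residue field $k$.

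No serious obstacle is expected: the only subtle point is making sure the ring structure on $\End(\E_\ell(n))$ is correctly identified with the multiplication in $kC_2$ (as opposed to some twisted version), but this is immediate from chasing the bijection in~\eqref{eq:Hom(e_l,-)} through composition of morphisms $\E_\ell(n) \to \E_\ell(n) \to \E_\ell(n)$ on underlying objects.
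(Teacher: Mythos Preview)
Your proposal is correct and follows essentially the same approach as the paper: both compute $\End_{\Afil}(\unit(n))=k$ directly and use the formula~\eqref{eq:Hom(e_l,-)} to identify $\End_{\Afil}(\E_\ell(n))$ with $kC_2\cong k[s]/s^2$, concluding locality. The paper is slightly terser (it reduces to $n=0$ by twisting and leaves the verification that the filtration condition is automatic implicit), but you have simply made these steps explicit.
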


\begin{proof}
Of course, as twisting is an auto-equivalence, we can assume $n=0$. It is clear that $\End_{\Afil}(\unit)=k\cdot\Id$. On the other hand, we deduce from~\eqref{eq:Hom(e_l,-)} that $\End_{\Afil}(\E_\ell)$ equals $\End_{kC_2}(kC_2)\cong kC_2\cong k[\sigma]/(\sigma^2-1)\cong k[s]/s^2$.
\end{proof}

\begin{Cor}
\label{Cor:Afil-krull-schmidt}%
  The category $\Afil$ is Krull-Schmidt. In particular, the decomposition in \Cref{Prop:KRS-Efil} is unique up to permutation (and isomorphism) of the indecomposable summands $\unit(n)$ and $\E_{\ell}(m)$.
\end{Cor}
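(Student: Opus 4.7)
The plan is to invoke the classical Krull--Schmidt--Azumaya theorem, after assembling the three ingredients that we already have at our disposal. Recall that an additive category is Krull--Schmidt precisely when every object decomposes as a finite direct sum of objects with local endomorphism rings; the uniqueness of such a decomposition up to permutation and isomorphism then comes for free (see, e.g., Krause's treatment in the survey on Krull--Schmidt categories).

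First, \Cref{Prop:KRS-Efil} provides the existence of a finite direct-sum decomposition of any $A\in\Afil$ into summands of the form $\unit(n_j)$ and $\E_{\ell_i}(m_i)$. Second, \Cref{Lem:hom-Afil} shows that the endomorphism rings of each of these building blocks are, respectively, $k$ and $kC_2\cong k[s]/(s^2)$, both of which are local rings. Third, \Cref{Rem:idempotent-complete-Afil} records that $\Afil$ is idempotent-complete, which is the ambient hypothesis under which the Krull--Schmidt--Azumaya theorem applies cleanly. Putting these three facts together immediately yields that $\Afil$ is Krull--Schmidt, and in particular that the decomposition in \Cref{Prop:KRS-Efil} is unique up to permutation and isomorphism of the summands $\unit(n)$ and $\E_\ell(m)$.

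There is no real obstacle here: all the substance lies in \Cref{Prop:KRS-Efil} (existence) and in \Cref{Lem:hom-Afil} (locality of endomorphism rings). The corollary is essentially a packaging step.
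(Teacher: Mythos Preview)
Your proof is correct and matches the paper's second argument essentially verbatim: combine \Cref{Prop:KRS-Efil} with \Cref{Lem:hom-Afil} to conclude via the Krull--Schmidt--Azumaya criterion. The paper additionally offers a one-line alternative justification (``we are over a field and all objects are Noetherian and Artinian''), but your packaging is entirely adequate.
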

\begin{proof}
The category is Krull-Schmidt because we are over a field and all objects are Noetherian and Artinian. We can also see this directly from \Cref{Prop:KRS-Efil}, which moreover describes the indecomposables. Indeed, it suffices to know that the endomorphism rings of $\unit(n)$ and $\E_{\ell}(m)$ are local rings, which is \Cref{Lem:hom-Afil}.
\end{proof}

Let us now discuss the tensor structure of~$\Afil$ on the indecomposables~$\E_\ell(m)$.

\begin{Lem}\label{Lem:special-dual}
  The $\otimes$-dual of $\E_\ell(m)$ is isomorphic to~$\E_\ell(-m-\ell)$.
\end{Lem}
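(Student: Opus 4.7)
The plan is to unpack the formula for the dual given in \Cref{Lem:fil-rigid} applied to $A=\E_\ell(m)$, and to match the resulting filtered module termwise against the description of $\E_\ell(-m-\ell)$ given in \Cref{Cons:e_l}. The first observation is that the underlying $kC_2$-module is correct automatically: $\fgt(\E_\ell(m)^\vee)\cong\fgt(\E_\ell(m))^\vee=(kC_2)^\vee\cong kC_2$, using that the regular representation of~$C_2$ is self-dual. Under this self-duality, the $k$-linear dual of $\eta\colon k\into kC_2$, $1\mapsto 1+\sigma$, is the augmentation $\eps\colon kC_2\onto k$, $a+b\sigma\mapsto a+b$, whose kernel is exactly $k\cdot(1+\sigma)=\Img(\eta)$.

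Next I would compute $(\E_\ell(m)^\vee)^n=\ker(kC_2\to (\E_\ell(m)^{-n+1})^\vee)$ according to the three regions for~$-n+1$. When $-n+1>m+\ell$, that is $n\le-m-\ell$, the target is~$0$ and the kernel is all of~$kC_2$. When $m+1\le -n+1\le m+\ell$, that is $-m-\ell+1\le n\le -m$, the defining inclusion $\E_\ell(m)^{-n+1}=k\xinto{\eta}kC_2$ dualizes to $\eps\colon kC_2\onto k$, so the kernel is $\ker(\eps)=k(1+\sigma)\cong k$ sitting inside~$kC_2$ via~$\eta$. When $-n+1\le m$, that is $n\ge -m+1$, the inclusion $\E_\ell(m)^{-n+1}\into \fgt(\E_\ell(m))$ is the identity of~$kC_2$, so its dual is the identity and the kernel is~$0$. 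Comparing with \Cref{Cons:e_l}, these three regions describe exactly the filtration of $\E_\ell(-m-\ell)$: all of~$kC_2$ in weights $\le -m-\ell$, a copy of~$k$ included via $\eta$ in weights $-m-\ell+1,\ldots,-m$, and zero afterwards. Since the nested submodules and the transition maps match on both sides, we conclude $\E_\ell(m)^\vee\cong\E_\ell(-m-\ell)$.

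The argument is essentially bookkeeping, and the only genuine subtlety is tracking the index shift $n\mapsto -n+1$ in \Cref{Lem:fil-rigid}: the top of the filtration of $\E_\ell(m)$ reappears at the bottom of its dual, and the fact that $\eta^\vee=\eps$ is precisely what ensures that the intermediate copy of $k$ lands as a submodule of~$kC_2$ via $\eta$ on the dual side (so that the dual is again of the form~$\E_\ell$ and not~$\unit\oplus\unit(\ell)$). As a sanity check, the formula $\gr^n(A^\vee)=(\gr^{-n}A)^\vee$ of \Cref{Lem:fil-rigid} immediately yields $\gr^{-m}(\E_\ell(m)^\vee)\cong k$ and $\gr^{-m-\ell}(\E_\ell(m)^\vee)\cong k$ (with all other graded pieces vanishing), in agreement with $\gr(\E_\ell(-m-\ell))$; together with \Cref{Cor:Afil-krull-schmidt}, this at least pins down the isomorphism class of $\E_\ell(m)^\vee$ to one of $\E_\ell(-m-\ell)$ or $\unit(-m)\oplus\unit(-m-\ell)$, and the termwise computation above rules out the split alternative.
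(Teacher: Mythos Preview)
Your proof is correct and follows exactly the approach the paper intends: the paper's proof is the two-line ``The dual of $kC_2$ in $\cA$ is $kC_2$ and the result follows from \Cref{Lem:fil-rigid},'' and you have simply carried out that computation in full, tracking the three weight-ranges and verifying that $\eta^\vee=\eps$ so that the intermediate kernel is $\Img(\eta)$ rather than a split copy of~$k$.
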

\begin{proof}
 The dual of $kC_2$ in $\cA$ is $kC_2$ and the result follows from \Cref{Lem:fil-rigid}.
\end{proof}

\begin{Rem}
We now want to describe $\E_\ell\otimes \E_{\ell'}$. We have already described the underlying object $kC_2\otimes kC_2\cong kC_2\oplus kC_2$ in \Cref{Rmd:Galois}. However that isomorphism does not preserve weights; indeed, weights in~$\E_\ell$ are controlled by $\eta\colon k\into kC_2$ and this monomorphism, defined by~$\eta(1)=1+\sigma$, is not the $k$-linearization of a map of $C_2$-sets. Hence we compose the Galois isomorphism $kC_2\otimes kC_2\isoto kC_2\oplus kC_2$ of~\eqref{eq:Galois} with an automorphism of~$kC_2\oplus kC_2$, namely~$\smat{1&1\\0&\sigma}$.
\end{Rem}

\begin{Not}
\label{Not:bamma}%
Let $\Epur=kC_2$. Define an isomorphism of $kC_2$-modules $\bamma\colon \Epur\otimes \Epur\isoto \Epur\oplus \Epur$ and its inverse~$\bamma\inv$ as follows
\begin{equation}
\label{eq:bamma}%
\quad\vcenter{\xymatrix@C=1.5em@R=.3em{
\Epur\otimes \Epur \ar[r]^-{\bamma}
& \Epur\oplus \Epur
&& \Epur\oplus \Epur \ar[r]^-{\bamma\inv}
& \Epur\otimes \Epur
\\
1\otimes 1 \ar@{|->}[r] & (1,0)
&& (1,0) \ar@{|->}[r] & 1\otimes 1\
\\
1 \otimes \sigma \ar@{|->}[r] & (1,\sigma)
&& (\sigma,0) \ar@{|->}[r] & \sigma\otimes \sigma
\\
\sigma \otimes 1 \ar@{|->}[r] & (\sigma,1)
&& (0,1) \ar@{|->}[r] & \sigma\otimes \sigma + \sigma\otimes 1\kern-3.7em
\\
\sigma \otimes \sigma \ar@{|->}[r] & (\sigma,0)
&& (0,\sigma) \ar@{|->}[r] & 1\otimes 1 + 1\otimes\sigma.\kern-3.7em
}}
\end{equation}
\end{Not}

\begin{Rem}
\label{Rem:bamma-basics}%
For some computations, it can be useful to know what happens to~$\eta\colon k\to kC_2=\Epur$ and to $\eps\colon \Epur=kC_2\to k$ under tensorization with~$\Epur$ and the identification of~\eqref{eq:bamma}. It is easy to check that the following diagrams commute:
\[
\vcenter{\xymatrix{
\unit \otimes \Epur \ar[r]^-{}_-{\cong} \ar[d]_-{\eta\otimes 1}
& \Epur \ar[d]^-{\smat{\eta\eps\\\id}}
\\
\Epur \otimes \Epur \ar[r]^-{\bamma}_-{\simeq}
& \Epur\oplus \Epur
}}
\qquad
\vcenter{\xymatrix{
\Epur \otimes \Epur \ar[r]^-{\bamma}_-{\simeq} \ar[d]_-{\eps\otimes 1}
& \Epur\oplus \Epur \ar[d]^-{\smat{\id&\eta\eps}}
\\
\unit  \otimes \Epur \ar[r]^-{}_-{\cong}
& \Epur
}}
\qquad
\vcenter{\xymatrix{
\Epur \otimes \Epur \ar[r]^-{\bamma}_-{\simeq} \ar[d]_-{\eta\eps\otimes 1}
& \Epur\oplus \Epur \ar[d]^-{\smat{\eta\eps&0\\\id&\eta\eps}}
\\
\Epur \otimes \Epur \ar[r]^-{\bamma}_-{\simeq}
& \Epur\oplus \Epur
}}
\]
\[
\vcenter{\xymatrix{
\Epur \otimes \unit \ar[r]^-{}_-{\cong} \ar[d]_-{1\otimes \eta}
& \Epur \ar[d]^-{\smat{0\\\sigma}}
\\
\Epur \otimes \Epur \ar[r]^-{\bamma}_-{\simeq}
& \Epur\oplus \Epur
}}
\qquad
\vcenter{\xymatrix{
\Epur \otimes \Epur \ar[r]^-{\bamma}_-{\simeq} \ar[d]_-{1\otimes \eps}
& \Epur\oplus \Epur \ar[d]^-{\smat{\id&0}}
\\
\Epur \otimes \unit \ar[r]^-{}_-{\cong}
& \Epur
}}
\qquad
\vcenter{\xymatrix{
\Epur \otimes \Epur \ar[r]^-{\bamma}_-{\simeq} \ar[d]_-{1\otimes \eta\eps}
& \Epur\oplus \Epur \ar[d]^-{\smat{0&0\\\sigma &0}}
\\
\Epur \otimes \Epur \ar[r]^-{\bamma}_-{\simeq}
& \Epur\oplus \Epur
}}
\]
Finally, the swap of factors $(12)\colon \Epur\otimes \Epur\isoto \Epur\otimes \Epur$ becomes~$\smat{1&\eta\eps\\0&\sigma}\colon \Epur\oplus \Epur\isoto \Epur\oplus \Epur$.
\end{Rem}

\begin{Prop}
\label{Prop:tens-e_l}%
Let $\ell'\geq \ell\geq 0$ and $i,j\in\bbZ$. The isomorphism $\bamma$ of~\eqref{eq:bamma} induces an isomorphism in~$\Afil$ for the filtered objects defined in~\eqref{eq:e_l}
\[
\E_\ell (i) \otimes \E_{\ell'}(j)\isoto \E_\ell(i+j) \oplus \E_\ell(i+j+\ell').
\]
\end{Prop}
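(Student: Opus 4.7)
The plan is to reduce to the case $i=j=0$, then show directly that the Galois isomorphism $\bamma\colon\Epur\otimes\Epur\isoto \Epur\oplus\Epur$ of \Cref{Not:bamma} respects the two filtrations. The reduction is immediate: a change of variables $p'=p-i$, $q'=q-j$ in the defining formula $(A\otimes B)^n=\sum_{p+q\geq n}A^p\otimes B^q$ gives $A(i)\otimes B(j)\cong (A\otimes B)(i+j)$, so it suffices to produce an isomorphism $\E_\ell\otimes\E_{\ell'}\isoto \E_\ell\oplus\E_\ell(\ell')$ in $\Afil$.

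Both sides have underlying $kC_2$-module $\Epur\oplus\Epur$, identified on the left with $\Epur\otimes\Epur$ via $\bamma$. Using the diagrams of \Cref{Rem:bamma-basics} (interaction of $\bamma$ with $\eta$ and $\eps$), I would compute the images under $\bamma$ of the three elementary subobjects of $\Epur\otimes\Epur$, namely
\[
\bamma(\Epur\otimes\eta(k))=\{0\}\oplus\Epur,\quad \bamma(\eta(k)\otimes\eta(k))=\{0\}\oplus\eta(k),\quad \bamma(\eta(k)\otimes\Epur)=V,
\]
where $V$ is the $2$-dimensional subspace of $\Epur \oplus \Epur$ spanned by $(1+\sigma,1)$ and $(1+\sigma,\sigma)$. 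A short computation shows $V+(\{0\}\oplus\Epur)=\eta(k)\oplus\Epur$ and $V\cap (\{0\}\oplus\Epur)=\{0\}\oplus\eta(k)$.

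The remaining work is a case analysis on $n\in\bbZ$ for the weight-$n$ part
\[
(\E_\ell\otimes\E_{\ell'})^n=\sum_{p+q\geq n}\E_\ell^p\otimes\E_{\ell'}^q\ \subseteq\ \Epur\otimes\Epur,
\]
using that $\E_\ell^p=\Epur$ for $p\leq 0$, $\E_\ell^p=\eta(k)$ for $1\leq p\leq\ell$, and $\E_\ell^p=0$ otherwise, and similarly for $\E_{\ell'}^q$. The five ranges $n\leq 0$, $1\leq n\leq\ell$, $\ell<n\leq\ell'$, $\ell'<n\leq\ell+\ell'$, and $n>\ell+\ell'$ should give respectively $\Epur\oplus\Epur$, $\eta(k)\oplus\Epur$, $\{0\}\oplus\Epur$, $\{0\}\oplus\eta(k)$, and $0$, matching term by term the weight-$n$ part of $\E_\ell\oplus\E_\ell(\ell')$.

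The main (mild) obstacle is the bookkeeping in the two middle ranges, where one must check that no extra contribution $\E_\ell^p\otimes\E_{\ell'}^q$ with both $p,q\geq 1$ appears beyond the $V$ and $\{0\}\oplus\Epur$ already identified. This is controlled by the observation that any such mixed term lies in $\eta(k)\otimes\eta(k)$, whose image under $\bamma$ is contained in $\{0\}\oplus\Epur$; and that the constraints $p\leq\ell$, $q\leq\ell'$ (together with $p+q\geq n$) determine exactly for which $n$ the pieces $V$ and $\{0\}\oplus\Epur$ survive.
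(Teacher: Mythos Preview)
Your proposal is correct and follows essentially the same approach as the paper: reduce to $i=j=0$ and verify directly that the explicit isomorphism~$\bamma$ of~\eqref{eq:bamma} respects the filtrations. The only difference is presentational: the paper checks separately that $\bamma$ and $\bamma^{-1}$ are filtration-preserving on a few key elements (those involving $1+\sigma$) and leaves the remaining verifications to the reader, whereas you carry out a complete case analysis on the weight~$n$, computing $\bamma\big((\E_\ell\otimes\E_{\ell'})^n\big)$ explicitly in each range and matching it against $(\E_\ell\oplus\E_\ell(\ell'))^n$.
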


\begin{proof}
We easily reduce to the case $i=j=0$. It suffices to show that the explicit $\bamma$ and $\bamma\inv$ of~\eqref{eq:bamma} preserve the filtrations to induce maps in~$\Afil$:
\[
\bamma\colon \E_\ell\otimes \E_{\ell'}\to \E_\ell\oplus \E_\ell(\ell')
\quadtext{and}
\bamma\inv\colon \E_\ell\oplus \E_\ell(\ell')\to \E_\ell\otimes \E_{\ell'}.
\]
In each case, we need to trace what happens to higher-weight elements of the form $1+\sigma$. For instance $(1+\sigma)\otimes 1$ in $\E_\ell\otimes \E_{\ell'}$ must land under~$\bamma$ in weight at least~$\ell$ in both summands~$\E_\ell$ and $\E_\ell(\ell')$. This image is~$(1+\sigma,\sigma)$ which is indeed in weight~$\ell$ in the first summand and in weight at least~$\ell$ in the second because of the assumption~$\ell'\ge \ell$. For another instance, $\bamma\inv$ should map $(0,1+\sigma)$ to something in weight at least $\ell+\ell'$ in $\E_\ell\otimes \E_{\ell'}$. That image is equal to $(1+\sigma)\otimes (1+\sigma)$, which is in weight at least~$\ell$ in the first factor, $\ell'$ in the second, thus in weight at least~$\ell+\ell'$ in the tensor. The remaining verifications are left to the reader.
\end{proof}

\bigbreak\goodbreak
\section{Frobenius category of filtered \texorpdfstring{$kC_2$}{kC2}-modules}
\label{sec:filt-Frobenius}%
\medbreak

Let $\cA=\mmod{kC_2}$ as in \Cref{sec:filt-repr-setup}, where we described the Krull-Schmidt tensor-category $\Afil$ of filtered $kC_2$-modules. We now want to discuss its homological structure, in the form of a Frobenius exact category (see \Cref{Rmd:general}).

\begin{Def}
\label{Def:Afil}%
Let $\Asplit$ denote the category $\cA$ with the minimal exact structure: Admissible short exact sequences are precisely the \emph{split} exact ones. In $\Afil$, we define a sequence $(f,g)=\big(\xymatrix@C=1.5em{A \,\ar@{>->}[r]|(.45){f} & B \ar@{->>}[r]|(.45){g} & C}\big)$ to be \emph{admissible} if $g\circ f=0$ and
\[
\big(\gr(f),\gr(g)\big)=\big(\xymatrix{\gr(A) \,\ar@{>->}[r]^-{\gr(f)} &\  \gr(B)\ \ar@{->>}[r]^-{\gr(g)} & \ \gr(C)}\big)
\]
is admissible in $\Asplit$. Equivalently, this means that $g\circ f=0$ and the sequences $\big(\gr^n(f),\gr^n(g)\big)$ are split exact in~$\cA$ for all~$n\in\bbZ$. These admissible exact sequences define an exact structure on~$\Afil$, see~\cite[Prop.~1.4, Lem.~1.9, Prop.~1.10]{drss:exact-vector}, that we denote
\[
\Efil\,.
\]
\end{Def}

\begin{Rem}
\label{Rem:fgt-gr-exact}%
The two functors of \Cref{Def:filtrations} are exact:
 \[
   \fgt:\Efil\to\cA\qquadtext{and}\gr:\Efil\to\Asplit
 \]
where the left-hand~$\cA$ has the abelian structure and the right-hand one has the split exact structure.
\end{Rem}

\begin{Rem}
\label{Rem:quasi-abelian}%
At first, the reader might be puzzled by our notation $\Efil$ to denote the same category that we denoted $\Afil$ in \Cref{sec:filt-repr-setup}. We choose to emphasize this point since it touches the technical crux of many of our discussions below. Indeed, there is another (maximal) exact structure on the category $\Afil$ that we denote
\[
\Afilmax
\]
coming from the fact that $\Afil$ is \emph{quasi-abelian}, \cf~\cite{Schneiders:quasi-abelian}. One possible definition of a quasi-abelian category is as a pre-abelian category, \ie one with kernels and cokernels, such that the family of all kernel-cokernel sequences $(f,g)=\big(\xymatrix@C=1.5em{A \,\ar@{>->}[r]|(.45){f} & B \ar@{->>}[r]|(.45){g} & C}\big)$ defines an exact structure. The two functors
 \[
   \fgt:\Aqab\to\cA\qquadtext{and}\gr:\Aqab\to\cA
 \]
are now exact when the target has the abelian structure in both cases.

Note that a sequence $(f,g)$ is exact in~$\Aqab$ if and only if $g\circ f=0$ and $(\gr(f),\gr(g))$ is exact in the abelian category~$\cA$. (This implies, but is different from, $(\fgt(f),\fgt(g))$ being exact. For example, the sequence $\unit\xto{\beta}\unit(1)\to 0$ is exact on underlying vector spaces but not intrinsically exact, \ie not in~$\Aqab$.)

Several things we shall spend time proving might seem trivially true if one does not pay attention to the special exact sequences of~$\Efil$. Conversely, some things we shall say would be plain wrong with another exact structure on~$\Afil$. Also, the motivic result of Positselski that we connect with in \Cref{part:II} involves~$\Efil$, not~$\Afilmax$.
\end{Rem}

\begin{Rem}
\label{Rem:gr-split-mono}%
It is convenient to have the quasi-abelian structure $\Aqab$ on~$\Afil$ even to study~$\Efil$. For instance, a morphism $f\colon A\to B$ in~$\Afil$ such that $\gr(f)$ is a split monomorphism is necessarily an admissible monomorphism in~$\Efil$. Indeed, such an $f$ is intrinsically a monomorphism since $\gr\colon \Aqab\to \cA$ is exact and conservative. Thus $f$ fits in an intrinsically exact sequence $(f,g)=\big(\xymatrix@C=1.5em{A \,\ar@{>->}[r]|(.45){f} & B \ar@{->>}[r]|(.3){g} & \coker(f)}\big)$. Its image under~$\gr$ is split exact, hence $(f,g)$ is indeed admissible in~$\Efil$.
\end{Rem}

\begin{Rem}
\label{Rem:pwz-exact}%
The tensor-functor $\pwz$ of \Cref{Exa:filt-0} can be seen as exact in two ways, either as $\pwz\colon \Asplit\to \Efil$ or $\pwz\colon \cA\to \Aqab$ (with this $\cA$ abelian).
\end{Rem}

\begin{Lem}
  \label{Lem:gr-conservative-and-section}%
  The exact functor $\gr:\Efil\to\Asplit$ induces a conservative tt-functor
  \begin{equation*}
    \gr:\DEfil\to\Kb(\cA)
  \end{equation*}
  with a section tt-functor $\pwz:\Kb(\cA)\to \DEfil$ induced by~$\pwz$ (see \Cref{Rem:pwz-exact}).
\end{Lem}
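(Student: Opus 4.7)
The plan is to construct both functors at the derived level, verify that $\pwz$ is a section of $\gr$, and then establish conservativity by a weight-filtration argument.

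For the construction, the functor $\gr\colon\Efil\to\Asplit$ is exact by the very definition of admissibility in $\Efil$ (\Cref{Def:Afil}: admissible sequences are exactly those whose image under $\gr$ is split exact) and is a tensor functor by \Cref{Lem:gr-tensor}. Because $\Asplit$ carries the minimal exact structure, every acyclic complex in $\Asplit$ is contractible, so $\Db(\Asplit)=\Kb(\cA)$ and $\gr$ descends to a tt-functor $\DEfil\to\Kb(\cA)$. Analogously, $\pwz\colon\Asplit\to\Efil$ is exact (\Cref{Rem:pwz-exact}) and tensor, inducing a tt-functor $\pwz\colon\Kb(\cA)\to\DEfil$. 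The composite $\gr\circ\pwz$ is already isomorphic to the identity at the level of $\cA$, since $\gr^0(\pwz(M))=M$ and $\gr^n(\pwz(M))=0$ for $n\neq 0$; hence $\pwz$ is a section of $\gr$ at the derived level.

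The crux is conservativity. The key observation is that every morphism in $\Efil$ preserves the weight filtration, so for a complex $X\in\Kb(\Efil)$ the differentials decompose into their weight components $\gr^m(d_i)\colon\gr^m(X_i)\to\gr^m(X_{i-1})$ for each $m\in\bbZ$. Consequently $\gr(X)$ splits canonically as a direct sum of complexes in $\cA$,
\[
\gr(X)\ \cong\ \bigoplus_{m\in\bbZ}\gr^m(X),
\]
the sum being finite by boundedness of the filtration. In particular, if $\gr(X)=0$ in $\Kb(\cA)$, then each summand $\gr^m(X)=0$ in $\Kb(\cA)$ as well.

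To finish, apply the weight filtration of $X$ termwise, using the notation of \Cref{Rem:a^>m}. For each $m$ in the finite range of weights appearing in $X$, the sequence
\[
X^{\geq m+1}\into X^{\geq m}\onto\pwz(\gr^m(X))(m)
\]
is degreewise admissible in $\Efil$, since applying $\gr$ to it yields the obvious split inclusion/projection between $\bigoplus_{p\geq m}\gr^p(X_i)$ and $\bigoplus_{p\geq m+1}\gr^p(X_i)$. It therefore induces a distinguished triangle in $\DEfil$. When $\gr(X)=0$, each $\gr^m(X)$ vanishes in $\Kb(\cA)$, so $\pwz(\gr^m(X))(m)=0$ in $\DEfil$ (as the image of zero under the tt-functor $\pwz(-)(m)$); the triangles degenerate and yield $X^{\geq m+1}\cong X^{\geq m}$ in $\DEfil$ for every $m$. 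Iterating from the top weight down gives $X\cong 0$ in $\DEfil$. The only non-formal step is recognizing the canonical splitting of $\gr(X)$ as a direct sum of complexes indexed by weight; a naive attempt to lift a contracting homotopy of $\gr(X)$ in $\Kb(\cA)$ to one in $\Kb(\Efil)$ would fail because such homotopies need not respect the filtration, and the weight-filtration reduction is precisely what bypasses this difficulty.
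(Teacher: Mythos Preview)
Your proof is correct, but it takes a genuinely different route from the paper's. The paper argues by induction on the \emph{homological} length of the complex: if $\gr(A)$ is contractible then the top differential $\gr(A_n\to A_{n-1})$ is a split monomorphism in~$\cA$, hence $d_n$ is an admissible monomorphism in~$\Efil$ (\Cref{Rem:gr-split-mono}), and one peels off the top degree to get a shorter quasi-isomorphic complex. You instead slice by \emph{weight}, using the degreewise-admissible filtration $X^{\ge m+1}\into X^{\ge m}\onto \pwz(\gr^m X)(m)$ and the resulting triangles in~$\DEfil$. Both inductions work; the paper's is a touch more economical (no need to set up the weight triangles or invoke the direct-sum decomposition of $\gr(X)$), while yours isolates a useful structural fact---the weight filtration on complexes with successive quotients in the image of~$\pwz$---that is of independent interest and in the spirit of weight-structure arguments.
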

\begin{proof}
Since $\gr$ is a tensor functor (\Cref{Lem:gr-tensor}), the induced $\gr:\DEfil\to\Kb(\cA)$ is indeed a tt-functor. For conservativity, let $A\in \DE$ be a complex in~$\Efil$ such that $\gr(A)=0$. We can assume $A=(\cdots 0\to A_n\to \cdots \to A_0\to 0\cdots)$ and proceed by induction on~$n$. The cases $n=0$ and~$n=1$ are trivial. By assumption $\gr(A)$ is homotopically trivial, hence $\gr(A_n\to A_{n-1})$ is a split monomorphism in~$\cA$. This means that $d\colon A_n\to A_{n-1}$ is an admissible monomorphism in~$\Efil$; see \Cref{Rem:gr-split-mono}. Therefore $A$ is spliced together from an admissible exact sequence and a shorter complex $A'=(\cdots 0 \to \coker(d_n)\to A_{n-2}\to \cdots \to A_0\to 0\cdots)$ as usual:
\[
\xymatrix@R=.5em{
\cdots 0 \ar[r]
& A_n \ar@{ >->}[r]^-{d_n}
& A_{n-1} \ar@{->>}[rd] \ar[rr]^-{d_{n-1}}
&& A_{n-2} \ar[r]
& \cdots
\\
&&& \coker(d_n) \ar[ru]
}
\]
Therefore, in $\DE$, we have $A\simeq A'$ and $\gr(A')=0$. By induction hypothesis, we have $A'=0$ and thus $A=0$. The last statement is easy from $\gr\circ\pwz=\Id_{\cA}$.
\end{proof}

\begin{Exa}
   \label{Exa:fundamental-sequences}%
   Recall the fundamental short exact sequence of $kC_2$-modules~\eqref{eq:extension-S}
   \begin{equation*}
     0\to k\xto{\eta}kC_2\xto{\epsilon}k\to 0.
   \end{equation*}
We may think of these objects as pure of weight zero, that is, we can apply $\pwz$ (\Cref{Exa:filt-0}) to the complex~$\fundpur$ of~\eqref{eq:fund-pur}, and thus obtain a sequence in~$\Afil$
\begin{equation}
  \label{eq:fundamental-nonexact-sequence}
  0\to \unit\xto{\eta}\E_0\xto{\epsilon}\unit\to 0
\end{equation}
which we denote by~$\fund_0=\pwz(\fundpur)$. (As a complex in~$\Afil$, it is still viewed as non-zero in homological degrees $2,1,0$.) As $\fund_0$ is \emph{not} an admissible sequence in $\Efil$, since $\gr^0(\fund_0)=\fundpur$ is not split, the complex $\fund_0$ is a non-zero object of~$\DE$. It would be acyclic in~$\Aqab$ though.

On the other hand, there is an infinite family of admissible exact sequences in~$\Efil$
   \begin{equation}
     \label{eq:fundamental-exact-sequences}
     \fund_\ell=\kern5em\unit(\ell)\xinto{\eta}\E_\ell\xonto{\epsilon}\unit\kern9em
   \end{equation}
for any $\ell\geq 1$, satisfying $\fgt(\fund_\ell)=\fundpur$. (Recall $\E_\ell$ from \Cref{Cons:e_l}.) We call these~$\fund_\ell$ the \emph{fundamental admissible exact sequences}. Of particular importance is
\begin{equation}
\label{eq:fund_1}%
\fund_1=\kern5em\unit(1)\xinto{\eta}\E_1\xonto{\epsilon}\unit\kern9em
\end{equation}
or in expanded form:
\vskip-\baselineskip\vskip-\baselineskip
\begin{equation*}
\kern5em\vcenter{\xymatrix@H=1.5em@R=.8em{
&&\ar@{}|-{\vdots}[d]&\ar@{}|-{\vdots}[d]&\ar@{}|-{\vdots}[d]\\
&&k\ar[r]^-{\eta}&kC_2\ar[r]^-{\epsilon}&k&&\textrm{(weight }-1)\\
&&k\ar[r]^-{\eta}\ar@{^(->}[u]_-{=}&kC_2\ar@{^(->}[u]_-{=}\ar[r]^-{\eps}&k\ar@{^(->}[u]_-{=}&&\textrm{(weight }0)\\
&&k\ar[r]^-{1}\ar@{^(->}[u]_-{=}&k\ar@{^(->}[u]_{\eta}\ar[r]&0\ar@{^(->}[u]&&\textrm{(weight }1)\\
&&0\ar[r]\ar@{^(->}[u]&0\ar@{^(->}[u]\ar[r]&0\ar@{^(->}[u]&&\textrm{(weight }2)\\
&&\ar@{}|(.7){\vdots}[u]&\ar@{}|(.7){\vdots}[u]&\ar@{}|(.7){\vdots}[u]}}
\end{equation*}
\vskip-\baselineskip\vskip-\baselineskip
\end{Exa}

 \begin{Prop}
   \label{Prop:Afil-flatness}%
   Every object in $\Efil$ is flat, \ie the category $\Efil$ is tensor-exact.
 \end{Prop}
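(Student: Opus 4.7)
My approach is to verify admissibility directly from the definition, leveraging the fact that $\gr$ is a tensor functor (\Cref{Lem:gr-tensor}). Given $A\in\Efil$ and an admissible sequence $X\xinto{f}Y\xonto{g}Z$ in $\Efil$, I want to show that $A\otimes X\to A\otimes Y\to A\otimes Z$ is admissible.

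By \Cref{Def:Afil}, admissibility has two parts: the composition must vanish, and the $\gr$-image of the sequence must be split exact in $\Asplit$. The first condition is immediate from additivity of $\otimes$. For the second, the key input will be that $\gr(A\otimes-)\cong\gr(A)\otimes\gr(-)$ naturally in both variables, so the graded version of the tensored sequence is obtained from the split exact sequence $(\gr(f),\gr(g))$ by tensoring with $\gr(A)$ inside $\cA$. Since tensoring with a fixed object is an additive functor, it sends split exact sequences to split exact sequences, and the conclusion follows.

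I expect no genuine obstacle here; the proof essentially collapses once \Cref{Lem:gr-tensor} is invoked. The only subtle point to check is that the tensored sequence actually forms a kernel-cokernel pair in $\Afil$ (which is part of being admissible in the exact structure of $\Efil$). This will be automatic from the conservativity and exactness of $\gr$ on the quasi-abelian category $\Aqab$ discussed in \Cref{Rem:quasi-abelian}: any sequence whose $\gr$-image is (split) exact in $\cA$ is intrinsically exact in $\Aqab$, hence a kernel-cokernel pair. In particular, no reduction to the indecomposable summands $\unit(n),\E_\ell(m)$ of \Cref{Prop:KRS-Efil} is needed — the argument works uniformly via the tensor-exactness of $\gr$.
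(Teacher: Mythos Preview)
Your proof is correct and follows exactly the same route as the paper's, which simply says the result ``follows readily from \Cref{Lem:gr-tensor} and \Cref{Def:Afil}.'' Your extra paragraph about the kernel-cokernel condition is not strictly needed: by \Cref{Def:Afil}, a sequence in $\Efil$ is admissible precisely when the composite vanishes and the $\gr$-image is split exact, so once you have verified those two conditions you are done (the intrinsic exactness is already built into the cited fact that this defines an exact structure).
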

 \begin{proof}
   This follows readily from \Cref{Lem:gr-tensor} and \Cref{Def:Afil}.
 \end{proof}

\begin{Rem}
\label{Rem:enough-proj}%
Tensoring~\eqref{eq:fund_1} with an~$A\in \Afil$ shows that every $A$ receives an admissible epimorphism from~$\E_1\otimes A$. The latter are the projectives in~$\Efil$:
\end{Rem}

\begin{Prop}
\label{Prop:proj-Efil}%
The subcategory of projective objects of~$\Efil$, as an exact category, coincides with the thick $\otimes$-ideal $\add^\otimes(\E_1)$ generated by~$\E_{1}$, namely it consists of all  direct sums of $\E_0(i)$ and $\E_1(j)$ for $i,j\in \bbZ$.\,{\rm(\footnote{\,We write $\add^{\otimes}(\E_1)$ for the thick $\otimes$-ideal of~$\Afil$ generated by~$\E_1$, instead of $\ideal{\E_1}$. We reserve $\ideal{\E_1}$ for the tt-ideal generated by~$\E_1$ in upcoming tt-categories, like $\DE$ for instance.})}
\end{Prop}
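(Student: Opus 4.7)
The proof proceeds in three steps. First, I identify $\add^\otimes(\E_1)$ explicitly as the class $\cP$ of finite direct sums of $\E_0(i)$ and $\E_1(j)$ for $i,j\in\bbZ$. Applying \Cref{Prop:tens-e_l} to all possible tensor products of one of $\E_0(i),\E_1(j)$ with an arbitrary indecomposable $\E_\ell(m)$ or $\unit(n)$ of $\Afil$ yields again an object of $\cP$; together with Krull-Schmidt (\Cref{Cor:Afil-krull-schmidt}), this shows $\cP$ is a thick $\otimes$-ideal, and since $\E_1\in\cP$ we obtain $\add^\otimes(\E_1)\subseteq\cP$. Conversely $\E_1(j)=\E_1\otimes\unit(j)\in\add^\otimes(\E_1)$ and $\E_0(i)$ is a direct summand of $\E_1\otimes\E_0(i)=\E_0(i)\oplus\E_0(i+1)$.

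Second, I show that each member of $\cP$ is projective in $\Efil$, using~\eqref{eq:Hom(e_l,-)}. For $\E_0(i)$, $\Hom_{\Afil}(\E_0(i),A)\cong A^i$, and downward induction on weight (with $B^N=0$ for $N\gg 0$), combined with split-exactness of $\gr^n(g)$ for each $n$, shows that any admissible epi $g:B\onto C$ in $\Efil$ induces a short exact sequence on the weight-$i$ filtered pieces; hence $\E_0(i)$ is projective. For $\E_1(j)$, $\Hom_{\Afil}(\E_1(j),A)=\{x\in A^j\mid (1+\sigma)x\in A^{j+1}\}$; given $y\in C^j$ with $(1+\sigma)y\in C^{j+1}$, pick a $kC_2$-linear section $s$ of $\gr^j(g)$---available because admissibility in $\Efil$ is defined via splittings in $\Asplit=\mmod{kC_2}$---and observe that $(1+\sigma)\,s(\bar y)=s((1+\sigma)\bar y)=0$. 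Any lift $u\in B^j$ of $s(\bar y)$ then satisfies $(1+\sigma)u\in B^{j+1}$, and correcting $u$ by a preimage $v\in B^{j+1}$ of $g(u)-y\in C^{j+1}$ produces the required lift $x=u-v$.

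Third, I rule out every other indecomposable listed in \Cref{Prop:KRS-Efil}. By twisting (an auto-equivalence of $\Efil$) it suffices to handle $\unit$ and $\E_\ell$ for $\ell\geq 2$. For $\unit$, the admissible SES $\fund_1$ of~\eqref{eq:fund_1} gives $\Hom_{\Afil}(\unit,\E_1)=(\E_1^0)^{C_2}=k\cdot(1+\sigma)$, mapped by $\eps$ to $\eps(1+\sigma)=0$, so $\id_\unit$ does not lift. For $\E_\ell$ with $\ell\geq 2$, tensoring $\fund_1$ with $\E_\ell$---exact by \Cref{Prop:Afil-flatness}---and invoking \Cref{Prop:tens-e_l} produces an admissible epi $\E_1\oplus\E_1(\ell)\onto\E_\ell$; a section of this epi would display $\E_\ell$ as a direct summand of $\E_1\oplus\E_1(\ell)$, contradicting \Cref{Cor:Afil-krull-schmidt}. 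Combined with Krull-Schmidt once more, any projective must lie in $\cP$, completing the proof. The most delicate step is the lifting for $\E_1(j)$: it hinges on the $kC_2$-linearity (not merely $k$-linearity) of the section $s$, which is exactly what the special exact structure of $\Efil$ supplies and which lets $s$ commute with $1+\sigma$ so as to preserve the extra weight condition in~\eqref{eq:Hom(e_l,-)}.
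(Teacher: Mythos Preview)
Your proof is correct, and the key lifting argument for~$\E_1$ is essentially the paper's as well. The difference lies in how the two proofs establish the \emph{containment} ``projectives $\subseteq\add^\otimes(\E_1)$''. The paper does this structurally in one stroke: by rigidity and flatness (\Cref{Lem:fil-rigid}, \Cref{Prop:Afil-flatness}) the projectives form a $\otimes$-ideal, and since every object admits an admissible epi from $\E_1\otimes A$ (\Cref{Rem:enough-proj}), any projective $P$ receives a \emph{split} epi from $\E_1\otimes P$ and is therefore a summand of it. This immediately gives projectives $=\add^\otimes(\E_1)$ once $\E_1$ is known to be projective, with no need to verify $\E_0$ separately or to rule out $\unit$ and~$\E_\ell$ for $\ell\ge 2$ by hand. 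Your Step~3, while correct and pleasantly concrete (the non-projectivity of $\E_\ell$ via Krull-Schmidt is a nice touch), is thus replaced in the paper by a single conceptual observation. The trade-off is that your route is more elementary and self-contained, whereas the paper's is shorter and exhibits the $\otimes$-ideal structure of the projectives as the organizing principle.
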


\begin{proof}
By rigidity (\Cref{Lem:fil-rigid}) and flatness of all objects~$A$ (\Cref{Prop:Afil-flatness}), the projectives~$P$ form a $\otimes$-ideal since $\Hom(A\otimes P,-)\cong \Hom(P,A^\vee\otimes-)$. In view of \Cref{Cor:Afil-krull-schmidt}, \Cref{Prop:tens-e_l} and \Cref{Rem:enough-proj}, it suffices to show that $\E_1$ is projective in~$\Efil$. To see that, we need to show that it has the lifting property with respect to admissible epimorphisms. Recall the description $\Homfil(\E_1,A)=\SET{x\in A^0}{(1+\sigma)x\in A^1}$ given in \Cref{Cons:e_l}. Consider now an admissible epimorphism $g\colon B\onto C$ in~$\Efil$ and specifically the part around $\gr^0$:
\[
\xymatrix@R=1em@H=1em{
\gr^0(B) \ar@{->>}[r]^-{g}
& \gr^0(C)
\\
B^0 \ar@{->>}[r]^-{g} \ar@{->>}[u]
& C^0 \ar@{->>}[u]
\\
B^1 \ar@{->>}[r]^-{g} \ar@{ >->}[u]
& C^1 \ar@{ >->}[u]
}
\]
in which the rows are epimorphisms and the columns exact in~$\cat{A}$, and the top row is furthermore a split epimorphism. Suppose given $z\in C^0$ such that $(1+\sigma)z\in C^1$. We need to find~$y\in B^0$ such that $g(y)=z$ and $(1+\sigma)y\in B^1$. Consider first $\bar z\in \gr^0(C)$ and note that $(1+\sigma)\bar z=0$, that is, $\bar z$ is $C_2$-fixed. Since the top epimorphism is split, we can lift $\bar z$ to some $\bar y\in \gr^0(B)$ still $C_2$-fixed. In other words, we have found $y\in B^0$ such that $(1+\sigma)y\in B^1$ and whose image in $\gr^0(C)$ is~$\bar z$, \ie the same as our initial~$z$. We do not know that $g(y)=z$, we only know this modulo~$C^1$. Hence there exists $z'\in C^1$ such that $z=g(y)+z'$. Since $g\colon B^1\onto C^1$ is an epimorphism, we can pick $y'\in B^1$ such that $g(y')=z'$. Direct verification shows that the element $y'':=y+y'\in B^0$ satisfies $(1+\sigma)y''\in B^1$ and $g(y'')=z$, hence is a lift of the initial~$z$ under $\Homfil(\E_1,B)\to \Homfil(\E_1,C)$.
\end{proof}

\begin{Cor}
\label{Cor:Afil-Frobenius}%
The exact category $\Efil$ is Frobenius. In particular, its injective-projective objects are sums of $\E_0(i)$ and $\E_1(j)$ for $i,j\in\bbZ$ as in \Cref{Prop:proj-Efil}, and they form a $\otimes$-ideal.
\end{Cor}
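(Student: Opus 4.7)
The plan is to leverage the fact that almost all the work is already done: \Cref{Prop:proj-Efil} establishes that $\Efil$ has enough projectives, identifies the projective objects as the $\otimes$-ideal $\add^{\otimes}(\E_1)$ of direct sums of $\E_0(i)$ and $\E_1(j)$, and in particular settles the $\otimes$-ideal assertion. What remains is (i)~that there are enough injectives, and (ii)~that injectives coincide with projectives. I would deduce both points by transferring the projective theory through the duality $(-)^\vee$ of \Cref{Lem:fil-rigid}.

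The crucial technical ingredient I would establish first is that $(-)^\vee\colon \Efil\to \Efil$ is an exact contravariant auto-equivalence, that is, that it sends admissible exact sequences to admissible exact sequences. By \Cref{Def:Afil} admissibility in $\Efil$ is detected by $\gr\colon \Efil\to \Asplit$, and \Cref{Lem:fil-rigid} provides a natural isomorphism $\gr^n(A^\vee)\cong (\gr^{-n}A)^\vee$, so applying $\gr$ after $(-)^\vee$ amounts to dualizing in $\cA$ up to reindexing of weights---and dualizing in $\cA$ manifestly preserves split exactness. The intrinsic kernel--cokernel property of the dualized sequence is standard for a rigid category with exact tensor; alternatively one passes through the quasi-abelian structure $\Aqab$ of \Cref{Rem:quasi-abelian} where duality is visibly exact, and recovers admissibility in $\Efil$ from the $\gr$-computation above.

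Once exactness of $(-)^\vee$ is in hand, the remaining points follow formally. For enough injectives, I would take $A\in\Efil$, choose an admissible epimorphism $P\onto A^\vee$ with $P$ projective, and dualize to obtain an admissible monomorphism $A\cong A^{\vee\vee}\into P^\vee$. That $P^\vee$ is injective---which gives (i) and one half of (ii) at once---I would check from the rigid-tensor identities
\[
\Homfil(X, P^\vee)\cong \Homfil(X\otimes P, \unit)\cong \Homfil(P, X^\vee),
\]
valid for all $X\in\Efil$: since $P$ is projective and $(-)^\vee$ is exact, the right-hand functor of $X$ is exact, hence so is the left. For the converse inclusion in~(ii), \Cref{Lem:special-dual} gives $\E_\ell(m)^\vee\cong \E_\ell(-m-\ell)$, so $\add^{\otimes}(\E_1)$ is stable under $(-)^\vee$; the symmetric version of the previous paragraph's argument shows that $I^\vee$ is projective whenever $I$ is injective, and then $I\cong I^{\vee\vee}$ is the dual of a projective, hence itself projective.

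The main obstacle is the exactness assertion for duality on $\Efil$ with its non-standard class of admissible sequences; once that is settled, the remaining arguments amount to formal rigidity and adjunction bookkeeping.
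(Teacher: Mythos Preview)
Your proposal is correct and follows essentially the same route as the paper: both deduce the Frobenius property from the fact that rigidity furnishes an exact duality $(-)^\vee\colon(\Efil)^{\op}\isoto\Efil$, together with \Cref{Lem:special-dual} and \Cref{Prop:proj-Efil}. The paper simply asserts that duality is an equivalence of exact categories and concludes in two lines, whereas you spell out why $(-)^\vee$ preserves admissibility (via the $\gr$-criterion and $\gr^n(A^\vee)\cong(\gr^{-n}A)^\vee$) and make the passage from projectives to injectives explicit---but the underlying argument is the same.
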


\begin{proof}
Rigidity (\Cref{Lem:fil-rigid}) provides an equivalence of exact categories $(-)^\vee:(\Efil)^{\op}\isoto\Efil$. As $\E_1^\vee\simeq \E_1(-1)$ by \Cref{Lem:special-dual}, it follows that injective and projective objects in $\Efil$ coincide. There are enough of them by \Cref{Rem:enough-proj}.
\end{proof}

\begin{Rem}
\label{Rem:Aqab-Frobenius}%
We can also consider the quasi-abelian structure~$\Aqab$ on~$\Afil$, as in \Cref{Rem:quasi-abelian}. Since it admits more exact sequences than $\Efil$, it will have less projectives and injectives. One easily verifies that the projectives and injectives coincide in~$\Aqab$ (using the same argument as above) and that they contain all sums of twists of~$\E_0$. Also, tensoring any object $A\in \Aqab$ with the intrinsically-exact sequence $\unit\into \E_0\onto \unit$, we see that $\Aqab$ is Frobenius with subcategory of projective-injective equal to the thick $\otimes$-ideal $\add^\otimes(\E_0)$ generated by~$\E_{0}$ (\cf \Cref{Prop:tens-e_l}).
\end{Rem}

We can now consider the derived category~$\DE$, which is tensor-triangulated, and whose spectrum we compute in \Cref{sec:Spc-DE}. We shall need the following fact which is direct from the exact structure discussed in the present section:

\begin{Lem}
\label{Lem:e_l-e_l+1}%
For $\ell\ge 1$, we have an isomorphism in $\DEfil$
\[
\cone(\E_\ell\xto{\iota} \E_{\ell+1})\simeq\cone(\beta\colon \unit(\ell)\to \unit(\ell+1))
\]
where $\iota\colon \E_\ell\to \E_{\ell+1}$ is underlain by $\id_{kC_2}$. (Recall $\E_\ell$ from \Cref{Cons:e_l}.)
\end{Lem}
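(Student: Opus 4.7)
The plan is to exhibit a morphism of admissible short exact sequences in~$\Efil$ whose horizontal components are~$\beta$ and~$\iota$ and whose right-most vertical column is the identity of~$\unit$; then a standard 3$\times$3 argument in~$\DEfil$ will identify the two cones.

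First, I would verify the commutativity of the square
\[
\vcenter{\xymatrix@R=1.4em{
\unit(\ell) \ar[r]^-{\beta} \ar[d]_-{\eta}
& \unit(\ell+1) \ar[d]^-{\eta}
\\
\E_\ell \ar[r]^-{\iota}
& \E_{\ell+1}
}}
\]
in~$\Efil$. On underlying $kC_2$-modules both composites $\iota\circ\eta$ and $\eta\circ\beta$ reduce to~$\eta\colon k\to kC_2$, and using the explicit description of the filtrations in~\eqref{eq:e_l} one checks that the common image lies in $\E_{\ell+1}^{\ell+1}$, so the square lives in~$\Efil$. Together with the obvious equality $\id_{\unit}\circ\eps=\eps\circ\iota$ on the right, this yields a morphism of admissible short exact sequences
\[
\vcenter{\xymatrix@R=1.4em{
\unit(\ell) \ar@{>->}[r]^-{\eta} \ar[d]_-{\beta}
& \E_\ell \ar@{->>}[r]^-{\eps} \ar[d]^-{\iota}
& \unit \ar@{=}[d]
\\
\unit(\ell+1) \ar@{>->}[r]^-{\eta}
& \E_{\ell+1} \ar@{->>}[r]^-{\eps}
& \unit
}}
\]
between the fundamental admissible sequences $\fund_\ell$ and $\fund_{\ell+1}$ of~\eqref{eq:fundamental-exact-sequences}.

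Passing to~$\DEfil$, the two rows become the distinguished triangles associated with~$\fund_\ell$ and~$\fund_{\ell+1}$, and the above data constitutes a morphism of triangles with vertical components~$(\beta,\iota,\id_\unit)$. I would then invoke the 3$\times$3 lemma (equivalently, the octahedron axiom) applied to the evident $3\times 3$ diagram whose first two rows are the defining triangles of $\cone(\beta)$ and $\cone(\iota)$ and whose first two columns are the triangles from $\fund_\ell$ and $\fund_{\ell+1}$. The conclusion is that the third column is also a distinguished triangle
\[
\cone(\beta)\longrightarrow \cone(\iota)\longrightarrow \cone(\id_\unit)\longrightarrow \cone(\beta)[1],
\]
and since $\cone(\id_\unit)=0$ the first arrow is an isomorphism, which is the desired identification.

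The only mildly delicate point is the first step: checking that the square commutes as a square \emph{in}~$\Efil$, including the compatibility of filtrations (and not merely of underlying modules), so that the morphism of short exact sequences is genuinely a morphism in the exact category. Once this is in place, the 3$\times$3 application is entirely formal.
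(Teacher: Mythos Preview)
Your proof is correct and follows essentially the same route as the paper. The paper constructs the morphism of complexes $s\colon \cone(\beta)\to\cone(\iota)$ given degreewise by~$\eta$ (which is exactly your left square read vertically), then observes that the vertical completion by the fundamental sequences~$\fund_\ell,\fund_{\ell+1}$ makes $\cone(s)$ isomorphic in~$\DEfil$ to the contractible complex $[\unit\xto{1}\unit]$; this is the concrete form of your 3$\times$3 argument, using \Cref{Rmd:DbE} in place of an abstract invocation of the octahedron.
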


\begin{proof}
Consider the following morphism of complexes~$s$, where morphisms in~$\Afil$ are described as usual by the underlying morphisms of $kC_2$-modules
\[
\xymatrix@R=1em{
\cone(\unit(\ell) \xto{\beta} \unit(\ell+1))= \ar[d]_-{s}
& \cdots 0 \ar[r] \ar@<.7em>[d]
& \unit(\ell) \ar[r]^-{1} \ar@{>->}[d]^-{\eta}
& \unit(\ell+1) \ar[r] \ar@{>->}[d]^-{\eta}
& 0 \cdots \ar@<-.7em>[d]
\\
\cone(\E_\ell\xto{\iota}\E_{\ell+1})=
& \cdots 0 \ar[r] \ar@<.7em>[d]
& \E_{\ell} \ar[r]^-{1} \ar@{->>}[d]^-{\eps}
& \E_{\ell+1} \ar[r] \ar@{->>}[d]^-{\eps}
& 0 \cdots \ar@<-.7em>[d]
\\
& \cdots 0 \ar[r]
& \unit \ar[r]^-{1}
& \unit \ar[r]
& 0 \cdots
}
\]
We complete vertically by using the fundamental admissible exact sequences~\eqref{eq:fundamental-exact-sequences}. Hence the cone of $s\colon \cone(\beta_{\unit(\ell)})\to \cone(\E_{\ell}\to \E_{\ell+1})$ is isomorphic in~$\DEfil$ to the bottom complex (\Cref{Rmd:DbE}) which is trivial. Thus $s$ is an isomorphism.
\end{proof}

\begin{Cor}
\label{Cor:generators-DE}%
The objects $\{\unit(1),\unit(-1),\E_0,\E_1\}$ generate $\DE$ as a tensor-triangulated category.
\end{Cor}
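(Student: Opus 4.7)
The plan is to reduce, via the Krull--Schmidt description of $\Efil$ in \Cref{Prop:KRS-Efil}, the generation problem in $\DE=\Db(\Efil)$ to generating the indecomposable filtered modules $\unit(n)$ and $\E_\ell(m)$, and then use the tensor structure together with \Cref{Lem:e_l-e_l+1} to reach them one by one.

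First I would note that since $\DE=\Db(\Efil)$, every object of $\DE$ is a bounded complex in $\Efil$ and hence lies in the thick triangulated subcategory generated by the objects of $\Efil$ placed in degree zero. Thus it suffices to prove that every object of $\Efil$ lies in the tt-subcategory $\cat{T}:=\thick^\otimes(\unit(1),\unit(-1),\E_0,\E_1)$ generated by the four listed objects. By \Cref{Prop:KRS-Efil}, every object of $\Efil$ is a finite direct sum of twists $\unit(n)$ with $n\in\bbZ$ and $\E_\ell(m)$ with $\ell\ge 0$, $m\in\bbZ$, so the task reduces to showing $\unit(n),\,\E_\ell(m)\in\cat{T}$ for all such $n,m,\ell$.

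Next I would dispatch the twists. Since $\unit(n)\otimes\unit(n')\cong\unit(n+n')$, iterated tensor powers of the two generators $\unit(\pm1)$ produce $\unit(n)$ for every $n\in\bbZ$. Consequently, for any filtered module $A$ already in $\cat{T}$, all of its twists $A(m)=A\otimes\unit(m)$ also lie in $\cat{T}$. This reduces the problem for the $\E_\ell(m)$ to showing $\E_\ell\in\cat{T}$ for all $\ell\ge 0$.

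For this last step I proceed by induction on $\ell\ge 0$. The base cases $\ell=0,1$ are given. For the induction step, assuming $\E_\ell\in\cat{T}$, I invoke \Cref{Lem:e_l-e_l+1} which provides an isomorphism in $\DE$
\[
\cone(\E_\ell\xto{\iota}\E_{\ell+1})\simeq\cone\bigl(\beta\colon\unit(\ell)\to\unit(\ell+1)\bigr).
\]
The right-hand cone is built from $\unit(\ell)$ and $\unit(\ell+1)$, both of which are in $\cat{T}$ by the previous step, so the cone is in $\cat{T}$. Together with $\E_\ell\in\cat{T}$, the exact triangle $\E_\ell\to\E_{\ell+1}\to\cone(\iota)\to\E_\ell[1]$ then forces $\E_{\ell+1}\in\cat{T}$, completing the induction.

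There is no real obstacle here: all of the work has been done in \Cref{Prop:KRS-Efil} (Krull--Schmidt classification of indecomposables in $\Efil$) and \Cref{Lem:e_l-e_l+1} (the key inductive identity). The proof is essentially a three-line bookkeeping argument assembling those results.
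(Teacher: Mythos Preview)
Your proof is correct and takes essentially the same approach as the paper, which simply states that the result is immediate from \Cref{Prop:KRS-Efil} and \Cref{Lem:e_l-e_l+1}. You have unpacked exactly the intended argument: reduce to indecomposables via Krull--Schmidt, handle all twists using $\unit(\pm1)$, and climb from $\E_0,\E_1$ to all $\E_\ell$ by induction on $\ell$ using the triangle provided by \Cref{Lem:e_l-e_l+1}.
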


\begin{proof}
This is immediate from \Cref{Prop:KRS-Efil} and \Cref{Lem:e_l-e_l+1}.
\end{proof}

\bigbreak\goodbreak
\section{A central localization}
\label{sec:central}%
\medbreak

In Sections~\ref{sec:filt-repr-setup} and~\ref{sec:filt-Frobenius}, we turned the category $\Afil$ of filtered objects in~$\cA=\mmod{kC_2}$ into a tensor-exact Frobenius category~$\Efil$. The present section is dedicated to computing a central localization of its derived category~$\DE$. This will be a key ingredient in the computation of its spectrum. Along the way, we build a functor $\tfgt\colon \DE\to \KA$, different from the total-graded~$\gr$ of \Cref{Lem:gr-conservative-and-section}.

\begin{Rem}
\label{Rem:pwz-(-)^0}%
The idea is to discuss~$\Afil$ `around weight zero'. We have already seen in \Cref{Exa:filt-0} the inclusion of `pure-weight-zero' objects $\pwz\colon \cA\to \Afil$, mapping any $kC_2$-module to the filtered object pure in filtration degree zero. It admits a right adjoint $A\mapsto A^0$, taking the \wzp. Indeed, a morphism $f\colon \pwz(M)\to A$ is given by the underlying $f\colon M\to \fgt(A)$ which must land in~$A^0$ to respect the filtration, with no other condition. Furthermore, this adjunction
\[
\xymatrix@R=1.5em{\cA \ar@<-.3em>[d]_-{\pwz}
\\
\Afil  \ar@<-.3em>[u]_-{(-)^0}
}
\]
satisfies a projection formula, \ie there exists a natural isomorphism
\begin{equation}
\label{eq:projection-formula}%
(A\otimes \pwz(M))^0\cong A^0\otimes M
\end{equation}
for $M\in \cA$ and $A\in \Afil$. This holds for general reasons; see~\eqref{eq:projection-formula-general}. But~\eqref{eq:projection-formula} can also be seen as an \emph{equality} of submodules of~$\fgt(A)\otimes M$. Indeed, the \wzp\ of $A\otimes \pwz(M)$ consists of $\sum_{i+j=0}A^{i}\otimes \pwz(M)^{j}$ and we can replace $\pwz(M)^{j}=0$ for $j>0$ and $\pwz(M)^{j}=M$ for $j\le0$ and use $A^i\subseteq A^0$ for all~$i>0$.
\end{Rem}

\begin{Exa}
\label{Exa:(e_1(m))^0}%
Consider in~$\Afil$ the object~$\E_1= (\cdots \subset 0\subset k\xinto{\eta} kC_2 = kC_2 = \cdots)$ of \Cref{Cons:e_l}, with~$k$ in weight~$1$. By definition, we have
\[
(\E_1(m))^0=
\left\{
\begin{array}{cl}
kC_2 & \textrm{if }m\ge 0
\\
k & \textrm{if }m=-1
\\
0 & \textrm{if }m<-1.
\end{array}
\right.
\]
Furthermore, under the functor~$(-)^0\colon \Afil\to \cA$, the map $\eta\eps\colon \E_1(m)\to \E_1(m-1)$ goes to $\eta\eps\colon kC_2\to kC_2$ when $m>0$ and to $\eps\colon kC_2\onto k$ when $m=0$, and necessarily to zero otherwise. On the other hand, for $r\ge0$, the map $\beta^r\colon \E_1(m)\to \E_1(m+r)$ goes under~$(-)^0$ to $\id\colon kC_2\to kC_2$ when $m\ge 0$ and to $\eta\colon k\into kC_2$ when $m=-1$, and necessarily to zero otherwise. (For $\beta\colon \Id\to (1)$ see \Cref{Not:beta}.)
\end{Exa}

\begin{Rem}
\label{Rem:rwz}%
Let us add exact structures to the discussion of \Cref{Rem:pwz-(-)^0}. The left adjoint $\pwz\colon \Asplit\to \Efil$ is exact but its right adjoint~$(-)^0\colon \Efil\to \Asplit$ is not, since $(\fund_1)^0=\fundpur$ is not split exact. So we need to right-derive $(-)^0$ to obtain a well-defined functor on~$\DE$ taking values in~$\Db(\Asplit)=\Kb(\cA)$.

Every object $A\in\Efil$ admits a canonical injective resolution $\injres\otimes A$ where $\injres$ is the injective resolution of~$\unit$. (\Cref{Cor:Afil-Frobenius}.) The complex~$\injres$ in~$\Efil$ is obtained by splicing together the fundamental exact sequences $\unit(i)\into \E_1(i-1)\onto \unit(i-1)$ as in~\eqref{eq:fundamental-exact-sequences}, for $i\le 0$, and the resulting quasi-isomorphism $\tilde\eta\colon\unit\to \injres$ in~$\Efil$ is
\begin{equation}
\label{eq:I}%
\vcenter{\xymatrix@R=2em{
\unit= \ar@<-.5em>[d]^-{\tilde\eta}
&\cdots \ar[r]
& 0 \ar[r] \ar[d]
& \unit \ar[r] \ar[d]^-{\eta}
& 0 \ar[r] \ar[d]
& 0 \ar[d] \ar[r]
& \cdots
\\
\injres=
& \cdots \ar[r]
& 0 \ar[r]
& \E_{1}(-1) \ar[r]^-{\eta\eps}
& \E_{1}(-2) \ar[r]^-{\eta\eps}
& \E_{1}(-3) \ar[r]^-{\eta\eps}
& \cdots}}
\end{equation}
Consider the triangulated functor~$\mathrm{R}((-)^0)=(\injres\otimes-)^0\colon \Kb(\Afil)\to \Km(\cA)$.
\end{Rem}

\begin{Prop}
\label{Prop:rwz}%
The above functor~$(\injres\otimes -)^0$ takes values in the bounded subcategory $\Kb(\cA)$ of~$\Km(\cA)$ and yields a right adjoint to~$\pwz\colon\KA\to \DE$
\[
\xymatrix@R=1.5em{
\KA \ar@<-.5em>[d]_-{\pwz}
\\
\DEfil \ar@<-.5em>[u]_-{\Displ\rwz:=(\injres\otimes-)^0}
}
\]
called $\rwz$ for \emph{`right-derived weight zero'}. We have a projection formula
\begin{equation}
\label{eq:projection-formula-rwz}
\rwz(A\otimes \pwz(M))\cong \rwz(A)\otimes M
\end{equation}
for every $A\in \DEfil$ and $M\in \Kb(\cA)$, given degreewise by~\eqref{eq:projection-formula}. Furthermore $\rwz(\unit)\cong\unit$ and the unit $\Id_{\KA}\to \rwz\circ\pwz$ of the adjunction is an isomorphism, hence the pure-weight-zero functor $\pwz\colon \KA\to \DE$ is fully faithful.
\end{Prop}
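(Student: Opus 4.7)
The proof will proceed in three stages: showing $\rwz$ lands in $\Kb(\cA)$, establishing the adjunction (the main obstacle), and deducing the remaining structural statements.

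\emph{Step~1 (Boundedness).} Given $A\in\Kb(\Efil)$, every term of~$A$ lives in weights bounded above by some~$N$. Unpacking $(B_1\otimes B_2)^0=\sum_{p+q=0}B_1^p\otimes B_2^q$ and using from \Cref{Exa:(e_1(m))^0} that $\E_1(-i)^q=0$ for $q\geq 2-i$, we see that $(\E_1(-i)\otimes A)^0$ only involves terms~$A^p$ with $p\geq i-1$; hence it vanishes for $i\gg 0$. Together with the obvious upper bound inherited from $\injres$, this shows that $\rwz(A)=(\injres\otimes A)^0$ is a bounded complex in~$\cA$, \ie lies in $\Kb(\cA)$.

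\emph{Step~2 (Adjunction, the main obstacle).} The non-derived adjunction $\pwz\dashv(-)^0$ extends termwise to chain complexes and respects homotopies, yielding a natural isomorphism
\[
\Hom_{\KE}(\pwz(M),B)\cong\Hom_{\KA}(M,B^0)
\]
for every $M\in\Kb(\cA)$ and $B\in\Kb(\Efil)$. Applied to $B=\injres\otimes A$, the right-hand side becomes $\Hom_{\KA}(M,\rwz(A))$. The challenge is to identify the left-hand side with $\Hom_{\DE}(\pwz(M),A)$. The quasi-isomorphism $\tilde\eta\otimes A\colon A\to\injres\otimes A$ from~\eqref{eq:I} makes the two targets isomorphic in~$\DE$; what remains is to check that the canonical map $\Hom_{\KE}(\pwz(M),\injres\otimes A)\to\Hom_{\DE}(\pwz(M),\injres\otimes A)$ is an isomorphism. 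This is the standard computation of a right-derived functor via an adapted class of objects, and the crucial input is the Frobenius property (\Cref{Cor:Afil-Frobenius}): an admissible short exact sequence in~$\Efil$ whose three terms are injective-projective (\ie in $\add^\otimes(\E_1)$) automatically splits, so its image under~$(-)^0$ is split exact in~$\cA$, \ie admissible in~$\Asplit$. Thus injective-projectives form a class adapted to~$(-)^0\colon\Efil\to\Asplit$, and the standard formalism of derived functors then produces the adjunction $\pwz\dashv\rwz$ despite $\injres$ being unbounded below.

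\emph{Step~3 (Projection formula and consequences).} The projection formula follows termwise from~\eqref{eq:projection-formula} applied to $\injres\otimes A$, giving $\rwz(A\otimes\pwz(M))=(\injres\otimes A)^0\otimes M=\rwz(A)\otimes M$. For $\rwz(\unit)=\injres^0$, the description of~$\injres$ in~\eqref{eq:I} together with $\E_1(m)^0=0$ for $m\leq -2$ (\Cref{Exa:(e_1(m))^0}) leaves only the degree-zero term $\E_1(-1)^0=k$, so $\rwz(\unit)\cong\unit$ in~$\KA$. Combining these two facts, the unit of the adjunction reads $M\to\rwz(\pwz(M))\cong\rwz(\unit)\otimes M\cong M$; tracking the isomorphism through the termwise adjunction counit identifies it with the identity on~$M$, so the unit is invertible, and full faithfulness of~$\pwz$ is the formal consequence.
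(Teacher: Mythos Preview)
Your proof is correct and follows essentially the same route as the paper's: both establish boundedness by a weight-shift argument, derive the adjunction from the non-derived one via the fact that homs into complexes of injectives are computed in the homotopy category, and deduce the projection formula and $\rwz(\unit)\cong\unit$ degreewise.

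One technical imprecision in Step~2: you state the termwise adjunction for $B\in\Kb(\Efil)$ and then apply it to $B=\injres\otimes A$, which lies only in $\Km(\Efil)$. The paper handles this cleanly by working in $\Km(\Efil)$ and $\Der_{-}(\Efil)$ throughout, where the relevant fact (homs into a bounded-above complex of injectives agree in $\Km$ and $\Der_{-}$) applies directly. Your phrase ``despite $\injres$ being unbounded below'' shows you see the issue; the fix is simply to state the chain of isomorphisms at the $\Km$/$\Der_{-}$ level from the outset rather than appealing to ``standard formalism''.
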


\begin{proof}
The adjunction is a general fact about derived functors:
\[
\kern-.5em\begin{array}{rll}
\Hom_{\Der_{-}(\Efil)}(\pwz(M),A)\kern-.7em
& \cong \Hom_{\Der_{-}(\Efil)}(\pwz(M),\injres\otimes A)&\textrm{for $\tilde\eta\colon \unit \to \injres$ is an iso}
\\
& \cong \Hom_{\Km(\Efil)}(\pwz(M),\injres\otimes A)&\textrm{for $\injres\otimes A$ is degw.\ inj.}
\\
& \cong \Hom_{\Km(\cA)}(M,(\injres\otimes A)^0)&\textrm{by~\Cref{Rem:pwz-(-)^0}.}
\end{array}\]
The only specific claim here is that $(\injres\otimes-)^0\colon \Der_{-}(\Efil)\to \Km(\cA)$ restricts to bounded subcategories~$\DE\to \KA$. By exactness and by induction on the length of complexes, it suffices to show that if~$A\in\Efil$ then $(\injres\otimes A)^0\in \KA$. The term $(\injres\otimes A)^0_i=(\E_1(i-1)\otimes A)^0$ in degree~$i\le 0$ is the \wzp\ of~$B(i)$ for $B=\E_1(-1)\otimes A$, where~$B$ does not depend on~$i$. For $i\ll0$ the filtered object $B(i)$ is `pushed up' far enough so that its \wzp\ becomes trivial. Hence the claim.
The projection formula still holds by general principle~\eqref{eq:projection-formula-general} or simply because it holds degreewise.
A direct computation gives~$\rwz(\unit)=\injres^0=k[0]=\unit$. Combining with the projection formula, we have~$\rwz\circ\pwz(M)\cong \rwz(\unit\otimes\pwz(M))\cong \rwz(\unit)\otimes M\cong \unit\otimes M\cong M$. This isomorphism is the unit of the $\pwz\adj \rwz$ adjunction.
\end{proof}

We now define an invertible object~$\invert$ in~$\DE$ and a map $\omega\colon \unit\to \invert$, that will play an important role in the sequel.

\begin{Def}
\label{Def:omega}%
Recall the invertible object~$\invertpur=(\cdots \to 0\to k\xto{\eta}kC_2\to 0\to\cdots)$ in~$\KA$ from~\Cref{Prop:L^n}, with $k$ in degree one. Let
\begin{equation}
\label{eq:u}%
\invert:=\pwz(\invertpur)(1)=(\cdots0 \to \unit(1)\xto{\eta} \E_0(1)\to 0\cdots)
\end{equation}
be the twisted image of~$\invertpur$ in~$\DE$. Consider the morphism $\omega\colon \unit\to \invert$ in~$\DE$ given by the following fraction in~$\KE$:
\begin{equation}
\label{eq:omega}%
\vcenter{\xymatrix@C=1.8em@R=1em{
\unit=
&\cdots 0 \ar[r]
& 0 \ar[r]
& \unit \ar[r]^-{}
& 0 \cdots
\\
\tilde\unit:=\ar@<.6em>[u]_-{s} \ar@<-.6em>[d]^-{\tilde\omega}
&\cdots 0 \ar[r]
& \unit(1) \ar[r]^-{\eta} \ar[u] \ar@{=}[d]
& \E_1 \ar[r]^-{} \ar[u]_-{\eps} \ar[d]^-{\iota}
& 0 \cdots
\\
\invert= \ar@{<-} `l[u] `[uu]^-{\Displ\omega} [uu]
&\cdots 0 \ar[r]
& \unit(1) \ar[r]^-{\eta}
& \E_0(1) \ar[r]
& 0 \cdots
}}
\end{equation}
Here the quasi-isomorphism $s\colon\tilde\unit\to \unit$ corresponds to the fundamental exact sequence $\unit(1)\into \E_1\onto \unit$ in~$\Efil$ as in~\eqref{eq:fund_1}, and the map $\iota\colon \E_1\to \E_0(1)$ is the canonical morphism underlain by~$\id_{kC_2}$. We shall denote $\iota$ by~$\iota_1$ when we need to distinguish it from the similarly defined $\iota_0\colon \E_0\to \E_1$, as in the next lemma.
\end{Def}

\begin{Lem}
\label{Lem:cone-omega}%
In $\DE$, the following holds true.
\begin{enumerate}[\rm(a)]
\item
\label{it:cone-omega-a}%
The cone of~$\omega\colon \unit\to \invert$ is isomorphic to~$\cone(\iota_1\colon \E_1\to \E_0(1))$.
\smallbreak
\item
\label{it:cone-omega-b}%
The tt-ideal $\ideal{\cone(\omega)}$ contains~$\cone(\iota_0\colon \E_0\to \E_1)$, $\cone(\beta\colon \E_0\to \E_0(1))$ and $\cone(\beta\colon \E_1\to \E_1(1))$, where $\beta\colon\Id\to (1)$ is as in~\Cref{Not:beta}.
\smallbreak
\item
\label{it:cone-omega-c}%
The tt-ideal~$\ideal{\cone(\omega)}$ is equal to the tt-ideal~$\ideal{\cone(\beta\colon \E_1\to \E_1(1))}$.
\end{enumerate}
\end{Lem}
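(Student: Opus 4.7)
For part~(a), the first observation is that $s\colon\tilde\unit\to\unit$ in~\Cref{eq:omega} is a quasi-isomorphism in~$\DEfil$: its cone in $\KE$ is precisely the admissible exact sequence $\fund_1$ of~\Cref{eq:fund_1}. Therefore $\omega=\tilde\omega\circ s\inv$ in~$\DEfil$ and $\cone(\omega)\simeq\cone(\tilde\omega)$. I would write $\cone(\tilde\omega)$ explicitly as the three-term complex $\unit(1)\to\unit(1)\oplus\E_1\to\E_0(1)$ and cancel the identity on the two copies of~$\unit(1)$ via \Cref{Rem:linear-algebra}; after a short change of basis using $\iota_1\circ\eta=\eta\colon\unit(1)\to\E_0(1)$, this recovers exactly $[\E_1\xto{\iota_1}\E_0(1)]=\cone(\iota_1)$.

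For part~(b), the crux is the pair of identities $\iota_1\circ\iota_0=\beta_{\E_0}$ and $\iota_0(1)\circ\iota_1=\beta_{\E_1}$, valid in~$\Efil$ because every morphism involved is the identity on the underlying $kC_2$-module. Applying the octahedron axiom to each produces an exact triangle among the corresponding cones. I would then use \Cref{Lem:e_l-e_l+1} to identify $\cone(\iota_0)\simeq\cone(\beta\colon\unit\to\unit(1))$, together with the naturality of $\beta$ (so that $\cone(\beta_A)\simeq A\otimes\cone(\beta_\unit)$ for every $A$), to recognize $\cone(\beta_{\E_i})=\E_i\otimes\cone(\iota_0)$ for $i=0,1$. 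A general rigidity fact finishes (b): in any rigid tt-category $X^\vee$ is a retract of $X^\vee\otimes X\otimes X^\vee\in\ideal{X}$ and hence $X^\vee\in\ideal{X}$; applied to $X=\cone(\iota_1)$ with $\iota_1^\vee=\iota_0(-1)$ via \Cref{Lem:special-dual}, this forces $\cone(\iota_0)\in\ideal{\cone(\iota_1)}=\ideal{\cone(\omega)}$, after which the remaining two containments of~(b) follow by tensor-closure of tt-ideals.

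For part~(c), the inclusion $\supseteq$ is immediate from~(b). For $\subseteq$, the first octahedron of~(b) together with $\cone(\beta_{\E_0})=\E_0\otimes\cone(\iota_0)\in\ideal{\cone(\iota_0)}$ yields $\cone(\iota_1)\in\ideal{\cone(\iota_0)}$, so $\ideal{\cone(\omega)}=\ideal{\cone(\beta_\unit)}$. As a useful preliminary, tensoring the decomposition $\E_0\otimes\E_1\simeq\E_0\oplus\E_0(1)$ of \Cref{Prop:tens-e_l} with $\cone(\beta_\unit)$ exhibits $\cone(\beta_{\E_0})$ as a direct summand of $\cone(\beta_{\E_1})\otimes\E_0$, hence $\cone(\beta_{\E_0})\in\ideal{\cone(\beta_{\E_1})}$.

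The main obstacle is to extend this to place $\cone(\beta_\unit)$ itself into $\ideal{\cone(\beta_{\E_1})}$. I would attempt this by passing to the Verdier quotient $\DEfil/\ideal{\cone(\beta_{\E_1})}$, in which both $\cone(\beta_{\E_0})$ and $\cone(\beta_{\E_1})$ vanish. Tensoring the fundamental triangle $\unit(1)\to\E_1\to\unit\to\unit(1)[1]$ with $\cone(\beta_\unit)$ collapses in the quotient to the relation $\cone(\beta_\unit)(1)\simeq\cone(\beta_\unit)[-1]$, while both octahedra from~(b) collapse to give $\cone(\iota_0)(1)\simeq\cone(\iota_0)[2]$. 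Combining these yields $\cone(\beta_\unit)\simeq\cone(\beta_\unit)[3]$ in the quotient; the boundedness of $\cone(\beta_\unit)$ as a complex in $\Efil$ should then force $\cone(\beta_\unit)=0$ modulo $\ideal{\cone(\beta_{\E_1})}$, completing the inclusion.
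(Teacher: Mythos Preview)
Your approach to~(a) is fine and matches the paper. Your duality idea for~(b) --- that $\iota_1^\vee\simeq\iota_0(-1)$ and hence $\cone(\iota_0)\in\ideal{\cone(\iota_1)}$ --- is a pleasant alternative to the paper's direct computation of $\E_0\otimes\cone(\iota_1)$. But there is a genuine error that propagates through the rest of the argument.

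\textbf{The error.} You invoke \Cref{Lem:e_l-e_l+1} for $\ell=0$ to get $\cone(\iota_0)\simeq\cone(\beta_\unit)$, but that lemma is stated (and only valid) for $\ell\ge 1$: its proof relies on the admissible exact sequences $\fund_\ell$ and $\fund_{\ell+1}$, and $\fund_0$ is \emph{not} admissible in~$\Efil$. In fact the identification is false: once the theory is developed one finds $\supp(\cone(\beta_\unit))=\{\pL,\pLs,\pMs,\pNs\}$ whereas $\supp(\cone(\iota_0))=\supp(\cone(\omega))=\{\pLs,\pMs,\pNs\}$, so these objects are not isomorphic and $\ideal{\cone(\omega)}\subsetneq\ideal{\cone(\beta_\unit)}$. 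This does not hurt~(b) much --- you don't actually need the false isomorphism there: once $\cone(\iota_0)\in\ideal{\cone(\omega)}$ is established via duality, your two octahedra immediately place $\cone(\beta_{\E_0})$ and $\cone(\beta_{\E_1})$ in $\ideal{\cone(\omega)}$ as well.

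\textbf{The cascade in~(c).} The false identification leads you to the false intermediate goal $\cone(\beta_\unit)\in\ideal{\cone(\beta_{\E_1})}$. Your final ``boundedness'' step then attempts to prove something untrue, and indeed the argument is invalid: boundedness of a complex in $\Efil$ says nothing about its image in a Verdier quotient (concretely, $\cone(\beta_\unit)$ maps to $\fundpur[-1]\neq 0$ in $\DE/\ideal{\cone(\beta_{\E_1})}\cong\Kb(\cA)$, and this object \emph{is} periodic up to shift there). The paper's route is short and uses exactly the preliminary you already proved: since $\E_0\in\ideal{\E_1}$ (from $\E_0\otimes\E_1\simeq\E_0\oplus\E_0(1)$), one has $\cone(\beta_{\E_0})\in\ideal{\cone(\beta_{\E_1})}$; now in $\DE/\ideal{\cone(\beta_{\E_1})}$ both $\beta_{\E_0}=\iota_1\iota_0$ and $\beta_{\E_1}=\iota_0(1)\iota_1$ are isomorphisms, which forces $\iota_1$ to be an isomorphism (it acquires both a left and a right inverse), i.e.\ $\cone(\omega)\simeq\cone(\iota_1)\in\ideal{\cone(\beta_{\E_1})}$.
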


\begin{proof}
With notation as in~\eqref{eq:omega}, we have $\cone(\omega)\cong\cone(\tilde\omega)$ since~$s$ is an isomorphism in~$\DE$; furthermore we have $\cone(\tilde\omega)\cong$\,\mbox{$(\cdots 0\to \E_1 \xto{\iota_1} \E_0(1)\to 0\cdots)$}$=\cone(\iota_1\colon \E_1\to \E_0(1))$ in $\KE$ already. This gives~\eqref{it:cone-omega-a}. Tensoring this complex with $\E_0$ gives an object in~$\ideal{\cone(\omega)}$ which can be shown to be isomorphic to
\[
\xymatrix{
\E_0\otimes \cone(\iota_1) \ar@{}[r]\cong
& \cdots 0 \ar[r]
& \E_0\oplus \E_0(1) \ar[r]^-{\smat{\beta& 0\\0&\id}}
& \E_0(1) \oplus \E_0(1) \ar[r]
&
0 \cdots
}
\]
by \Cref{Prop:tens-e_l}. (On underlying objects, both $\E_0\otimes \E_1$ and $\E_0\otimes \E_0(1)$ are $\Epur\otimes \Epur$ for $\Epur=kC_2$ and we use $\bamma\colon \Epur\otimes \Epur\isoto \Epur\oplus \Epur$ from~\eqref{eq:bamma} to replace the tensor by the sum of filtered objects. The underlying map of the differential $\id_{\E_0}\otimes \iota_1$ is therefore $\bamma\circ \bamma\inv$, the identity of~$\Epur\oplus \Epur$. So the differential is indeed $\beta\colon \E_0\to \E_0(1)$ and $\id_{\E_0(1)}$ on the diagonal, when the weights are taken into account.) The above complex is isomorphic to $\cone(\beta\colon \E_0\to \E_0(1))$, which therefore belongs to~$\ideal{\cone(\omega)}$ in~$\DE$. Consider now the commutative diagram in~$\Afil$:
\begin{equation}
\label{eq:omega-iota}%
\xymatrix{
\E_0 \ar[r]_-{\iota_0} \ar@/^1.5em/[rr]^-{\beta_{\E_0}} & \E_1 \ar[r]^-{\iota_1} \ar@/_1.5em/[rr]_-{\beta_{\E_1}} & \E_0(1) \ar[r]^-{\iota_0(1)} & \E_1(1).
}
\end{equation}
Modulo the tt-ideal~$\ideal{\cone(\omega)}$ we have proved that $\beta_{\E_0}$ and~$\iota_1$ become isomorphisms. Hence so do~$\iota_0$ and $\iota_0(1)$ and~$\beta_{\E_1}$. This finishes the proof of~\eqref{it:cone-omega-b}. To prove~\eqref{it:cone-omega-c}, thanks to~\eqref{it:cone-omega-a} and~\eqref{it:cone-omega-b}, it only remains to show that $\cone(\iota_1\colon \E_1\to \E_0(1))$ belongs to $\ideal{\cone(\beta_{\E_1})}$. It suffices to prove that in the quotient~$\DE/\ideal{\cone(\beta_{\E_1})}$ the morphism $\iota_1\colon \E_1\to \E_0(1)$ is invertible. Using~\eqref{eq:omega-iota}, it reduces to proving that $\beta_{\E_0}$ is invertible in that quotient. This claim, that $\cone(\beta_{\E_0})$ belongs to~$\ideal{\cone(\beta_{\E_1})}$, is easy from \Cref{Prop:tens-e_l} again, which tells us that $\E_0\in\ideal{\E_1}$ and therefore $\cone(\beta_{\E_0})\cong \cone(\beta_\unit)\otimes \E_0\in\ideal{\cone(\beta_\unit)\otimes \E_0}\subseteq\ideal{\cone(\beta_\unit)\otimes \E_1}=\ideal{\cone(\beta_{\E_1})}$.
\end{proof}

\begin{Lem}
\label{Lem:omega-effective}%
Let $A\in\Ch(\Efil)$ be a complex of effective\,{\rm(\footnote{\,Recall from \Cref{Rem:a^>m} that $A\in\Afil$ is effective if $A^0=\fgt(A)$.})} objects in~$\Efil$. Let $n\ge 0$. Then the image of $\omega\potimes{n}\otimes 1_A\colon A\to \invert\potimes{n}\otimes A$ under $\rwz\colon \DE\to \KA$
\[
\rwz(\omega\potimes{n}\otimes 1_A)\colon \rwz(A)\isoto \rwz(\invert\potimes{n}\otimes A)
\]
is an isomorphism. In particular, $\rwz(\omega\potimes{n})\colon \unit\isoto \rwz(\invert\potimes{n})$ is an isomorphism.
\end{Lem}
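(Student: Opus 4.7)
The plan is a double reduction followed by an explicit computation.

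First, induct on $n\ge 0$; the case $n=0$ is trivial. For $n\ge 1$, factor
\[
\omega^{\otimes n}\otimes 1_A=\bigl(\omega\otimes 1_{\invert^{\otimes(n-1)}\otimes A}\bigr)\circ\bigl(\omega^{\otimes(n-1)}\otimes 1_A\bigr).
\]
The induction hypothesis handles the right factor. For the left one, note that $\invert=(\unit(1)\xto{\eta}\E_0(1))$ is itself a complex of effective objects, so $\invert^{\otimes(n-1)}\otimes A$ remains a complex of effective objects, and applying the base case $n=1$ to it finishes the inductive step.

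So it remains to treat the base case $n=1$: show that $\rwz(\omega\otimes 1_A)$ is an iso for every complex of effective objects~$A$. Using stupid truncation~\eqref{eq:trunc-triangle} iteratively, together with the fact that both $\rwz$ and $\omega\otimes 1_{(-)}$ are triangulated functors in~$A$ and that the property of being a complex of effective objects is preserved by stupid truncation, we reduce to the case where $A\in\Efil$ is a single effective object concentrated in degree~$0$. The goal becomes $\rwz(\cone(\omega)\otimes A)=0$ in~$\KA$. By \Cref{Lem:cone-omega}\eqref{it:cone-omega-a}, $\cone(\omega)\cong\cone(\iota_1\colon\E_1\to\E_0(1))$, so it suffices to show that $\rwz(\iota_1\otimes 1_A)\colon\rwz(\E_1\otimes A)\to\rwz(\E_0(1)\otimes A)$ is a homotopy equivalence. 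By Krull-Schmidt (\Cref{Prop:KRS-Efil}) and additivity, we may take~$A=\unit(m)$ or $A=\E_\ell(m)$ with $m\ge 0$ and $\ell\ge 1$.

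For each such~$A$, the right-hand side $\rwz(\E_0(1)\otimes A)$ simplifies via the projection formula~\eqref{eq:projection-formula-rwz} applied to the identification $\E_0(1)\cong\pwz(\Epur)\otimes\unit(1)$, yielding $\Epur\otimes\rwz(\unit(1)\otimes A)$. The left-hand side $\rwz(\E_1\otimes A)$ is computed directly using the explicit injective resolution~$\injres$ of~\eqref{eq:I}, the decomposition $\E_\ell\otimes\E_{\ell'}\simeq\E_\ell\oplus\E_\ell(\ell')$ from \Cref{Prop:tens-e_l}, and the matrix identities of \Cref{Rem:bamma-basics}. The linear-algebra simplification of \Cref{Rem:linear-algebra} then collapses both sides to homotopy equivalents on which $\rwz(\iota_1\otimes 1_A)$ is visibly invertible. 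The main obstacle is precisely this base-case computation: although $\iota_1$ is underlain by the identity $\id_{kC_2}$, the two filtrations on $\E_1$ and $\E_0(1)$ genuinely differ, so tracking how~$\injres\otimes(-)$ interacts with them---in particular identifying which components of the matrix $\smat{\eta\eps & 0\\\id & \eta\eps}$ survive the weight-zero restriction---requires careful bookkeeping.
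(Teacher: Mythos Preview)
Your reductions to $n=1$ and to a single effective object~$A$ are fine, and the overall strategy is correct. But you are working much harder than necessary in the base case, and the explicit computation you outline---tracking $\injres$, the matrices from \Cref{Rem:bamma-basics}, and the Krull-Schmidt decomposition---is avoidable.

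The paper bypasses all of this with two observations. First, $\E_1$ and~$\E_0(1)$ are projective-injective in~$\Efil$ (\Cref{Cor:Afil-Frobenius}), and since these form a $\otimes$-ideal, the complex $\cone(\iota_1)\otimes A$ is \emph{degreewise injective}. Hence there is no need to resolve: the right-derived functor $\rwz=\mathrm{R}(-)^0$ is simply $(-)^0$ on this complex. Second, since $\cone(\iota_1)\otimes A$ is also degreewise effective, the weight-zero part equals the underlying object: $(-)^0=\fgt$. Now $\fgt$ is a genuine tensor functor, so
\[
\rwz(\cone(\iota_1)\otimes A)\cong\fgt(\cone(\iota_1))\otimes\fgt(A),
\]
and $\fgt(\cone(\iota_1))=(\cdots 0\to kC_2\xto{\id}kC_2\to 0\cdots)$ is contractible. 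Done---no Krull-Schmidt, no explicit $\injres$-computation, no matrix bookkeeping.

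Your route would eventually succeed, but the injectivity shortcut is the idea you are missing: it turns the right-derived functor into the plain functor~$(-)^0$, after which effectiveness and the fact that $\iota_1$ is underlain by~$\id_{kC_2}$ finish the job in one line.
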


\begin{proof}
The last statement is the case $A=\unit$. We want to reduce to the case $n=1$ but need to be careful since $\rwz$ is not a tensor functor. However, $\invert$ being degreewise effective, all objects in the following factorization of~$\omega\potimes{n}\otimes A$ are degreewise effective:
\[
\xymatrix{
A \ar[r]^-{\omega\otimes 1}
& \invert \otimes A \ar[r]^-{\omega\otimes 1}
& \cdots \ar[r]^-{\omega\otimes 1}
& \invert\potimes{n-1} \otimes A \ar[r]^-{\omega\otimes 1}
& \invert\potimes{n} \otimes A\,.
}
\]
So we can indeed assume $n=1$. Since $\rwz$ is triangulated, we need to show that $\rwz$ maps~$\cone(\omega\otimes 1_A)\cong\cone(\omega)\otimes A$ to zero. We have seen in \Cref{Lem:cone-omega}\,\eqref{it:cone-omega-a} that $\cone(\omega)\cong\cone(\iota_1)=(\cdots \to 0\to \E_1\xto{\iota_1}\E_0(1)\to 0\to \cdots)$ and \Cref{Cor:Afil-Frobenius} tells us that $\E_1$ and $\E_0(1)$ and all their $\otimes$-multiples in~$\Efil$ are injective. Consequently, $\cone(\iota_1)\otimes A$ is degreewise injective, hence its image under the right-derived functor $\rwz=\mathrm{R}(-)^0$ is $(\cone(\iota_1)\otimes A)^0$. Since $\cone(\iota_1)\otimes A$ is degreewise effective, we have
\[
\rwz(\cone(\omega)\otimes A)\cong\big(\cone(\iota_1)\otimes A\big)^0=\fgt(\cone(\iota_1)\otimes A)\cong\fgt(\cone(\iota_1))\otimes \fgt(A)
\]
since $\fgt$ is a tensor functor. But $\fgt(\cone(\iota_1))=(\cdots 0\to kC_2\xto{\id}kC_2\to 0\cdots)$ is clearly zero in~$\KA$ hence $\rwz(\cone(\omega\otimes A))=0$ as claimed.
\end{proof}

\begin{Def}
\label{Def:DEU}%
Consider the open piece of~$\Spc(\DE)$
\[
U=U(\cone(\omega))=\SET{\cP\in\Spc(\DE)}{\cone(\omega)\in\cP}
\]
`cut out by the section' $\omega\colon \unit\to \invert$ of the invertible~$\invert=\pwz(\invertpur)(1)$ of \Cref{Def:omega}, that is, $U$ is the open complement of~$\supp(\cone(\omega))$. Let us denote by
\begin{equation}
\label{eq:DEU}%
\xymatrix{\DEfil\ar@{->>}[r]^-{\quo}&\frac{\Displ\DEfil}{\Displ\ideal{\cone(\omega)}}=:\DEfil\restr{U}}
\end{equation}
the corresponding Verdier quotient.\,(\footnote{\,Technically, there is an idempotent-completion in the general definition of~$\cK\restr{U}$ but we are going to prove that the quotient $\DEfil/\ideal{\cone(\omega)}$ is already idempotent-complete.})

\end{Def}

\begin{Prop}
\label{Prop:DEU}%
The quotient $\quo\colon\DE\onto \DEU$ is a central localization in the sense of~\cite{balmer:sss} and~\cite[\S\,5]{gallauer:tt-fmod}, that is, it is the tensor-category $\DE[\omega\inv]$ initial among those receiving~$\DE$ and inverting~$\omega$. Explicitly, morphisms in~$\DEU$ are given by
\[
\Hom_{\DEU}(A,B)\cong\colim_{n\to \infty}\Hom_{\DE}(A,\invert\potimes{n}\otimes B)
\]
for all~$A,B\in\DE$, where the transition morphisms $\Hom_{\DE}(A,\invert\potimes{n}\otimes B)\to \Hom_{\DE}(A,\invert\potimes{(n+1)}\otimes B)$ are given by postcomposition with $\omega\otimes 1\colon \invert\potimes{n}\otimes B\to \invert\potimes{(n+1)}\otimes B$. To $f\colon A\to \invert\potimes{n}\otimes B$ in~$\DE$ in the colimit (on the right) corresponds the morphism $\xymatrix@C=2em{A \ar[r]^-{f} & \invert\potimes{n}\otimes B \ar[rr]^-{(\omega\potimes{n}\otimes 1)\inv} && B}$ in~$\DEU$ (on the left).
\end{Prop}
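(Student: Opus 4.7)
The plan is to reduce to the general theory of central localization at a section of a $\otimes$-invertible object, as developed in \cite{balmer:sss} and made explicit in \cite[Section~5]{gallauer:tt-fmod}. The two hypotheses required for that machinery are that $\invert$ is $\otimes$-invertible in $\DEfil$ and that $\omega$ is a morphism out of the unit. The second is immediate from \Cref{Def:omega}. For the first, \Cref{Prop:L^n} shows that $\invertpur\in\KA$ is $\otimes$-invertible; since $\pwz\colon\KA\to\DEfil$ is a tt-functor (\Cref{Lem:gr-conservative-and-section}) and the twist $(1)\colon\DEfil\to\DEfil$ is a tensor-auto-equivalence (\Cref{Not:beta}), it follows that $\invert=\pwz(\invertpur)(1)$ is $\otimes$-invertible.

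I would then verify the universal property by hand. By construction, $\cone(\omega)=0$ in $\DEU$, so $\omega$ becomes an isomorphism there. Conversely, any tt-functor $F\colon\DEfil\to\cL$ inverting $\omega$ satisfies $F(\cone(\omega))=0$, and hence annihilates the entire tt-ideal $\ideal{\cone(\omega)}$ (which is closed under suspension, cones, direct summands and tensoring with arbitrary objects --- all operations preserved by~$F$). Therefore $F$ factors uniquely through $\quo\colon\DEfil\onto\DEU$, which identifies $\DEU$ with the initial central localization $\DEfil[\omega^{-1}]$.

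For the colimit formula, I would appeal to the standard result: in an essentially small rigid tt-category $\cK$, given any $\otimes$-invertible $u\in\cK$ and any morphism $s\colon\unit\to u$, one has a natural bijection
\begin{equation*}
\Hom_{\cK[s^{-1}]}(A,B)\ \cong\ \colim_{n\to\infty}\Hom_{\cK}(A,u^{\otimes n}\otimes B),
\end{equation*}
with transition maps given by post-composition with $s\otimes 1$. The comparison sends a representative $f\colon A\to u^{\otimes n}\otimes B$ to the composite $(s^{\otimes n}\otimes 1_B)^{-1}\circ\quo(f)$, which is exactly the formula in our statement for $u=\invert$ and $s=\omega$. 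All hypotheses are in place: $\DEfil$ is essentially small (since $\Efil$ is, by \Cref{Prop:KRS-Efil}) and rigid (by \Cref{Lem:fil-rigid}), and $\omega$ is a morphism from $\unit$ to the invertible $\invert$.

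The main (and essentially only) point of substance inside the cited general proof is that the system $\{\omega^{\otimes n}\otimes 1\}_{n\geq 0}$ is cofinal among morphisms whose cone lies in $\ideal{\cone(\omega)}$; this follows because, modulo $\ideal{\cone(\omega)}$, every power of $\omega$ is invertible, so a standard calculus-of-fractions argument produces both the surjectivity and the injectivity of the comparison map. There is no obstacle specific to our setting: the result is a direct instance of the general framework once invertibility of $\invert$ has been recorded.
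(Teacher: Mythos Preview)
Your approach is essentially the paper's: reduce to the general machinery of central localization in \cite{balmer:sss} and \cite[\S\,5]{gallauer:tt-fmod}. However, you have omitted one hypothesis that this machinery requires (see for instance \cite[Prop.~5.1]{gallauer:tt-fmod} or \Cref{Lem:adjunction-localization} below): that the switch $(12)\colon\invert\otimes\invert\to\invert\otimes\invert$ is the identity. This is what ensures that $\omega\otimes 1$ and $1\otimes\omega$ agree as maps $\invert\potimes{n}\to\invert\potimes{n+1}$, so that the directed system defining your colimit is unambiguous, and it is also needed for \cite[Thm.~2.15]{balmer:sss} to identify $\ideal{\cone(\omega)}$ with the objects on which $\omega$ is $\otimes$-nilpotent. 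The paper handles this in one line: the switch on an invertible object is multiplication by an element of~$k^\times$ squaring to one, and since $\mathrm{char}(k)=2$ this element is~$1$. With that check added, your argument matches the paper's.
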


\begin{proof}
The $\otimes$-invertible object $\invert$ in~$\DE$ has the property that the swap of factors $(12)\colon \invert\otimes \invert\isoto \invert\otimes \invert$ is the identity (it is given by an element of~$k^\times$ of square one and $\mathrm{char}(k)=2$). Hence $\omega\otimes 1\colon \invert\potimes{n}\to \invert\potimes{(n+1)}$ can equally be $1\otimes\omega\otimes 1$ with $\omega$ in any place. By \cite[Thm.\,2.15]{balmer:sss}, we have in $\DE$ that
\begin{equation}
\label{eq:<cone(omega)>}%
\ideal{\cone(\omega)}=\SET{C\in \DE}{\omega\potimes{n}\otimes C=0\textrm{ for }n\gg0}.
\end{equation}
It easily follows (as in \cite[Lem.\,3.8]{balmer:sss}) that $\DEU$, which is by definition~$\DE/\ideal{\cone(\omega)}$, is also the localization~$\DE[\mathcal{S}\inv]$ with respect to the class of maps $\mathcal{S}=\SET{\omega\potimes{n}\otimes B}{n\ge 0,\ B\in \DE}$. The description of the latter as in the above statement is then a general fact; see~\cite[Prop.\,5.1]{gallauer:tt-fmod}.
\end{proof}

\begin{Thm}
\label{Thm:DEU}%
Let us denote by~$\barpwz$ the canonical tensor-triangulated functor
\begin{equation}
\label{eq:barpwz-equivalence}
  \vcenter{\xymatrix{
\barpwz\colon \ \Kb(\cA) \ar[r]^-{\pwz}
& \DEfil \ar@{->>}[r]^-{\quo}
& \DEU
}}
\end{equation}
composed of~$\pwz$ (`pure weight-zero', see~\Cref{Exa:filt-0}, applied degreewise) and the central localization $\quo$ of \Cref{Def:DEU}. Then $\barpwz$ is an equivalence.
\end{Thm}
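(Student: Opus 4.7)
\emph{Proof plan.} The plan is to show directly that the tt-functor $\barpwz=\quo\circ\pwz$ is both fully faithful and essentially surjective, exploiting the adjunction $\pwz\dashv\rwz$ of~\Cref{Prop:rwz} together with the central-localization description of $\DEU$ from~\Cref{Prop:DEU}.

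For \emph{full faithfulness}, fix $X,Y\in\KA$ and use~\Cref{Prop:DEU} to write
\[
\Hom_{\DEU}(\barpwz(X),\barpwz(Y))\cong \colim_{n\to\infty}\Hom_{\DE}\big(\pwz(X),\,\invert^{\otimes n}\otimes\pwz(Y)\big),
\]
with transition maps induced by post-composition with $\omega\otimes 1$. Each term is computed in three moves: the adjunction $\pwz\dashv\rwz$ turns it into $\Hom_{\KA}(X,\rwz(\invert^{\otimes n}\otimes\pwz(Y)))$; the projection formula~\eqref{eq:projection-formula-rwz} replaces the target by $\rwz(\invert^{\otimes n})\otimes Y$; and \Cref{Lem:omega-effective} (applied to $A=\unit$) identifies $\rwz(\omega^{\otimes n})\colon\unit\isoto\rwz(\invert^{\otimes n})$ as an isomorphism. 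Each term in the colimit is therefore naturally $\Hom_{\KA}(X,Y)$, and one checks that the transitions become the identity, so the colimit collapses to $\Hom_{\KA}(X,Y)$, as required.

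For \emph{essential surjectivity}, since $\barpwz$ is a tt-functor and the Verdier quotient $\quo\colon\DE\onto\DEU$ is essentially surjective, it is enough to show that the four generators $\unit(1),\unit(-1),\E_0,\E_1$ of $\DE$ from~\Cref{Cor:generators-DE} each lie (up to iso in $\DEU$) in the image of $\pwz\colon\KA\to\DE$. Directly, $\E_0=\pwz(\Epur)$. By~\Cref{Lem:cone-omega}(b), $\cone(\iota_0\colon\E_0\to\E_1)\in\ideal{\cone(\omega)}$, so $\E_1\cong\E_0=\barpwz(\Epur)$ in $\DEU$. Since $\omega\colon\unit\to\invert=\pwz(\invertpur)(1)$ becomes invertible in $\DEU$, twisting by $(-1)$ yields $\unit(-1)\cong\invert(-1)=\pwz(\invertpur)=\barpwz(\invertpur)$. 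Dualizing, and using that tt-functors preserve duals of rigid objects (\Cref{Rmd:general}), gives $\unit(1)=\unit(-1)^\vee\cong\pwz(\invertpur^\vee)=\barpwz(\invertpur^\vee)$ in $\DEU$.

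The main obstacle is the verification in the full-faithfulness step that the colimit transitions correspond to the identity on $\Hom_{\KA}(X,Y)$ through the chain of natural isomorphisms. This comes down to the compatibility $\rwz((\omega\otimes 1)\circ(\omega^{\otimes n}\otimes 1))=\rwz(\omega^{\otimes (n+1)}\otimes 1)$ together with naturality of the projection formula, ensuring that the two identifications of $\rwz(\invert^{\otimes n}\otimes\pwz(Y))$ with $Y$ for consecutive $n$ intertwine the transition $\omega\otimes 1$. Once this is checked, combining the two steps yields the tt-equivalence.
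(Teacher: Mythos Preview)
Your proof is correct and follows the same two-step structure as the paper: full faithfulness via the $\pwz\dashv\rwz$ adjunction, the projection formula, and \Cref{Lem:omega-effective}; then essential surjectivity on the generators of \Cref{Cor:generators-DE}. Your essential-surjectivity arguments are mild variants of the paper's: you use $\iota_0$ (via \Cref{Lem:cone-omega}(b)) rather than $\iota_1$ to handle $\E_1$, giving $\E_1\cong\E_0$ directly without a twist, and you reach $\unit(\pm 1)$ by twisting the isomorphism $\omega$ by $(-1)$ and then dualizing, whereas the paper exhibits an explicit zig-zag in $\KE$ to identify $\barpwz(\invertpur\potimes{-1})\cong\unit(1)$.
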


\begin{proof}
Let us first show that $\barpwz$ is fully faithful. Let $M,N\in\KA$. We have
\[\kern-0.4em\begin{array}{ll}
\Hom_{\DEU}(\barpwz(M),\barpwz(N)) &
\\
\quad\cong \colim_{n\to \infty}\Hom_{\DE}(\pwz(M),\invert\potimes{n}\otimes \pwz(N)) & \textrm{by \Cref{Prop:DEU}}
\\
\quad\cong \colim_{n\to \infty}\Hom_{\KA}(M,\rwz(\invert\potimes{n}\otimes \pwz(N))) & \textrm{by \Cref{Prop:rwz}}
\\
\quad\cong \colim_{n\to \infty}\Hom_{\KA}(M,\rwz(\invert\potimes{n})\otimes N) & \textrm{by projection formula~\eqref{eq:projection-formula-rwz}}
\\
\quad\cong \colim_{n\to \infty}\Hom_{\KA}(M,N) & \textrm{by \Cref{Lem:omega-effective}}
\\
\quad = \Hom_{\KA}(M,N)\,.&
\end{array}\]
A detailed verification shows that the above isomorphism $\Hom_{\KA}(M,N)\isoto \Hom_{\DEU}(\barpwz(M),\barpwz(N))$ is indeed induced by~$\barpwz$, \ie our functor $\barpwz$ is fully faithful. So it suffices to show that the essential image of~$\barpwz$ contains generators of~$\DEU$ as a tt-category. Such generators can be chosen in~$\DE$, namely $\unit(1)$, $\unit(-1)$, $\E_0$ and $\E_1$ (\Cref{Cor:generators-DE}). Clearly, $\E_0=\pwz(kC_2)$ by definition. Also, since $\cone(\iota\colon \E_1\to \E_0(1)) \cong \cone(\omega)$ by \Cref{Lem:cone-omega}\,\eqref{it:cone-omega-a}, we have an isomorphism $\iota\colon \E_1\isoto \E_0(1)=\barpwz(kC_2)(1)$ in~$\DEU$. Therefore it suffices to prove that we have the following isomorphism (see \Cref{Prop:L^n} for $\invertpur$)
\begin{equation}
\label{eq:aux-barpwz-L}%
\barpwz(\invertpur\potimes{-1})\cong \unit(1)
\end{equation}
in~$\DEU$, which automatically implies $\barpwz(\invertpur)\cong\unit(-1)$ since $\barpwz$ is a tensor functor. To prove~\eqref{eq:aux-barpwz-L}, consider the following fraction in~$\KE$:
\[
\xymatrix@C=1.7em@R=1em{
\unit(1)=
&\cdots 0 \ar[r]
& \unit(1) \ar[r]^-{} \ar[d]_-{\eta}
& 0 \ar[r]^-{} \ar[d]^-{}
& \cdots
\\
B:=\ar@<.7em>@{<-}[u]\ar@<-.7em>@{<-}[d]
&\cdots 0 \ar[r]
& \E_1 \ar[r]^-{\eps} 
& \unit \ar[r] 
& 0 \cdots
\\
\pwz(\invertpur\potimes{-1})=
&\cdots 0 \ar[r]
& \E_0 \ar[r]^-{\eps} \ar[u]^-{\iota_0}
& \unit \ar[r] \ar@{=}[u]^-{}
& 0 \cdots
}
\]
The top map $\unit(1)\to B$ is already an isomorphism in~$\DEfil$ since its cone is the fundamental exact sequence~$\unit(1)\into \E_1\onto \unit$. The bottom map~$\pwz(\invertpur\potimes{-1})\to B$ becomes an isomorphism in~$\DEU$ for its cone is a shift of~$\cone(\iota_0\colon \E_0\to \E_1)$ which goes to zero in~$\DEU$ by~\Cref{Lem:cone-omega}\,\eqref{it:cone-omega-b}.
\end{proof}

We want to describe the inverse of the equivalence $\barpwz\colon \KA\to \DEU$.

\begin{Lem}
  \label{Lem:adjunction-localization}
  Let $F:\cK\adjto\cL:G$ be an adjunction of $\otimes$-categories with $F$ a $\otimes$-functor. Let $\omega:\unit\to u$ be a map in $\cL$, where $u$ is $\otimes$-invertible with trivial switch, \ie $(12)=\id_{u\otimes u}$.
Assume that for every $b\in\cL$, the following sequence is stationary in~$\cK$, meaning that transition maps become isomorphisms for~$n\gg0$:
\begin{equation}
\label{eq:G(u^n-b)}%
\xymatrix@C=1.8em{
G(b) \ar[rr]^-{G(\omega\otimes 1)} && \cdots \ar[r]
& G(u\potimes{n}\otimes b) \ar[rr]^-{G(\omega\otimes 1)}
&& G(u\potimes{(n+1)}\otimes b) \ar[r]
& \cdots }
\end{equation}
Consider the localization $\cL\onto \cL[\omega\inv]$ as a tensor-category. Then the composite $\overline{F}:\cK\xto{F}\cL\xto{\quo}\cL[\omega\inv]$ has a right adjoint given by
\[
\widetilde{G}(b):=\colim_n G(u\potimes{n}\otimes b);
\]
in other words, $\widetilde{G}(b)$ is $G(u\potimes{n}\otimes b)$ for $n\gg0$ large enough (depending on~$b$).
\end{Lem}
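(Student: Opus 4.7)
The plan is to chain together four natural bijections, for $a\in\cK$ and $b\in\cL$:
\begin{align*}
\Hom_{\cL[\omega\inv]}(\overline{F}(a),b)
 &\;\cong\; \colim_n \Hom_{\cL}(F(a),u\potimes{n}\otimes b) \\
 &\;\cong\; \colim_n \Hom_{\cK}(a,G(u\potimes{n}\otimes b)) \\
 &\;\cong\; \Hom_{\cK}\!\big(a,\colim_n G(u\potimes{n}\otimes b)\big) \\
 &\;=\; \Hom_{\cK}(a,\widetilde{G}(b)).
\end{align*}

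First, I would invoke the explicit colimit description of Hom-sets in the central localization $\cL[\omega\inv]$, exactly as recalled in \Cref{Prop:DEU} and its reference \cite[Prop.\,5.1]{gallauer:tt-fmod}. The hypothesis that the switch $(12)$ on $u\otimes u$ is the identity is precisely what is needed to ensure that $\omega\otimes 1=1\otimes\omega$ up to the switch, so the localization at $\omega$ is a well-defined tensor-localization and the colimit formula applies. The transition maps in this first colimit are postcomposition with $\omega\otimes 1\colon u\potimes{n}\otimes b\to u\potimes{n+1}\otimes b$.

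Second, I would apply the adjunction $F\adj G$ termwise; this is natural in both variables, and under the bijection $\Hom_{\cL}(F(a),-)\cong\Hom_{\cK}(a,G(-))$, postcomposition with $\omega\otimes 1$ corresponds to postcomposition with $G(\omega\otimes 1)$. Hence the transition maps in the resulting colimit $\colim_n \Hom_{\cK}(a,G(u\potimes{n}\otimes b))$ are exactly those induced by applying $\Hom_{\cK}(a,-)$ to the sequence~\eqref{eq:G(u^n-b)}. Here the assumption that~\eqref{eq:G(u^n-b)} is stationary is crucial: it implies that the colimit $\colim_n G(u\potimes{n}\otimes b)$ is attained (equal to $G(u\potimes{N}\otimes b)$ for some large $N$ depending on $b$), so in particular $\Hom_{\cK}(a,-)$ commutes trivially with this colimit, giving the third bijection. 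The last equality is simply the definition of $\widetilde{G}(b)$.

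There is no real obstacle; the only thing requiring care is naturality of the composite bijection in $a$ and $b$, which however follows step by step: the first bijection is natural by construction of the localization, the second by naturality of the adjunction unit/counit, and the third from the universal property of the (stationary) colimit. Combining everything yields a natural isomorphism $\Hom_{\cL[\omega\inv]}(\overline{F}(a),b)\cong\Hom_{\cK}(a,\widetilde{G}(b))$, which is exactly the adjunction $\overline{F}\adj\widetilde{G}$ claimed by the lemma.
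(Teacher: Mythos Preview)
Your proof is correct and is essentially the same as the paper's: the paper writes the very same chain of bijections (in the opposite order) and likewise invokes the stationarity to commute $\Hom_{\cK}(a,-)$ with the colimit. The only minor addition in the paper is an explicit sentence that $\widetilde{G}$ inverts the maps $\omega\otimes 1$ and hence descends to a functor on $\cL[\omega\inv]$, but this is implicit in your argument via representability.
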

\begin{proof}
The category $\cL[\omega\inv]$ is the localization of $\cL$ with respect to the class of morphisms $b\xto{\omega}u\otimes b$. It is clear by construction that $\widetilde{G}$ inverts these morphisms and therefore passes to a well-defined functor on the localization. Let $a\in\cK$ and~$b\in\cL$. There are natural isomorphisms
\begin{align*}
  \Hom_{\cK}(a,\widetilde{G}(b))&\cong\Hom_{\cK}(a,\colim_n G(u\potimes{n}\otimes b))\cong\colim_n\Hom_{\cK}(a,G(u\potimes{n}\otimes b))\\
                     &\cong\colim_n\Hom_{\cL}(F(a),u\potimes{n}\otimes b)\cong\Hom_{\cL[\omega\inv]}(\overline{F}(a),b).
\end{align*}
The second isomorphism uses that the sequence~\eqref{eq:G(u^n-b)} is stationary.
\end{proof}

\begin{Cor}
\label{Cor:barpwzinv}%
The quasi-inverse $\barpwzinv:\DEU\to\Kb(\cA)$ to the equivalence $\barpwz:\Kb(\cA)\isoto\DEU$ of~\eqref{eq:barpwz-equivalence} is given for all~$B\in \DEU$ by
\begin{equation}
\label{eq:barpwzinv}%
\barpwzinv(B)= \colim_n \big(\cdots \rwz(\invert\potimes{n}\otimes B)\xto{\rwz(\omega\otimes1)}\rwz(\invert\potimes{(n+1)}\otimes B)\to\cdots\big)\kern-1em
\end{equation}
which is the colimit of a stationary sequence, \ie simply $\rwz(\invert\potimes{n}\otimes B)$ for $n\gg0$. More precisely, it suffices to take~$n$ such that $B(n)$ is effective in each degree. In particular, if $B$ is effective in each degree then $\barpwzinv(B)\cong \rwz(B)$.
\end{Cor}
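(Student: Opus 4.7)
My plan is to apply the general adjunction-vs-localization result of \Cref{Lem:adjunction-localization} to the adjunction $\pwz\adj\rwz$ of \Cref{Prop:rwz} and the morphism $\omega\colon\unit\to\invert$ in $\DEfil$. By \Cref{Prop:DEU} we have $\DEU\cong\DEfil[\omega\inv]$ as tensor-categories, and by the observation made in the proof of that proposition the switch on $\invert\otimes\invert$ is the identity (it is given by a square-one element of $k^\times$ in characteristic~$2$). Hence, once I verify the stationarity hypothesis of \Cref{Lem:adjunction-localization}, the lemma produces a right adjoint $\widetilde{\rwz}$ to the composite $\barpwz=\quo\circ\pwz$ precisely given by the colimit formula~\eqref{eq:barpwzinv}. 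Since $\barpwz$ is an equivalence by \Cref{Thm:DEU}, any such right adjoint is automatically a quasi-inverse, so $\barpwzinv\cong\widetilde{\rwz}$.

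The key technical step is to check stationarity of the sequence $\rwz(\invert\potimes{n}\otimes B)\xto{\rwz(\omega\otimes 1)}\rwz(\invert\potimes{n+1}\otimes B)$ and to pin down the threshold on~$n$. By \Cref{Lem:omega-effective}, any such transition map is an isomorphism as soon as $\invert\potimes{n}\otimes B$ is a complex of effective objects of $\Efil$. I therefore need to understand when that effectiveness holds. From~\eqref{eq:u} we have $\invert=\pwz(\invertpur)(1)$, so $\invert\potimes{n}\cong\pwz(\invertpur\potimes{n})(n)$ is a bounded complex each of whose terms is a $kC_2$-module \emph{pure of weight~$n$}. Since tensor products in $\Efil$ add weights (\Cref{Lem:gr-tensor}), if a term $B_i$ has smallest non-zero filtration degree at least~$-n$---equivalently, if $B_i(n)$ is effective---then every term of $\invert\potimes{n}\otimes B_i$ has smallest non-zero weight at least~$0$, hence is effective in the sense of \Cref{Rem:a^>m}. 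Boundedness of~$B$ guarantees that such an~$n$ can be chosen uniformly over all homological degrees.

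Combining these observations, for any~$n$ such that $B(n)$ is effective in each degree, every transition map from position~$n$ onward is an isomorphism, and hence the colimit in~\eqref{eq:barpwzinv} stabilizes and computes $\barpwzinv(B)\cong\rwz(\invert\potimes{n}\otimes B)$. The ``in particular'' statement is then just the special case $n=0$: if $B$ is already effective in each degree, the colimit stabilizes from the very beginning at $\rwz(B)$. The only potentially tricky point I foresee is bookkeeping the weight shifts correctly, i.e.\ translating ``$B(n)$ effective'' into effectivity of $\invert\potimes{n}\otimes B$; once that is in place, the result follows cleanly from the two pillars \Cref{Lem:adjunction-localization} and \Cref{Lem:omega-effective}, together with the equivalence \Cref{Thm:DEU}.
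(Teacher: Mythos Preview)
Your proposal is correct and follows essentially the same approach as the paper: apply \Cref{Lem:adjunction-localization} to the adjunction $\pwz\adj\rwz$ and the map $\omega$, verify the stationarity hypothesis via \Cref{Lem:omega-effective} after observing that $\invert\potimes{n}\otimes B$ is degreewise effective for $n\gg0$ (using $\invert\potimes{n}\cong\pwz(\invertpur\potimes{n})(n)$), and conclude using the equivalence of \Cref{Thm:DEU}. Your treatment is simply more explicit than the paper's about the weight bookkeeping and about why the right adjoint of an equivalence is a quasi-inverse.
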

\begin{proof}
To apply \Cref{Lem:adjunction-localization}, it suffices to show that for each $B\in\DE$ fixed, the morphism $\rwz(\omega\otimes \invert\potimes{n}\otimes B)$ is an isomorphism in~$\KA$ for $n\gg0$. Since $\invert\potimes{n}\cong\pwz(\invertpur\potimes{n})(n)$ by definition, we see that for $n\gg0$ large enough $A:=\invert\potimes{n}\otimes B$ is a complex of effective objects, to which we can apply \Cref{Lem:omega-effective}.
\end{proof}

\begin{Rem}
  \label{Rem:trwz-generators}%
  For any bounded complex $B\in\Ch(\Efil)$ there exists $n\gg 0$ such that $B(n)$ is effective (in each degree) and \Cref{Cor:barpwzinv} combined with the definition of~$\invert$ in~\eqref{eq:u} and the projection formula~\eqref{eq:projection-formula-rwz} yield in~$\KA$
\begin{equation}
\label{eq:twist-untwist}%
    \barpwzinv(B)\cong\rwz(\invert\potimes{n}\otimes B)\cong\rwz(B(n)\otimes\pwz(\invertpur)\potimes{n})\cong\rwz(B(n))\otimes \invertpur\potimes{n}.
\end{equation}
In particular, the images in $\Kb(\cA)$ of our favorite objects under the tt-equivalence $\barpwzinv$ may easily be computed using~\Cref{Rem:bamma-basics}:
  \begin{enumerate}[\rm(a)]
  \item $\barpwzinv(\unit(n))\cong \invertpur\potimes{-n}$, for all $n\in\bbZ$. See also~\eqref{eq:aux-barpwz-L}.
  \item $\barpwzinv(\E_{\ell})\cong \rwz(\E_{\ell})\cong kC_2\oplus (\fundpur^{\ell-1}[-\ell])$, where $\fundpur^{\inv}=\fundpur^0=0$ and $\fundpur^{m}$ (for $m\ge 1$) is the $m^\textrm{th}$ iterated splice of the extension $\fundpur$ from~\eqref{eq:fund-pur}:
    \[
      \cdots 0\to k\xto{\eta} kC_2\xto{\eta\eps} \cdots \xto{\eta\eps} kC_2 \xto{\eps} k \to 0 \cdots
    \]
    in homological degrees from $m+1$ down to~$0$ (hence with~$kC_2$ in $m$ places).
  \item $\barpwzinv(\cone(\beta\colon\unit\to \unit(1)))\cong \rwz(\cone(\beta))\cong \cone(k\xto{\eta} \invertpur\potimes{-1})\cong \fundpur[-1]$.
  \end{enumerate}
\end{Rem}

To wrap up this section, let us try to build some conceptual understanding of the equivalence $\barpwzinv\colon \DEU\isoto \KA$ of \Cref{Cor:barpwzinv}. Perhaps the first property is that $\barpwzinv$ as a refinement of the functor forgetting the filtration.

\begin{Cor}
\label{Cor:tfgt}%
The following diagram commutes up to isomorphism
\[
\xymatrix{
\DE \ar@{->>}[r]^-{\quo} \ar[d]_-{\fgt}
& \DEU \ar[d]^-{\barpwzinv}
\\
\DA
& \KA \ar@{->>}^-{\quo}[l]
}
\]
where $\fgt\colon \DE\to \DA$ is induced by the exact functor $\fgt\colon \Efil\to \cA$.
\end{Cor}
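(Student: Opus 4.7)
The plan is to factor $\fgt\colon\DE\to\DA$ through the central localization~$\quo\colon\DE\onto\DEU$ and then to identify this factorization with $\quo\circ\barpwzinv$ using the tt-equivalence $\barpwz\colon\KA\isoto\DEU$ of~\Cref{Thm:DEU}.

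The first step is to verify that $\fgt$ inverts the map $\omega\colon\unit\to\invert$. By \Cref{Lem:cone-omega}\,\eqref{it:cone-omega-a} we have $\cone(\omega)\cong\cone(\iota_1\colon\E_1\to\E_0(1))$ in~$\DE$. Since $\E_1$ and~$\E_0(1)$ both have underlying $kC_2$-module equal to~$kC_2$, and $\iota_1$ is underlain by~$\id_{kC_2}$, and since $\fgt\colon\DE\to\DA$ is a tt-functor induced by the tensor-exact functor $\fgt\colon\Efil\to\cA$ of~\Cref{Rem:fgt-gr-exact}, we obtain $\fgt(\cone(\omega))\cong\cone(\id_{kC_2})=0$ in~$\DA$, so $\fgt(\omega)$ is an isomorphism. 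The universal property of the central localization (\Cref{Prop:DEU}) then yields a unique tt-functor $\overline{\fgt}\colon\DEU\to\DA$ with $\overline{\fgt}\circ\quo=\fgt$.

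To finish, I will show that $\overline{\fgt}\cong\quo\circ\barpwzinv$ as tt-functors~$\DEU\to\DA$. Using the equivalence~$\barpwz$, it suffices to compare the two composites with~$\barpwz$. On one side, $\quo\circ\barpwzinv\circ\barpwz\cong\quo\colon\KA\to\DA$ by construction. On the other, since $\barpwz=\quo\circ\pwz$ by~\eqref{eq:barpwz-equivalence}, we obtain $\overline{\fgt}\circ\barpwz=\overline{\fgt}\circ\quo\circ\pwz=\fgt\circ\pwz$. Now $\fgt\circ\pwz\colon\cA\to\cA$ is the identity functor by the very definition of~$\pwz$ (\Cref{Exa:filt-0}); applying this identification degreewise to bounded complexes and passing to the derived categories, the composite $\KA\xto{\pwz}\DE\xto{\fgt}\DA$ sends each complex $X\in\KA$ to itself viewed in the abelian derived category, yielding precisely the canonical Verdier quotient~$\quo\colon\KA\to\DA$.

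The only real delicacy — and therefore the main (minor) obstacle — lies in carefully juggling the three exact structures involved: the split-exact structure on the source of~$\pwz$, the `tricky' exact structure on~$\Efil$, and the abelian structure on the target of~$\fgt$. However, both $\pwz$ and~$\fgt$ are exact for the relevant pairs of structures (\Cref{Rem:pwz-exact} and~\Cref{Rem:fgt-gr-exact}), so the composition on derived categories is computed pointwise and the identification $\fgt\circ\pwz=\id_{\cA}$ forces the composite $\KA\to\DA$ to be the canonical quotient by acyclics. Beyond this bookkeeping, no further computation is required.
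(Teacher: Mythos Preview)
Your proof is correct and takes a genuinely different route from the paper's.

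The paper argues computationally, working directly with the explicit formula~\eqref{eq:twist-untwist} for~$\barpwzinv$: it observes that $(-)^0\colon\Efil\to\cA$ is exact for the abelian structure, that $\fgt(A)=(A(n))^0$ for $n\gg0$, and that $\invertpur\potimes{n}\cong\unit$ in~$\DA$; combining these turns the formula $\barpwzinv(B)\cong\rwz(B(n))\otimes\invertpur\potimes{n}$ into~$\fgt(B)$ once one passes to~$\DA$. Your argument, by contrast, never unpacks~$\barpwzinv$ at all: you factor $\fgt$ through the localization via the universal property, then test the resulting functor against the equivalence~$\barpwz$, reducing everything to the trivial identity $\fgt\circ\pwz=\id_{\cA}$. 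This is cleaner and more conceptual, and it makes transparent \emph{why} the diagram commutes---namely because both legs are characterized by the same universal data. The paper's approach has the compensating virtue that it exercises the explicit description of~$\barpwzinv$ just established in \Cref{Cor:barpwzinv}, which is useful for the concrete computations in \Cref{Rem:trwz-generators}; but for the bare statement of \Cref{Cor:tfgt}, your method is more economical.
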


\begin{proof}
Recall from \Cref{Rem:fgt-gr-exact} that $\fgt\colon \Efil\to \cA$ is exact when the target has its abelian category structure (not the split one), and so is $(-)^0\colon \Efil\to \cA$. Furthermore, for every $A\in\Efil$, we have $\fgt(A)=(A(n))^0$ for $n\gg0$. Finally, in~$\DA$, we have $\invertpur\potimes{n}\cong\unit$ since $\invertpur\potimes{n}$ is a resolution of~$k$ (see~\Cref{Prop:L^n}). Hence the image of the isomorphism~\eqref{eq:twist-untwist} in~$\DA$ gives us
\[
\barpwz\inv(B)\cong\rwz(B(n))\otimes \invertpur\potimes{n}\cong\rwz(B(n))\cong \fgt(B)
\]
for $n\gg0$.
\end{proof}

\begin{Def}
\label{Def:tfgt}%
In view of \Cref{Cor:tfgt}, we call the composite tt-functor
\[
\tfgt:=\barpwzinv\circ\quo\colon \DE\onto \DEU\isoto \KA
\]
the \emph{twisted forgetful functor}.
\end{Def}

\begin{Rem}
\label{Rem:tfgt}%
Let us consider another heuristic for~$\tfgt\colon \DE\to \KA$, building on the initial intuition proposed at the beginning of the section. In \Cref{Rem:pwz-(-)^0}, we saw that the pure-weight-zero tt-functor $\pwz\colon \cA\to \Efil$ admits a right adjoint $(-)^0\colon \Efil\to \cA$ that is unfortunately not exact. Right-deriving this right adjoint led us in \Cref{Prop:rwz} to the functor $\rwz=\mathrm{R}(-)^0\colon \DE\to \KA$, itself right adjoint to~$\pwz\colon \KA\to \DE$. In other words, $\rwz\colon \DE\to \KA$ has its own justification, independently of the open~$U$ and the map~$\omega\colon \unit\to \invert$ of the subsequent \Cref{Def:omega}. Unfortunately, $\rwz$ is not a tensor functor. For instance, $\rwz(\unit(-1))=0$. (The functor $\rwz$ is at least lax-monoidal.) On the other hand, $\tfgt\colon \DE\to \KA$ \emph{is} a tensor functor and there exists a natural transformation (of lax-monoidal functors)
\[
\omega^\infty\colon \rwz\to \tfgt
\]
given by~$\rwz\to \colim_{n\ge0}\rwz(\omega\potimes{n}\otimes -)$. See~\eqref{eq:barpwzinv}. Also, $\omega^\infty_A\colon \rwz(A)\to \tfgt(A)$ is an isomorphism whenever~$A$ is a degreewise effective complex in~$\Efil$ by~\Cref{Lem:omega-effective}.

These properties characterize~$\tfgt$. Indeed, suppose that $\alpha\colon \rwz\to G$ is a tensorial approximation of~$\rwz$, that is, $\alpha$ is a natural transformation and $G\colon \DE\to \KA$ is a tt-functor. Suppose furthermore that $\alpha\colon \rwz(A)\isoto G(A)$ is an isomorphism on degreewise effective complexes~$A$. (This last property is expected of a functor `forgetting the filtration': On effective objects it should agree with $(-)^0$ and we know that $\rwz$ is the `derived version' of~$(-)^0$.) Then $G$ is isomorphic to~$\tfgt$. More precisely, the top horizontal sequence in the following diagram
\[
\xymatrix@C=1.7em{
\rwz(A) \ar[rr]^-{\rwz(\omega\otimes1)} \ar[d]_-{\alpha}
&& \cdots \ar[rr]^-{\rwz(\omega\otimes1)}
&& \rwz(\invert\potimes{n}\otimes A)\ar[rr]^-{\rwz(\omega\otimes1)} \ar[d]_-{\alpha}
&& \rwz(\invert\potimes{(n+1)}\otimes A) \ar[r] \ar[d]_-{\alpha}
& \cdots
\\
G(A) \ar[rr]^-{G(\omega\otimes1)}_-{\simeq}
&& \cdots \ar[rr]^-{G(\omega\otimes1)}_-{\simeq}
&& G(\invert\potimes{n}\otimes A)\ar[rr]^-{G(\omega\otimes1)}_-{\simeq}
&& G(\invert\potimes{(n+1)}\otimes A) \ar[r]_-{\simeq}
& \cdots
}
\]
becomes stationary by~\Cref{Lem:omega-effective}. We claim that the bottom sequence consists of isomorphisms. Indeed, since $G$ is a tensor functor, it suffices to check that $G(\omega)$ is an isomorphism. Now $\omega\colon \unit\to \invert=\pwz(L)(1)$ is a map between effective complexes and $\rwz(\omega)$ is an isomorphism (\Cref{Lem:omega-effective}), so our assumption about~$\alpha$ forces~$G(\omega)$ to be one too. Finally, for $n\gg0$, the above vertical maps~$\alpha\colon \rwz(\invert\potimes{n}\otimes A)\to G(\invert\potimes{n}\otimes A)$ become isomorphisms by assumption (since again $\invert\potimes{n}\otimes A$ becomes effective). Taking the colimit of the above (stationary) sequences yields a natural isomorphism $\tfgt(A)\cong\colim_n\rwz(\invert\potimes{n}\otimes A)\isoto \colim_n G(\invert\potimes{n}\otimes A)\simeq G(A)$.

In other words, if we follow the intuition that a forget-the-filtration functor $\DE\to \KA$ should be exact, tensorial, and should agree as much as possible with~$(-)^0$ on effective objects, then we naturally construct the tt-functor~$\tfgt$.
\end{Rem}

\bigbreak\goodbreak
\section{Main representation-theoretic result}
\label{sec:Spc-DE}%
\medbreak

We are now ready to put the pieces of \Cref{part:I} together and determine the space $\Spc(\DEfil)$. Recall that $\cA$ stands for~$\cA=\mmod{kC_2}$ and that $\Efil$ is the category of filtered objects in~$\cA$ with the Frobenius tensor-exact structure of \Cref{sec:filt-Frobenius}.

\begin{Rem}
\label{Rem:gr-Q}%
The strategy will rely on the interplay between the two tt-functors $\gr$ and~$\tfgt\colon\DE\to \KA$ displayed on the left-hand diagram\,:
\[
\vcenter{\xymatrix@R=1em@C=2em{
&& \KA
\\
\DE \ar[rru]^-{\gr} \ar@{->>}[rd]_-{\quo} \ar@{->>}@/^.8em/[rrd]^-{\tfgt}
\\
& \DEU
& \KA \ar@{->}[l]^(.45){\barpwz^{\vphantom{I}}}_(.45){\simeq}
}}
\overset{\Spc}{\leadsto}\quad
\vcenter{\xymatrix@R=1em@C=2em{
& \Spc(\KA)
\\
\Spc(\DE) \ar@{<-}[ru]^-{\Spc(\gr)} \ar@{<-}[rd]_-{\Spc(\tfgt)}
\\
& \Spc(\KA)
}}
\]
Here $\gr$ is the total-graded, as in \Cref{Lem:gr-conservative-and-section}, and $\tfgt=(\barpwz)\inv\circ \quo$ is the `twisted-forgetful functor' of \Cref{Def:tfgt}, \ie the central localization corresponding to the open~$U=U(\cone(\omega))$ of \Cref{Def:DEU} followed by the inverse of the equivalence $\barpwz$ of \Cref{Thm:DEU}. (See heuristics about~$\tfgt$ in \Cref{Rem:tfgt}.)

Applying the contravariant functor~$\Spc(-)$ we get the above right-hand diagram of spaces. We know the source $\Spc(\KA)$ of those two maps, by \Cref{Thm:Spc(Kb(kC_2))}. The gist of the argument is that the images of those two maps form a partition of~$\Spc(\DE)$. More precisely, the bottom one has image~$U$ (that is easy, by \Cref{Rmd:Spc-quo}) and the top map, $\Spc(\gr)$ has closed image equal to the complement of~$U$ (that will require proof). Then we need to understand how the open piece and the closed complement attach together topologically.

Some critical filtered objects in~$\Efil$ are $\E_0=\pwz(kC_2)$, which is just~$kC_2$ in pure weight zero, and $\E_1$ which is $k\hook kC_2$ in weight one and zero; see~\eqref{eq:e_l}. Recall also $\fund_0$ which is the complex $0\to \unit \into \E_0\onto \unit \to 0$, \ie the basic extension $\fundpur$ of~\eqref{eq:fund-pur}, in pure weight zero, which is not exact in~$\Efil$ and therefore defines a complex in~$\DE$, that we place in homological degrees two, one and zero.
\end{Rem}

Let us summarize the basic geography:
\begin{Prop}
\label{Prop:geography}%
We have a set partition
\[
\Spc(\DE)=U\sqcup Z
\]
where the open $U=U(\cone(\omega))=\SET{\cP}{\cone(\omega)\in\cP}$ is the complement of the closed $Z=\supp(\cone(\omega))$ for the morphism~$\omega\colon \unit\to \pwz(\invertpur)(1)$ described in~\eqref{eq:omega}. Moreover, this closed subset~$Z$ is also the support of the object
\begin{equation}
\label{eq:T}%
\T=\cone(\beta\colon \E_1\to \E_1(1))=\cone(\beta\colon \unit\to \unit(1))\otimes \E_1
\end{equation}
for the map~$\beta\colon \Id\to (1)$ as in \Cref{Not:beta}.
\end{Prop}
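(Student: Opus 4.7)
The plan is to recognize that the bulk of the work is already accomplished in Lemma~\ref{Lem:cone-omega}, so that this Proposition serves mainly to repackage that result as a statement about the spectrum. First, the decomposition $\Spc(\DE)=U\sqcup Z$ is a tautology given the definition of $U=U(\cone(\omega))$ in~\Cref{Def:DEU}: by \Cref{Rmd:Spc}, the support $Z=\supp(\cone(\omega))$ is closed in $\Spc(\DE)$ and $U$ is its open complement. So no work is required for the first half.

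For the second half, that is, the equality $Z=\supp(\T)$, I would first unpack the second equality in~\eqref{eq:T}. The morphism $\beta\colon \E_1\to \E_1(1)$ coincides with $\beta_\unit\otimes \id_{\E_1}$ under the canonical identification (see \Cref{Not:beta}), and since cones commute with tensoring against a fixed object, one obtains $\cone(\beta_{\E_1})\cong \cone(\beta_\unit)\otimes \E_1$ canonically in $\DE$. This identification justifies the notation $\T$ and requires no further argument.

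The core identification $\supp(\cone(\omega))=\supp(\T)$ then reduces, by the general tt-geometric fact that $\supp(x)=\supp(\ideal{x})$ and that two objects with the same generated tt-ideal have equal supports (see \Cref{Rmd:Spc}), to the equality of tt-ideals
\[
\ideal{\cone(\omega)}=\ideal{\cone(\beta\colon \E_1\to\E_1(1))}
\]
in $\DE$. But this is precisely the content of \Cref{Lem:cone-omega}\,\eqref{it:cone-omega-c}. So the proposition follows by chaining together these observations.

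The main obstacle has already been overcome in \Cref{Lem:cone-omega}, where the chain of manipulations through the commutative diagram~\eqref{eq:omega-iota} translated $\cone(\omega)$ into a Koszul object involving $\E_1$ and $\beta$. Once that identification of tt-ideals is in hand, the present proposition is essentially a formal consequence, whose only role is to repackage the information in a geometric form suitable for the subsequent analysis of the six points of $\Spc(\DE)$.
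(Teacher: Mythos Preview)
Your proposal is correct and follows essentially the same approach as the paper: the decomposition is tautological, and the equality of supports reduces to the equality of tt-ideals established in \Cref{Lem:cone-omega}\,\eqref{it:cone-omega-c}. The only difference is that you spell out the identification $\cone(\beta_{\E_1})\cong\cone(\beta_\unit)\otimes\E_1$ explicitly, which the paper leaves implicit.
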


\begin{proof}
The decomposition $\SpcK=U(A)\,\sqcup\,\supp(A)$ holds for any object $A$ in any tt-category~$\cK$. The two objects $\cone(\omega)$ and $\cone(\beta\colon \E_1\to \E_1(1))$ have the same support because they generate the same tt-ideal by \Cref{Lem:cone-omega}\,\eqref{it:cone-omega-c}.
\end{proof}

The technical crux of the matter is the following result which will allow us to show that $\gr:\DEfil\to\Kb(\cA)$ catches all the points in $Z$.
\begin{KLem}
\label{Lem:gr-nil}%
Let $f\colon \unit\to A$ be a morphism in~$\DEfil$ such that $\gr^0(f)\colon k \to \gr^0(A)$ is zero in~$\Kb(\cA)$. Then $f\potimes{2}\otimes\T$ is zero, where~$\T$ is as in~\eqref{eq:T}.
\end{KLem}

\begin{proof}
The morphism $f$ is represented by a fraction $\unit\xrightarrow{f'}A'\xleftarrow{s}A$ with $f'$ and $s$ maps in $\Ch(\Afil)$ and the complex $\cone(s)$ is acyclic in the exact structure~$\Efil$. In particular, $\gr^0(s)$ is an isomorphism and we deduce that $\gr^0(f')=0$ as well. Moreover, if $(f')\potimes{2}\otimes\T$ is zero in $\DEfil$ then so is $f\potimes{2}\otimes\T$. Hence we will assume without loss of generality that $f$ is represented by a morphism $\unit\to A$ in $\Ch(\Afil)$.

The statement is a consequence of the following claim: The hypothesis $\gr^0(f)=0$ forces the composite at the top of the following diagram to factor in~$\Kb(\Afil)$ via $\beta$
\begin{equation}
\label{eq:aux-nil-reduce}%
\vcenter{\xymatrix@R=1.5em{
\E_1\otimes \E_1^\vee \ar[r]^-{\ev} \ar@{-->}[rrd]
& \unit \ar[r]^-{f\potimes{2}}
& A\potimes{2}
\\
&& A\potimes{2}(-1) \ar[u]_-{\beta}
}}
\end{equation}
Indeed, by duality this is equivalent to a factorization as on the left-hand side below
\[
\vcenter{\xymatrix@R=1.5em{
\E_1\ar[r]^-{1\otimes f\potimes{2}} \ar@{-->}[rd]
& \E_1\otimes A\potimes{2}
\\
& \E_1\otimes A\potimes{2}(-1) \ar[u]_-{1\otimes\beta}
}}
\Longrightarrow\quad
\vcenter{\xymatrix@C=4em@R=1.5em{
\E_1\otimes\cone(\beta)\ar[r]^-{1\otimes f\potimes{2}\otimes1} \ar@{-->}[rd]
& \E_1\otimes A\potimes{2}\otimes\cone(\beta)
\\
& \kern-2em \E_1\otimes A\potimes{2}(-1)\otimes\cone(\beta) \ar[u]_-{1\otimes\beta\otimes 1}
}}
\]
from which we get a factorization as on the right-hand side by tensoring with~$\cone(\beta)$.
But the vertical arrow in this last diagram is zero in $\Kb(\Afil)$, because the map of complexes $\beta\otimes\cone(\beta):\cone(\beta)(-1)\to\cone(\beta)$ is null-homotopic (with $\id$ as homotopy). We then conclude that the top map $\E_1\otimes f\potimes{2}\otimes \cone(\beta)$ is zero as well, as wanted. So we are indeed reduced to prove the claimed factorization in~\eqref{eq:aux-nil-reduce}.

Since $\E_1\otimes \E_1^\vee\cong \E_1\oplus \E_1(-1)$ and every map $\E_1(-1)\to \unit$ factors through~$\beta$, it suffices to prove that the following horizontal composite factors in~$\Kb(\Afil)$ through~$\beta$:
\[
\xymatrix@R=1.5em{
\E_1 \ar[r]^-{\eps} \ar@{-->}[rrd]
& \unit \ar[r]^-{f\potimes{2}}
& A\potimes{2}
\\
&& A\potimes{2}(-1). \ar[u]_-{\beta}
}
\]
Note that we reduced to the homotopy category $\Kb(\Afil)$ of the tensor category~$\Afil$. In particular we do not use the exact category structure~$\Efil$ in the rest of the proof.

For~$B\in\Ch(\Afil)$, with differential~$d\colon B_i\to B_{i-1}$, we have explicit descriptions of maps of complexes from~$\unit$ and from~$\E_1$ to~$B$, and what it means to factor via $\beta$:
\begin{enumerate}[(1)]
\item
\label{it:f:1->y}%
A morphism $f\colon\unit=\pwz(k)\to B$ amounts to picking an element $a\in B_0^0$ such that $(1+\sigma)a=0$ (to be $kC_2$-linear) and such that $d(a)=0$ (to be a morphism of complexes).
\item
\label{it:f:e_1->y}%
A morphism $\E_1\to B$ amounts to picking an element $a\in B_0^0$ such that $(1+\sigma)a\in B_0^1$ (so that $k=(\E_1)^1$ maps to weight 1) and still such that $d(a)=0$. For $f\colon \unit \to B$ as in~\eqref{it:f:1->y}, the morphism $f\eps\colon \E_1\to B$ is given by the same~$a$.
\item
\label{it:f:e_1-beta-y}%
A morphism $\E_1\to B$ as in~\eqref{it:f:e_1->y} factors via $\beta\colon B(-1)\to B$ if (and only if) $a\in B_0^1$ and $(1+\sigma)\in B_0^2$. Note that the condition $d(a)=0$ holds automatically.
\item
\label{it:f-homotopy}%
For two morphisms $\E_1\to B$ given by $a,a'\in B_0^0$ as in~\eqref{it:f:e_1->y}, a homotopy $h$ between them amounts to picking $h\in B_1^0$ such that $(1+\sigma)h\in B_1^1$ (that is just a morphism $\E_1\to B_1$) with the property that $a=a'+d(h)$.
\end{enumerate}

Our $f\colon \unit\to A$ is given, via~\eqref{it:f:1->y} for $B=A$, by an element $a\in A_0^0$ such that
\begin{equation}
\label{eq:aux-a}%
(1+\sigma)a=0
\qquadtext{and}
d(a)=0.
\end{equation}
Note right away that the morphism $f\potimes{2}\colon \unit\to A\potimes{2}$ is simply given by $a\otimes a\in (A\potimes{2})_0^0$ when we apply~\eqref{it:f:1->y} for $B=A\potimes{2}$. Similarly, $f\potimes{2}\circ\eps\colon \E_1\to A\potimes{2}$ is also given by~$a\otimes a$ in the description of~\eqref{it:f:e_1->y} for $B=A\potimes{2}$.

Now let us unpack the information about $\gr^0(f)\colon k \to \gr^0(A)$ being zero in~$\KA$. This homotopy amounts to the existence of $\bar b\in A_1^0/A_1^1$ such that $(1+\sigma)\bar b=0$ and such that $d(\bar b)=\bar a$ in~$A_0^0/A_0^1$. Picking $b\in A_1^0$ representing~$\bar b$, this information reads
\begin{equation}
\label{eq:aux-b}%
(1+\sigma)b\in A_1^1
\qquadtext{and}
c:=a+d(b)\in A_0^1.
\end{equation}
(We use characteristic~2 and do not write signs.) Note that we have $a=d(b)+c$. Also note that $d(c)=d(a)+d^2(b)=0$.

Consider now the element
\[
h=(b\otimes a)+(c\otimes b)
\]
in~$(A\potimes{2})_1$. Its homological degree is indeed~1 because $b$ is degree~1 and $a$ and~$c$ are degree~0. For the moment, we consider~$h$ as `effective', that is, as an element of~$(A\potimes{2})_1^0$. We claim that $(1+\sigma)h$ is of strictly positive weight, as in~\eqref{it:f-homotopy} for the object~$B=A\potimes{2}$. Note that $c\otimes b$ already is of strictly positive weight since~$c$ is and $b$ is effective. So, to show that $(1+\sigma)h$ is of strictly positive weight, it suffices to check this for~$(1+\sigma)(b\otimes a)$. Using that $\sigma a=a$ and that $\sigma$ acts diagonally on the tensor, we have $(1+\sigma)(b\otimes a)=((1+\sigma)b)\otimes a$ which is indeed of weight~$\geq 1$ since $(1+\sigma)b$ is by~\eqref{eq:aux-b} and $a$ is effective. In short, $h$ defines a homotopy for morphisms~$\E_1\to A\potimes{2}$. Let us now modify $f\potimes{2}\circ \eps\colon \E_1\to A\potimes{2}$, given by~$a\potimes{2}$, with the homotopy given by~$h$, as in~\eqref{it:f-homotopy}. We compute using Leibniz (without signs), together with $d(a)=0$ and $d(c)=0$, and finally $a+db=c$:
\begin{align*}
a\potimes{2}+dh & =
a\potimes{2}+d(b\otimes a)+d(c\otimes b)= a\otimes a+db\otimes a+c\otimes db
\\
& = (a+db)\otimes a+c\otimes db = c\otimes (a+db)=c\potimes{2}.
\end{align*}
In other words, the morphism $f\potimes{2}\circ \eps\colon \E_1\to A\potimes{2}$ is homotopic to the morphism $\E_1\to A\potimes{2}$ given by~$c\potimes{2}$. Now since $c\in A_0^1$, we see that $c\potimes{2}$ belongs to $A_0^2$, \ie is of weight~$\geq 2$. In particular, $(1+\sigma)c\potimes{2}$ also is of weight~$\geq 2$. In other words, by~\eqref{it:f:e_1-beta-y} for the object $B=A\potimes{2}$, the morphism given by~$c\potimes{2}$ does factor via $\beta\colon A\potimes{2}(-1)\to A\potimes{2}$.
\end{proof}

To use \Cref{Lem:gr-nil}, we need the following extension of \cite[Thm.\,1.3]{balmer:surjectivity}.
\begin{Cor}
  \label{Cor:surjectivity-support}%
  Let $F:\cK\to\cL$ be a tt-functor between tt-categories and assume that $\cK$ is rigid. Let $t\in\cK$ be an object and assume that $F$ \emph{detects $\otimes$-nilpotence on~$t$}, in the sense that if $f$ is a morphism in $\cK$ and $F(f)=0$ then $f\potimes{n}\otimes t=0$ for some $n\geq 1$. Then the image of $\Spc(F):\Spc(\cL)\to\Spc(\cK)$ contains $\supp(t)$.
\end{Cor}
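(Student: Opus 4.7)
The strategy is to adapt Balmer's proof of \cite[Theorem 1.3]{balmer:surjectivity}---the case $t=\unit$ of the present statement---to a local setting around a single prime $\cP \in \supp(t)$, \ie $\cP$ with $t \notin \cP$. The target is to produce $\cQ \in \Spc(\cL)$ with $F\inv(\cQ)=\cP$.

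First, I would apply Zorn's lemma to the poset of tt-ideals $\cJ \subseteq \cL$ with $F\inv(\cJ) \subseteq \cP$; this is non-empty because for $x$ with $F(x)=0$ the hypothesis applied to $f=\id_x$ gives $x \otimes t=0$, so $\supp(x) \cap \supp(t)=\varnothing$ and in particular the kernel of $F$ lies in $\cP$ (as $\cP \in \supp(t)$). Pick a maximal such $\cQ$. It remains to show (a) that $\cQ$ is prime, and (b) that $F(\cP) \subseteq \cQ$, which together with the defining property of $\cQ$ yields $F\inv(\cQ)=\cP$.

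Step (a) is the standard rigidity argument. If $y_1 \otimes y_2 \in \cQ$ with $y_i \notin \cQ$, then by maximality of $\cQ$ each enlargement $\langle \cQ, y_i\rangle$ contains $F(x_i)$ for some $x_i \notin \cP$; the ideal-theoretic inclusion $\langle\cQ,y_1\rangle \otimes \langle\cQ,y_2\rangle \subseteq \langle\cQ,\,y_1\otimes y_2\rangle = \cQ$ then forces $F(x_1) \otimes F(x_2) = F(x_1 \otimes x_2) \in \cQ$, contradicting that $\cP$ is prime (so $x_1 \otimes x_2 \notin \cP$) and that $F\inv(\cQ) \subseteq \cP$.

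Step (b) is the main obstacle and is the step where the hypothesis on $t$ is essential. By the maximality of $\cQ$, it suffices to verify that the enlarged ideal $\cQ' := \langle \cQ, F(\cP)\rangle$ still satisfies $F\inv(\cQ') \subseteq \cP$. So suppose, for contradiction, that $x \notin \cP$ and $F(x) \in \langle \cQ, F(p)\rangle$ for some $p \in \cP$. Following the rigidity-based manipulations that power \cite[Thm.~1.3]{balmer:surjectivity}, one translates this ideal membership into the vanishing under $F$ of a concrete morphism $f$ in $\cK$ built from $x$ and $p$ via coevaluation and evaluation; the hypothesis then produces $n\geq 1$ with $f\potimes{n} \otimes t = 0$ in $\cK$. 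Combined with $x, t \notin \cP$ (so $x\otimes t \notin \cP$ by primality of~$\cP$), the resulting identity must fail modulo $\cP$, delivering the required contradiction. The ``threading'' of the auxiliary object $t$ through this construction is precisely what allows us to weaken Balmer's nilpotence hypothesis from all morphisms to morphisms $\otimes$-multiplied by $t$, and correspondingly restricts the image of $\Spc(F)$ from the whole spectrum to just $\supp(t)$.
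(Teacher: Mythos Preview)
Your outline may well be completable, but the paper takes a different and much shorter route. Rather than rerunning the Zorn argument behind \cite[Thm.~1.3]{balmer:surjectivity} with $t$ threaded through, the paper reduces to that theorem as a black box via the auxiliary tt-functor
\[
F'=(\quo,F)\colon\cK\longrightarrow(\cK/\ideal{t})\times\cL.
\]
One checks that $F'$ detects $\otimes$-nilpotence outright: if $F'(f)=0$ then $\quo(f)=0$ forces $f$ to factor via some object $z\in\ideal{t}$, while $F(f)=0$ gives $f\potimes{n}\otimes t=0$ and hence $f\potimes{n}\otimes z=0$ by \cite[Prop.~2.12]{balmer:sss}; splicing the factorization through~$z$ into $f\potimes{n+1}$ then yields $f\potimes{n+1}=0$. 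Thus $\Spc(F')$ is surjective by \cite[Thm.~1.3]{balmer:surjectivity}, and since the component $\Spc(\cK/\ideal{t})$ lands in the open $U(t)$, the closed piece $\supp(t)$ must be covered by $\Spc(F)$. This packaging absorbs the entire difficulty---in particular your Step~(b)---into the already-proved theorem.

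As written, your Step~(b) is only a gesture. The assertion that the membership $F(x)\in\langle\cQ,F(p)\rangle$ ``translates into the vanishing under $F$ of a concrete morphism in $\cK$ built from $x$ and $p$'' is exactly the content that needs proving, and it is not obvious: the tt-ideal $\cQ\subseteq\cL$ need not be generated by the image of $F$, so even after passing to $\cL/\cQ$ the relation $\overline{F(x)}\in\ideal{\overline{F(p)}}$ involves tensoring with arbitrary objects of $\cL/\cQ$ and does not visibly correspond to the vanishing of any single $F(f)$. Whatever rigidity-based manipulations make this work in \cite{balmer:surjectivity} would have to be reproduced here in full; the paper's product-functor trick lets one invoke them by citation instead.
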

\begin{proof}
Consider the tt-functor $F'\colon \cK\to (\cK/\ideal{t})\times \cL$ given by~$\quo\colon \cK\onto \cK/\ideal{t}$ in the first component and by~$F$ in the second. If~$f\colon x\to y$ is such that $F'(f)=0$ then in particular $f\mapsto 0$ in~$\cK/\ideal{t}$, hence it factors $f=(x\xto{g}z\xto{h}y)$ via an object~$z\in\ideal{t}$. On the other hand, $F(f)=0$. Thus by hypothesis the morphism~$f$ is nilpotent on~$t$, and therefore on any object of~$\ideal{t}$, like our~$z$; see~\cite[Prop.\,2.12]{balmer:sss}. It follows that~$f\potimes{(n+1)}$, which factors as follows
\[
\xymatrix{x\potimes{(n+1)} \ar[r]^-{1\otimes g}
& x\potimes{n}\otimes z \ar[rr]^-{f\potimes{n}\otimes 1_z}
&& y\potimes{n}\otimes z \ar[r]^-{1\otimes h}
& y\potimes{(n+1)}}
\]
is zero for~$n\gg0$. In short, $F'(f)=0$ forces $f\potimes{n}=0$ for $n\gg0$, \ie the functor $F'$ detects nilpotence. Hence by \cite[Thm.\,1.3]{balmer:surjectivity}, the spectrum $\SpcK$ is covered by the image under $\Spc(F')$ of $\Spc((\cK/\ideal{t})\times\cL)=\Spc(\cK/\ideal{t})\sqcup\Spc(\cL)$. We know by \Cref{Rmd:Spc-quo} that the first component $\Spc(\cK/\ideal{t})\isoto U(t)\subset\SpcK$ misses $\supp(t)$ entirely. Hence it must be the other component of~$\Spc(F')$, namely $\Spc(F)\colon \Spc(\cL)\to \SpcK$, that covers~$\supp(t)$.
\end{proof}

We can now prove our main result, on the representation-theoretic side.

\begin{Thm}
\label{Thm:Spc-DE}%
The spectrum $\Spc(\DEfil)$ of the tt-category $\cK=\DE$ is the following six-point topological space (see \Cref{Rmd:Spc}):
\[
\xymatrix@R=1em{
\prLs
&& \prNs
\\
\prL \ar@{-}[u]
& \prMs \ar@{-}[lu] \ar@{-}[ru]
& \prN \ar@{-}[u]
\\
& \prM \ar@{-}[u] \ar@{-}[lu] \ar@{-}[ru]
}
\]
More precisely, using notation as in~\Cref{Rem:gr-Q} and \Cref{Prop:geography}, we have
\begin{align}
\label{eq:M_0}%
\prM=\ideal{\T,\E_0,\fund_0}
&\quadtext{is the kernel of}
\rsd{\prM}\colon \cK \xto{\tfgt} \KA \xto{\rsd{\cM}}\mmod{k}
\\
\label{eq:L_0}%
\prL=\ideal{\T,\E_0}
&\quadtext{is the kernel of}
\rsd{\prL}\colon \cK \xto{\tfgt} \KA \xto{\rsd{\cL}}\Db(k)
\\
\label{eq:N_0}%
\prN=\ideal{\T,\fund_0}
&\quadtext{is the kernel of}
\rsd{\prN}\colon \cK \xto{\tfgt} \KA \xto{\rsd{\cN}}\Db(k)
\\
\label{eq:M_1}%
\prMs=\ideal{\E_0,\fund_0}
&\quadtext{is the kernel of}
\rsd{\prMs}\colon \cK \xto{\gr} \KA \xto{\rsd{\cM}}\mmod{k}
\\
\label{eq:L_1}%
\prLs=\ideal{\E_0}
&\quadtext{is the kernel of}
\rsd{\prLs}\colon \cK \xto{\gr} \KA \xto{\rsd{\cL}}\Db(k)
\\
\label{eq:N_1}%
\prNs=\ideal{\fund_0}
&\quadtext{is the kernel of}
\rsd{\prNs}\colon \cK \xto{\gr} \KA \xto{\rsd{\cN}}\Db(k)
\end{align}
where the `tt-residue fields' $\rsd{\cL}$, $\rsd{\cM}$ and $\rsd{\cN}$ are those of \Cref{Cor:fields-KA}.
\end{Thm}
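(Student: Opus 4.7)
The plan is to exploit the open/closed decomposition $\Spc(\cK)=U\sqcup Z$ of \Cref{Prop:geography}, with $U=U(\cone(\omega))$ and $Z=\supp(\T)$. Each of the six candidate primes is manifestly a prime tt-ideal, being the kernel of a tt-functor landing in either $\Db(k)$ or $\mmod{k}$, both of which have single-point spectrum.

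For the open piece $U$: \Cref{Rmd:Spc-quo} identifies $\Spc(\DEU)$ with $U$, while \Cref{Thm:DEU} provides the equivalence $\barpwz\inv\colon\DEU\isoto\KA$; composing, we obtain $U\cong\Spc(\KA)$, and since $\tfgt=\barpwz\inv\circ\quo$, the three points $\cL,\cM,\cN$ of \Cref{Thm:Spc(Kb(kC_2))} correspond precisely to the kernels $\prL,\prM,\prN$ of \eqref{eq:M_0}--\eqref{eq:N_0}. For the closed piece $Z$: the Key Lemma \Cref{Lem:gr-nil} states that $\gr$ detects tensor-nilpotence on $\T$, so by \Cref{Cor:surjectivity-support} the map $\Spc(\gr)$ surjects onto $Z$; since $|\Spc(\KA)|=3$ this forces $|Z|\le 3$. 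Defining $\prLs,\prMs,\prNs$ as $\gr\inv(\cL),\gr\inv(\cM),\gr\inv(\cN)$ recovers the kernels \eqref{eq:L_1}--\eqref{eq:N_1} by \Cref{Cor:fields-KA}. These lie in $Z$, verified by computing $\gr(\cone(\omega))\cong\invertpur\oplus k[1]$ in $\KA$ and checking that it has non-vanishing image under each of the three residue field functors; and they are pairwise distinct by \Cref{Prop:Spc-map-generators}(a) together with $\cL=\ideal{\gr(\E_0)}$, $\cN=\ideal{\gr(\fund_0)}$ and $\cM=\ideal{\gr(\E_0),\gr(\fund_0)}$.

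The generator descriptions follow from support computations and Balmer's classification of tt-ideals in the rigid tt-category $\cK$. First compute $\supp(\T)=Z$ (directly from \Cref{Prop:geography}), and $\supp(\E_0)=\{\prN,\prNs\}$, $\supp(\fund_0)=\{\prL,\prLs\}$ by evaluating at the six residue field functors. To match each prime with its proposed generating set~$S$, determine the set of primes containing the prime by a case-check---exhibiting, for every non-containment, an explicit witness (e.g.\ $\T\in\prL\setminus\prLs$, $\E_0\in\prL\setminus\prN$, $\fund_0\in\prMs\setminus\prL$)---and verify that the resulting support of the prime equals $\supp(\ideal{S})=\bigcup_{s\in S}\supp(s)$. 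Balmer's classification then identifies the prime with $\ideal{S}$, and the full specialization diagram is immediate from the nested generator sets.

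The heart of the matter is the Key Lemma \Cref{Lem:gr-nil}, which is what guarantees that $\gr$ catches every prime in the closed piece~$Z$ (without it, one could not even control $|Z|$); the remainder of the argument, while involving some bookkeeping, is a systematic application of the Verdier-quotient correspondence, \Cref{Prop:Spc-map-generators}, and Balmer's classification of tt-ideals by Thomason-closed subsets.
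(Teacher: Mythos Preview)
Your overall strategy---splitting $\Spc(\cK)=U\sqcup Z$ and catching $U$ via~$\tfgt$ and $Z$ via~$\gr$ using the Key Lemma---is correct and matches the paper's approach. The identification of the six primes as kernels, the verification that $\prLs,\prMs,\prNs$ lie in~$Z$, and their pairwise distinctness via \Cref{Prop:Spc-map-generators}(a) are all fine.

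The gap is in gluing $U$ and~$Z$: you have not established the three ``vertical'' containments $\prLs\subseteq\prL$, $\prNs\subseteq\prN$, and especially $\prMs\subseteq\prM$. Your ``case-check'' only produces witnesses for \emph{non}-containments; these give lower bounds on $\supp(\cP)$ but never upper bounds. Without the containments, the generator descriptions become circular: for instance, from $\supp(\E_0)=\{\prN,\prNs\}$ and $\supp(\fund_0)=\{\prL,\prLs\}$ one gets $\ideal{\E_0,\fund_0}=\prM\cap\prMs$, and this equals $\prMs$ \emph{only if} $\prMs\subseteq\prM$---precisely the point at issue. So ``the specialization diagram is immediate from the nested generator sets'' begs the question.

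The paper supplies the missing arguments as follows. For $\prLs\subseteq\prL$ and $\prNs\subseteq\prN$: conservativity of~$\gr$ (\Cref{Lem:gr-conservative-and-section}) forces $\Spc(\gr)$ to hit every closed point of $\Spc(\cK)$ by~\cite[Thm.\,1.2]{balmer:surjectivity}, so $\prL,\prN\in U$ cannot be closed; each must therefore strictly contain some prime of~$Z$, and witnesses (e.g.\ $\E_0\in\prLs\setminus\prN$, $\fund_0\in\prNs\setminus\prL$) eliminate the wrong candidates. For $\prMs\subseteq\prM$, a genuinely different idea is needed: the paper factors both $\rsd{\prMs}$ and $\rsd{\prM}$ through the stable category $\stab(\Aqab)$ of the quasi-abelian structure on~$\Afil$ (\Cref{Rem:quasi-abelian}, \Cref{Rem:Aqab-Frobenius}) and checks that $\Ker(\gr\colon\stab(\Aqab)\to\stab(\cA))=0$, hence is contained in $\Ker(\fgt)$; pulling back yields $\prMs\subseteq\prM$. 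This last step is not visible from support bookkeeping alone and is the subtlest part of the argument.
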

\begin{proof}
We refer to the basic geography $\Spc(\DE)=U\sqcup\supp(\T)$ of \Cref{Prop:geography}. The open piece~$U$ is straightforward to describe in view of \Cref{Rmd:Spc-quo} and the equivalence $\barpwz\colon \KA\isoto\DEU$ of \Cref{Thm:DEU}. We have $U=\SET{\tfgt\inv(\cP)}{\cP\in \Spc(\KA)}$, that is, $U$ consists of three points $\prL:=\tfgt^{-1}(\cL)=\Ker(\rsd{\cL}\circ \tfgt)$, $\prM:=\tfgt^{-1}(\cM)=\Ker(\rsd{\cM}\circ \tfgt)$ and $\prN:=\tfgt^{-1}(\cN)=\Ker(\rsd{\cN}\circ \tfgt)$ with the specialization relations between them as depicted. Furthermore, as $\tfgt\colon \DE\onto \KA$ is (equivalent to) the localization at~$\ideal{\cone(\omega)}=\ideal{\T}$ and since we have generators of the quotients~$\prL/\ideal{\T}=\cL=\ideal{\E_0}$, $\prM/\ideal{\T}=\cM=\ideal{\E_0,\fund_0}$ and $\prN/\ideal{\T}=\cN=\ideal{\fund_0}$, we have obvious generators of~$\prM$, $\prL$ and~$\prN$ as in~\eqref{eq:M_0}-\eqref{eq:N_0}.

The closed complement of~$U$ is~$\supp(\T)=\SET{\cP}{\T\notin\cP}$ by \Cref{Prop:geography}. We want to show that this closed complement is exactly $\Img(\Spc(\gr))$. The functor $\gr:\DEfil\to\Kb(\cA)$ has a section (\Cref{Lem:gr-conservative-and-section}). Hence the map $\Spc(\gr):\Spc(\Kb(\cA))\to\Spc(\DEfil)$ is a homeomorphism onto its image, which consists of three points $\prLs:=\gr^{-1}(\cL)=\Ker(\rsd{\cL}\circ\gr)$, $\prMs:=\gr^{-1}(\cM)=\Ker(\rsd{\cM}\circ\gr)$, $\prNs:=\gr^{-1}(\cN)=\Ker(\rsd{\cN}\circ\gr)$ with the specialization relations between them as depicted. Computing $\gr(\E_0)=kC_2$ and $\gr(\fund_0)=\fundpur$, we easily have
\begin{equation}
\label{eq:aux-LMN}%
\prLs\supseteq \ideal{\E_0},\qquad
\prMs\supseteq \ideal{\E_0,\fund_0}
\quadtext{and}
\prNs\supseteq \ideal{\fund_0}
\end{equation}
but we do not yet know that these are equalities. Since $\gr(\T)\cong \gr(\cone(\beta))\otimes \gr(\E_1)\cong (k\oplus k[1])\otimes (k\oplus k)$ is a sum of invertibles, these points $\prLs$, $\prMs$, $\prNs$ do not contain~$\T$, hence belong to $\supp(\T)$. To show that this inclusion $\Img(\Spc(\gr))\subseteq\supp(\T)$ is an equality, we can use \Cref{Cor:surjectivity-support}, \ie it suffices to verify that $\gr$ `detects $\otimes$-nilpotence on~$\T$'. Thus let $f:B\to A$ be a morphism in $\DEfil$ such that $\gr(f)=0$. We would like to show that $f$ is $\otimes$-nilpotent on~$\T$. By rigidity, we may assume $B=\unit$ in which case the statement is proved in the \Cref{Lem:gr-nil}.

At this stage, we know that the spectrum of $\DEfil$ has exactly six points and at least the specialization relations as follows:
\[
\xymatrix@R=1em{
\prLs
&& \prNs
\\
\prL
& \prMs \ar@{-}[lu] \ar@{-}[ru]
& \prN
\\
& \prM \ar@{-}[lu] \ar@{-}[ru]
}
\]
(We also know that inside each of the two ``V-shapes'' in this picture there are no other specialization relations.) It remains to prove the `vertical' specialization relations and to prove that the inclusions in~\eqref{eq:aux-LMN} are equalities.

By \Cref{Lem:gr-conservative-and-section}, the functor $\gr$ is conservative  and thus the image of the map $\Spc(\gr)$ contains all closed points of $\Spc(\DEfil)$, by~\cite[Thm.\,1.2]{balmer:surjectivity}. We conclude that $\prL$ and $\prN$ are not closed points. Since $\E_0\in \prLs\smallsetminus\prN$ we see that $\prLs\notin\adhpt{\prN}$ and, \afortiori, $\prMs\notin\adhpt{\prN}$. Hence $\overline{\{\prN\}}=\{\prN,\prNs\}$. Similarly, $\fund_0\in\prL\smallsetminus\prNs$ and we deduce that $\overline{\{\prL\}}=\{\prL,\prLs\}$.

To prove $\prMs\in\adhpt{\prM}$, that is $\prMs\subset\prM$, we provide an intermediate tt-category, between $\DE$ and the residue fields~$\kappa(\prMs)=\mmod{k}=\kappa(\prM)$ through which both residue functors $\rsd{\prMs}$ and $\rsd{\prM}$ factor and in which the corresponding primes are included. That intermediate category is obtained from the Frobenius quasi-abelian category~$\Aqab$ discussed in \Cref{Rem:quasi-abelian}. As we saw in \Cref{Rem:Aqab-Frobenius}, the projective-injectives in~$\Aqab$ are given by~$\add^{\otimes}(\E_0)$. Consequently the subcategory of perfect complexes in~$\Db(\Aqab)$ consists of the tt-ideal~$\ideal{\E_0}$ and the associated Verdier quotient is equivalent to the stable category $\stab(\Aqab)$
\[
\Sta\colon \Db(\Aqab)\onto\Db(\Aqab)/\ideal{\E_0}\cong\stab(\Aqab)
\]
as in \Cref{Prop:Rickard}. There is a commutative diagram of tt-functors
\begin{equation}
\label{eq:factorization-res-M01}%
\vcenter{\xymatrix{
& \Kb(\cA) \ar@{->>}[r]^-{\quo}
& \Db(\cA) \ar@{->>}[r]^-{\Sta}
& \stab(\cA) \kern-1em \ar@{}[r]|-{\cong}
& \kern-1em \mmod{k}
 \ar @{<-} `u[ll] `[lllld]_-{\rsd{\prMs}} [lllld]
\\
\DE \ar@{->>}[r]^-{\quo} \ar[ru]^-{\gr} \ar@{->>}[rd]^-{\tfgt}
& \Db(\Aqab) \ar@{->>}[r]^-{\Sta} \ar[ru]^-{\gr} \ar[rd]^-{\fgt}
& \stab(\Aqab) \ar[ru]^-{\gr} \ar[rd]^-{\fgt}
\\
& \Kb(\cA) \ar@{->>}[r]^-{\quo}
& \Db(\cA) \ar@{->>}[r]^-{\Sta}
& \stab(\cA) \kern-1em \ar@{}[r]|-{\cong}
& \kern-1em \mmod{k}
 \ar @{<-} `d[ll] `[llllu]^-{\rsd{\prM}} [llllu]
}}
\end{equation}
The commutativity of the top part is straightforward (Remarks~\ref{Rem:fgt-gr-exact} and~\ref{Rem:quasi-abelian}) and so is the bottom-right (slanted) square, in which $\fgt$ means everywhere `forget the filtration', \ie is the functor induced by~$\fgt\colon \Afil\to \cA$. (\Cref{Rem:fgt-gr-exact} again.) Commutativity of the bottom-left square in~\eqref{eq:factorization-res-M01} follows from \Cref{Cor:tfgt} and the fact that $\fgt\colon \DE\to\DA$ factors via $\quo\colon \DE\to \Db(\Aqab)$.

In order to deduce $\prMs\subset \prM$ from the factorizations of $\rsd{\prMs}$ and $\rsd{\prM}$ given in~\eqref{eq:factorization-res-M01}, it suffices to prove that in $\stab(\Aqab)$ we have $\Ker(\gr)\subset \Ker(\fgt)$. Now, $\stab(\Aqab)$ is a very simple category: Every object is a direct sum of~$\unit(n)$ and $\E_\ell(n)$ for $n\in\bbZ$ and $\ell\ge 1$ by \Cref{Prop:KRS-Efil} (we have not changed the underlying Krull-Schmidt category~$\Afil$ and $\ell=0$ can be removed since the $\E_0(n)$ are projective, hence zero in that stable category). Now, taking the total-graded of any non-zero object in this list $\{\unit(n),\E_\ell(n)\}$ remains non-zero in~$\stab(\cA)\cong\mmod{k}$. In other words, the prime $\Ker(\gr\colon \stab(\Aqab)\to \stab(\cA))$ is zero (\ie $\stab(\Aqab)$ is local). Hence we have the wanted inclusion of primes in~$\stab(\Aqab)$, namely $\Ker(\gr)=(0)\subset \Ker(\fgt)$, whatever the latter is. (It is $\ideal{\E_1}$ but this is not essential.)

Thus we have completely determined the space $\Spc(\DE)$ and we only need to provide generators for $\prLs$, $\prMs$ and~$\prNs$, \ie we need to show that the inclusions in~\eqref{eq:aux-LMN} are equalities. But now that we know the spectrum, it suffices to consider the supports of those tt-ideals. A direct verification shows that they coincide:
$\supp(\prLs)=\{\prN,\prNs\}=\supp(\ideal{\E_0})$, $\supp(\prMs)=\{\prL,\prLs,\prN,\prNs\}=\supp(\ideal{\E_0,\fund_0})$ and $\supp(\prNs)=\{\prL,\prLs\}=\supp(\ideal{\fund_0})$. Hence we do have equalities in~\eqref{eq:aux-LMN}.
\end{proof}

\begin{Rem}
\label{Rem:tfgt-res}%
The `twisted forgetful functor' $\tfgt\colon \DE\to\KA$ of \Cref{Def:tfgt} appears in the tt-residue functors $\rsd{\pL}$, $\rsd{\pM}$ and~$\rsd{\pN}$ of~\eqref{eq:M_0}--\eqref{eq:N_0}. However, it is only strictly necessary for~$\rsd{\pL}$. Indeed, by \Cref{Rem:res-fields}, both $\rsd{\cM}\colon \KA\to \kappa(\cM)$ and $\rsd{\cN}\colon \KA\to \kappa(\cN)$ factor via~$\quo\colon \KA\onto\DA$, hence using that $\quo\circ\tfgt=\fgt$ by \Cref{Cor:tfgt}, we get:
\begin{align*}
\rsd{\pM}=\Sta\circ\fgt\colon & \DE\to \DA\to \kappa(\pM)=\kappa(\cM)=\mmod{k}
\\
\rsd{\pN}=\res^{C_2}_1\circ\fgt\colon & \DE\to \DA\to \kappa(\pN)=\kappa(\cN)=\Db(k).
\end{align*}
This explains our comment about $\pL$ being the most elusive prime among the six. We return to $\rsd{\pL}$ in \Cref{Rem:tfgt-summary}.
\end{Rem}

\bigbreak\goodbreak
\section{Applications}
\label{sec:applications}%
\medbreak

Knowing $\Spc(\DE)$ by \Cref{Thm:Spc-DE}, we can describe all tt-ideals and consequently a number of localizations of~$\cK=\DE$. Direct inspection gives us the 14 closed subsets of~$\SpcK$  listed in~\eqref{eq:lattice-closed}. Note that they are all `Thomason', \ie their complement is quasi-compact, simply because~$\SpcK$ is finite. Applying \cite[Thm.\,4.10]{balmer:spectrum} gives the 14 tt-ideals of \Cref{Cor:14-tt-ideals}. (By rigidity of~$\cK$, every tt-ideal is $\otimes$-radical.) Let us now describe objects with the various possible supports. In the following pictures, we illustrate subsets $Y\subseteq\Spc(\DE)$ of
\[
\Spc(\DE)=
\vcenter{\xymatrix@R=.2em@C=1em{
\prLs\ar@{-}[d]\ar@{-}[rd] && \prNs \ar@{-}[ld]\ar@{-}[d]
\\
\prL \ar@{-}[rd] & \prMs \ar@{-}[d] & \prN \ar@{-}[ld]
\\
& \prM
}}
=
\vcenter{\xymatrix@R=.4em@C=1em{
\bullet \ar@{-}[d]\ar@{-}[rd] && \bullet \ar@{-}[ld]\ar@{-}[d]
\\
\bullet \ar@{-}[rd] & \bullet \ar@{-}[d] & \bullet\ar@{-}[ld]
\\
& \bullet
}}
\]
by writing $\bullet$ for the primes that do belong to~$Y$ and $\circ$ for those not in~$Y$.

\begin{Exas}
\label{Exa:supp-in-DE}%
As every prime~$\cP$ is the kernel of some $\rsd{\cP}\colon \DE\to \kappa(\cP)$ by~\eqref{eq:M_0}--\eqref{eq:N_1}, we compute $\supp(A)\overset{\textrm{def}}{=}\SET{\cP}{A\notin\cP}$ as~$\SET{\cP}{\rsd{\cP}(A)\neq 0}$.
\begin{enumerate}[\rm(a)]
\item
\label{it:supp-E_0}%
The object $\E_0=\pwz(kC_2)$ has $\gr(\E_0)=kC_2$ and $\tfgt(\E_0)=kC_2$, hence
\[
\supp(\E_0)=\{\prN,\prNs\}=\adhpt{\prN}=
\vcenter{\xymatrix@R=.2em@C=1em@H=.5em{
{\color{Gray}\circ} \ar@[Gray]@{-}[d]\ar@[Gray]@{-}[rd] && \bullet \ar@[Gray]@{-}[ld]\ar@{-}[d]
\\
{\color{Gray}\circ} \ar@[Gray]@{-}[rd] & {\color{Gray}\circ} \ar@[Gray]@{-}[d] & \bullet\ar@[Gray]@{-}[ld]
\\
& {\color{Gray}\circ}
}}
\]
\smallbreak
\item
\label{it:supp-E_1}%
The object $\E_1$ of~\eqref{eq:e_l} has $\gr(\E_1)=k\oplus k$ and $\tfgt(\E_1)=kC_2$, hence
\[
\supp(\E_1)=\{\prLs,\prMs,\prN,\prNs\}=\adhpt{\prMs}\cup\adhpt{\prN}=
\vcenter{\xymatrix@R=.2em@C=1em@H=.5em{
\bullet \ar@[Gray]@{-}[d]\ar@{-}[rd] && \bullet \ar@{-}[ld]\ar@{-}[d]
\\
{\color{Gray}\circ} \ar@[Gray]@{-}[rd] & \bullet \ar@[Gray]@{-}[d] & \bullet\ar@[Gray]@{-}[ld]
\\
& {\color{Gray}\circ}
}}
\]
\smallbreak
\item
For $\ell\ge 2$, the object $\E_\ell$ of~\eqref{eq:e_l} has $\gr(\E_\ell)=k\oplus k$ and $\tfgt(\E_\ell)=kC_2\oplus \fundpur^{\ell-1}$ as in \Cref{Rem:trwz-generators}. Hence
\[
\supp(\E_\ell)=\Spc(\DE)\smallsetminus\{\prM\}=
\vcenter{\xymatrix@R=.2em@C=1em@H=.5em{
\bullet \ar@{-}[d]\ar@{-}[rd] && \bullet \ar@{-}[ld]\ar@{-}[d]
\\
\bullet \ar@[Gray]@{-}[rd] & \bullet \ar@[Gray]@{-}[d] & \bullet\ar@[Gray]@{-}[ld]
\\
& {\color{Gray}\circ}
}}
\]
\smallbreak
\item
\label{it:supp-cone(beta)}%
The object~$\cone(\beta\colon \unit\to \unit(1))$ has $\gr(\cone(\beta))=(\cdots 0\to k\xto{0} k\to 0\cdots)=k[0]\oplus k[1]$ and $\tfgt(\cone(\beta))=\fundpur[-1]$ by \Cref{Rem:trwz-generators}. Therefore
\[
\supp(\cone(\beta))=\{\prL,\prLs,\prMs,\prNs\}=\adhpt{\prMs}\cup\adhpt{\prL}=
\vcenter{\xymatrix@R=.2em@C=1em@H=.5em{
\bullet \ar@{-}[d]\ar@{-}[rd] && \bullet \ar@{-}[ld]\ar@[Gray]@{-}[d]
\\
\bullet \ar@[Gray]@{-}[rd] & \bullet \ar@[Gray]@{-}[d] & {\color{Gray}\circ}\ar@[Gray]@{-}[ld]
\\
& {\color{Gray}\circ}
}}
\]
\smallbreak
\item
\label{it:supp(fund_0)}%
The object~$\fund_0=\pwz(\fundpur)$ has $\gr(\fund_0)=\fundpur$ and $\tfgt(\fund_0)=\fundpur$ hence
\[
\supp(\fund_0)=\{\prL,\prLs\}=\adhpt{\prL}=
\vcenter{\xymatrix@R=.2em@C=1em@H=.5em{
\bullet \ar@{-}[d]\ar@[Gray]@{-}[rd] && {\color{Gray}\circ} \ar@[Gray]@{-}[ld]\ar@[Gray]@{-}[d]
\\
\bullet \ar@[Gray]@{-}[rd] & {\color{Gray}\circ} \ar@[Gray]@{-}[d] & {\color{Gray}\circ}\ar@[Gray]@{-}[ld]
\\
& {\color{Gray}\circ}
}}
\]
\smallbreak
\item
The object~$\T=\cone(\beta)\otimes \E_1$ featured prominently in \Cref{sec:Spc-DE}. Its support is the complement of~$U=U(\cone(\omega))$, that is
\[
\supp(\cone(\beta)\otimes \E_1)=\supp(\cone(\omega))=\{\prLs,\prMs,\prNs\}=\adhpt{\prMs}=
\vcenter{\xymatrix@R=.2em@C=1em@H=.5em{
\bullet \ar@[Gray]@{-}[d]\ar@{-}[rd] && \bullet \ar@{-}[ld]\ar@[Gray]@{-}[d]
\\
{\color{Gray}\circ} \ar@[Gray]@{-}[rd] & \bullet \ar@[Gray]@{-}[d] & {\color{Gray}\circ}\ar@[Gray]@{-}[ld]
\\
& {\color{Gray}\circ}
}}\kern-2em
\]
\smallbreak
\item
\label{it:supp-singleton}%
One can combine the above to get the closed points, for instance
\[
\supp(\cone(\beta)\otimes \E_0)=
\vcenter{\xymatrix@R=.2em@C=1em@H=.5em{
{\color{Gray}\circ} \ar@[Gray]@{-}[d]\ar@[Gray]@{-}[rd] && \bullet \ar@[Gray]@{-}[ld]\ar@[Gray]@{-}[d]
\\
{\color{Gray}\circ} \ar@[Gray]@{-}[rd] & {\color{Gray}\circ} \ar@[Gray]@{-}[d] & {\color{Gray}\circ}\ar@[Gray]@{-}[ld]
\\
& {\color{Gray}\circ}
}}
\quadtext{and}
\supp(\fund_0\otimes \E_1)=
\vcenter{\xymatrix@R=.2em@C=1em@H=.5em{
\bullet \ar@[Gray]@{-}[d]\ar@[Gray]@{-}[rd] && {\color{Gray}\circ} \ar@[Gray]@{-}[ld]\ar@[Gray]@{-}[d]
\\
{\color{Gray}\circ} \ar@[Gray]@{-}[rd] & {\color{Gray}\circ} \ar@[Gray]@{-}[d] & {\color{Gray}\circ}\ar@[Gray]@{-}[ld]
\\
& {\color{Gray}\circ}
}}\kern-2em
\]
\smallbreak
\item
Direct sums of objects as in~\eqref{it:supp-E_0}, \eqref{it:supp(fund_0)} and~\eqref{it:supp-singleton} will provide representatives of the four remaining (disconnected) closed subsets listed in~\eqref{eq:lattice-closed}.
\end{enumerate}
\end{Exas}

We can then give new generators for the primes in~$U$, for instance.

\begin{Cor}
\label{Cor:primes-generators}%
We have $\prL=\ideal{\E_1}$, $\prN=\ideal{\cone(\beta)}$ and $\prM=\ideal{\E_2}$.
\end{Cor}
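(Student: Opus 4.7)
The plan is to prove each of the three equalities by comparing supports and invoking the classification of (radical) tt-ideals by Thomason subsets of~$\Spc(\cK)$ from~\cite[Thm.\,4.10]{balmer:spectrum}. Since $\cK=\DE$ is rigid every tt-ideal is radical, and since $\Spc(\cK)$ is finite every subset is Thomason. Hence two tt-ideals are equal if and only if they have the same support, which reduces the corollary to a direct comparison.

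First, I would compute the supports of the three primes $\prL$, $\prN$ and $\prM$ from the six-point Hasse diagram given in~\Cref{Thm:Spc-DE}. For any prime $\cQ\in\Spc(\cK)$, unfolding the definition in~\Cref{Rmd:Spc} gives
\[
\supp(\cQ)\ =\ \SET{\cP\in\Spc(\cK)}{\cQ\not\subseteq\cP}\ =\ \Spc(\cK)\smallsetminus\SET{\cP}{\cQ\subseteq\cP},
\]
so $\supp(\cQ)$ is the complement of the set of generizations of~$\cQ$. Reading this off the diagram, the generizations of~$\prL$ are~$\{\prL,\prM\}$, of~$\prN$ are~$\{\prN,\prM\}$, and of~$\prM$ are~$\{\prM\}$. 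Consequently:
\begin{align*}
\supp(\prL)&=\{\prLs,\prMs,\prN,\prNs\},\\
\supp(\prN)&=\{\prL,\prLs,\prMs,\prNs\},\\
\supp(\prM)&=\Spc(\cK)\smallsetminus\{\prM\}.
\end{align*}

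Second, I would match these with the supports of $\E_1$, $\cone(\beta)$ and $\E_2$ already computed in \Cref{Exa:supp-in-DE}\,(\ref{it:supp-E_1}), (\ref{it:supp-cone(beta)}) and~(3), which read respectively $\{\prLs,\prMs,\prN,\prNs\}$, $\{\prL,\prLs,\prMs,\prNs\}$ and $\Spc(\cK)\smallsetminus\{\prM\}$. Comparison gives $\supp(\ideal{\E_1})=\supp(\prL)$, $\supp(\ideal{\cone(\beta)})=\supp(\prN)$ and $\supp(\ideal{\E_2})=\supp(\prM)$. By the classification theorem, the three stated equalities of tt-ideals follow.

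There is no real obstacle: all the real work has already been done in~\Cref{Thm:Spc-DE} (determining the space and its specialization order) and in~\Cref{Exa:supp-in-DE} (computing the relevant supports via the residue functors $\rsd{\cP}$). The only thing to notice is the book-keeping convention that specialization in~$\Spc(\cK)$ reverses inclusion of primes, so that the generizations (not the specializations) of~$\cQ$ form the complement of~$\supp(\cQ)$.
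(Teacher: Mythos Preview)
Your argument is correct and follows exactly the strategy of the paper's own proof, which simply instructs the reader to ``check the supports of those tt-ideals'' using \Cref{Thm:Spc-DE} and \Cref{Exa:supp-in-DE}. You have merely written out the details that the paper leaves implicit.
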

\begin{proof}
Check the supports of those tt-ideals; see \Cref{Thm:Spc-DE} and \Cref{Exa:supp-in-DE}.
\end{proof}

\begin{Not}
\label{Not:rho}%
Let $\rho\colon \unit\to \unit(1)[1]$ in~$\DE$ be the map associated to the fundamental exact sequence~\eqref{eq:fund_1}, namely
\begin{equation}
\label{eq:rho}%
\vcenter{\xymatrix@C=1.8em@R=1em{
\unit=
&\cdots 0 \ar[r]
& 0 \ar[r]
& \unit \ar[r]^-{}
& 0 \cdots
\\
\kern-.3em\tilde\unit=\ar@<.6em>[u]_-{\simeq} \ar@<-.6em>[d]^-{}
&\cdots 0 \ar[r]
& \unit(1) \ar[r]^-{\eta} \ar[u] \ar@{=}[d]
& \E_1 \ar[r]^-{} \ar[u]_-{\eps} \ar[d]^-{}
& 0 \cdots
\\
\unit(1)[1]= \ar@{<-} `l[u] `[uu]^-{\rho} [uu]
&\cdots 0 \ar[r]
& \unit(1) \ar[r]^-{}
& 0 \ar[r]
& 0 \cdots
}}
\end{equation}
Note right away that the cone of~$\rho$ is the cone of the lower map, that is,
\begin{equation}
\cone(\rho)=\E_1[1]\,.
\end{equation}
Its support is the 4-point subset $\{\pLs,\pMs,\pNs,\pN\}$ described in \Cref{Exa:supp-in-DE}\,\eqref{it:supp-E_1}.
\end{Not}

\begin{Rem}
\label{Rem:primes-generators-invertibles}%
Let us continue the `Koszul objects' thread of \Cref{Rem:Koszul-Kb(A)}. In addition to $\rho\colon \unit\to \unit(1)[1]$ from~\eqref{eq:rho}, whose cone is~$\E_1[1]$, we can use other invertibles in $\cK=\DEfil$, different from the `obvious' $\unit[1]$ and $\unit(1)$. By \Cref{Prop:L^n}, we have a third interesting invertible object, namely $\pwz(\invertpur)$. By \Cref{Rem:Koszul-Kb(A)}, the maps $\pwz(\tilde\eta\colon\unit\to \invertpur\potimes{-1})$ and $\pwz(\upsilon\colon\unit\to \invertpur\potimes{-1}[1])$ have respective cones $\fund_0[-1]$ and~$\E_0$. Let us write $\unit^{a,b,c}=\pwz(\invertpur\potimes{-c})(b)[a]$. With this notation, we may describe all the prime ideals of $\DEfil$ as generated by Koszul objects, as follows:
\[
\xymatrix@C=1em@R=1.5em{
\ideal{\cone(\pwz(\upsilon):\unit\to\unit^{1,0,1})}\kern-2em
&& \kern-2em\ideal{\cone(\pwz(\tilde\eta):\unit\to\unit^{0,0,1})}
\\
\ideal{\cone(\rho:\unit\to\unit^{1,1,0})} \ar@{-}[u]
& \ideal{\cone(\pwz(\tilde\eta \upsilon):\unit\to\unit^{1,0,2})} \ar@{-}[lu] \ar@{-}[ru]
& \ideal{\cone(\beta:\unit\to\unit^{0,1,0})} \ar@{-}[u]
\\
& \ideal{\cone(\beta\rho:\unit\to\unit^{1,2,0})} \ar@{-}[u] \ar@{-}[lu] \ar@{-}[ru]
}
\]
\end{Rem}

\begin{Rem}
For every object~$A\in\cK=\DE$, we know (\Cref{Rmd:Spc-quo}) that the open complement $U(A)=\SpcK\smallsetminus\supp(A)$ of its support is homeomorphic via $\Spc(\quo)$ to the spectrum of the Verdier quotient $\cK/\ideal{A}$. For instance, for the objects~$A$ whose supports are described in \Cref{Exa:supp-in-DE}, we obtain the spectra of several Verdier quotients of~$\DE$ by looking at the points marked~$\circ$. Let us isolate the following three special cases of interest, for which we can identify the corresponding localizations as something meaningful.
\[
{\xy
(0,0)*{\pLs};
(0,-10)*{\pL};
(20,-10)*{\pMs};
(20,-20)*{\pM};
(40,0)*{\pNs};
(40,-10)*{\pN};
{\ar@{-} (0,-8)*{};(0,-2)*{}};
{\ar@{-} (20,-18)*{};(20,-12)*{}};
{\ar@{-} (40,-8)*{};(40,-2)*{}};
{\ar@{-} (16,-8)*{};(4,-2)*{}};
{\ar@{-} (16,-18)*{};(4,-12)*{}};
{\ar@{-} (24,-8)*{};(36,-2)*{}};
{\ar@{-} (24,-18)*{};(36,-12)*{}};
{\ar@{..} (-4,-4)*{};(24,-18)*{};};
{\ar@{..} (-4,-4)*{};(-4,-12)*{};};
{\ar@{..} (-4,-12)*{};(24,-26)*{};};
{\ar@{..} (24,-18)*{};(24,-26)*{};};
{\ar@{..>}@/_1em/ (-1,-14)*{};(-20,-15)*{\Spc\big(\stabfil\big)};};
{\ar@{..} (44,-4)*{};(16,-18)*{};};
{\ar@{..} (44,-4)*{};(44,-12)*{};};
{\ar@{..} (44,-12)*{};(16,-26)*{};};
{\ar@{..} (16,-18)*{};(16,-26)*{};};
{\ar@{..>}@/^1em/ (41,-14)*{};(65,-15)*{\Spc\big(\Db(kC_2)\big)};};
{\ar@{--} (10,5)*{};(50,5)*{};};
{\ar@{--} (50,5)*{};(50,-28)*{};};
{\ar@{--} (50,-28)*{};(10,-28)*{};};
{\ar@{--} (10,-28)*{};(10,5)*{};};
{\ar@{-->}@/^1em/ (50,-5)*{};(65,0)*{\Spc\big(\Db(\Aqab)\big)};};
\endxy}
\]
\end{Rem}
\smallskip

\begin{Cor}[Inverting~$\beta$]
\label{Cor:inverting-beta}%
Recall the morphism~$\beta\colon \unit\to \unit(1)$ from~\Cref{Not:beta}. The (central) localization $\DE[\beta\inv]=\DE/\ideal{\cone(\beta)}$ is canonically equivalent to the derived category~$\Db(\mmod{kC_2})$ of the abelian category~$\cA$. In particular, its spectrum is the subset~$\{\pM,\pN\}$ of~$\Spc(\DE)$, with $\prN\in\adhpt{\prM}$.
\end{Cor}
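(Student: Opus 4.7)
The proof naturally splits into two parts: the spectrum and the equivalence with $\Db(\cA)$.

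For the spectrum, apply \Cref{Rmd:Spc-quo} to the Verdier quotient $\cK\onto\cK/\ideal{\cone(\beta)}$: this identifies $\Spc(\cK/\ideal{\cone(\beta)})$ with the open complement of $\supp(\cone(\beta))$ in $\SpcK$. By \Cref{Exa:supp-in-DE}\,(4), $\supp(\cone(\beta))=\{\pLs,\pL,\pMs,\pNs\}$, so this complement is precisely $\{\pM,\pN\}$, inheriting the specialization $\pN\in\adhpt{\pM}$ from~$\SpcK$.

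For the equivalence, the forgetful functor $\fgt:\Efil\to\cA$ (with the \emph{abelian} structure on the target) is exact by~\Cref{Rem:fgt-gr-exact} and sends $\beta:\unit\to\unit(1)$, which is underlain by the identity, to~$\id_k$. The induced tt-functor $\fgt:\DEfil\to\Db(\cA)$ therefore kills $\cone(\beta)$ and descends to a canonical tt-functor
\[
\bar{\fgt}:\DEfil/\ideal{\cone(\beta)}\to\Db(\cA).
\]
Moreover, since $\unit(1)$ is $\otimes$-invertible with trivial swap (as $\mathrm{char}(k)=2$), the argument of~\Cref{Prop:DEU} applies verbatim with $\beta$ replacing $\omega$, realizing this quotient as the central localization $\DEfil[\beta^{-1}]$ with morphisms given by the colimit
\[
\Hom_{\DEfil/\ideal{\cone(\beta)}}(A,B)\cong\colim_n\Hom_{\DEfil}(A,B(n)).
\]

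Essential surjectivity of $\bar{\fgt}$ is immediate since $k=\fgt(\unit)$ and $kC_2=\fgt(\E_0)$ generate $\Db(\cA)$ as a tt-category. The main obstacle is fully faithfulness; by d\'evissage in both arguments using~\Cref{Prop:KRS-Efil}, it reduces to showing that
\[
\colim_n\Hom_{\DEfil}(A,B(n)[p])\longrightarrow\Ext^p_{\cA}(\fgt A,\fgt B)
\]
is bijective for $A,B\in\{\unit(i),\E_\ell(j)\}$ and $p\in\bbZ$. For $p=0$ this follows from a direct filtration-stabilization: a $kC_2$-linear map $\fgt A\to\fgt B$ is automatically filtration-preserving as a map $A\to B(n)$ for $n\gg0$, because the constraint $f(A^i)\subseteq B^{i-n}$ is then vacuous (either $A^i=0$ or $B^{i-n}=\fgt B$). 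For $p\ge 1$, the analogous argument on Yoneda extensions reduces to observing that the fundamental admissible sequences $\fund_\ell$ for $\ell\ge 1$ (which map under $\fgt$ to the generating class $\fundpur$ of $\Ext^\bullet_\cA(k,k)$) and their iterated splices realize all relevant extension classes in $\DEfil$ once sufficiently twisted by~$\beta$, matching the $\Ext$-generators on the two sides.
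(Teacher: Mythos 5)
Your spectrum computation is exactly the paper's: \Cref{Rmd:Spc-quo} plus the support of $\cone(\beta)$ from \Cref{Exa:supp-in-DE}. For the identification with $\Db(\cA)$, however, you take a genuinely different route. The paper observes that $\supp(\cone(\beta))\supseteq\{\pLs,\pMs,\pNs\}=\Spc(\DE)\smallsetminus U$, so the localization at $\beta$ factors through $\DEU$, which \Cref{Thm:DEU} has already identified with $\KA$; the residual localization of $\KA$ then removes the single remaining point $\cL=\supp(\Kbac(\cA))$, and $\KA/\Kbac(\cA)=\DA$ \emph{by definition} of the derived category (see \Cref{Rem:localisations-of-KA}), with compatibility with $\fgt$ supplied by \Cref{Cor:tfgt}. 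That argument costs nothing beyond bookkeeping on the three-point space $\Spc(\KA)$. You instead construct $\bar\fgt$ directly and attempt a Positselski-style Hom computation in the central localization; this is a legitimate strategy (it parallels the paper's own \Cref{sec:translation}), but it is where your write-up has a real gap.

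Concretely: in positive degrees you only argue \emph{surjectivity} of $\colim_n\Hom_{\DE}(\unit,\unit(n)[p])\to\Ext^p_{\cA}(k,k)$, by exhibiting preimages of the generating classes via the $\fund_\ell$ and their splices. Nothing in your argument rules out extra classes in the colimit that die under $\fgt$; you never bound the groups $\Hom_{\DE}(\unit,\unit(n)[p])$. To close this you need the computation of \Cref{Prop:extensions-computation} (via the injective resolution $\injres$), which gives $\dim_k\Hom_{\DE}(\unit,\unit(n)[p])\le 1$; only then does surjectivity onto the one-dimensional $\Ext^p_{\cA}(k,k)$ force bijectivity. You also leave the $\E_\ell(j)$ cases of the d\'evissage untreated. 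They do work out --- $\E_0(j)$ is projective-injective in $\Efil$ so both sides vanish for $p\ge 1$, and for $\ell\ge 1$ all $\E_\ell$ become isomorphic to $\E_0(1)$ once $\beta$ is inverted, since $\ideal{\cone(\omega)}=\ideal{\cone(\beta)\otimes\E_1}\subseteq\ideal{\cone(\beta)}$ and \Cref{Lem:e_l-e_l+1} applies --- but these reductions are part of the proof and must be stated. As written, the full-faithfulness step is incomplete; either supply the dimension count and the $\E_\ell$ reductions, or switch to the paper's argument, which avoids the computation entirely.
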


\begin{proof}
The localization $\DE[\beta\inv]=\DE/\ideal{\cone(\beta)}$ has spectrum the open complement~$\{\pM,\pN\}$ of the closed subset $\supp(\cone(\beta))=\{\pL,\pLs,\pMs,\pNs\}$ of \Cref{Exa:supp-in-DE}\,\eqref{it:supp-cone(beta)}. The latter contains~$\{\pLs,\pMs,\pNs\}=\SpcK\smallsetminus U$, hence our localization is a localization of~$\DEU$ from~\eqref{eq:DEU}. We proved in \Cref{Thm:DEU} that $\DEU\cong\KA$. Our localization $\DE[\beta\inv]$ is therefore the localization of~$\KA$ away from the remaining point~$\{\pL\}$, corresponding to~$\{\cL\}=\supp(\Kbac)$ in~$\KA$. This localization is nothing but~$\DA$. (See \Cref{Rem:localisations-of-KA}.)
\end{proof}

\begin{Rem}
As in \Cref{Cor:tfgt}, the localization functor $\DE\onto \Db(\cA)$ isolated above is simply the one induced by the exact forgetful functor $\fgt\colon \Efil\to\cA$.
\end{Rem}

For the next case, recall that $\Efil$ is Frobenius (\Cref{Cor:Afil-Frobenius}).

\begin{Cor}[Inverting~$\rho$]
\label{Cor:inverting-rho}%
Recall the morphism~$\rho\colon \unit\to \unit(1)[1]$ from~\Cref{Not:rho}. The (central) localization $\DE[\rho\inv]=\DE/\ideal{\cone(\rho)}$ is canonically equivalent to the stable category~$\stabfil$ of the Frobenius exact category~$\Efil$. In particular, its spectrum is the subset~$\{\pM,\pL\}$ of~$\Spc(\DE)$, with $\prL\in\adhpt{\prM}$.
\end{Cor}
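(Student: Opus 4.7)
The plan is to reduce the statement directly to Rickard's theorem in the form given as \Cref{Prop:Rickard}, applied to the Frobenius tensor-exact category $\Efil$. The key observation, already recorded in \Cref{Not:rho}, is that $\cone(\rho)\cong \E_1[1]$, so $\ideal{\cone(\rho)} = \ideal{\E_1}$ in $\DE$. The strategy is then to identify this tt-ideal with the tt-ideal generated by the projective-injective objects of $\Efil$, after which \Cref{Prop:Rickard} furnishes the canonical tt-equivalence $\stabfil\isoto \Db(\Efil)/\ideal{\Proj} = \DE[\rho\inv]$.

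To establish that $\ideal{\E_1}=\ideal{\Proj}$ inside $\DE$, I would first recall from \Cref{Prop:proj-Efil} that the projective-injective objects are (retracts of) sums of $\E_0(i)$ and $\E_1(j)$ for $i,j\in\bbZ$, so that $\ideal{\Proj}=\ideal{\E_0,\E_1}$. Since tt-ideals are automatically closed under the twists $(n)$, it remains to show that $\E_0\in\ideal{\E_1}$. This is immediate from \Cref{Prop:tens-e_l}, which provides a decomposition $\E_0\otimes \E_1 \cong \E_0 \oplus \E_0(1)$, exhibiting $\E_0$ as a direct summand of an object manifestly in the $\otimes$-ideal $\ideal{\E_1}$. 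The tensor-triangular enhancement of \Cref{Prop:Rickard} then applies, since \Cref{Cor:Afil-Frobenius} guarantees that $\Efil$ is tensor-exact with projective-injectives forming a $\otimes$-ideal.

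For the spectrum statement, \Cref{Rmd:Spc-quo} identifies $\Spc(\DE/\ideal{\cone(\rho)})$ with the open complement of $\supp(\cone(\rho))=\supp(\E_1)$ in $\Spc(\DE)$. By \Cref{Exa:supp-in-DE}\,\eqref{it:supp-E_1}, this support equals $\{\pLs,\pMs,\pN,\pNs\}$, so the open complement is precisely $\{\pM,\pL\}$, with the specialization relation $\prL\in\adhpt{\prM}$ inherited from the ambient picture of \Cref{Thm:Spc-DE}. No serious obstacle arises: the entire argument is a direct reading of Rickard's theorem through the identifications already in place, the one slightly non-obvious point being that $\E_0$ sits inside $\ideal{\E_1}$ — but this is a one-line consequence of \Cref{Prop:tens-e_l}.
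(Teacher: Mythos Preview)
Your proposal is correct and follows essentially the same approach as the paper: identify $\ideal{\cone(\rho)}=\ideal{\E_1}$ with the tt-ideal generated by the projectives via \Cref{Prop:proj-Efil}, invoke \Cref{Prop:Rickard}, and read off the spectrum from \Cref{Rmd:Spc-quo} and \Cref{Exa:supp-in-DE}\,\eqref{it:supp-E_1}. The only difference is cosmetic: the paper absorbs your separate verification that $\E_0\in\ideal{\E_1}$ into the citation of \Cref{Prop:proj-Efil}, whose statement already asserts that the projectives form precisely $\add^\otimes(\E_1)$.
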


\begin{proof}
By \Cref{Prop:Rickard}, the stable category~$\stabfil$ can also be obtained as the Verdier quotient of the derived category of~$\Efil$ by the tt-ideal~$\ideal{\E_1}$ generated by the projectives of~$\Efil$ (see \Cref{Prop:proj-Efil}). So it suffices to apply \Cref{Rmd:Spc-quo} to the object~$\E_1$, whose support was computed in \Cref{Exa:supp-in-DE}\,\eqref{it:supp-E_1}.
\end{proof}

\begin{Rem}
\label{Rem:inverting-rho}%
Since $\supp(\cone(\rho))=\supp(\E_1)\supset \SpcK\smallsetminus U$, the localization $\DE\onto \DE[\rho\inv]$ that we just identified to be~$\Sta\colon \DE\onto \stab(\Efil)$ is a localization of~$\DEU\cong \KA$. It is easy to trace the kernel $\KA\onto \stab(\Efil)$ as having support $\supp(\E_1)\cap U=\{\pN\}$, corresponding to~$\{\cN\}=\supp(kC_2)$ in~$\Spc(\KA)$. In other words, we have an equivalence $\KA/\ideal{kC_2}\cong \stab(\Efil)$ making the following diagram commute\,:
\[
\xymatrix{
\DE \ar@{->>}[d]_-{\Sta} \ar@{->>}[r]^-{\tfgt}
& \KA \ar@{->>}[d]^-{\quo}
\\
\stab(\Efil) \ar@{<-}[r]^-{\cong}
& \KA/\ideal{kC_2}
}
\]
\end{Rem}

\begin{Rem}
\label{Rem:tfgt-summary}%
Summarizing our analysis of the tt-functor~$\tfgt\colon \DE\to \KA$ of \Cref{Def:tfgt}, we saw that if we post-compose it with the two localizations $\KA\onto \DA$ and $\KA\onto \KA/\ideal{kC_2}$ discussed in \Cref{Rem:localisations-of-KA} we obtain respectively $\fgt\colon \DE\to\DA$ by \Cref{Cor:tfgt} and $\Sta\colon \DE\onto \stab(\Efil)$ by \Cref{Rem:inverting-rho}. In terms of the residue tt-functor~$\rsd{\pL}\colon \DE\to \kappa(\pL)$ of~\eqref{eq:L_0}, we can complement \Cref{Rem:tfgt-res} and obtain the factorization
\[
\rsd{\pL}\colon \DE\ \xonto{\Sta}\ \stab(\Efil)\cong \KA/\ideal{kC_2}\xto{\rsd{\cL}'}\kappa(\pL)=\kappa(\cL)=\Db(k)
\]
using the functor $\rsd{\cL}'$ of \Cref{Rem:res-fields}, induced by $\Kb(\sta)\colon\KA\to \Kb(k)$.
\end{Rem}

For the last localization, recall the quasi-abelian structure~$\Aqab$ from \Cref{Rem:quasi-abelian}.
\begin{Cor}
The derived category $\Db(\Aqab)$ of $\Afil$ with its maximal (quasi-abelian) structure is a localization of~$\DE$ with kernel the tt-ideal~$\ideal{\fund_0}$. Hence
\[
\Spc(\Db(\Aqab))=
\vcenter{\xymatrix@R=.2em@C=1em{
& \prNs \ar@{-}[ld]\ar@{-}[d]
\\
 \prMs \ar@{-}[d] & \prN \ar@{-}[ld]
\\
 \prM
}}
\quad\overset{\Spc(\quo)}\hook\quad
\vcenter{\xymatrix@R=.2em@C=1em{
\prLs\ar@{-}[d]\ar@{-}[rd] && \prNs \ar@{-}[ld]\ar@{-}[d]
\\
\prL \ar@{-}[rd] & \prMs \ar@{-}[d] & \prN \ar@{-}[ld]
\\
& \prM
}}
=\Spc(\DE)
\]
\end{Cor}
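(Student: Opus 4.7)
The plan is to construct a tt-functor $q \colon \DE \to \Db(\Aqab)$, realize it as a Verdier localization, and identify its kernel with $\ideal{\fund_0}$ by comparing supports via the residue functors already assembled in the proof of \Cref{Thm:Spc-DE}.

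First I would check that the identity on~$\Afil$ is exact as a functor $\Efil \to \Aqab$ (every split exact sequence is intrinsically exact) and that $\Aqab$ is tensor-exact: tensoring a kernel-cokernel sequence with any object remains kernel-cokernel exact, since $\gr \colon \Aqab \to \cA$ is conservative, tensor (\Cref{Lem:gr-tensor}) and lands in the abelian category~$\cA$ which has exact tensor. This yields a tt-functor $q \colon \DE \to \Db(\Aqab)$. Because $\DE = \Kb(\Afil)/\Kbac(\Efil)$, $\Db(\Aqab) = \Kb(\Afil)/\Kbac(\Aqab)$ and $\Kbac(\Efil) \subseteq \Kbac(\Aqab)$, the functor~$q$ is automatically a Verdier quotient with kernel the tt-ideal $\cJ := \Kbac(\Aqab)/\Kbac(\Efil)$ of~$\DE$, and the problem reduces to showing $\cJ = \ideal{\fund_0}$.

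One inclusion $\ideal{\fund_0} \subseteq \cJ$ is immediate: the sequence $\unit \xinto{\eta} \E_0 \xonto{\epsilon} \unit$ is intrinsically exact in $\Afil$ (its image under $\gr$ is the non-split extension $\fundpur$ of~\eqref{eq:extension-S}), hence admissible in $\Aqab$, so the complex $\fund_0$ is acyclic there. For the reverse inclusion, I would use that $\DE$ is rigid, so every tt-ideal is radical and equality reduces to equality of supports. By \Cref{Exa:supp-in-DE}\eqref{it:supp(fund_0)} we have $\supp(\ideal{\fund_0}) = \{\pL,\pLs\}$, hence it suffices to verify that $\cJ \subseteq \cP$ for each $\cP \in \{\pM,\pMs,\pN,\pNs\}$, equivalently that the four residue functors $\rsd{\cP}$ factor through~$q$.

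The main obstacle --- really the only substantive step --- is verifying these four factorizations, but the commutative diagram~\eqref{eq:factorization-res-M01} from the proof of \Cref{Thm:Spc-DE} already handles $\rsd{\pM}$ and $\rsd{\pMs}$ directly through the middle row $\DE \xto{q} \Db(\Aqab) \xto{\Sta} \stab(\Aqab)$. For $\rsd{\pN} = \res^{C_2}_1 \circ \fgt$ (as in \Cref{Rem:tfgt-res}) and $\rsd{\pNs} = \res^{C_2}_1 \circ \quo \circ \gr$ (using the factorization of~$\rsd{\cN}$ through~$\DA$ from \Cref{Rem:res-fields}), the same diagram shows that both $\fgt \colon \DE \to \DA$ and $\quo \circ \gr \colon \DE \to \DA$ factor through~$q$, so post-composition with $\res^{C_2}_1$ yields the required factorizations. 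Combining, $\cJ = \ideal{\fund_0}$, so $q$ descends to an equivalence $\DE/\ideal{\fund_0} \isoto \Db(\Aqab)$, and applying \Cref{Rmd:Spc-quo} to the complement $\Spc(\DE) \smallsetminus \{\pL,\pLs\}$ gives the depicted spectrum with its inherited specialization relations.
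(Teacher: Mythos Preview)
Your proposal is correct and follows essentially the same route as the paper: realize $\Db(\Aqab)$ as a Verdier quotient of~$\DE$, note that $\fund_0$ lies in the kernel, and bound the kernel from above by showing the relevant residue functors factor through the quotient. The only difference is one of economy: the paper observes from the lattice~\eqref{eq:lattice-closed} that any closed subset of $\Spc(\DE)$ strictly containing $\adhpt{\pL}=\{\pL,\pLs\}$ must already contain the closed point~$\pNs$, so it suffices to verify the single factorization of~$\rsd{\pNs}$ through $\Db(\Aqab)$ (via $\gr\colon\Db(\Aqab)\to\DA$ followed by~$\res^{C_2}_1$), rather than checking all four of $\pM,\pMs,\pN,\pNs$ as you do via~\eqref{eq:factorization-res-M01}.
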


\begin{proof}
The localization $\DE\onto \Db(\Aqab)$ is clear since both categories are localizations of~$\Kb(\Afil)$ and there are less acyclics for~$\Efil$ than for the maximal exact structure~$\Aqab$. Hence the kernel of $\DE\onto \Db(\Aqab)$ consists of complexes which are acyclic for~$\Aqab$, like~$\fund_0$ certainly is, see~\eqref{eq:fund_0}. If the support of this kernel was larger than $\supp(\fund_0)=\adhpt{\prL}$, it would contain the closed point~$\prNs$. So to get the result it suffices to show that $\prNs$ belongs to~$\Spc(\Db(\Aqab))$, \ie that $\rsd{\prNs}\colon\DE\to\Db(k)$ factors via~$\Db(\Aqab)$. This is easy to see\,:
\[
\xymatrix@R=.7em{
\DE \ar[rr]^-{\rsd{\prNs}} \ar[rd]_-{\gr} \ar@{->>}[dd]_-{\quo}
&& \Db(k)
\\
& \Kb(\cA) \ar[ru]_-{\res^{C_2}_1} \ar@{->>}[rd]_-{\quo}
\\
\Db(\Aqab) \ar[rr]_-{\exists\,\gr}
&& \Db(\cA) \ar[uu]_-{\exists\,\res^{C_2}_1}
\\
}
\]
The top part of the diagram commutes by definition of~$\rsd{\prNs}$, see~\eqref{eq:N_1}. The rest is straightforward (the non-trivial part is the existence of the functors).
\end{proof}

\begin{Rem}
Since $\Aqab$ is itself Frobenius (\Cref{Rem:Aqab-Frobenius}), one can go one step further and identify $\Spc(\stab(\Aqab))$ as~$\{\prM,\prMs\}$. Indeed, $\stab(\Aqab)$ is the quotient of~$\Db(\Aqab)$ by its tt-ideal of perfect complexes (\Cref{Prop:Rickard}) which we already know is~$\ideal{\E_0}$ (\Cref{Rem:Aqab-Frobenius}), and $\supp(\E_0)=\{\prN,\prNs\}$. That localization $\DE\onto\stab(\Aqab)$ already appeared in the proof of \Cref{Thm:Spc-DE}, see~\eqref{eq:factorization-res-M01}.
\end{Rem}

\bigbreak\goodbreak
\part{Artin-Tate motives}
\label{part:II}%
\medbreak

\bigbreak\goodbreak
\section{Artin-Tate motives and filtered representations}
\label{sec:translation}%
\medbreak

We now turn to algebraic geometry. The first goal, which is the subject of the present section, is to provide the dictionary necessary to translate the results obtained in \Cref{part:I} to the theory of motives. This dictionary is due to Positselski~\cite{positselski:artin-tate-motives}, and relies on Voevodsky's resolution of the Milnor conjecture.
\begin{Not}
  \label{Not:voevodsky-motives}
Let~$\FF$ be a field of characteristic zero, and set $k=\bbZ/2$. (Of course, many of the constructions below apply in greater generality.) Recall that Voevodsky constructed in~\cite{Voevodsky00} a category $\DM(\FF;k)$ of (geometric, mixed) motives over~$\FF$ with coefficients in~$k$. It is defined by starting with the homotopy category of bounded complexes of finite $k$-linear correspondences of smooth schemes of finite type over~$F$, localizing it to force homotopy invariance and Mayer-Vietoris, then idempotent completing it, and finally inverting the Tate object~$k(1)$. The resulting $\DM(\FF;k)$ is an essentially small, rigid tt-category. For a smooth~$\FF$-scheme of finite type we denote its motive in $\DM(\FF;k)$ by~$\mot(X)$. (If $X=\Spec(A)$ is affine we write $\mot(A)$ instead.) The motive of the base is the unit $\unit=\mot(\FF)$ for the tensor product. By definition of the Tate object we have $\mot(\bbP^1)=\unit\oplus\unit(1)[2]$. The notation $M(i)$ is short for $M\otimes \unit(i)$.

Of particular interest to us will be three thick triangulated subcategories of $\DM(\FF;k)$ with the following sets of generators:
  \begin{itemize}
  \item Artin motives $\DAM(\FF;k)=\thick\{\mot(E)\mid E/\FF\text{ finite}\}$;
  \item Tate motives $\DTM(\FF;k)=\thick\{\unit(n)\mid n\in\bbZ\}$;
  \item Artin-Tate motives $\DATM(\FF;k)=\thick\{\mot(E)(n)\,|\,E/\FF\text{ finite}, n\in\bbZ\}$.
  \end{itemize}
  All these subcategories are in fact rigid tt-categories, by~\cite[Thm.\,4.3.2]{Voevodsky00}.
\end{Not}

Now, let us fix a real closed field~$\FF$ with algebraic closure $\bar{\FF}=\FF(\sqrt{-1})$. As in the first part, we denote by $\kGfil$ the category of filtered $kC_2$-modules with the exact structure of \Cref{sec:filt-Frobenius}.
\begin{Prop}[{\cite{positselski:artin-tate-motives}}]%
  \label{Prop:positselski}
  There is an equivalence of triangulated categories
  \begin{equation}
    \label{eq:positselski}%
    \pos:\Db(\kGfil)\xrightarrow{\sim}\DATM(\FF;k)
  \end{equation}
  which induces a homeomorphism on spectra:
  \begin{align*}
    \Spc(\DATM(\FF;k))&\xrightarrow{\sim}\Spc(\Db(\kGfil))\\
    \cP&\mapsto \pos^{-1}(\cP)\,.
  \end{align*}
\end{Prop}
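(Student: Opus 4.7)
The triangulated equivalence~\eqref{eq:positselski} is essentially the main theorem of Positselski~\cite{positselski:artin-tate-motives}. I would begin by citing that result and briefly recalling how $\pos$ arises: $\kGfil$ is identified with the full extension-closed subcategory of $\DATM(\FF;k)$ generated by the Tate twists of the Artin motives $\mot(\Spec(E))$ for $E\in\{\FF,\bar\FF\}$, equipped with the exact structure inherited from the ambient triangulated category, and Voevodsky's norm residue isomorphism (Milnor's conjecture) ensures that the $\Ext$-groups between these generators match those computed in $\kGfil$. Under this identification the basic objects are matched: $\unit(i)\leftrightarrow k(i)$, $\E_0(i)\leftrightarrow \mot(\Spec(\bar\FF))(i)$, and the filtered objects $\E_\ell(i)$ correspond to the Artin-Tate extensions involving the motivic Bott element.

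The main work lies in the second assertion, the homeomorphism on spectra, because $\pos$ is \emph{not} known to preserve the tensor product. The plan is to bypass full monoidality by exploiting that, by~\Cref{Rmd:Spc}, $\Spc(\cK)$ depends only on the lattice of (radical) thick $\otimes$-ideals of~$\cK$. Since $\pos$ is a triangulated equivalence, it already induces an isomorphism between the lattices of thick subcategories of $\Db(\kGfil)$ and $\DATM(\FF;k)$; so it will suffice to show that this bijection restricts to a bijection of tt-ideals. Concretely, a thick subcategory $\cJ$ is a $\otimes$-ideal if and only if it is closed under tensoring with a set of tt-generators of the ambient category, so it is enough to check compatibility of $\pos$ with the tensor action of such a generating set.

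On the $\Db(\kGfil)$ side, \Cref{Cor:generators-DE} supplies the generators $\{\unit(\pm1),\E_0,\E_1\}$, and \Cref{Lem:e_l-e_l+1} shows that $\E_1$ differs from~$\E_0$ by the cone of a morphism built from~$\unit(\pm1)$; hence closure under tensoring with~$\E_1$ follows from closure under tensoring with $\E_0$ and $\unit(\pm1)$. On the motivic side the analogous generators are $\{k(\pm1),\mot(\Spec(\bar\FF))\}$. The key compatibility to extract from Positselski's construction is that, up to natural isomorphism, $\pos$ intertwines $-\otimes\unit(\pm1)$ with $-\otimes k(\pm1)$ (automatic, as Tate twists are built in on both sides) and $-\otimes\E_0$ with $-\otimes\mot(\Spec(\bar\FF))$ (essentially the statement that tensoring with the motive of $\bar\FF$ corresponds to the restriction-induction operation along~$\bar\FF/\FF$, which on filtered representations is tensoring with the regular representation $kC_2$ in pure weight zero).

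The expected main obstacle is to extract this partial monoidality cleanly from Positselski's construction, since the latter is not formulated in tensor-triangular language; this will occupy the bulk of the section. Once the bijection of tt-ideals is established, the homeomorphism of spectra is formal: the basis of closed subsets of the form $\supp(M)=\SET{\cP}{M\notin\cP}$ on either side corresponds under the bijection to the supports of the images, so both the set-level map $\cP\mapsto \pos\inv(\cP)$ and its inverse are continuous.
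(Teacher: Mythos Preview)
Your plan for the spectrum statement is essentially the paper's: the paper also observes that since $\pos$ is monoidal on the heart $\Efil\cong\cF(\FF;k)$, and the heart generates both sides as thick subcategories, tt-ideals on either side are exactly the thick subcategories stable under tensoring with heart objects; the triangulated equivalence $\pos$ then matches them. The partial monoidality you flag as the main obstacle (that $\pos(X\otimes A)\cong\pos(X)\otimes\pos(A)$ for $A$ in the heart and $X$ arbitrary) is handled in the paper via the universal property of $\Db$ as a stable derivator, which forces any exact extension of a monoidal functor on the heart to commute with tensoring by heart objects; you should invoke this rather than trying to extract it ad hoc from Positselski's construction.

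Where your approach genuinely diverges is for the equivalence~\eqref{eq:positselski} itself. You cite Positselski's Koszulity argument; the paper instead gives a self-contained proof using the machinery of \Cref{part:I}. The paper's route is: reduce fully-faithfulness of $\pos$ by d\'evissage to bijectivity of $\pos\colon R^{n,m}\to\Hm^{n,m}$ on the bigraded endomorphism rings of~$\unit$; establish this map is a ring homomorphism and a bijection for $n\le1$ (using that $\pos$ is an equivalence on the extension-closed heart); observe that $\Hm^{\sbull,\sbull}\cong k[\beta,\rho]$ by Beilinson--Lichtenbaum is generated in degrees $n\le1$, so $\pos$ is surjective; and finally compute $R^{n,m}$ directly from the explicit injective resolution $\injres$ of~$\unit$ in~$\Efil$ built in \Cref{sec:central}, finding $\dim_k R^{n,m}=\dim_k\Hm^{n,m}$. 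This buys independence from Positselski's Koszul duality framework and ties the motivic side to the concrete filtered-module computations already performed; your approach is shorter but treats the equivalence as a black box.
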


Let us explain this result. The \'{e}tale realization functor induces an equivalence of exact $\otimes$-categories
\begin{equation}
\label{eq:mini-positselski}%
  \cF(\FF;k)\xrightarrow{\sim}\kGfil
\end{equation}
where $\cF(\FF;k)\subset\DATM(\FF;k)$ denotes the smallest full subcategory containing $\mot(\FF)(n)$ and $\mot(\bar{\FF})(n)$ for all $n\in\bbZ$, and closed under extensions. The filtration is induced by the weight filtration on $\DATM(\FF;k)$. For the proof of this we refer to~\cite[\S\,3]{positselski:artin-tate-motives} or~\cite[\S\,7]{gallauer:tt-dtm-algclosed}. This part is where the Milnor conjecture is used. By the argument in~\cite[Proposition~7.7]{gallauer:tt-dtm-algclosed} (or~\cite[Appendix~D]{positselski:artin-tate-motives}), the functor $\kGfil\isofrom \cF(\FF;k)\hook \DATM(\FF;k)$ of~\eqref{eq:mini-positselski} extends to an exact functor\,(\footnote{As the underlying functor of an exact morphism of stable derivators, \eqref{eq:exact-positselski} is in fact unique up to unique isomorphism. See~\cite[Proposition~7.7]{gallauer:tt-dtm-algclosed}.\label{fnt:uniqueness-derivators}})
\begin{equation}
\label{eq:exact-positselski}%
  \pos\colon\DE\to\DATM(\FF;k)
\end{equation}
which is not known to be tensor except on the heart. The latter, however, is enough to deduce the second statement of \Cref{Prop:positselski} from the first, because tt-ideals on both sides of~\eqref{eq:positselski} are thick subcategories closed under tensoring with certain objects in the heart. We again refer to~\cite[\S\,7]{gallauer:tt-dtm-algclosed} for details.

It remains to explain the equivalence~\eqref{eq:positselski}. Instead of invoking Koszulity of the cohomology algebra as in~\cite{positselski:artin-tate-motives} we will give a direct proof of this fact, using some of the results from \Cref{part:I}.

Consider the invertible objects $\unit[1]$ and $\unit(1)$ in $\DE$ and in $\DATM(\FF;k)$, respectively. They give rise to bigraded endomorphism rings, denoted $R^{\sbull,\sbull}$ and $\Hm^{\sbull,\sbull}=\Hm^{\sbull,\sbull}(\FF;k)$, which is motivic cohomology, defined for all~$n,m\in \bbZ$ by
\begin{equation}
\label{eq:bigraded-rings}%
  R^{n,m}=\Hom_{\DE}(\unit,\unit(m)[n]) \quadtext{and} \Hm^{n,m}=\Hom_{\DATM}(\unit,\unit(m)[n]).
\end{equation}
Without shifts, that is, for $n=0$, the equivalence~\eqref{eq:mini-positselski} gives us $R^{0,m}\cong \Hm^{0,m}$.

\begin{Lem}
  \label{Lem:pos-ring-morphism}
  The exact functor $\pos$ of~\eqref{eq:exact-positselski} induces a morphism of bigraded rings
  \begin{equation*}
    \pos:R^{n,m}\to\Hm^{n,m}.
  \end{equation*}
  Moreover, the latter is a bijection in bidegrees with $n\leq 1$.
\end{Lem}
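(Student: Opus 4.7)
The plan is to reduce everything to the heart equivalence~\eqref{eq:mini-positselski}, together with the existence of a weight structure on~$\DATM(\FF;k)$ whose heart is~$\cF(\FF;k)$. First, the ring-morphism claim is almost formal: being triangulated, $\pos$ commutes with~$[1]$, while~\eqref{eq:mini-positselski} supplies natural isomorphisms $\pos(\unit(m))\cong\unit(m)$ for every~$m\in\bbZ$; composing with these identifications yields maps $\pos\colon R^{n,m}\to\Hm^{n,m}$. The bigraded multiplications on both sides can be expressed purely in terms of composition after shifting and twisting, namely $f\cdot g=(f(m')[n'])\circ g$ for $f\in R^{n,m}$ and $g\in R^{n',m'}$; since $\pos$ preserves composition and is compatible with $[1]$ and~$(1)$, it is a morphism of bigraded rings.

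For the bijectivity claim I would handle the three ranges $n<0$, $n=0$, and $n=1$ separately. When $n<0$ both sides vanish: the source because $\Hom_{\Db(\kGfil)}(\unit,\unit(m)[n])=0$ whenever both objects lie in the heart of a bounded derived category of an exact category; the target because the mod-$2$ motivic cohomology ring of a real closed field is concentrated in non-negative cohomological degree. When $n=0$, the map is identified via~\eqref{eq:mini-positselski} with the composite $\Hom_{\kGfil}(\unit,\unit(m))\isoto\Hom_{\cF(\FF;k)}(\unit,\unit(m))=\Hm^{0,m}$, and the last equality holds because $\cF(\FF;k)\hook\DATM(\FF;k)$ is fully faithful.

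The substantive case is $n=1$. On the source side, $R^{1,m}=\Ext^1_{\kGfil}(\unit,\unit(m))$ is the group of Yoneda classes of admissible extensions in the exact category~$\kGfil$. The target is analyzed by identifying $\Hm^{1,m}$ with $\Ext^1_{\cF(\FF;k)}(\unit,\unit(m))$: since $\cF(\FF;k)$ is extension-closed in $\DATM(\FF;k)$---it is the heart of the weight structure on Artin-Tate motives, \cf~\cite{bondarko-weight,wildeshaus:artin-tate}---every morphism $\unit\to\unit(m)[1]$ in $\DATM(\FF;k)$ completes to a distinguished triangle $\unit(m)\to X\to\unit\to\unit(m)[1]$ with $X\in\cF(\FF;k)$, and Yoneda-equivalent extensions remain equivalent in the ambient triangulated category. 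Under~\eqref{eq:mini-positselski}, admissible extensions in $\kGfil$ biject with those in $\cF(\FF;k)$, proving the claim. The main obstacle is exactly this identification $\Hm^{1,m}\cong\Ext^1_{\cF(\FF;k)}(\unit,\unit(m))$, whose verification hinges on showing that $\cF(\FF;k)$ sits inside $\DATM(\FF;k)$ as the heart of a genuine weight structure---and is thus, in particular, extension-closed.
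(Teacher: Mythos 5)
Your overall strategy coincides with the paper's: exactness of $\pos$ gives additivity, the heart equivalence~\eqref{eq:mini-positselski} handles $n=0$, negative-Ext vanishing on both sides handles $n<0$, and the case $n=1$ is the identification of $\Hm^{1,m}=\Hom(\unit,\unit(m)[1])$ with $\Ext^1$ computed in the extension-closed subcategory $\cF(\FF;k)$ (the paper cites~\cite{dyer:exact-triangulated} for exactly this). However, your ``almost formal'' ring-morphism step conceals the one genuinely non-trivial point. You assert that $\pos$ is ``compatible with $[1]$ and $(1)$,'' but compatibility with the twist is precisely what is \emph{not} free: $\pos$ is not known to be a tensor functor, and $(m')=-\otimes\unit(m')$. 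Knowing $\pos(\unit(m))\cong\unit(m)$ on objects does not yield $\pos(f(m'))\cong\pos(f)(m')$ for a general morphism $f$, which your formula $f\cdot g=(f(m')[n'])\circ g$ requires. The paper's proof spends most of its effort here: one needs an isomorphism of \emph{functors} $\unit(m')[n']\otimes\pos(-)\simeq\pos(-(m'))[n']$, deduced from the fact that $\unit(m')$ lies in the heart (where $\pos$ \emph{is} tensor) together with the uniqueness statement for exact functors out of $\Db$ of the heart~\cite{porta:stable-derivators-universal}. Without some such argument, multiplicativity is unproved.

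You have also mislocated the difficulty in the $n=1$ case. Extension-closedness of $\cF(\FF;k)$ in $\DATM(\FF;k)$ needs no weight-structure input: $\cF(\FF;k)$ is \emph{defined} as the smallest full subcategory containing the $\mot(\FF)(n)$ and $\mot(\bar\FF)(n)$ and closed under extensions. (Nor is it the heart of the Chow weight structure --- that heart is the additive category $\ATM(\FF;k)$ of pure Artin--Tate Chow motives, whereas $\cF(\FF;k)\simeq\Efil$ contains genuinely non-split filtered objects such as the $\E_\ell$.) Granting extension-closedness, the comparison $\Ext^1_{\cF}(\unit,\unit(m))\isoto\Hom(\unit,\unit(m)[1])$ is the standard fact of~\cite{dyer:exact-triangulated}; your sketch establishes surjectivity, but for injectivity one needs that extensions with equal connecting classes in the triangulated category are already Yoneda-equivalent in $\cF$, which is the direction your argument does not address.
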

\begin{proof}
  Since $\pos$ in~\eqref{eq:positselski} is an exact functor, the map $\pos:R^{\sbull,\sbull}\to\Hm^{\sbull,\sbull}$ is a morphism of bigraded abelian groups. For multiplication, recall that given two homogeneous elements $f:\unit\to\unit(m)[n]$ and $g:\unit\to\unit(m')[n']$ their product can be viewed as the image of the pair $(g,f)$ under the map
  \begin{multline}
    \label{eq:multiplication-nontensor}%
    \Hom(\unit,\unit(m')[n'])\times\Hom(\unit,\unit(m)[n])\xto{(\unit(m)[n]\otimes-)\times\id}\\
    \Hom(\unit(m)[n],\unit(m)[n]\otimes\unit(m')[n'])\times\Hom(\unit,\unit(m)[n])\xto{\simeq}\\
    \Hom(\unit(m)[n],\unit(m+m')[n+n'])\times\Hom(\unit,\unit(m)[n])\xto{\circ}\\
    \Hom(\unit,\unit(m+m')[n+n']).
  \end{multline}
  But the functor
  \begin{equation*}
    \unit(m)[n]\otimes\pos(-)\simeq(\unit(m)\otimes\pos(-))[n]
  \end{equation*}
  is equivalent to
  \begin{equation*}
    \pos(\unit(m)\otimes -)[n]\simeq\pos(-(m))[n]
  \end{equation*}
  since $\unit(m)$ belongs to the heart~$\Efil\cong\cF(\FF;k)$ and the latter is a \emph{tensor} subcategory of $\DATM(\FF;k)$. (This is the universal property of the bounded derived category~\cite[Theorem~2.17]{porta:stable-derivators-universal} alluded to in \Cref{fnt:uniqueness-derivators}; see the proof of~\cite[Proposition~7.7]{gallauer:tt-dtm-algclosed} for details.) This shows that the map on hom groups induced by the functor $\pos$ is compatible with the first two arrows in~\eqref{eq:multiplication-nontensor}. Compatibility with the last arrow is functoriality of $\pos$. This completes the proof that $\pos:R^{\sbull,\sbull}\to\Hm^{\sbull,\sbull}$ is a bigraded ring morphism.

   The groups $\Hom_{\mathcal{C}}(A,B[n])$ vanish for $n<0$ and all $A,B\in\Efil\cong\cF(\FF;k)$, for both $\mathcal{C}=\DE$ and~$\mathcal{C}=\DATM(\FF;k)$. Since $\pos:\DE\to\DATM(\FF;k)$ is an equivalence on $\Efil$ and its image~$\pos(\Efil)=\cF(\FF;k)$ is closed under extensions in $\DATM(\FF;k)$, the last part of the statement follows, by~\cite{dyer:exact-triangulated}.
\end{proof}

\begin{Lem}
  \label{Lem:pos-reduction}
  The following statements are equivalent:
  \begin{enumerate}[\rm(a)]
  \item \label{it:pos-red-a}%
  The functor $\pos:\DE\to\DATM(\FF;k)$ is an equivalence.
  \item \label{it:pos-red-b}%
  The morphism $\pos:R^{\sbull,\sbull}\to\Hm^{\sbull,\sbull}$ is an isomorphism.
  \end{enumerate}
\end{Lem}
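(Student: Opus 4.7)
The implication (a) $\Rightarrow$ (b) is immediate: any equivalence of triangulated categories induces bijections on Hom groups, in particular on the bigraded endomorphism rings of the unit. The content is the converse.

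For (b) $\Rightarrow$ (a), my plan is to establish full faithfulness of $\pos$ on a set of triangulated generators of $\DE$; essential surjectivity is then automatic since $\DATM(\FF;k)$ is generated by the objects of the heart $\cF(\FF;k)$, which lie in the essential image of $\pos$ via the equivalence~\eqref{eq:mini-positselski}. By \Cref{Cor:generators-DE}, it suffices to check bijectivity of
\[
\pos\colon \Hom_{\DE}(A, B[n]) \longrightarrow \Hom_{\DATM}(\pos A, \pos B[n])
\]
for $A, B \in \{\unit(m), \E_0(m), \E_1(m) \mid m \in \bbZ\}$ and all $n \in \bbZ$. Exploiting the compatibility $\pos(\unit(m)[n]\otimes -)\simeq \unit(m)[n]\otimes \pos(-)$ used in the proof of \Cref{Lem:pos-ring-morphism}, I may twist on the source to assume $A \in \{\unit, \E_0, \E_1\}$. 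The case $A = B = \unit$ is exactly hypothesis~(b), by the definition of the bigraded rings in~\eqref{eq:bigraded-rings}. The cases involving $\E_1$ (or $\E_\ell$ for $\ell \geq 1$) then follow by applying $\Hom(-, B[n])$ or $\Hom(A, -[n])$ to the exact triangle in $\DE$ associated to the fundamental exact sequence $\fund_1 = (\unit(1) \into \E_1 \onto \unit)$---which $\pos$ preserves, being exact---together with the five-lemma and induction via \Cref{Lem:e_l-e_l+1}.

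The genuine obstacle is the Artin block $\E_0$ (as either $A$ or $B$), since $\mot(\bar{\FF}) = \pos(\E_0)$ does \emph{not} belong to $\DTM(\FF;k)$ and so $\E_0$ cannot be built from twists of $\unit$ by exact triangles in $\DE$. Here my plan is to use that $\E_0$ is projective-injective in the Frobenius exact category $\Efil$ (\Cref{Cor:Afil-Frobenius}), so that $\Ext^n_{\Efil}(\E_0, -)$ and $\Ext^n_{\Efil}(-, \E_0)$ vanish for $n \neq 0$; Hom groups in $\DE$ involving $\E_0$ then collapse to Hom groups in the heart, where $\pos$ is already an equivalence by~\eqref{eq:mini-positselski}. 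The corresponding vanishing on the motivic side, namely $\Hom_{\DATM}(\mot(\bar{\FF})(m'), \pos(B)[n]) = 0$ for $n \neq 0$ and $B$ a twist of the unit or of $\E_0$, should follow from the adjunction between restriction and induction along $\Spec(\bar{\FF}) \to \Spec(\FF)$, reducing the vanishing to analogous statements over the algebraically closed field $\bar{\FF}$ treated in~\cite{gallauer:tt-dtm-algclosed}. Making this last reduction airtight is where the main technical work will lie.
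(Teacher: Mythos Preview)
Your approach is correct and matches the paper's: reduce full faithfulness to the generators $\unit(m)$ and $\E_0(m)$ (the paper does this via induction on filtration amplitude, which subsumes your $\E_1$ case through the triangle $\fund_1$ exactly as you indicate), then handle $\E_0$ using that it is projective-injective in $\Efil$ together with the corresponding motivic vanishing. The one place you are overestimating the difficulty is the last step: the ``main technical work'' you anticipate is in fact a one-liner. Since $\mot(\bar\FF)$ is self-dual (or equivalently by the $p^*\dashv p_*$ adjunction for the finite \'etale cover $p\colon\Spec(\bar\FF)\to\Spec(\FF)$), one has
\[
\Hom_{\DATM(\FF;k)}\big(\mot(\bar\FF),\unit(m)[n]\big)\cong\Hom_{\DATM(\FF;k)}\big(\unit,\mot(\bar\FF)(m)[n]\big)=\Hm^{n,m}(\bar\FF;k),
\]
and over the algebraically closed field $\bar\FF$ this motivic cohomology vanishes for all $n\neq 0$ (it is $k[\beta]$, concentrated in cohomological degree zero). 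No further input from~\cite{gallauer:tt-dtm-algclosed} is needed.
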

\begin{proof}
  Let $\mathcal{D}=\DATM(\FF;k)$. It is generated as a thick subcategory by $\mot(E)(m)$ which are in the image of the functor $\pos$. Indeed, $\mot(\FF)(m)=\pos(\unit(m))$ while $\mot(\bar{\FF})(m)=\pos(\E_0(m))$. Given that $\DE$ is idempotent complete, to prove~\eqref{it:pos-red-a} it therefore suffices to prove that~$\pos$ is fully faithful.

  Fix two complexes $A,B\in\DE$. We want to prove that
  \begin{equation}
    \label{eq:pos-hom-groups}
    \pos:\Hom_{\DE}(A,B)\to\Hom_\mathcal{D}(\pos(A),\pos(B))
  \end{equation}
  is a bijection. By induction on the length of the complexes and the five-lemma (recall \Cref{Rem:truncation}) we reduce to $A$ and $B$ shifts of objects in $\Efil$. It therefore suffices to prove~\eqref{eq:pos-hom-groups} is a bijection for $A\in\Efil$ and $B=C[n]$ with $C\in\Efil$ and $n\in\bbZ$.

  By induction on the filtration amplitude and the five-lemma we reduce to $A$ and $C$ of the form $\unit(m)$ or $\E_0(m)$, some $m$. Since twisting is an equivalence, we may assume $A$ is pure of weight zero.

  As already remarked above, the groups $\Hom_{\mathcal{C}}(A,C[n])$ vanish for $n<0$ and $\mathcal{C}\in\{\DE,\mathcal{D}\}$, and~\eqref{eq:pos-hom-groups} is a bijection for $n=0$. If $C=\E_0(m)$ is projective-injective, then $\Hom_{\DE}(A,\E_0(m)[n])=0$ for $n>0$. The same is true for the hom-groups in $\mathcal{D}$:
  \begin{equation*}
    \Hom_{\mathcal{D}}(\unit,\mot(\bar{\FF})(m)[n])=\Hm^{n,m}(\bar{\FF};k)=0.
  \end{equation*}
  We may therefore assume $C=\unit(m)$. Similarly, we may assume $A=\unit$ and are reduced to~\eqref{it:pos-red-b}. This proves the Lemma since~\eqref{it:pos-red-a}$\Rightarrow$\eqref{it:pos-red-b} is trivial.
\end{proof}

\begin{Rem}
  \label{Rem:motivic-cohomology-point}
   Recall that, by the Beilinson-Lichtenbaum conjecture with $\bbZ/2$-coefficients~\cite{voevodsky:milnor-conjecture},
   \begin{equation*}
     \Hm^{\sbull,\sbull}(\FF;k)\cong k[\beta,\rho]
   \end{equation*}
   where:
   \begin{itemize}
   \item $\beta\colon \unit\to \unit(1)$ is the (motivic) Bott element of~\cite{levine:bott,haesemeyer-hornbostel:bott}\,(\footnote{\,This element corresponds to $\tau$ in~\cite{voevodsky:motivic-power-operations}.}), that is, the non-trivial element $-1$ in $\Hm^{0,1}(\FF;k)\cong \mu_2(\FF)=\{\pm1\}$;
   \item the map $\rho\colon \unit\to \unit(1)[1]$ is the non-trivial element in $\Hm^{1,1}(\FF;k)\cong K^{\mathrm{M}}_1(\FF)/2=\FF^\times/(\FF^\times)^2$, induced by a morphism $\Spec(\FF)\to\bbG_{\mathrm{m},\FF}$ corresponding to a negative element of $\FF$.
\end{itemize}

   It follows from the fact that $\Hm^{\sbull,\sbull}$ is generated by elements in $\Hm^{\leq 1,\sbull}$ and from \Cref{Lem:pos-ring-morphism} that $\pos:R^{n,m}\to\Hm^{n,m}$ is an epimorphism. Thus we only need to establish injectivity for $n\geq 2$. This will follow from the following result.
 \end{Rem}

 \begin{Prop}
   \label{Prop:extensions-computation} For~$R^{n,m}$ as in~\eqref{eq:bigraded-rings}, we have for all $n\geq 0$ and $m\in \bbZ$
   \begin{equation*}
     R^{n,m}=
     \begin{cases}
       k&\textrm{ if }n\leq m\\
       0&\textrm{ if }n>m.
     \end{cases}
   \end{equation*}
 \end{Prop}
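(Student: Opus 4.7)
The plan is to compute $R^{n,m} = \Ext^n_{\Efil}(\unit, \unit(m))$ (Yoneda Ext in the Frobenius exact category~$\Efil$) directly from an explicit projective resolution of~$\unit$. First I would splice together the fundamental admissible exact sequences $\unit(i{+}1) \into \E_1(i) \onto \unit(i)$ of~\eqref{eq:fundamental-exact-sequences} for all $i \geq 0$ to build
\begin{equation*}
P_\bullet\colon\qquad \cdots \to \E_1(i+1) \xto{\eta\eps} \E_1(i) \to \cdots \to \E_1(1) \xto{\eta\eps} \E_1 \onto \unit \to 0
\end{equation*}
with $P_i = \E_1(i)$ in homological degree~$i$. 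Each $\E_1(i)$ is projective-injective in~$\Efil$ by \Cref{Prop:proj-Efil}, and the differential $P_{i+1} \to P_i$ factors as $\E_1(i+1) \onto \unit(i+1) \into \E_1(i)$, which on underlying $kC_2$-modules is the norm map $\eta \circ \eps\colon kC_2 \to kC_2$.

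Next I would compute $\Hom_{\Efil}(\E_1(i), \unit(m))$ directly using~\eqref{eq:Hom(e_l,-)} (suitably twisted). Any $kC_2$-linear map $kC_2 \to k$ into the trivial module is a scalar multiple of~$\eps$, so such a morphism in $\Afil$ is either zero or proportional to~$\eps$, subject to the filtration constraints. The only non-trivial constraint is that the weight-$i$ part $(\E_1(i))^i = kC_2$ maps into $(\unit(m))^i$, which equals $k$ if $i \leq m$ and $0$ if $i > m$; the weight-$(i{+}1)$ constraint is automatic because $\eps \circ \eta = 0$ in characteristic~$2$ (indeed $\eps\eta(1) = \eps(1+\sigma) = 1+1 = 0$). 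Hence $\Hom_{\Efil}(\E_1(i), \unit(m)) \cong k$ for $i \leq m$ and $0$ otherwise.

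The same identity $\eps \circ \eta = 0$ will then trivialize the differentials of the cochain complex $\Hom_{\Efil}(P_\bullet, \unit(m))$: for $f = \eps$, the pullback $f \circ \eta\eps = \eps \circ \eta \circ \eps$ is zero. Therefore
\begin{equation*}
R^{n,m} \ = \ H^n\bigl(\Hom_{\Efil}(P_\bullet, \unit(m))\bigr) \ = \ \Hom_{\Efil}(\E_1(n), \unit(m))
\end{equation*}
for every $n \geq 0$, which is $k$ when $n \leq m$ and zero when $n > m$, as asserted.

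The resolution $P_\bullet$ is unbounded on one side, but this causes no trouble: $\Ext^n$ depends only on the truncation of $P_\bullet$ in degrees $\leq n+1$, and equivalently $\Hom_{\DE}(\unit, \unit(m)[n])$ coincides with Yoneda Ext in~$\Efil$. There is no real obstacle; the entire argument is driven by the single observation $\eps \circ \eta = 0$ in characteristic~$2$, which simultaneously governs the weight constraint on Homs and the vanishing of the differentials. Combined with the epimorphism $\pos$ of \Cref{Lem:pos-ring-morphism} and the count $\dim_k \Hm^{n,m}(\FF;k) = 1$ for $0 \leq n \leq m$ from \Cref{Rem:motivic-cohomology-point}, this will upgrade $\pos$ to a bigraded isomorphism and hence, via \Cref{Lem:pos-reduction}, to the equivalence of \Cref{Prop:positselski}.
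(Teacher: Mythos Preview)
Your proof is correct and essentially the same as the paper's, just dualized: the paper computes $R^{n,m}$ using the injective resolution $\injres(m)$ of~$\unit(m)$ (from~\eqref{eq:I}) and applies $\Homfil(\unit,-)$, whereas you use the projective resolution $P_\bullet$ of~$\unit$ and apply $\Homfil(-,\unit(m))$. In the Frobenius category~$\Efil$ the projectives and injectives coincide, both resolutions are built from twisted copies of~$\E_1$ with differentials~$\eta\eps$, and both computations reduce to the same complex $k\xto{0}k\xto{0}\cdots\xto{0}k$ via the identity $\eps\eta=0$; so there is no substantive difference between the two routes.
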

 \begin{proof}
   Recall from \Cref{Rem:rwz} the injective resolution $\injres$ of $\unit$ in $\Efil$. We may compute $R^{n,m}$ as $R^{n,m}=\Hm_{-n}(\Hom_{\Efil}(\unit,\injres(m)))$ which is the homology in degree~$-n$ of the complex
\begin{equation*}
  0\to k\xto{0}k\xto{0}\cdots\xto{0}k\to 0
\end{equation*}
with non-zero objects in homological degrees from~$0$ down to~$-m$. (If $m<0$ then this is the zero complex.) The claim follows immediately.
 \end{proof}

We may now finish the proof of \Cref{Prop:positselski}. By \Cref{Lem:pos-reduction} and \Cref{Rem:motivic-cohomology-point}, we are reduced to prove injectivity of $R^{n,m}\onto\Hm^{n,m}$ in degrees $n\geq 2$. By \Cref{Prop:extensions-computation}, the $k$-dimensions of $R^{n,m}$ and $\Hm^{n,m}$ coincide for all $n,m$, and we conclude.

\bigbreak\goodbreak
\section{Spectrum of mod-2 real Artin-Tate motives}
\label{sec:Spc-DATM(R;Z/2)}%
\medbreak

Having established the necessary dictionary in the previous section, we are now in a position to apply the results of \Cref{part:I} to the theory of motives. The following theorem, our main result, follows directly from \Cref{Prop:positselski} and \Cref{Thm:Spc-DE}.
\begin{Thm}
\label{Thm:main}%
Let~$\FF$ be a real closed field. The spectrum of the tt-category $\DATM(\FF;\bbZ/2)$ is the following space:
\[
{\xy
(0,0)*{\pLs};
(0,-10)*{\pL};
(20,-10)*{\pMs};
(20,-20)*{\pM};
(40,0)*{\pNs};
(40,-10)*{\pN};
{\ar@{-} (0,-8)*{};(0,-2)*{}};
{\ar@{-} (20,-17.5)*{};(20,-13)*{}};
{\ar@{-} (40,-8)*{};(40,-2)*{}};
{\ar@{-} (16,-8)*{};(4,-2)*{}};
{\ar@{-} (16,-18)*{};(4,-12)*{}};
{\ar@{-} (24,-8)*{};(36,-2)*{}};
{\ar@{-} (24,-18)*{};(36,-12)*{}};
\endxy}
\]
As before, a line indicates that the lower prime specializes to the higher prime.
\qed
\end{Thm}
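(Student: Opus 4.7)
The plan is to reduce the statement to the representation-theoretic computation already carried out in Part~I via Positselski's equivalence. Since $\FF$ is real closed, its absolute Galois group is $C_2 = \mathrm{Gal}(\bar{\FF}/\FF)$ where $\bar{\FF} = \FF(\sqrt{-1})$, so the category $\kGfil = \Efil$ of filtered $kC_2$-modules studied in Part~I is precisely the one relevant to~$\FF$.

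First, I would invoke \Cref{Prop:positselski}, established in \Cref{sec:translation}: the exact functor
\[
\pos\colon \DE \isoto \DATM(\FF;\bbZ/2)
\]
is an equivalence of triangulated categories and induces a homeomorphism $\Spc(\DATM(\FF;\bbZ/2)) \isoto \Spc(\DE)$ sending $\cP \mapsto \pos^{-1}(\cP)$. This is the key input that transports all tt-geometric information between the two sides, even though $\pos$ is not known to be a tensor functor globally---as explained in \Cref{sec:translation}, it suffices that $\pos$ is tensor-compatible on the heart $\Efil \simeq \cF(\FF;\bbZ/2)$, because tt-ideals on both sides are characterized as thick subcategories closed under tensoring with objects of the respective hearts.

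Second, I would invoke \Cref{Thm:Spc-DE}, which provides the explicit description of $\Spc(\DE)$ as the six-point space with primes $\prL, \prLs, \prM, \prMs, \prN, \prNs$ and the specialization relations depicted. Composing the two results yields the theorem.

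There is essentially no obstacle left at this stage: the substantive work has already been done, first in establishing the equivalence of \Cref{Prop:positselski} via Voevodsky's resolution of the Milnor conjecture, and then in the purely representation-theoretic analysis culminating in \Cref{Thm:Spc-DE}. The present proof is just the concatenation of these two ingredients.
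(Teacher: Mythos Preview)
Your proposal is correct and matches the paper's approach exactly: the paper states that \Cref{Thm:main} ``follows directly from \Cref{Prop:positselski} and \Cref{Thm:Spc-DE}'' and marks it with a bare \texttt{\textbackslash qed}, so there is nothing more to add.
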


Our next goal is to interpret motivically some of the functors used in \Cref{part:I} to catch the prime ideals.

\begin{Rem}
  \label{Rem:semisimplification}%
  Objects of the conjectural abelian category of mixed motives should possess a weight filtration whose $n$th graded piece belongs to the subcategory of pure motives of weight $n$. It is also expected that the pure motives span the subcategory of semi-simple objects, so that the total weight graded functor can be thought of as a \emph{semi-simplification}. It is then natural to view the functor $\gr$ which in \Cref{part:I} was used to detect the top three points (\cf \eqref{eq:subdivision}) as a triangulated analogue of semi-simplification, detecting the `pure' primes.
\end{Rem}
  Independently of these considerations, there is another functor defined on Voevodsky motives in great generality, and which, on $\DATM(\FF;\bbZ/2)$ for a real closed field $\FF$, catches the same three primes. To discuss this functor, we need to fix some notation regarding Chow motives.
\begin{Not}
  \label{Not:chow-motives}
 We denote by $\Chow(\FF;k)$ the classical category of Chow motives over~$\FF$ with coefficients in~$k$. The Tate motive is denoted by $\unit\{1\}$, and as before we denote by $\mot(X)$ the Chow motive of a smooth projective~$\FF$-scheme~$X$ and abbreviate $\mot(X)\{m\}=\mot(X)\otimes\unit\{m\}$. In particular, we have for such $X,Y$
  \begin{equation}
    \label{eq:chow-hom}
    \Hom_{\Chow(\FF;k)}(\mot(X)\{m\},\mot(Y)\{n\})=\mathrm{CH}^{\dim(X)+n-m}(X\times_{\FF} Y;k).
  \end{equation}
  As with mixed motives we consider the following three subcategories of~$\Chow(\FF;k)$:
  \begin{itemize}
  \item Artin Chow motives $\AM(\FF;k)=\add(\mot(E)\mid E/\FF\text{ finite})$;
  \item Tate Chow motives $\TM(\FF;k)=\add(\unit\{n\}\mid n\in\bbZ)$;
  \item Artin-Tate Chow motives $\ATM(\FF;k)=\add(\mot(E)\{n\}\,|\,E/\FF\text{ finite}, n\in\bbZ)$.
  \end{itemize}
  These are rigid idempotent-complete $\otimes$-categories, and embed as full $\otimes$-exact subcategories (endowed with the split exact structure) of their mixed triangulated analogues defined in \Cref{Not:voevodsky-motives}, by~\cite[\S\,2.2]{Voevodsky00}.
\end{Not}

\begin{Rem}
  \label{Rem:identification-pure-motives}%
From now on, let us fix a real closed field~$\FF$ with algebraic closure $\bar{\FF}=\FF(\sqrt{-1})$ and the coefficients~$k=\bbZ/2$ as in \Cref{Not:voevodsky-motives}. Using~\eqref{eq:chow-hom}, one checks easily that \'etale cohomology induces canonical equivalences of $\otimes$-categories
  \[
    \AM(\FF;k)\simeq \mmod{kC_2},\quad \TM(\FF;k)\simeq \grmod{k},\quad \ATM(\FF;k)\simeq \grmod{kC_2}
  \]
  where $\grmod{R}$ denotes the category of (finitely generated, as always) $\bbZ$-graded $R$-modules. In particular, these are in fact abelian categories and coincide with the categories of pure motives of Artin, Tate, and Artin-Tate type, respectively. (In other words, for a zero-dimensional variety, all adequate equivalence relations on algebraic cycles coincide.) Under those equivalences, $\mot(\FF)$ corresponds to~$k$ and $\mot(\bar\FF)$ corresponds to~$kC_2$.
\end{Rem}

\begin{Rem}
\label{Rem:chow-weight}%
The category of Voevodsky motives $\DM(\FF;k)$ admits a weight structure in the sense of~\cite{bondarko-weight}, called the \emph{Chow weight structure}, whose (additive) heart is $(\DM(\FF;k))^{w-\heartsuit}=\Chow(\FF;k)$ the category of Chow motives. The associated conservative weight complex functor constructed by Bondarko,
  \begin{equation}
    \label{eq:weight-complex}
    w_{\Chow}:\DM(\FF;k)\to\Kb(\Chow(\FF;k)),
  \end{equation}
  is a tt-functor (\cite[Lemma~20]{bachmann:invertible-quadric} or~\cite{Aoki:weight-complex-tensor}). Restricted to $\DATM(\FF;k)$, this functor factors through the homotopy category of Artin-Tate Chow motives and yields a tt-functor by summing over all weights
  \begin{equation*}
    \gr^{w_{\Chow}}:\DATM(\FF;k)\xto{w_{\Chow}}\Kb(\ATM(\FF;k))\xto{\oplus}\Kb(\AM(\FF;k)).
  \end{equation*}
  Note that the category $\Kb(\AM(\FF;k))$ is canonically equivalent to the triangulated category of Artin motives $\DAM(\FF;k)$ (\cite[Prop.~3.4.1]{Voevodsky00}) and $\gr^{w_{\Chow}}$ therefore provides a retraction to the inclusion of Artin motives into $\DATM(\FF;k)$.

  This proves that the map on spectra,
  \begin{equation*}
    \Spc(\gr^{w_{\Chow}}):\Spc(\DAM(\FF;k))\to\Spc(\DATM(\FF;k)),
  \end{equation*}
  is injective. (Recall from \Cref{Thm:Spc(Kb(kC_2))} that the domain is the V-shaped topological space.) Also, since $\gr^{w_{\Chow}}$ is conservative, the map on spectra catches all closed points, by~\cite[Theorem~1.2]{balmer:surjectivity}. Finally, the objects corresponding to the generators of the bottom three primes in \Cref{part:I}, $\cone(\beta)$ and $\cone(\rho)$, are sent to sums of invertibles by $\gr^{w_{\Chow}}$, and we conclude that $\gr^{w_{\Chow}}$ catches the same points as the total graded of \Cref{part:I}, depicted at the top of the following diagram (the other functors will be discussed subsequently).
\begin{equation}
  \label{eq:3-functors}%
\vcenter{\xy
(0,0)*{\pLs};
(0,-10)*{\pL};
(20,-10)*{\pMs};
(20,-20)*{\pM};
(40,0)*{\pNs};
(40,-10)*{\pN};
{\ar@{-} (0,-8)*{};(0,-2)*{}};
{\ar@{-} (20,-18)*{};(20,-12)*{}};
{\ar@{-} (40,-8)*{};(40,-2)*{}};
{\ar@{-} (16,-8)*{};(4,-2)*{}};
{\ar@{-} (16,-18)*{};(4,-12)*{}};
{\ar@{-} (24,-8)*{};(36,-2)*{}};
{\ar@{-} (24,-18)*{};(36,-12)*{}};
{\ar@{..} (-4,-4)*{};(24,-18)*{};};
{\ar@{..} (-4,-4)*{};(-4,-12)*{};};
{\ar@{..} (-4,-12)*{};(24,-26)*{};};
{\ar@{..} (24,-18)*{};(24,-26)*{};};
{\ar@{..>}@/_1em/ (-1,-14)*{};(-20,-15)*{\mathrm{Re}_{\bbR}};};
{\ar@{..} (44,-4)*{};(16,-18)*{};};
{\ar@{..} (44,-4)*{};(44,-12)*{};};
{\ar@{..} (44,-12)*{};(16,-26)*{};};
{\ar@{..} (16,-18)*{};(16,-26)*{};};
{\ar@{..>}@/^1em/ (41,-14)*{};(60,-15)*{\mathrm{Re}_{\mathrm{\acute{e}t}}};};
{\ar@{..} (-4,3)*{};(4,3)*{};};
{\ar@{..} (4,3)*{};(20,-5)*{};};
{\ar@{..} (20,-5)*{};(36,3)*{};};
{\ar@{..} (36,3)*{};(44,3)*{};};
{\ar@{..} (44,3)*{};(44,-3)*{};};
{\ar@{..} (44,-3)*{};(20,-15)*{};};
{\ar@{..} (20,-15)*{};(-4,-3)*{};};
{\ar@{..} (-4,-3)*{};(-4,3)*{};};
{\ar@{..>}@/_.5em/ (22,-4)*{};(20,5)*{w_{\Chow}};};
\endxy}
\end{equation}
Although $\gr^{w_{\Chow}}$ and the `semi-simplification' functor $\gr$ are both retractions to the inclusion $\DAM(\FF;k)\into\DATM(\FF;k)$ and catch the same points, they are distinct. They differ in that for example $\gr^{w_{\Chow}}(\unit\{i\}=\unit(i)[2i])=\unit$ while $\gr(\unit(i)[2i])=\unit[2i]$.
\end{Rem}

\begin{Rem}
\label{Rem:etale-realization}
 On the category of Voevodsky motives there is an \'{e}tale realization functor~\cite[Thm.\,4.3, Rem.\,4.8]{ivorra:l-adic-real-1}
  \begin{equation*}
    \DM(\FF;k)\to\Db(kC_2)
  \end{equation*}
and its restriction to Artin-Tate motives is the functor $\mathrm{Re}_{\mathrm{\acute{e}t}}$ of~\eqref{eq:3-functors}
  \begin{equation}
    \label{eq:etale-realization-DATM}
    \mathrm{Re}_{\mathrm{\acute{e}t}}:\DATM(\FF;k)\to\Db(kC_2).
  \end{equation}
  The functor in~\eqref{eq:etale-realization-DATM} is obtained by inverting the motivic Bott element $\beta:\unit\to\unit(1)$ of \Cref{Rem:motivic-cohomology-point}, \ie we have the following statement.
\end{Rem}

\begin{Prop}
  \label{Prop:etale-vs-forget} The following diagram of exact functors commutes,
  \begin{equation*}
    \xymatrix{\DE\ar[r]^-{\pos}_-{\sim}\ar[rd]_{\fgt}&\DATM(\FF;k)\ar[d]^{\mathrm{Re}_{\mathrm{\acute{e}t}}}\\
      &\Db(kC_2)}
  \end{equation*}
  and the \'etale realization induces a canonical equivalence of tt-categories
  \[
    \DATM(\FF;k)[\beta\inv]\simeq\Db(kC_2).
  \]
\end{Prop}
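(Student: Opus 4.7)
The plan is to reduce everything to the representation-theoretic statement already established, namely \Cref{Cor:inverting-beta}, via Positselski's equivalence $\pos\colon\DE\isoto\DATM(\FF;k)$.

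First, I would prove commutativity of the diagram, \ie that $\mathrm{Re}_{\mathrm{\acute{e}t}}\circ\pos\cong\fgt$ as exact functors $\DE\to\Db(kC_2)$. By the universal property of the bounded derived category of an exact category (invoked already in \Cref{fnt:uniqueness-derivators} in the form of stable derivators, \cf~\cite[Proposition~7.7]{gallauer:tt-dtm-algclosed} and~\cite[Theorem~2.17]{porta:stable-derivators-universal}), any exact functor out of $\DE=\Db(\Efil)$ is determined up to canonical isomorphism by its restriction to the heart~$\Efil$. Hence it suffices to exhibit a natural isomorphism between the two restrictions to $\Efil\cong\cF(\FF;k)$. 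But on this heart, both functors are literally the \'etale realization with values in $kC_2$-modules: for $\pos$ this is the very definition of the equivalence~\eqref{eq:mini-positselski}, after which $\mathrm{Re}_{\mathrm{\acute{e}t}}$ forgets the weight filtration; and for $\fgt\colon\Efil\to\mmod{kC_2}$ this is by construction. The isomorphism on the heart is then essentially tautological and extends uniquely.

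Second, I would identify the motivic Bott element $\beta\colon\unit\to\unit(1)$ of \Cref{Rem:motivic-cohomology-point} with the natural transformation $\beta\colon\Id\To(1)$ of \Cref{Not:beta} under~$\pos$. Both elements generate the one-dimensional $k$-vector space $R^{0,1}\cong\Hm^{0,1}(\FF;k)\cong k$ (see \Cref{Lem:pos-ring-morphism} and \Cref{Prop:extensions-computation}); since $\pos\colon R^{0,1}\to\Hm^{0,1}$ is an isomorphism of $k$-vector spaces, the two generators are identified up to a non-zero scalar, which over $k=\bbZ/2$ is forced to be~$1$.

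Third, I would conclude as follows. By the first step, inverting $\beta$ in $\DATM(\FF;k)$ via $\mathrm{Re}_{\mathrm{\acute{e}t}}$ is the same, via $\pos$, as inverting $\beta$ in $\DE$ via $\fgt$. By \Cref{Cor:inverting-beta}, the latter Verdier localization
\[
\DE[\beta\inv]=\DE\big/\ideal{\cone(\beta)}\xto{\ \fgt\ }\Db(\mmod{kC_2})
\]
is an equivalence. Transporting through $\pos$ yields the asserted equivalence
\[
\DATM(\FF;k)[\beta\inv]\simeq\Db(kC_2)
\]
induced by \'etale realization.

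The main obstacle is really the first step: convincing oneself that the tt-functor~$\pos$, which is only known to be tensor on the heart, is nevertheless compatible with \'etale realization at the level of the derived category. This is resolved by appealing to the uniqueness of exact extensions to $\Db(\Efil)$, exactly as in \cite[\S\,7]{gallauer:tt-dtm-algclosed}; no further calculation of Yoneda extensions or higher coherence is needed.
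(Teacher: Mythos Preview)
Your proposal is correct and follows essentially the same route as the paper: verify commutativity on the heart, invoke the universal property via stable derivators to extend, then deduce the second claim from \Cref{Cor:inverting-beta}. One small imprecision: you assert that ``any exact functor out of $\DE=\Db(\Efil)$ is determined up to canonical isomorphism by its restriction to the heart,'' but the universal property of~\cite{porta:stable-derivators-universal} applies to exact morphisms of stable derivators, not to arbitrary exact functors; the paper is careful to supply the missing references that $\mathrm{Re}_{\mathrm{\acute{e}t}}$ and $\fgt$ do underlie such morphisms (\cite[Thm.~4.5.2]{cisinski-deglise:etale-motives} and~\cite{cisinski:categories-derivables}, respectively).
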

\begin{proof}
  By construction of the equivalence $\pos$, the diagram commutes when restricted to the heart $\Efil$. The first claim then follows from the fact that both functors $\DE\to\Db(kC_2)$ are the underlying functors of an exact morphism of stable derivators (\cf \Cref{fnt:uniqueness-derivators} for $\pos$, \cite[Theorem~4.5.2]{cisinski-deglise:etale-motives} for the \'etale realization, and~\cite{cisinski:categories-derivables} for $\fgt$) and are therefore uniquely determined by their restrictions to the heart~\cite[Theorem~2.17]{porta:stable-derivators-universal}. The second claim then follows from \Cref{Cor:inverting-beta}.
\end{proof}

\begin{Rem}
  \label{Rem:real-realization}%
The functor $\mathrm{Re}_{\bbR}:\DATM(\FF;k)\to\DATM(\FF;k)[\rho^{-1}]$ of~\eqref{eq:3-functors}, that we call \emph{real realization} as in~\cite{bachmann:real}, is obtained by inverting the morphism $\rho:\unit\to\unit(1)[1]$ of \Cref{Rem:motivic-cohomology-point}. This localization was studied in \loccit in the stable $\bbA^{1}$-homotopy category, where the quotient category was identified with the topological stable homotopy category:
  \begin{equation*}
    \SHmot(\FF)[\rho^{-1}]\simeq\SH.
  \end{equation*}
  In our context, the localization was described in \Cref{Cor:inverting-rho}:
  \begin{equation*}
    \DATM(\FF;k)[\rho^{-1}]\simeq\stab(\kGfil)
  \end{equation*}
  as the stable category of the Frobenius exact category of filtered $kC_2$-modules. (The $\rho$ in the motivic setting corresponds to the $\rho$ in the setting of filtered $kC_2$-modules, as defined in \Cref{Not:rho}.) By \Cref{Cor:inverting-rho}, another description of the same tt-category is
  \[
    \DATM(\FF;k)[\rho^{-1}]\simeq\DAM(\FF;k)/\ideal{\mot(\bar{\FF})}\simeq\Kb(\mmod{kC_2})/\ideal{kC_2}.
  \]
\end{Rem}

\begin{Rem}
  \label{Rem:motivic-tfgt}
  It is more mysterious (to us, at least) how to interpret motivically the important central localization with respect to the generalized Koszul complex $\cone(\rho)\otimes\cone(\beta)$ considered in \Cref{sec:central}:
  \begin{equation}
  \label{eq:central-localization-motivic}%
   \tfgt:\DATM(\FF;k)\onto\frac{\DATM(\FF;k)}{\ideal{\cone(\rho)\otimes\cone(\beta)}}\simeq \DAM(\FF;k)
  \end{equation}
  which catches the three bottom (mixed) primes in~\eqref{eq:3-functors}. \textsl{Cf}. \Cref{Rem:tfgt}.
\end{Rem}

\begin{Rem}
  \label{Rem:geometric-base-change}
  Base-change to the algebraic closure induces a tt-functor
  \begin{equation*}
    -\times_{\FF}\bar{\FF}:\DATM(\FF;k)\to\DTM(\bar{\FF};k),
  \end{equation*}
  and a similar argument as in \Cref{Prop:etale-vs-forget} shows that this functor corresponds, in the context of \Cref{part:I}, to forgetting the $C_2$-action:
  \begin{equation*}
    \res^{C_2}_1:\DEfil\to\Db(\fil{\mmod{k}}).
  \end{equation*}
  The spectrum of $\DTM(\bar{\FF};k)$ was determined in~\cite{gallauer:tt-fmod,gallauer:tt-dtm-algclosed} by computing the spectrum of $\Db(\fil{\mmod{k}})$. The functor $-\times_{\FF}\bar{\FF}$ catches the two right-most points in~\eqref{eq:3-functors}, \ie the `geometric' primes of \eqref{eq:subdivision}.
\end{Rem}

\begin{Rem}
\label{Rem:prime-generators-motivic}%
We can deduce from \Cref{sec:applications} generators for each of the six prime ideals in $\DATM(\FF;k)$:
\[
\xymatrix@C=1em@R=1em{
\ideal{\mot(\bar{\FF})}
&& \ideal{\fund_0}
\\
\ideal{\cone(\rho)} \ar@{-}[u]
& \ideal{\mot(\bar{\FF}),\fund_0} \ar@{-}[lu] \ar@{-}[ru]
& \ideal{\cone(\beta)} \ar@{-}[u]
\\
& \ideal{\cone(\beta\rho)} \ar@{-}[u] \ar@{-}[lu] \ar@{-}[ru]
}
\]
Here, $\fund_0$ is the complex of finite correspondences
\begin{equation*}
  \cdots 0 \to 0 \to \Spec(\FF)\xto{\eta}\Spec(\bar{\FF})\xto{\epsilon}\Spec(\FF)\to 0 \to 0 \cdots
\end{equation*}
viewed as an object in $\DATM(\FF;k)$, while the morphisms $\beta:\unit\to\unit(1)$ and $\rho:\unit\to\unit(1)[1]$ are those of \Cref{Rem:motivic-cohomology-point}.
\end{Rem}

\bigbreak\goodbreak
\section{Spectrum of integral real Artin-Tate motives}
\label{sec:Spc-DATM(R;Z)}%
\medbreak
As mentioned in the introduction, for $\FF$ real-closed, the computation of the spectrum $\Spc(\DATM(\FF;\bbZ))$ essentially breaks down into two distinct tasks: First, computing the spectrum for mod-2 coefficients as done in the previous section, and second, computing the spectrum for $\bbZ[1/2]$-coefficients after passing to the algebraic closure $\bar{\FF}$. The latter task was undertaken in~\cite{gallauer:tt-dtm-algclosed}. The goal of this section is to put the solutions to these two tasks together in order to describe, in \Cref{Thm:Spc-DATM(R;Z)}, the space $\Spc(\DATM(\FF;\bbZ))$. We fully achieve this for $\FF$ small enough and provide a conjectural picture for all real closed~$\FF$; \cf \Cref{Rmd:Spc-DATM(R;Q)-conjecture}.

\begin{Prop}
\label{Prop:img-Spc}%
Let $\FF$ be a base field (not necessarily real closed).
\begin{enumerate}[\rm(a)]
\item
\label{it:img-Spc-Z/l}%
Let $\mathcal{D}(\FF;R)$ denote $\DATM(\FF;R)$, or $\DAM(\FF;R)$ or $\DTM(\FF;R)$. Let $\ell$ be a prime and consider the change-of-coefficients functor
\[
\chgcoef_{\ell}^*\colon\mathcal{D}(\FF;\bbZ)\to\mathcal{D}(\FF;\bbZ/\ell).
\]
Then the image of $\Spc(\chgcoef_{\ell}^*)$ is the support of~$\bbZ/\ell=\cone (\unit \xto{\ell}\unit)$ in~$\mathcal{D}(\FF;\bbZ)$.
\smallbreak
\item
\label{it:img-Spc-Fbar}%
Let $\mathcal{D}(\FF;R)$ denote $\DATM(\FF;R)$ or $\DAM(\FF;R)$. Let $E/\FF$ be a finite separable extension and $p\colon \Spec(E)\to \Spec(\FF)$. Consider base-extension
\[
p^*=(E\times_{\FF}-)\colon\mathcal{D}(\FF;R)\to\mathcal{D}(E;R).
\]
Then the image of $\Spc(p^*)$ is the support of~$\mot(E)$ in~$\mathcal{D}(\FF;R)$.
\end{enumerate}
\end{Prop}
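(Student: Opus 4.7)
Both parts reduce to the same template: for a tt-functor $F\colon\cK\to\cL$ and a distinguished object $x\in\cK$, establish $\Img(\Spc(F))=\supp(x)$ by showing the two inclusions separately.

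For the inclusion $\Img(\Spc(F))\subseteq\supp(x)$, the general identity $\Spc(F)\inv(\supp_{\cK}(x))=\supp_{\cL}(F(x))$ recalled in \Cref{Rmd:Spc} reduces matters to verifying that $F(x)$ has full support in~$\cL$, equivalently that $\unit_{\cL}\in\ideal{F(x)}$. In case~\eqref{it:img-Spc-Z/l}, multiplication by~$\ell$ becomes zero after change of coefficients, so $F(\bbZ/\ell)=\cone(\unit\xto{0}\unit)\cong\unit\oplus\unit[1]$ in $\mathcal{D}(\FF;\bbZ/\ell)$, which plainly contains $\unit_{\cL}$ as a direct summand. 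In case~\eqref{it:img-Spc-Fbar}, $F(\mot(E))=p^{*}\mot(E)=\mot(E\otimes_{\FF}E)$; the separability of $E/\FF$ provides an $\FF$-algebra section of the multiplication $E\otimes_{\FF}E\onto E$, and passing to motives exhibits $\unit_{\mathcal{D}(E;R)}=\mot(E)$ as a direct summand of $F(\mot(E))$.

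For the reverse inclusion $\supp(x)\subseteq\Img(\Spc(F))$, the strategy is to realize $F$ as extension of scalars along a commutative algebra object in~$\cK$ and then apply Balmer's surjectivity theorem for separable commutative algebras, which states that $\Img(\Spc(-\otimes A))=\supp(A)$ whenever $A\in\cK$ is separable. In case~\eqref{it:img-Spc-Fbar}, \'etale descent for motives identifies $p^{*}$ with extension of scalars along $A:=\mot(E)\in\mathcal{D}(\FF;R)$, and the finite separability of $E/\FF$ translates into separability of $A$ as a commutative monoid, yielding the result. For case~\eqref{it:img-Spc-Z/l}, I would instead combine the easy inclusion with the complementary Verdier quotient $\mathcal{D}(\FF;\bbZ)\onto\mathcal{D}(\FF;\bbZ)/\ideal{\bbZ/\ell}\simeq\mathcal{D}(\FF;\bbZ[1/\ell])$, whose spectrum is homeomorphic, by \Cref{Rmd:Spc-quo}, to the open complement of $\supp(\bbZ/\ell)$; it then suffices to exhibit, for each prime $\cP\in\supp(\bbZ/\ell)$, an explicit lift through~$\chgcoef_{\ell}^{*}$, using the residue field constructions available at~$\cP$.

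The main obstacle is surjectivity onto $\supp(\bbZ/\ell)$ in case~\eqref{it:img-Spc-Z/l}: since $\bbZ/\ell$ is not \'etale over $\bbZ$, the separability framework does not apply verbatim, and one must either verify an appropriate derived-separability condition for $\bbZ/\ell$ inside $\mathcal{D}(\FF;\bbZ)$ or produce the lifts of primes by hand, exploiting that for $\cP\in\supp(\bbZ/\ell)$ the relevant residue tt-functor factors through mod-$\ell$ coefficients.
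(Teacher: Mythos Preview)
Your argument for the inclusion $\Img(\Spc(F))\subseteq\supp(x)$ is correct in both cases. The gap is precisely where you flag it: surjectivity onto $\supp(\bbZ/\ell)$ in~\eqref{it:img-Spc-Z/l}. The object $\bbZ/\ell$ is \emph{not} separable in $\mathcal{D}(\FF;\bbZ)$ (the multiplication $\bbZ/\ell\otimes^{\mathrm L}_{\bbZ}\bbZ/\ell\to\bbZ/\ell$ has no section, as one sees already for $\Db(\bbZ)$), so the separable-algebra framework genuinely does not apply. Your fallback of ``producing lifts by hand'' presupposes knowledge of the residue fields at each $\cP\in\supp(\bbZ/\ell)$, which is exactly what one is trying to establish in the first place for general~$\FF$; this is circular.

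The paper avoids all of this by invoking a single result that covers both parts uniformly: \cite[Thm.\,1.7]{balmer:surjectivity} says that if $F\colon\cK\to\cK'$ is a tt-functor with right adjoint~$G$ and $\cK$ is rigid, then $\Img(\Spc(F))=\supp(G(\unit))$. Both functors here have right adjoints, with $(\chgcoef_\ell)_*(\unit)\cong\bbZ/\ell$ and $p_*(\unit)\cong\mot(E)$, so the statement follows immediately. The point you missed is that the right-adjoint formulation requires neither separability nor any explicit description of the primes; the projection formula~\eqref{eq:projection-formula-general} alone suffices to prove it.
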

\begin{proof}
Both parts follow from~\cite[Thm.\,1.7]{balmer:surjectivity}, that guarantees that the image of the map on spectra induced by a tt-functor~$F\colon \cK\to \cK'$ with a right adjoint~$G$ is the subset $\supp(G(\unit))$, as long as $\cK$ is rigid. For~\eqref{it:img-Spc-Z/l} we use $(\chgcoef_\ell)_*(\unit)\cong\bbZ/\ell$. For~\eqref{it:img-Spc-Fbar} we use $p_*(\unit)\cong\mot(E)$. (The latter does not exist on non-Artin Tate motives.)
\end{proof}

\begin{Cor}
  \label{Cor:DATM-odd}%
  Let~$\FF$ be a real closed field, and let $\ell$ be an odd prime. The change-of-coefficients functor $\chgcoef_{\ell}^*:\DATM(\FF;\bbZ)\to\DATM(\FF;\bbZ/\ell)$ induces on spectra a homeomorphism from the Sierpi\'nski space
  \begin{equation*}
    \Spc(\DATM(\FF;\bbZ/\ell))\quad =\quad \vcenter{\xymatrix@R=1em{\bullet\ar@{-}[d]\\\bullet}}
  \end{equation*}
  onto the support of $\bbZ/\ell=\cone (\unit \xto{\ell}\unit)$.
\end{Cor}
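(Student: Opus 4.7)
The strategy proceeds in three stages: identify the image of $\Spc(\chgcoef_\ell^*)$, compute the source $\Spc(\DATM(\FF;\bbZ/\ell))$, and upgrade the resulting bijection onto $\supp(\bbZ/\ell)$ to a homeomorphism. The image is immediate: by~\Cref{Prop:img-Spc}\,\eqref{it:img-Spc-Z/l}, it equals $\supp(\bbZ/\ell)$ in $\Spc(\DATM(\FF;\bbZ))$, with no use of $\ell$ being odd.

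For the source, my plan is to exploit base change to the algebraic closure. Consider the tt-functor $p^*\colon\DATM(\FF;\bbZ/\ell)\to\DATM(\bar\FF;\bbZ/\ell)=\DTM(\bar\FF;\bbZ/\ell)$, where the last equality uses that every Artin motive over $\bar\FF$ is trivial. The target is known by~\cite{gallauer:tt-dtm-algclosed} to be the 2-point Sierpi\'nski space with primes $\mathfrak{m}_\ell$ (generic, kernel of mod-$\ell$ motivic cohomology) and $\mathfrak{e}_\ell$ (closed, kernel of mod-$\ell$ \'etale cohomology). Since $\ell$ is odd, $2 = [\bar\FF:\FF]$ is invertible in $\bbZ/\ell$, so the composite $\unit\to\mot(\bar\FF)\to\unit$ of the structure map and the trace is multiplication by $2$, hence an isomorphism. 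This realizes $\unit$ as a direct summand of $p_*\unit=\mot(\bar\FF)$, so $\supp(\mot(\bar\FF))=\Spc(\DATM(\FF;\bbZ/\ell))$ and \Cref{Prop:img-Spc}\,\eqref{it:img-Spc-Fbar} yields surjectivity of~$\Spc(p^*)$.

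The main obstacle will be the injectivity of $\Spc(p^*)$, for which the plan is to exhibit an object of~$\DATM(\FF;\bbZ/\ell)$ whose image under~$p^*$ separates $\mathfrak{m}_\ell$ and $\mathfrak{e}_\ell$. While the Bott element $\tau\in\Hm^{0,1}(\bar\FF;\bbZ/\ell)$ does not descend to~$\FF$ (because $\mu_\ell(\FF)$ is trivial for odd $\ell$ and real closed $\FF$), the Galois action on~$\mu_\ell^{\otimes 2}$ is trivial, so $\tau^2$ descends to a morphism $\tau^2\colon\unit\to\unit(2)$ in~$\DATM(\FF;\bbZ/\ell)$. Since $\tau^2$ becomes invertible after \'etale realization but remains non-invertible in mod-$\ell$ motivic cohomology, its cone lies in $(p^*)^{-1}\mathfrak{e}_\ell\smallsetminus(p^*)^{-1}\mathfrak{m}_\ell$. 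With injectivity established, $\Spc(p^*)$ is a continuous bijection of Sierpi\'nski spaces that preserves specialization, hence a homeomorphism; in particular $\Spc(\DATM(\FF;\bbZ/\ell))$ is itself a 2-point Sierpi\'nski space.

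The final step is to verify that $\Spc(\chgcoef_\ell^*)$ is a homeomorphism from this Sierpi\'nski source onto $\supp(\bbZ/\ell)$. The plan here is to factor through the commutative square relating $\chgcoef_\ell^*$ and $p^*$ with their analogues over $\bar\FF$, which identifies the two primes of $\Spc(\DATM(\FF;\bbZ/\ell))$ with the kernels of mod-$\ell$ motivic and mod-$\ell$ \'etale cohomology on $\DATM(\FF;\bbZ)$, respectively; these are manifestly distinct cohomology theories, giving injectivity of $\Spc(\chgcoef_\ell^*)$. A continuous bijection between finite $T_0$ spaces preserving the single specialization relation is then automatically a homeomorphism, completing the proof.
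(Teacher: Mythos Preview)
Your strategy mirrors the paper's: identify the image as $\supp(\bbZ/\ell)$ via \Cref{Prop:img-Spc}\,\eqref{it:img-Spc-Z/l}, show $\Spc(p^*)$ is surjective using that $\unit$ is a summand of $\mot(\bar\FF)$ when $2\in(\bbZ/\ell)^\times$, import the two-point description of $\Spc(\DTM(\bar\FF;\bbZ/\ell))$ from~\cite{gallauer:tt-dtm-algclosed}, and then produce a separating object. The paper differs in one structural respect: rather than proving injectivity of $\Spc(p^*)$ and of $\Spc(\chgcoef_\ell^*)$ separately, it shows directly that the \emph{composite} $\Spc(\DTM(\bar\FF;\bbZ/\ell))\to\Spc(\DATM(\FF;\bbZ))$ is injective, by exhibiting an object $A\in\DATM(\FF;\bbZ)$ whose image under $p^*\chgcoef_\ell^*$ generates~$\mathfrak{e}_\ell$ and invoking \Cref{Prop:Spc-map-generators}\,\eqref{it:Spc-map-generators-a}. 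This single step yields both injectivities at once and closes the gap in your Stage~3, where ``manifestly distinct cohomology theories'' is not yet a proof: to make that step rigorous you still have to name an object of $\DATM(\FF;\bbZ)$ separating the two images---for instance $\chgcoef_{\ell,*}(\cone(\tau^2))$---at which point you have essentially reproduced the composite argument.

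Your observation that $\tau=\beta_\ell$ does \emph{not} descend to~$\FF$ (since $\mu_\ell(\FF)=\{1\}$ for odd~$\ell$ in a real closed field) but that $\tau^2$ does (complex conjugation acts by $(-1)^2=1$ on $\mu_\ell^{\otimes 2}$) is correct and worth emphasizing. The paper's sentence ``the primitive $\ell$th root defining $\beta_\ell$ already exists in~$\FF$'' is not accurate as stated; your $\tau^2$, or alternatively $p_*\cone(\beta_\ell)$, is the natural repair, and $\ideal{\cone(\tau^2)}=\ideal{\cone(\tau)}$ since $\tau^2$ is invertible exactly when $\tau$ is. One minor slip: in the paper's conventions $\mathfrak{m}_\ell=0$ is the \emph{closed} point and $\mathfrak{e}_\ell=\ideal{\cone(\beta_\ell)}$ the \emph{generic} one; you have these reversed, though it does not affect the argument.
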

\begin{proof}
Consider the maps induced on spectra by~$p^*$ and~$\chgcoef_{\ell}^*$ as in \Cref{Prop:img-Spc}:
\[
\kern-.3em\xymatrix@C=2.9em@R=.1em{\Spc(\DTM(\bar\FF;\bbZ/\ell)) \ar[r]^-{\Spc(p^*)}
& \Spc(\DATM(\FF;\bbZ/\ell)) \ar[r]^-{\Spc(\chgcoef_{\ell}^*)}
& \Spc(\DATM(\FF;\bbZ)).
}
\]
As the Euler characteristic of $\mot(\bar\FF)$ is 2 and thus invertible in $\bbZ/\ell$ we see that $\unit\in\ideal{\mot(\bar{\FF})}$ in~$\DATM(\FF;\bbZ/\ell)$. Therefore the first map $\Spc(p^*)$ is surjective by \Cref{Prop:img-Spc}\,\eqref{it:img-Spc-Fbar}. The image of the second map (and therefore the image of the composite) is~$\supp(\bbZ/\ell)$ by \Cref{Prop:img-Spc}\,\eqref{it:img-Spc-Z/l}.

The source, $\Spc(\DTM(\bar{\FF};\bbZ/\ell))$, was computed in~\cite[Cor.~8.3]{gallauer:tt-dtm-algclosed} and the prime ideals were found to be 0 and $\ideal{\cone(\beta_\ell)}$ where $\beta_{\ell}:\bbZ/\ell\to\bbZ/\ell(1)$ corresponds to the choice of a primitive $\ell$th root of unity in~$\bar\FF$. To prove the result, it therefore suffices to show that the above composite is injective (forcing the first to be a homeomorphism), \ie that those two points have distinct images.

Note that the primitive $\ell$th root defining $\beta_\ell\colon \bbZ/\ell\to \bbZ/\ell(1)$ already exists in~$\FF$ as $\ell$ is odd and~$\FF$ is real closed. Let $A\in\DATM(\FF;\bbZ)$ be the object $\chgcoef_{\ell,*}(\cone(\beta_\ell))$. By the computation in~\cite[Lem.~B.6]{gallauer:tt-dtm-algclosed}, we see that the image of $A$ under the functor~$p^*\chgcoef^*_{\ell}$ generates the prime ideal $\mathfrak{e}_\ell=\ideal{\cone(\beta_\ell)}$. \Cref{Prop:Spc-map-generators}\,\eqref{it:Spc-map-generators-a} implies that $\Spc(p^*\chgcoef_{\ell}^*)$ is injective as was left to prove.
\end{proof}

\begin{Thm}
  \label{Thm:Spc-DATM(R;Z)} Let $\FF=\bbR_{\mathrm{alg}}=\overline{\bbQ}\cap \bbR$ be the field of real algebraic numbers. Then the spectrum of $\DATM(\bbR_{\mathrm{alg}};\bbZ)$ is the following set with specialization relations
  \begin{equation}
    \label{eq:Spc-DATM(R;Z)}
\vcenter{\xymatrix@C=1em@R=.01em{
\overset{\pLs}{\bullet}&& \overset{\pNs=\mathfrak{m}_2}{\bullet}&\overset{\mathfrak{m}_3}{\bullet}&\overset{\mathfrak{m}_5}{\bullet}&\cdots&\overset{\mathfrak{m}_\ell}{\bullet}& \cdots
\\
\underset{\pL}{\bullet}\ar@{-}[u]&\overset{\pMs}{\bullet}\ar@{-}[ru]\ar@{-}[lu]&\underset{\pN=\mathfrak{e}_2}{\bullet}\ar@{-}[u]&\underset{\mathfrak{e}_3}{\bullet}\ar@{-}[u]&\underset{\mathfrak{e}_5}{\bullet}\ar@{-}[u]&\cdots&\underset{\mathfrak{e}_\ell}{\bullet}\ar@{-}[u]
& \cdots
\\
&\underset{\pM}{\bullet}\ar@{-}[ru]\ar@{-}[lu]\ar@{-}[u]
\\
&&&&&\underset{\cP_0}{\bullet}\ar@{-}[llluu]\ar@{-}[lluu]\ar@{-}[luu]\ar@{-}[ruu]
}}
  \end{equation}
where $\ell$ runs through all prime numbers. The topology is the minimal one with these specialization relations: The closed subsets are
  \begin{enumerate}[\rm(a)]
  \item finite specialization-closed subsets; and
  \item subsets of the form $Z\cup \adhpt{\cP_0}$, where $Z$ is as in~{\rm(a)}.
  \end{enumerate}
\end{Thm}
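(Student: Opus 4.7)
The plan is to use the Balmer comparison map $\rho\colon\Spc(\cK)\to\Spec(\bbZ)$ of~\cite{balmer:sss}, for $\cK=\DATM(\bbR_{\mathrm{alg}};\bbZ)$, sending $\cP$ to $\{n\in\bbZ\mid\bbZ/n\notin\cP\}$, to decompose the spectrum into fibers, compute each one, and reassemble the topology.

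For the fiber computations, \Cref{Prop:img-Spc}\,\eqref{it:img-Spc-Z/l} shows that the fiber of $\rho$ over $\ell\bbZ$ equals $\supp(\bbZ/\ell)$, the image of $\Spc(\chgcoef_\ell^*)$. For odd~$\ell$, \Cref{Cor:DATM-odd} identifies this fiber with the Sierpi\'nski space $\{\mathfrak{e}_\ell,\mathfrak{m}_\ell\}$. For $\ell=2$, \Cref{Thm:main} provides the six-point source, and I would mimic the proof of \Cref{Cor:DATM-odd}: continuity and surjectivity of $\Spc(\chgcoef_2^*)$ onto $\supp(\bbZ/2)$ follow from \Cref{Prop:img-Spc}, while injectivity follows from \Cref{Prop:Spc-map-generators}\,\eqref{it:Spc-map-generators-a} applied to the six explicit prime generators in \Cref{Rem:prime-generators-motivic}, each of which arises from an object already defined over~$\bbZ$. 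The fiber over~$(0)$ consists of primes containing every $\bbZ/\ell$, and identifies, via the central localization inverting all primes, with $\Spc(\DATM(\bbR_{\mathrm{alg}};\bbQ))$; by~\cite{peter:spectrum-damt} this is the single point~$\cP_0=\ker(\chgcoef_\bbQ^*)$.

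For the topology and specialization structure, each fiber carries the topology transported from the source. Continuity of~$\rho$ combined with the Zariski topology on $\Spec(\bbZ)$ ensures that the only cross-fiber containments are of the form $\cQ\subseteq\cP_0$ with $\cQ$ positive-characteristic. Such a $\cQ$ satisfies $\cQ\subseteq\cP_0$ if and only if every object of $\cQ$ is torsion, which for a pullback prime $\cQ=(\chgcoef_\ell^*)^{-1}(\bar{\cQ})$ happens exactly when $\bar{\cQ}$ avoids the Artin generator $\mot(\bar{\FF})$. For odd~$\ell$ every proper mod-$\ell$ prime has this property since $\mot(\bar{\FF})$ is a sum of $\otimes$-units there; for $\ell=2$, \Cref{Exa:supp-in-DE}\,\eqref{it:supp-E_0} identifies exactly $\pN$ and~$\pNs$ as the mod-$2$ primes avoiding $\E_0=\mot(\bar{\FF})$. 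Hence $\adhpt{\cP_0}=\{\cP_0\}\cup\{\pN,\pNs\}\cup\bigcup_{\ell\text{ odd}}\{\mathfrak{e}_\ell,\mathfrak{m}_\ell\}$, accounting precisely for the specialization edges drawn from~$\cP_0$.

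The description of closed subsets then follows. A closed subset $Z=\supp(\cJ)$ either has $\cJ$ consisting of torsion objects (equivalently $\cJ\subseteq\cP_0$), in which case $\cP_0\notin Z$ and $Z$ lies in finitely many positive-characteristic fibers as a specialization-closed set, giving case~(a); or $\cJ$ contains a non-torsion object, so $\cP_0\in Z$ and specialization-closure forces $\adhpt{\cP_0}\subseteq Z$, with $Z\smallsetminus\adhpt{\cP_0}$ again finite and specialization-closed, giving case~(b). The main obstacle is the careful identification of which positive-characteristic primes lie in~$\adhpt{\cP_0}$, together with the promotion of $\Spc(\chgcoef_2^*)$ to a homeomorphism for the $\ell=2$ fiber; both ultimately rest on explicit lifts of mod-$\ell$ prime generators to integral coefficients and the unconditional computation of $\DATM(\bbR_{\mathrm{alg}};\bbQ)$ from~\cite{peter:spectrum-damt}, which is only known for~$\bbR_{\mathrm{alg}}$ and is conjectural for arbitrary real closed fields (\cf \Cref{Rmd:Spc-DATM(R;Q)-conjecture}).
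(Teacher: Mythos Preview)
Your fiberwise strategy over $\Spec(\bbZ)$ is sound and close in spirit to the paper's argument, but there are two gaps worth flagging.

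\textbf{Minor gap: lifting the mod-$2$ generators.} You assert that each of the six generators in \Cref{Rem:prime-generators-motivic} ``arises from an object already defined over~$\bbZ$.'' This fails for $\cone(\beta)$, since the Bott element $\beta\in\Hm^{0,1}(\FF;\bbZ/2)$ is genuinely a mod-$2$ class with no integral lift. The paper fixes this by using $\chgcoef_{2,*}\cone(\beta)$ instead: one has $\chgcoef_2^*\chgcoef_{2,*}\cone(\beta)\cong\cone(\beta)\oplus\cone(\beta)[1]$, which still generates~$\pN$, so \Cref{Prop:Spc-map-generators}\,\eqref{it:Spc-map-generators-a} applies with $S=\{\chgcoef_{2,*}\cone(\beta)\}$. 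The same adjustment is needed for~$\pM$.

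\textbf{Main gap: the cross-fiber specializations.} Your key claim is that for a pullback prime $\cQ=(\chgcoef_\ell^*)^{-1}(\bar\cQ)$, one has $\cQ\subseteq\cP_0$ exactly when $\mot(\bar\FF)\notin\bar\cQ$. The forward implication is immediate (since $\mot(\bar\FF)$ is non-torsion), but you give no argument for the converse: why should $\mot(\bar\FF)\notin\cQ$ force \emph{every} object of~$\cQ$ to be torsion? This is not a formality. One would need to know, for instance, that the prime $\mathfrak{m}_\ell=(\chgcoef_\ell^*)^{-1}(0)$ consists entirely of torsion objects, \ie that any compact $X$ in $\DATM(\FF;\bbZ)$ with $X/\ell=0$ satisfies $X_\bbQ=0$. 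The paper sidesteps this entirely: rather than decomposing purely into fibers, it covers the spectrum by $\supp(\bbZ/2)\cup\supp(\mot(\bar\FF))$ and identifies the second piece as the image of $\Spc(p^*)$ from $\Spc(\DTM(\bar\FF;\bbZ))$. Since the latter space is already known from~\cite{gallauer:tt-dtm-algclosed} to be irreducible with generic point~$\cP_0$, continuity of $\Spc(p^*)$ immediately gives $\supp(\mot(\bar\FF))\subseteq\adhpt{\cP_0}$, and the reverse inclusion is automatic since $\supp(\mot(\bar\FF))$ is closed and contains~$\cP_0$. Your argument would be repaired by invoking $\Spc(p^*)$ and the structure of $\Spc(\DTM(\bar\FF;\bbZ))$ at this step.
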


\begin{proof}
If a prime~$\cP\in\Spc(\DATM(\FF;\bbZ))$ contains~$\cone(2)$, the object $\mot(\bar\FF)$ generates~$\DATM(\FF;\bbZ)/\cP$, since $\mot(\bar\FF)$ has Euler characteristic~$2$. Hence
\begin{equation}
\label{eq:two-closed}%
\Spc(\DATM(\FF;\bbZ))=\supp(\bbZ/2)\cup \supp(\mot(\bar\FF))\,.
\end{equation}
(These two closed subsets are not disjoint.) By \Cref{Prop:img-Spc}, we know that these two closed pieces are the images of the following two maps, respectively
\begin{align}
\label{eq:supp(Z/2)}%
\Spc(\DATM(\FF;\bbZ/2))\xto{\Spc(\chgcoef_2^*)}\Spc(\DATM(\FF;\bbZ)),
\\
\label{eq:supp(Fbar)}%
\Spc(\DATM(\bar\FF;\bbZ))\xto{\Spc(p^*)}\Spc(\DATM(\FF;\bbZ)).
\end{align}
The source of~\eqref{eq:supp(Z/2)} was computed in \Cref{Thm:main} and consists of the six points $\{\pL,\pLs,\pM,\pMs,\pN,\pNs\}$ with the inclusions appearing in~\eqref{eq:Spc-DATM(R;Z)}. On the other hand, the source of~\eqref{eq:supp(Fbar)} was computed in \cite[Thm.\,8.6]{gallauer:tt-dtm-algclosed} and consists of all the primes $\mathfrak{m}_\ell$ and $\mathfrak{e}_\ell$ and the generic~$\cP_0$ exactly as depicted in~\eqref{eq:Spc-DATM(R;Z)}, without the 4 left-most points. (Here we use that~$\bar\FF$ is absolutely algebraic and therefore satisfies Hypothesis~6.6 of \loccit) We need to show that none of those inclusions become equalities in~$\DATM(\FF;\bbZ)$ and that there are no other inclusions or collisions except $\pN=\mathfrak{e}_2$ and $\pNs=\mathfrak{m}_2$. The intersection of the two closed pieces in~\eqref{eq:two-closed} is the image of $\supp(\mot(\bar\FF))$ in $\Spc(\DATM(\FF;\bbZ/2))$, or equivalently the image of $\supp(\bbZ/2)$ in~$\Spc(\DTM(\bar\FF;\bbZ))$, both of which are a Sierpi\'nski space~$\{\pN,\pNs\}=\{\mathfrak{e}_2,\mathfrak{m}_2\}$. This reduces the question of proving injectivity on each of the two maps~\eqref{eq:supp(Z/2)} and~\eqref{eq:supp(Fbar)}.

To show that \eqref{eq:supp(Z/2)} is injective, we use \Cref{Prop:Spc-map-generators}\,\eqref{it:Spc-map-generators-a} and the explicit generators of the prime ideals in $\DATM(\FF;\bbZ/2)$ given in \Cref{Rem:prime-generators-motivic}, most of which already live in $\DATM(\FF;\bbZ)$. The only exceptions are $\pN=\ideal{\cone(\beta)}$ and $\pM=\ideal{\cone(\beta\rho)}=\ideal{\cone(\rho),\cone(\beta)}$. But in these cases, \cite[Lemma~B.6]{gallauer:tt-dtm-algclosed} gives $\chgcoef_2^*\chgcoef_{2,*}\cone(\beta)=\cone(\beta)\oplus\cone(\beta)[1]$ and we can indeed conclude with \Cref{Prop:Spc-map-generators}\,\eqref{it:Spc-map-generators-a}.

To show that \eqref{eq:supp(Fbar)} is injective, we can use \Cref{Cor:DATM-odd}, that shows the images of $\mathfrak{m}_\ell\subset \mathfrak{e}_\ell$ remain proper inclusions in~$\DATM(\FF;\bbZ)$. In the same vein, since $\cP_0\notin\supp(\bbZ/\ell)$ for any prime~$\ell$, the inclusions~$\mathfrak{e}_\ell\subset\cP_0$ remain proper.

At this point we have verified the accuracy of~\eqref{eq:Spc-DATM(R;Z)}, that is the underlying set together with the specialization relations of $\Spc(\DATM(\FF;\bbZ))$. Given a closed subset $Z\subset\Spc(\DATM(\FF;\bbZ))$, let $Z_1=Z\cap \supp(\bbZ/2)$ and $Z_2=Z\cap\supp(\mot(\bar{\FF}))$. Necessarily, $Z_1$ is a finite specialization-closed subset. Since the map $\Spc(p^*)$ is continuous, the (bijective) pullback of $Z_2$ in $\Spc(\DTM(\bar{\FF};\bbZ))$ is closed. It follows from~\cite[Thm.\,8.6]{gallauer:tt-dtm-algclosed} that it is either finite specialization-closed or the whole space. This concludes the proof.
\end{proof}

\begin{Rem}
\label{Rmd:Spc-DATM(R;Q)-conjecture}
  Let~$\FF$ be a general real closed field, and consider the canonical comparison map of~\cite[\S\,5]{balmer:sss}
  \begin{equation*}
    \varrho_{\DATM(\FF;\bbZ)}:\Spc(\DATM(\FF;\bbZ))\to\Spec(\bbZ).
  \end{equation*}
  (The `$\varrho_{\cK}$' notation is that of~\cite{balmer:sss} and is unrelated to the motivic~$\rho$.) The proof of \Cref{Thm:Spc-DATM(R;Z)} in fact identifies all the fibers of this map except at the generic point. (They look precisely as described in~\eqref{eq:Spc-DATM(R;Z)} for $\FF=\bbR_{\mathrm{alg}}$, with our `six points' above~$2\bbZ$.) For the fiber above the generic point of~$\Spec(\bbZ)$, we do not know whether it is a single point for every~$\FF$, as in the case of~$\FF=\bbR_{\mathrm{alg}}$. The issue is that the vanishing hypothesis on the algebraic K-theory of~$\FF$ in~\cite[Hyp.~6.6]{gallauer:tt-dtm-algclosed} is possibly violated. We are therefore not able to determine the spectrum of $\DATM(\FF;\bbQ)$ for general real closed~$\FF$. We conjecture that it is a singleton space in general, and therefore that the spectrum of $\DATM(\FF;\bbZ)$ looks exactly as in the case of $\FF=\bbR_{\mathrm{alg}}$ described in \Cref{Thm:Spc-DATM(R;Z)}.

What we \emph{do} know for general real closed fields $\FF$ is that the $\ell$-adic realization
\begin{equation*}
  \mathrm{Re}_{\ell}:\DATM(\FF;\bbQ)\to\Db(\bbQ_\ell)
\end{equation*}
is conservative, by~\cite[Theorem~1.12]{wildeshaus:interior-motive-pic-surfaces}, and that in particular $\DATM(\FF;\bbQ)$ is a local category, \ie $0$ is a prime ideal. In order to prove the remaining specialization relations depicted in \Cref{Cor:Spc-AT-integral} one can run the argument of~\cite[Theorem~6.10, Lemma~8.5]{gallauer:tt-dtm-algclosed}.
\end{Rem}
\bigbreak\goodbreak
\section{Applications}
\label{sec:Spc-DAM-DTM}%
\medbreak

We will now easily derive the spectra of Artin motives and Tate motives, over a real-closed base field~$\FF$ and with integral coefficients. The inclusions into Artin-Tate motives induce surjective maps on spectra by~\cite[Thm.\,1.3]{balmer:surjectivity}. As in the previous section, nothing new compared to the case of~$\bar\FF$ happens with 2 inverted, because then $\mot(\bar\FF)$ is $\otimes$-faithful. So we concentrate on the coefficients~$k=\bbZ/2$ (but see \Cref{Rem:Spc-DTM/DAM-Z}).

\subsection*{Tate motives}

For the category of Tate motives $\DTM(\FF;k)$, see~\Cref{Not:voevodsky-motives}.
\begin{Thm}
  \label{Thm:Spc-DTM}%
  The spectrum of $\DTM(\FF;k)$ is the following set with specialization relations:
  \begin{equation}
    \label{eq:Spc-DTM}
    \vcenter{\xymatrix@C=1em@R=.1em{&0\\
      \langle \cone(\rho)\rangle\ar@{-}[ru]&&\langle\cone(\beta)\rangle\ar@{-}[ul]\\
      &\langle\cone(\beta\rho)\rangle\ar@{-}[ru]\ar@{-}[lu]}}
  \end{equation}
\end{Thm}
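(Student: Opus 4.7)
The plan is to leverage \Cref{Thm:main} via the surjection
\[
\varphi\colon\Spc(\DATM(\FF;k))\onto\Spc(\DTM(\FF;k))
\]
of~\cite[Thm.\,1.3]{balmer:surjectivity} induced by the inclusion. Every prime of~$\DTM(\FF;k)$ arises as $\cP\cap\DTM(\FF;k)$ for one of the six primes produced by \Cref{Thm:main}, so the task reduces to computing the fibres of~$\varphi$.

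First, I would show that the three ``top'' primes $\pLs,\pMs,\pNs$ all restrict to the zero ideal of~$\DTM(\FF;k)$. Via Positselski's equivalence (\Cref{Prop:positselski}), they are the kernels of $\rsd{\cL}\circ\gr$, $\rsd{\cM}\circ\gr$ and $\rsd{\cN}\circ\gr$ respectively. Since Tate motives have trivial Galois action on their graded pieces, the total-graded functor $\gr$ restricted to~$\DTM(\FF;k)$ takes values in $\Kb(\mmod{k})\subset\Kb(\cA)$; on this subcategory each residue functor of~\Cref{Cor:fields-KA} is easily seen to be faithful (both $\sta$ and $\res_1^{C_2}$ act as the identity on~$k$, and $\Sta$ sends~$k$ to the non-zero generator of~$\mmod{k}$). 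Combined with the conservativity of~$\gr$ on~$\DEfil$ (\Cref{Lem:gr-conservative-and-section}), this forces the three restrictions to equal~$(0)$, which is therefore prime in~$\DTM(\FF;k)$.

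For the three remaining primes $\pL,\pN,\pM$, the Tate objects $\cone(\rho),\cone(\beta),\cone(\beta\rho)$ separate them: from the supports recorded in \Cref{Exa:supp-in-DE}, they lie respectively only in~$\{\pL,\pM\}$, only in~$\{\pN,\pM\}$, and only in~$\{\pM\}$ among the six primes of~\Cref{Thm:main}. Hence the four pullbacks $(0)$, $\pL\cap\DTM(\FF;k)$, $\pN\cap\DTM(\FF;k)$, $\pM\cap\DTM(\FF;k)$ are pairwise distinct. Since $\DTM(\FF;k)$ is rigid, every tt-ideal is radical and equals the intersection of the primes containing it (\cite[Thm.\,4.10]{balmer:spectrum}); applied to~$\cone(\rho)$, whose enclosing primes in~$\DTM(\FF;k)$ are exactly $\pL\cap\DTM(\FF;k)$ and $\pM\cap\DTM(\FF;k)$, the inclusion $\pL\subset\pM$ gives $\ideal{\cone(\rho)}=\pL\cap\DTM(\FF;k)$, and similarly $\ideal{\cone(\beta)}=\pN\cap\DTM(\FF;k)$ and $\ideal{\cone(\beta\rho)}=\pM\cap\DTM(\FF;k)$.

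Finally, the specialization structure transfers directly from \Cref{Thm:main}: $\pLs\subset\pL$ and $\pNs\subset\pN$ yield $(0)\subsetneq\ideal{\cone(\rho)}$ and $(0)\subsetneq\ideal{\cone(\beta)}$, while $\pL,\pN\subsetneq\pM$ give $\ideal{\cone(\rho)},\ideal{\cone(\beta)}\subsetneq\ideal{\cone(\beta\rho)}$ (strictness again witnessed by $\cone(\beta)$ and~$\cone(\rho)$). The delicate step is the collapse in the second paragraph: one must confirm that the filtration- and Galois-sensitive information encoded by the pure primes is genuinely invisible to the Tate subcategory, which here rests on conservativity of~$\gr$ together with the triviality of the Galois action on the graded pieces of Tate motives.
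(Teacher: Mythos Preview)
Your proof is correct and follows the paper's strategy: surjectivity of~$\varphi$, collapse of the three pure primes to~$(0)$ via a graded functor landing in~$\Kb(k)$ on Tate motives, and identification of the remaining three primes via their Tate generators $\cone(\rho),\cone(\beta),\cone(\beta\rho)$. The paper uses the Chow weight complex~$\gr^{w_{\Chow}}$ (\Cref{Rem:chow-weight}) in place of your~$\gr$ and invokes \Cref{Prop:Spc-map-generators} rather than the radical formula to name the generators, but these are minor variants; one small correction---``faithful'' in your second paragraph should read ``conservative''.
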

For the map induced by $\DTM(\FF;k)\hook\DATM(\FF;k)$, see \Cref{Rmd:Spc-inclusion-Artin-Tate}.
\begin{proof}
  We know from~\cite[Cor.~1.8]{balmer:surjectivity} that the map $\varphi\colon \Spc(\DATM(\FF;k))\to \Spc(\DTM(\FF;k))$ on spectra, given by intersection with $\DTM(\FF;k)$, is surjective. To determine the map more specifically note that the composition of the horizontal arrows in
  \begin{equation*}
    \xymatrix@C=4em@R=1.5em{\DTM(\FF;k)\ \ar@{.>}[rrd]_{\gr^{w_{\mathrm{Chow}}}_{\mathrm{Tate}}}\ar@{^(->}[r]&\DATM(\FF;k)\ar[r]^-{\gr^{w_{\mathrm{Chow}}}}&\Kb(kC_2)\\
      &&\Kb(k)\ar[u]_-{\triv}}
  \end{equation*}
  factors as indicated. As the three ``pure'' primes $\pLs$, $\pMs$, $\pNs$ are all detected by the functor $\gr^{w_{\mathrm{Chow}}}$ and $\Kb(k)=\Db(k)$ has a single prime ideal, namely $0$, it follows that these three primes are all mapped to the same prime ideal in $\DTM(\FF;k)$, namely $\ker(\gr^{w_{\mathrm{Chow}}}_{\mathrm{Tate}})$. As the functor $\gr^{w_{\mathrm{Chow}}}$ is conservative (\Cref{Rem:chow-weight}), so is $\gr^{w_{\mathrm{Chow}}}_{\mathrm{Tate}}$, and we see that $\varphi$ maps these three primes to~$0$.

The generators of the remaining prime ideals in $\DATM(\FF;k)$, namely $\pL$, $\pM$ and~$\pN$, \cf \Cref{Rem:prime-generators-motivic}, all lie in $\DTM(\FF;k)$ hence they are mapped to distinct primes under $\varphi$ (which are also distinct from $0$) with the same generators, by \Cref{Prop:Spc-map-generators}. For the same reason there can be no additional specialization relations in~\eqref{eq:Spc-DTM}. This completes the proof.
\end{proof}

\subsection*{Artin motives}

For the category of Artin motives $\DAM(\FF;k)$, see \Cref{Not:voevodsky-motives}.

\begin{Thm}
  \label{Thm:Spc-DAM}%
  The spectrum of $\DAM(\FF;k)$ is the following set with specialization relations (with $\fund_0$ as in \Cref{Rem:prime-generators-motivic}):
  \begin{equation}
    \label{eq:Spc-DAM}
    \vcenter{\xymatrix@C=1em@R=1em{\ideal{\mot(\bar{\FF})}&&\ideal{\fund_0}\\
        &\ideal{\mot(\bar{\FF}),\fund_0}\ar@{-}[ru]\ar@{-}[lu]}}
  \end{equation}
\end{Thm}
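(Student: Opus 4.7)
The plan is to reduce the computation of $\Spc(\DAM(\FF;k))$ to the already-determined spectrum of $\Kb(\mmod{kC_2})$. Since $\FF$ is real closed with absolute Galois group $C_2 = \mathrm{Gal}(\bar{\FF}/\FF)$, the classical identification \cite[\S\,3.4]{Voevodsky00} provides a tt-equivalence
\begin{equation*}
\DAM(\FF;k) \isoto \Kb(\mmod{kC_2}).
\end{equation*}
Under this equivalence, $\mot(\bar{\FF})$ corresponds to the regular representation $kC_2$ placed in homological degree zero and $\mot(\FF) = \unit$ corresponds to $k$. Consequently, the complex $\fund_0$ of \Cref{Rem:prime-generators-motivic}, built from the unit/counit of the $\otimes$-selfdual adjunction for $\Spec(\bar{\FF})$, is carried to the complex $\fundpur$ of~\eqref{eq:fund-pur}.

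Granting this dictionary, the theorem follows at once from \Cref{Thm:Spc(Kb(kC_2))}, which identifies $\Spc(\Kb(\mmod{kC_2}))$ as the V-shaped three-point space with closed points $\cL = \ideal{kC_2}$ and $\cN = \ideal{\fundpur}$, and generic point $\cM = \ideal{kC_2, \fundpur}$. Transporting back along the equivalence yields the three primes $\ideal{\mot(\bar{\FF})}$, $\ideal{\fund_0}$, $\ideal{\mot(\bar{\FF}),\fund_0}$ with the displayed specialization pattern.

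As a sanity check, and parallel to the proof of \Cref{Thm:Spc-DTM}, one can instead deduce the result from \Cref{Thm:main}. The inclusion $\DAM(\FF;k) \hook \DATM(\FF;k)$ induces a surjection on spectra by~\cite[Cor.~1.8]{balmer:surjectivity}, and the generators listed in \Cref{Rem:prime-generators-motivic} together with the support calculations of \Cref{Exa:supp-in-DE} show that the prime ideals $\pL = \ideal{\T, \E_0}$, $\pLs = \ideal{\E_0}$ both restrict to $\ideal{\mot(\bar{\FF})}$ (since $\cone(\beta)$ becomes invertible in $\DAM$), that $\pN, \pNs$ both restrict to $\ideal{\fund_0}$, and that $\pM, \pMs$ both restrict to $\ideal{\mot(\bar{\FF}),\fund_0}$. \Cref{Prop:Spc-map-generators} then guarantees there are no accidental coincidences or extra specialization relations beyond those forced by the surjection from the six-point space.

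The only genuine bookkeeping, and the single point where care is required, is the identification of $\fund_0$ with $\fundpur$ under Voevodsky's equivalence: one must verify that the structure map $\epsilon\colon \Spec(\bar{\FF}) \to \Spec(\FF)$ and its dual $\eta$ correspond to the augmentation $\epsilon\colon kC_2 \to k$ and the norm map $\eta\colon k \to kC_2$. This is standard \'etale-realization-plus-duality, and no conceptually new difficulty arises beyond what has been developed in \Cref{part:I}.
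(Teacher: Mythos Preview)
Your main argument is correct and matches the paper's proof exactly: invoke the equivalence $\DAM(\FF;k)\simeq\Kb(\mmod{kC_2})$ and read off the spectrum from \Cref{Thm:Spc(Kb(kC_2))}, translating $kC_2\leftrightarrow\mot(\bar\FF)$ and $\fundpur\leftrightarrow\fund_0$. One small slip in your optional sanity-check paragraph: the phrase ``$\cone(\beta)$ becomes invertible in $\DAM$'' is not meaningful since $\beta$ does not live in $\DAM$ at all; the correct observation is simply that the Tate-twist-related generators $\T$, $\cone(\beta)$, $\cone(\rho)$ do not lie in $\DAM$, so intersecting with $\DAM$ leaves only $\mot(\bar\FF)$ and $\fund_0$ as the visible generators of the restricted primes.
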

For the map induced by $\DAM(\FF;k)\hook\DATM(\FF;k)$, see \Cref{Rmd:Spc-inclusion-Artin-Tate}.
\begin{proof}
  As recalled in \Cref{Not:chow-motives}, the category $\DAM(\FF;k)$ is equivalent to $\Kb(kC_2)$ whose spectrum, by \Cref{Thm:Spc(Kb(kC_2))}, indeed has the required shape and description of the primes as in~\eqref{eq:Spc-DAM}.
\end{proof}

\begin{Rem}
\label{Rmd:Spc-inclusion-Artin-Tate}%
  The inclusions of tt-subcategories
\[
\DAM(\FF;k)\hook\DATM(\FF;k)\qquadtext{and}\DTM(\FF;k)\hook\DATM(\FF;k)
\]
induce the following two canonical ``projection'' maps on spectra:
\[
\scalebox{.8}{{\xy
(-25,0)*{\boxed{\Spc(\DATM(\FF;k))}};
(-25,-35)*{\boxed{\Spc(\DAM(\FF;k))}};
(90,-25)*{\boxed{\Spc(\DTM(\FF;k))}};
(0,0)*{\bullet};
(0,-10)*{\bullet};
(15,-5)*{\bullet};
(15,-15)*{\bullet};
(30,0)*{\bullet};
(30,-10)*{\bullet};
{\ar@{-} (0,-8)*{};(0,-2)*{}};
{\ar@{-} (15,-13)*{};(15,-7)*{}};
{\ar@{-} (30,-8)*{};(30,-2)*{}};
{\ar@{-} (12,-4)*{};(3,-1)*{}};
{\ar@{-} (12,-14)*{};(3,-11)*{}};
{\ar@{-} (18,-4)*{};(27,-1)*{}};
{\ar@{-} (18,-14)*{};(27,-11)*{}};
(90,-5)*{\bullet};
(75,-10)*{\bullet};
(90,-15)*{\bullet};
(105,-10)*{\bullet};
{\ar@{-} (78,-9)*{};(87,-6)*{}};
{\ar@{-} (93,-6)*{};(102,-9)*{}};
{\ar@{-} (87,-14)*{};(78,-11)*{}};
{\ar@{-} (93,-14)*{};(102,-11)*{}};
(0,-35)*{\bullet};
(15,-40)*{\bullet};
(30,-35)*{\bullet};
{\ar@{-} (12,-39)*{};(3,-36)*{}};
{\ar@{-} (18,-39)*{};(27,-36)*{}};
(40,-2)*{\Big\}\mapsto};
(50,-10)*{\longrightarrow};
(15,-25)*{\Big\downarrow};
\endxy}}
\]
as can be readily verified on the generators of the relevant prime ideals.
\end{Rem}

\begin{Rem}
If one removes the unique closed point of the space $\Spc(\DTM(\FF;k))$, what remains is precisely $\Spc(\DAM(\FF;k))$:
\[
\scalebox{0.8}{{\xy
(0,-10)*{\bullet};
(20,0)*{\bullet};
(20,-20)*{\bullet};
(40,-10)*{\bullet};
{\ar@{-} (16,-2)*{};(4,-8)*{}};
{\ar@{-} (16,-18)*{};(4,-12)*{}};
{\ar@{-} (24,-2)*{};(36,-8)*{}};
{\ar@{-} (24,-18)*{};(36,-12)*{}};
(-28,-2)*{\Spc\big(\DTM(\FF;k)\big)};
 {\ar@{..>}@/^1em/ (41,-14)*{};(65,-15)*{\Spc\big(\DAM(\FF;k)\big)};};
{\ar@{--} (-5,-7)*{};(45,-7)*{};};
{\ar@{--} (45,-7)*{};(45,-23)*{};};
{\ar@{--} (45,-23)*{};(-5,-23)*{};};
{\ar@{--} (-5,-23)*{};(-5,-7)*{};};
\endxy}}
\]
This geometric observation underlies a categorical fact. The composite
\[
\DTM(\FF;k)\into \DATM(\FF;k)\xto{\tfgt}\DAM(\FF;k)
\]
realizes the inclusion of the bottom three points and therefore factors through the localization (which realizes the removal of the closed point)
\begin{equation}
\label{eq:tate-artin-localization}
\frac{\DTM(\FF;k)}{\ideal{\cone(\rho)\otimes\cone(\beta)}}\to\DAM(\FF;k).
\end{equation}
It turns out that \eqref{eq:tate-artin-localization} is an equivalence of tt-categories.
To see this it suffices, by \Cref{Rem:motivic-tfgt}, to show that
\[
\frac{\DTM(\FF;k)}{\ideal{\cone(\rho)\otimes\cone(\beta)}}\to\frac{\DATM(\FF;k)}{\ideal{\cone(\rho)\otimes\cone(\beta)}}
\] is an equivalence.
This can be checked on the two open subsets $U(\cone(\beta))$ and $U(\cone(\rho))$ (cf.~\cite{balmer-favi:gluing}) but since these localizations are \emph{central} localizations~\cite[\S\,3]{balmer:sss} this is clear.
In particular, we note that in this particular instance, Tate motives carry all the information about Artin motives.
\end{Rem}

\begin{Rem}
\label{Rem:Spc-DTM/DAM-Z}%
With integral coefficients, the spectra of the tt-categories $\DTM(\FF;\bbZ)$ and $\DAM(\FF;\bbZ)$ are respectively
\[
\vcenter{\xymatrix@C.5em@R=.5em{
& {\bullet}&&{\bullet}&{\bullet}&\cdots&{\bullet}& \cdots
\\
{\bullet}\ar@{-}[ru]\ar@{-}[rd]&&{\bullet}\ar@{-}[lu]&{\bullet}\ar@{-}[u]&{\bullet}\ar@{-}[u]&\cdots&{\bullet}\ar@{-}[u]
& \cdots
\\
&{\bullet}\ar@{-}[ru]
\\
&&&&&{\bullet}\ar@{-}[llluu]\ar@{-}[lluu]\ar@{-}[luu]\ar@{-}[ruu]
}}
\qquadtext{and}
\vcenter{\xymatrix@C.5em@R=.5em{
{\bullet}\ar@{-}[rd]&&{\bullet}&{\bullet}&{\bullet}&\cdots&{\bullet}
& \cdots
\\
&{\bullet}\ar@{-}[ru]
\\
&&&&&{\bullet}\ar@{-}[llluu]\ar@{-}[lluu]\ar@{-}[luu]\ar@{-}[ruu]
}}
\]
Details are left to the reader, following the methods of \Cref{sec:Spc-DATM(R;Z)}.
\end{Rem}

\bookmarksetup{startatroot} 

\end{document}